\definecolor{dark-red}{rgb}{0.4,0.15,0.15}
\definecolor{dark-blue}{rgb}{0.15,0.15,0.4}
\definecolor{medium-blue}{rgb}{0,0,0.5}
\newcommand{\bigboxplus}{
\mathop{
	\vphantom{\bigoplus} 
	\mathchoice
	{\vcenter{\hbox{\resizebox{\widthof{$\displaystyle\bigoplus$}}{!}{$\boxplus$}}}}
	{\vcenter{\hbox{\resizebox{\widthof{$\bigoplus$}}{!}{$\boxplus$}}}}
	{\vcenter{\hbox{\resizebox{\widthof{$\scriptstyle\oplus$}}{!}{$\boxplus$}}}}
	{\vcenter{\hbox{\resizebox{\widthof{$\scriptscriptstyle\oplus$}}{!}{$\boxplus$}}}}
}\displaylimits 
}
\newcommand{\bigboxtimes}{
\mathop{
	\vphantom{\bigotimes} 
	\mathchoice
	{\vcenter{\hbox{\resizebox{\widthof{$\displaystyle\bigotimes$}}{!}{$\boxtimes$}}}}
	{\vcenter{\hbox{\resizebox{\widthof{$\bigotimes$}}{!}{$\boxtimes$}}}}
	{\vcenter{\hbox{\resizebox{\widthof{$\scriptstyle\otimes$}}{!}{$\boxtimes$}}}}
	{\vcenter{\hbox{\resizebox{\widthof{$\scriptscriptstyle\otimes$}}{!}{$\boxtimes$}}}}
}\displaylimits 
}
\newcommand{\A}{\mathbb{A}}
\newcommand{\Agp}{\mathrm{A}}
\newcommand{\AI}{\mathcal{AI}}
\newcommand{\C}{\mathbb{C}}
\newcommand{\dee}{\partial}
\newcommand{\df}{\mathfrak{d}}
\newcommand{\e}{\varepsilon}
\newcommand{\F}{\mathbb{F}}
\newcommand{\Hb}{\mathbb{H}}
\newcommand{\HH}{\mathcal{H}}
\newcommand{\Mgp}{\mathrm{M}}
\newcommand{\MM}{\mathcal{M}}
\newcommand{\N}{\mathbb{N}}
\newcommand{\Ngp}{\mathrm{N}}
\newcommand{\OO}{\mathcal{O}}
\newcommand{\Pgp}{\mathrm{P}}
\newcommand{\pp}{\mathfrak{p}}
\newcommand{\Q}{\mathbb{Q}}
\newcommand{\qq}{\mathfrak{q}}
\newcommand{\R}{\mathbb{R}}
\newcommand{\Ss}{\mathscr{S}}
\newcommand{\V}{\mathrm{V}}
\newcommand{\WW}{\mathcal{W}}
\newcommand{\Z}{\mathbb{Z}}
\newcommand{\Zgp}{\mathrm{Z}}
\DeclareMathOperator{\antidiag}{antidiag}
\DeclareMathOperator{\adj}{adj}
\DeclareMathOperator{\blockdiag}{blockdiag}
\DeclareMathOperator{\diag}{diag}
\DeclareMathOperator{\GL}{GL}
\DeclareMathOperator{\Hom}{Hom}
\DeclareMathOperator{\Ind}{Ind}
\DeclareMathOperator{\Mat}{Mat}
\DeclareMathOperator{\Norm}{N}
\DeclareMathOperator{\Ogp}{O}
\DeclareMathOperator{\sgn}{sgn}
\DeclareMathOperator{\SL}{SL}
\DeclareMathOperator{\SO}{SO}
\DeclareMathOperator{\Tr}{Tr}
\DeclareMathOperator*{\vol}{vol}
\DeclareMathOperator{\Ugp}{U}
\numberwithin{equation}{section}
\newtheorem{theorem}[equation]{Theorem}
\newtheorem{corollary}[equation]{Corollary}
\newtheorem{lemma}[equation]{Lemma}
\newtheorem{proposition}[equation]{Proposition}
\newtheorem*{problem}{Test Vector Problem}
\theoremstyle{remark}
\newtheorem{remark}[equation]{Remark}
\newtheorem{example}[equation]{Example}
\theoremstyle{definition}
\newtheorem{definition}[equation]{Definition}
\begin{document}

\title[Archimedean Newform Theory for $\GL_n$]{Archimedean Newform Theory for $\GL_n$}

\author{Peter Humphries}

\address{Department of Mathematics, University of Virginia, Charlottesville, VA 22904, USA}

\email{\href{mailto:pclhumphries@gmail.com}{pclhumphries@gmail.com}}

\urladdr{\href{https://sites.google.com/view/peterhumphries/}{https://sites.google.com/view/peterhumphries/}}

\keywords{Casselman--Wallach representation, Godement--Jacquet zeta integral, Rankin--Selberg integral, test vector, Whittaker function}

\subjclass[2020]{11F70 (primary); 20G05, 22E45, 22E50 (secondary)}

\thanks{Research partially supported by the European Research Council grant agreement 670239.}

\begin{abstract}
We introduce a new invariant, the conductor exponent, of a generic irreducible Casselman--Wallach representation of $\GL_n(F)$, where $F$ is an archimedean local field, that quantifies the extent to which this representation may be ramified. We also determine a distinguished vector, the newform, occurring with multiplicity one in this representation, with the complexity of this vector measured in a natural way by the conductor exponent. Finally, we show that the newform is a test vector for $\GL_n \times \GL_n$ and $\GL_n \times \GL_{n - 1}$ Rankin--Selberg integrals when the second representation is unramified. This theory parallels an analogous nonarchimedean theory due to Jacquet, Piatetski-Shapiro, and Shalika; combined, this completes a global theory of newforms for automorphic representations of $\GL_n$ over number fields. By-products of the proofs include new proofs of Stade's formul\ae{} and a new resolution of the test vector problem for archimedean Godement--Jacquet zeta integrals.
\end{abstract}

\maketitle

\section{Introduction}

Let $\MM_k(q,\chi)$ denote the finite-dimensional vector space of holomorphic modular forms of weight $k$, level $q$, and nebentypus $\chi$, where $\chi$ is a primitive Dirichlet character of conductor $q_{\chi} \mid q$. The classical theory of newforms due to Atkin and Lehner \cite{AL70} states that for each $q' \mid q$ with $q' \neq q$ and $q' \equiv 0 \pmod{q_{\chi}}$ and for each $\ell \mid \frac{q}{q'}$, the function $(\iota_{\ell} f)(z) \coloneqq f(\ell z)$ defines an element of $\MM_k(q,\chi)$ whenever $f \in \MM_k(q',\chi)$. We call $\iota_{\ell} f$ an oldform. Moreover, the orthogonal complement with respect to the Petersson inner product of the vector subspace of oldforms has an orthonormal basis consisting of newforms, which are eigenfunctions of the $n$-th Hecke operator not just for each positive integer $n$ for which $(n,q) = 1$ but for all $n \in \N$.

Casselman \cite{Cas73}, building on the seminal work of Jacquet and Langlands \cite{JL70}, gave an ad\`{e}lic reformulation of the Atkin--Lehner theory of newforms. Due to the fact that automorphic representations $\pi$ of $\GL_2(\A_{\Q})$ have a tensor product factorisation in terms of representations of $\GL_2(\R)$ and $\GL_2(\Q_p)$ for each prime $p$, this reformulation is purely local and is in terms of distinguished vectors in certain classes of representations of $\GL_2(\Q_p)$ determined in terms of congruence subgroups. Such a theory of newforms has been extended to the setting of generic irreducible admissible smooth representations of $\GL_n(F)$, where $F$ is a nonarchimedean local field \cite{JP-SS81}. This allows one to generalise Atkin--Lehner theory to automorphic representations of $\GL_n(\A_E)$ for any number field $E$. Furthermore, this theory has blossomed to include a well-developed theory of oldforms and conductor exponents associated to such representations, coupled with the local theory of test vectors for certain families of $\GL_n \times \GL_m$ Rankin--Selberg integrals \cite{Jac12,JP-SS81,Mat13,Ree91}.

While this nonarchimedean aspect of the ad\`{e}lic theory of newforms has been well understood since the seminal work of Jacquet, Piatetski-Shapiro, and Shalika \cite{JP-SS81}, there has been little development --- indeed, little even in the way of a conjectural formulation --- of the corresponding archimedean theory beyond the case $n = 2$. For $n = 2$, the archimedean aspect of the ad\`{e}lic theory of newforms, due to Popa \cite{Pop08}, concerns distinguished vectors in certain classes of representations of $\GL_2(\R)$ and $\GL_2(\C)$ determined in terms of the restrictions to the maximal compact subgroups $\Ogp(2)$ and $\Ugp(2)$. In the classical language of holomorphic modular forms and Maa\ss{} forms on the upper half-plane, this translates to prescribed automorphic forms determined in terms of Maa\ss{} raising and lowering operators, which raise and lower the weight of the automorphic form.

In this article, we put the archimedean setting on an equal footing with the nonarchimedean setting by developing such a theory for $\GL_n(\R)$ and $\GL_n(\C)$. We summarise our main results concerning generic irreducible Casselman--Wallach representations $\pi$ of $\GL_n$ as follows; precise statements are given in more detail in \hyperref[sect:archimedean]{Section \ref*{sect:archimedean}}.
\begin{itemize}
\item Among the $K_n$-types $\tau$ of $\pi$ whose restriction to $K_{n - 1}$ contains the trivial representation, there exists a unique $K_n$-type $\tau^{\circ}$ of lowest Howe degree, which occurs with multiplicity one in $\pi$; moreover, the subspace of $K_{n - 1}$-invariant $\tau^{\circ}$-isotypic vectors in $\pi$ is one-dimensional (\hyperref[thm:conductornewform]{Theorem \ref*{thm:conductornewform}}). We call the distinguished nonzero vector lying in this subspace, unique up to scalar multiplication, the newform of $\pi$, and we define the conductor exponent $c(\pi)$ of $\pi$ to be the Howe degree of $\tau^{\circ}$ (\hyperref[def:archcondnewfom]{Definition \ref*{def:archcondnewfom}}).
\item For each nonnegative integer $m \geq c(\pi)$, the dimension of the subspace of $K_{n - 1}$-invariant vectors in $\pi$ that are $\tau$-isotypic for some $K_n$-type $\tau$ of Howe degree $m$ is given explicitly as a certain binomial coefficient (\hyperref[thm:oldforms]{Theorem \ref*{thm:oldforms}}). We call vectors of this form oldforms.
\item The epsilon factor $\e(s,\pi,\psi)$ of $\pi$ is equal to $i^{-c(\pi)}$ (\hyperref[thm:epsilonconductor]{Theorem \ref*{thm:epsilonconductor}}); additionally, the conductor exponent $c(\pi)$ of $\pi$ is additive with respect to isobaric sums of representations (\hyperref[thm:additiveconductor]{Theorem \ref*{thm:additiveconductor}}) and is inductive (\hyperref[thm:inductiveconductor]{Theorem \ref*{thm:inductiveconductor}}).
\item When viewed in the Whittaker model and appropriately normalised, the newform is a test vector for the $\GL_n \times \GL_{n - 1}$ (\hyperref[thm:testvector]{Theorem \ref*{thm:testvector}}) and $\GL_n \times \GL_n$ (\hyperref[thm:GLnxGLn]{Theorem \ref*{thm:GLnxGLn}}) Rankin--Selberg integrals whenever the second representation is spherical.
\item The newform is a test vector for the Godement--Jacquet zeta integral (\hyperref[thm:GJram]{Theorem \ref*{thm:GJram}}).
\end{itemize}
As we explain in \hyperref[sect:nonarchimedean]{Sections \ref*{sect:nonarchimedean}} and \ref{sect:archimedean}, each of these results parallels an analogous result in the nonarchimedean setting.

Some aspects of this archimedean theory have long been expected. In particular, there has been a flurry of work in recent years studying the problem of finding test vectors for local $\GL_n \times \GL_m$ Rankin--Selberg integrals involving ramified representations over archimedean fields; see, for example, \cite{HIM12,HIM16,HIM22,IM22,Miy18,Pop08}. With the exception of \cite{Pop08}, previous work has invariably involved choosing test vectors that are associated in some way to the minimal $K$-type (in the sense of Vogan \cite{Vog81}), whereas we propose a theory of newforms without such a direct relation to minimal $K$-types. Apart from the recent work \cite{IM22}, these results have also been confined to low rank, namely $n \leq 3$ and $m \leq 2$.

A key part of this archimedean theory is the introduction of a new invariant, a nonnegative integer that we call the conductor exponent, associated to a generic irreducible Casselman--Wallach representation of $\GL_n(F)$ that in a certain sense quantifies the extent of ramification of such a representation. Somewhat surprisingly, there seems to have been no previous considerations in the literature of such a theory of the conductor exponent, in spite of the fact that it has many properties that mirror those of the nonarchimedean conductor exponent introduced by Jacquet, Piatetski-Shapiro, and Shalika \cite{JP-SS81}.

The structure of this article is the following. \hyperref[sect:prelim]{Section \ref*{sect:prelim}} contains a brief review of the theory of induced representations of Whittaker and Langlands types, $\GL_n \times \GL_m$ Rankin--Selberg integrals, Godement--Jacquet zeta integrals, and $L$-functions and epsilon factors. We survey the nonarchimedean theory of newforms, oldforms, conductor exponents, and test vectors for Rankin--Selberg integrals and Godement--Jacquet zeta integrals in \hyperref[sect:nonarchimedean]{Section \ref*{sect:nonarchimedean}}. This serves to motivate the results stated in \hyperref[sect:archimedean]{Section \ref*{sect:archimedean}}, where we present an analogous theory in the archimedean setting; these results are all essentially new, although several of the results for $n = 2$ are implicit in the work of Popa \cite{Pop08}. We discuss this theory further in \hyperref[sect:further]{Section \ref*{sect:further}}. The remaining sections are devoted to the proofs of the theorems stated in \hyperref[sect:archimedean]{Section \ref*{sect:archimedean}}.

\section{Preliminaries}
\label{sect:prelim}

\subsection{Groups and Haar Measures}
\label{sect:groupsHaar}

\subsubsection{Local Fields and Absolute Values}

Let $F$ be a local field, and denote by $|\cdot|_F$ the absolute value on $F$: for nonarchimedean $F$ with ring of integers $\OO$, maximal ideal $\pp$, and uniformiser $\varpi$, so that $\varpi \OO = \pp$ and $\OO / \pp \cong \F_q$ for some finite field of order $q$, this absolute value is normalised such that $|\varpi|_F = q^{-1}$, while for archimedean $F$, this is normalised such that
\[|x|_F = \begin{dcases*}
\max\{x,-x\} & if $F = \R$,	\\
x \overline{x} & if $F = \C$.
\end{dcases*}\]
When the local field is clear from context, we write $|\cdot|$ in place of $|\cdot|_F$. We also let $\|\cdot\| \coloneqq |\cdot|_{\C}^{1/2}$ denote the standard modulus on $\C$.

\subsubsection{Haar Measures on $F$ and $F^{\times}$}
\label{sect:HaarF}

We let $dx$ denote the Haar measure on $F$ normalised such that it is self-dual with respect to a fixed nontrivial additive character $\psi = \psi_F$ of $F$. For $F = \R$, we choose $\psi(x) \coloneqq \exp(2\pi i x)$, so that $dx$ is the Lebesgue measure; for $F = \C$, we choose $\psi(x) \coloneqq \exp(2\pi i(x + \overline{x}))$, so that $dx$ is twice the Lebesgue measure; finally, we choose $\psi$ to be unramified when $F$ is nonarchimedean, in which case $dx$ gives $\OO$ volume $1$. The multiplicative Haar measure $d^{\times} x$ for $F^{\times}$ is defined to be $\zeta_F(1) |x|^{-1} \, dx$, where
\[\zeta_F(s) \coloneqq \begin{dcases*}
\pi^{-\frac{s}{2}} \Gamma\left(\frac{s}{2}\right) & if $F = \R$,	\\
2(2\pi)^{-s} \Gamma(s) & if $F = \C$,	\\
\frac{1}{1 - q^{-s}} & if $F$ is nonarchimedean.
\end{dcases*}\]

\subsubsection{Subgroups of $\GL_n(F)$ and the Iwasawa Decomposition}

For each $r$-tuple of positive integers $(n_1,\ldots,n_r) \in \N^r$ for which $n_1 + \cdots + n_r = n$, let $\Pgp(F) = \Pgp_{(n_1,\ldots,n_r)}(F)$ denote the associated standard upper parabolic subgroup of $\GL_n(F)$ containing the standard Borel subgroup of upper triangular matrices. This has the Levi decomposition $\Pgp(F) = \Ngp_{\Pgp}(F) \Mgp_{\Pgp}(F)$, where the block-diagonal Levi subgroup $\Mgp_{\Pgp}(F)$ is isomorphic to $\GL_{n_1}(F) \times \cdots \times \GL_{n_r}(F)$, while the unipotent radical $\Ngp_{\Pgp}(F)$ of $\Pgp(F)$ consists of upper triangular matrices with block-diagonal entries $(1_{n_1},\ldots,1_{n_r})$; here we have written $1_n$ to denote the $n \times n$ identity matrix. When $\Pgp(F)$ is the standard Borel (and minimal parabolic) subgroup $\Pgp_{(1,\ldots,1)}(F)$, we write $\Ngp_{\Pgp}(F) \eqqcolon \Ngp_n(F) \cong F^{n(n - 1)/2}$, the subgroup of unipotent upper triangular matrices, and $\Mgp_{\Pgp}(F) \eqqcolon \Agp_n(F) \cong (F^{\times})^n$, the subgroup of diagonal matrices.

The maximal compact subgroup $K_n$ of $\GL_n(F)$, unique up to conjugacy, is
\[K_n = \begin{dcases*}
\Ogp(n) & if $F = \R$,	\\
\Ugp(n) & if $F = \C$,	\\
\GL_n(\OO) & if $F$ is nonarchimedean.
\end{dcases*}\]
When the context is clear, we write $K$ in place of $K_n$. Given a standard parabolic subgroup $\Pgp(F)$, we have the Iwasawa decomposition $\GL_n(F) = \Pgp(F) K_n = \Ngp_{\Pgp}(F) \Mgp_{\Pgp}(F) K_n$. Note that the Iwasawa decomposition is not unique since $\Mgp_{\Pgp}(F)$ intersects $K_n$ nontrivially.

\subsubsection{Haar Measures on $\GL_n(F)$ and Its Subgroups}

We normalise the Haar measure $dg$ on $\GL_n(F) \ni g$ via the Iwasawa decomposition $g = uak$ for the standard Borel subgroup, so that $dg = \delta_n^{-1}(a) \, du \, d^{\times} a \, dk$. Here $du = \prod_{j = 1}^{n - 1} \prod_{\ell = j + 1}^{n} du_{j,\ell}$ for $u \in \Ngp_n(F)$ with upper triangular entries $u_{j,\ell} \in F$, $d^{\times} a = \prod_{j = 1}^{n} d^{\times} a_j$ for $a \in \Agp_n(F)$ with diagonal entries $a_j \in F^{\times}$, $\delta_n(a) = \prod_{j = 1}^{n} |a_j|^{n - 2j + 1}$ denotes the modulus character of the Borel subgroup, and $dk$ is the Haar measure on the compact group $K_n \ni k$ normalised to give $K_n$ volume $1$ (so that when $F = \R$ and $n = 1$, in which case $K_1 = \Ogp(1) = \{\pm 1\} \cong \Z/2\Z$, this is just half the counting measure).

More generally, given a standard parabolic subgroup $\Pgp(F) = \Ngp_{\Pgp}(F) \Mgp_{\Pgp}(F)$ of $\GL_n(F)$, the Haar measure $dg$ on $\GL_n(F) \ni g$ is given by $dg = \delta_{\Pgp}^{-1}(m) \, du \, d^{\times} m \, dk$ with respect to the Iwasawa decomposition $g = umk$, where for $m = \blockdiag(m_1,\ldots,m_r)$, the modulus character is
\[\delta_{\Pgp}(m) = \prod_{j = 1}^{r} \left|\det m_j\right|^{n - 2(n_1 + \cdots + n_{j - 1}) - n_j}\]
and $d^{\times} m = \prod_{j = 1}^{r} dm_j$ with $dm_j$ the Haar measure on $\GL_{n_j}(F) \ni m_j$ normalised via the Iwasawa decomposition for the standard Borel subgroup of $\GL_{n_j}(F)$.

\subsection{Representations}

\subsubsection{Isobaric Sums}

Given representations $(\pi_1,V_{\pi_1}), \ldots, (\pi_r,V_{\pi_r})$ of $\GL_{n_1}(F), \ldots, \GL_{n_r}(F)$, where $F$ is a local field and $n_1 + \cdots + n_r = n$, we form the representation $\pi_1 \boxtimes \cdots \boxtimes \pi_r$ of $\Mgp_{\Pgp}(F)$, where $\boxtimes$ denotes the outer tensor product and $\Mgp_{\Pgp}(F)$ denotes the block-diagonal Levi subgroup of the standard (upper) parabolic subgroup $\Pgp(F) = \Pgp_{(n_1,\ldots,n_r)}(F)$ of $\GL_n(F)$. We then extend this representation trivially to a representation of $\Pgp(F)$. By normalised parabolic induction, we obtain an induced representation $(\pi,V_{\pi})$ of $\GL_n(F)$,
\[\pi \coloneqq \Ind_{\Pgp(F)}^{\GL_n(F)} \bigboxtimes_{j = 1}^{r} \pi_j,\]
where $V_{\pi}$ denotes the space of smooth functions $f : \GL_n(F) \to V_{\pi_1} \otimes \cdots \otimes V_{\pi_r}$ that satisfy
\[f(umg) = \delta_{\Pgp}^{1/2}(m) \pi_1(m_1) \otimes \cdots \otimes \pi_r(m_r) \cdot f(g),\]
for any $u \in \Ngp_{\Pgp}(F)$, $m = \blockdiag(m_1,\ldots,m_r) \in \Mgp_{\Pgp}(F)$, and $g \in \GL_n(F)$, and the action of $\pi$ on $V_{\pi}$ is by right translation, namely $(\pi(h) \cdot f)(g) \coloneqq f(gh)$. We call $\pi$ the isobaric sum of $\pi_1,\ldots,\pi_r$, which we denote by
\[\pi = \bigboxplus_{j = 1}^{r} \pi_j.\]

\subsubsection{Induced Representations of Whittaker and Langlands Types}

A representation $\pi$ of $\GL_n(F)$ is said to be an induced representation of Whittaker type if it is the isobaric sum of $\pi_1,\ldots,\pi_r$ and each $\pi_j$ is irreducible and essentially square-integrable. Such a representation is admissible and smooth; moreover, if $F$ is archimedean, then it is a Fr\'{e}chet representation of moderate growth and of finite length. The contragredient of an induced representation of Whittaker type $\pi = \pi_1 \boxplus \cdots \boxplus \pi_r$ is $\widetilde{\pi} = \widetilde{\pi_1} \boxplus \cdots \boxplus \widetilde{\pi_r}$, which is again an induced representation of Whittaker type. If each $\pi_j$ is additionally of the form $\sigma_j \otimes \left|\det\right|^{t_j}$, where $\sigma_j$ is irreducible, unitary, and square-integrable, and $\Re(t_1) \geq \cdots \geq \Re(t_r)$, then $\pi$ is said to be an induced representation of Langlands type.

\subsubsection{Whittaker Models}

A Whittaker functional $\Lambda : V_{\pi} \to \C$ of an admissible smooth representation $(\pi,V_{\pi})$ of $\GL_n(F)$ is a continuous linear functional that satisfies
\[\Lambda(\pi(u) \cdot v) = \psi_n(u) \Lambda(v)\]
for all $v \in V_{\pi}$ and $u \in \Ngp_n(F)$; here
\[\psi_n(u) \coloneqq \psi\left(\sum_{j = 1}^{n - 1} u_{j,j + 1}\right).\]
If $\pi$ is additionally irreducible, then the space of Whittaker functionals of $\pi$ is at most one-dimensional. If this space is indeed one-dimensional, so that there exists a unique such functional up to scalar multiplication, then $\pi$ is said to be generic. Every induced representation of Langlands type $(\pi,V_{\pi})$ admits a nontrivial Whittaker functional $\Lambda$; moreover, $\pi$ is isomorphic to its unique Whittaker model $\WW(\pi,\psi)$, which is the image of $V_{\pi}$ under the map $v \mapsto \Lambda(\pi(\cdot) \cdot v)$, so that $\WW(\pi,\psi)$ consists of Whittaker functions on $\GL_n(F)$ of the form $W(g) \coloneqq \Lambda(\pi(g) \cdot v)$. An induced representation of Whittaker type $\pi$ also has a one-dimensional space of Whittaker functionals, but the map $v \mapsto \Lambda(\pi(\cdot) \cdot v)$ need not be injective, so that the Whittaker model may only be a model of a quotient of $\pi$.

\subsubsection{Irreducible Representations}
\label{sect:irredrep}

For nonarchimedean $F$, every irreducible admissible smooth representation $\pi$ of $\GL_n(F)$ is isomorphic to the unique irreducible quotient of some induced representation of Langlands type. If $\pi$ is also generic, then it is isomorphic to some (necessarily irreducible) induced representation of Langlands type \cite{CS98}.

For archimedean $F$, we recall that a Casselman--Wallach representation of $\GL_n(F)$ is an admissible smooth Fr\'{e}chet representation of moderate growth and of finite length \cite{Wal92,BK14}; in particular, induced representations of Whittaker type are Casselman--Wallach representations. Every irreducible Casselman--Wallach representation $\pi$ of $\GL_n(F)$ is isomorphic to the unique irreducible quotient of some induced representation of Langlands type. Again, if $\pi$ is additionally generic, then it is isomorphic to some (necessarily irreducible) induced representation of Langlands type.

\subsubsection{Spherical Representations}
\label{sect:sphericalrep}

An induced representation of Whittaker type $\pi$ is said to be spherical if it has a $K$-fixed vector. Such a spherical representation $\pi$ must then be a principal series representation of the form $|\cdot|^{t_1} \boxplus \cdots \boxplus |\cdot|^{t_n}$; furthermore, the subspace of $K$-fixed vectors must be one-dimensional. This $K$-fixed vector, which is unique up to scalar multiplication, is called the spherical vector of $\pi$. For a spherical representation of Langlands type $\pi$, the spherical Whittaker function $W^{\circ}$ in the Whittaker model $\WW(\pi,\psi)$ is given by the Jacquet integral
\[W^{\circ}(g) \coloneqq \int_{\Ngp_n(F)} f^{\circ}(w_n u g) \overline{\psi_n}(u) \, du,\]
where $w_n \coloneqq \antidiag(1,\ldots,1)$ is the long Weyl element and $f^{\circ}$ is the canonically normalised spherical vector in the induced model: the unique smooth function $f^{\circ} : \GL_n(F) \to \C$ satisfying
\[f^{\circ}(1_n) = \prod_{j = 1}^{n - 1} \prod_{\ell = j + 1}^{n} \zeta_F(1 + t_j - t_{\ell}), \qquad f^{\circ}(uag) = f^{\circ}(g) \delta_n^{1/2}(a) \prod_{j = 1}^{n} |a_j|^{t_j}, \qquad f^{\circ}(gk) = f^{\circ}(g)\]
for all $u \in \Ngp_n(F)$, $a = \diag(a_1,\ldots,a_n) \in \Agp_n(F)$, $g \in \GL_n(F)$, and $k \in K$. The Jacquet integral of $f^{\circ}$ converges absolutely when $\Re(t_1) > \cdots > \Re(t_n)$ and extends holomorphically to all of $\C^n \ni (t_1,\ldots,t_n)$ (cf.\ \hyperref[sect:Jacquetint]{Section \ref*{sect:Jacquetint}}). For nonarchimedean $F$, the normalisation of $W^{\circ}$ is such that $W^{\circ}(1_n) = 1$.

\subsection{Integral Representations of \texorpdfstring{$L$}{L}-Functions}

\subsubsection{Rankin--Selberg Integrals}

We recall the definition of Rankin--Selberg integrals over a local field $F$; see \cite{Cog04} for further details. Given induced representations of Whittaker type $\pi$ of $\GL_n(F)$ and $\pi'$ of $\GL_m(F)$ with $m \leq n$, we take Whittaker functions $W \in \WW(\pi,\psi)$ and $W' \in \WW\left(\pi',\overline{\psi}\right)$ and form the local $\GL_n \times \GL_m$ Rankin--Selberg integral defined by
\begin{align}
\label{eq:RankinSelbergnm}
\Psi(s,W,W') & \coloneqq \int\limits_{\Ngp_m(F) \backslash \GL_m(F)} W \begin{pmatrix} g & 0 \\ 0 & 1_{n - m} \end{pmatrix} W'(g) \left|\det g\right|^{s - \frac{n - m}{2}} \, dg \quad \text{for $m < n$,}	\\
\label{eq:RankinSelbergnn}
\Psi(s,W,W',\Phi) & \coloneqq \int\limits_{\Ngp_n(F) \backslash \GL_n(F)} W(g) W'(g) \Phi(e_n g) \left|\det g\right|^s \, dg \quad \text{for $m = n$,}
\end{align}
where $\Phi \in \Ss(\Mat_{1 \times n}(F))$ is a Schwartz--Bruhat function and $e_n \coloneqq (0,\ldots,0,1) \in \Mat_{1 \times n}(F) = F^n$. These integrals converge absolutely for $\Re(s)$ sufficiently large and extend meromorphically to the entire complex plane via the local functional equation.

\subsubsection{Godement--Jacquet Zeta Integrals}

Following \cite{GJ72,Jac79}, we define Godement--Jacquet zeta integrals over a local field $F$. Given an induced representation of Whittaker type $(\pi,V_{\pi})$ of $\GL_n(F)$, we take $v_1 \in V_{\pi}$, $\widetilde{v_2} \in V_{\widetilde{\pi}}$, with associated matrix coefficient $\beta(g) \coloneqq \langle \pi(g) \cdot v_1, \widetilde{v_2}\rangle$ of $\pi$, and form the local Godement--Jacquet zeta integral defined by
\begin{equation}
\label{eq:GodementJacquet}
Z(s,\beta,\Phi) \coloneqq \int_{\GL_n(F)} \beta(g) \Phi(g) \left|\det g\right|^{s + \frac{n - 1}{2}} \, dg,
\end{equation}
where $\Phi \in \Ss(\Mat_{n \times n}(F))$ is a Schwartz--Bruhat function. This integral converges absolutely for $\Re(s)$ sufficiently large and extends meromorphically to the entire complex plane via the local functional equation.

\subsection{\texorpdfstring{$L$}{L}-Functions and Epsilon Factors}

\subsubsection{Rankin--Selberg $L$-Functions and Standard $L$-Functions}

For nonarchimedean $F$, the Rankin--Selberg $L$-function $L(s,\pi \times \pi')$ is the generator of the $\C[q^s,q^{-s}]$-fractional ideal of $\C(q^{-s})$ generated by the family of Rankin--Selberg integrals $\Psi(s,W,W')$ (or $\Psi(s,W,W',\Phi)$ if $m = n$) with $W \in \WW(\pi,\psi)$ and $W' \in \WW\left(\pi',\overline{\psi}\right)$ (and $\Phi \in \Ss(\Mat_{1 \times n}(F))$ if $m = n$) with $L(s,\pi \times \pi')$ normalised to be of the form $P(q^{-s})^{-1}$ for some $P(q^{-s}) \in \C[q^{-s}]$ whose constant term is $1$. For archimedean $F$, the Rankin--Selberg $L$-function $L(s,\pi \times \pi')$ is defined via the local Langlands correspondence as explicated in \cite{Kna94}.

Similarly, for nonarchimedean $F$, the standard $L$-function $L(s,\pi)$ is the generator of the $\C[q^s,q^{-s}]$-fractional ideal of $\C(q^{-s})$ generated by the family of Godement--Jacquet zeta integrals $Z(s,\beta,\Phi)$ with $v_1 \in V_{\pi}$, $\widetilde{v_2} \in V_{\widetilde{\pi}}$, and $\Phi \in \Ss(\Mat_{n \times n}(F))$, with $L(s,\pi)$ normalised to be of the form $P(q^{-s})^{-1}$ for some $P(q^{-s}) \in \C[q^{-s}]$ whose constant term is $1$. For archimedean $F$, the standard $L$-function $L(s,\pi)$ is defined via the local Langlands correspondence as explicated in \cite{Kna94}.

In both settings, upon decomposing $\pi$ and $\pi'$ as isobaric sums
\begin{equation}
\label{eq:pipi'isobaric}
\pi = \bigboxplus_{j = 1}^{r} \pi_j, \qquad \pi' = \bigboxplus_{j' = 1}^{r'} \pi_{j'}',
\end{equation}
we have the identities
\begin{equation}
\label{eq:isobaricRS}
L(s,\pi) = \prod_{j = 1}^{r} L(s,\pi_j), \qquad L(s,\pi \times \pi') = \prod_{j = 1}^{r} \prod_{j' = 1}^{r'} L(s,\pi_j \times \pi_{j'}').
\end{equation}
Moreover, Rankin--Selberg $L$-functions involving twists by a character are related to standard $L$-functions via the identity
\begin{equation}
\label{eq:twistedLfunction}
L(s,\pi \times |\cdot|^t) = L(s + t,\pi).
\end{equation}

\subsubsection{$L$-Functions for Representations of $\GL_n(\C)$}

Essentially square-integrable representations of $\GL_n(\C)$ exist only for $n = 1$, in which case the representation must be a character of the form $\pi(z) = \chi^{\kappa}(z) |z|_{\C}^t$ for some $\kappa \in \Z$ and $t \in \C$, where $z \in \GL_1(\C) = \C^{\times}$ and $\chi$ is the canonical character
\[\chi(z) \coloneqq e^{i \arg(z)} = \frac{z}{|z|_{\C}^{1/2}}.\]
The $L$-function of $\pi$ is
\begin{equation}
\label{eq:L(s,pi)C}
L(s,\pi) = \zeta_{\C}\left(s + t + \frac{\|\kappa\|}{2}\right),
\end{equation}
where
\begin{equation}
\label{eq:zetaCint}
\zeta_{\C}(s) \coloneqq 2(2\pi)^{-s} \Gamma(s) = \int_{\C^{\times}} |x|_{\C}^s \exp\left(-2\pi x \overline{x}\right) \, d^{\times} x.
\end{equation}
This integral representation of $\zeta_{\C}(s)$ converges absolutely for $\Re(s) > 0$ and extends meromorphically to the entire complex plane with poles at nonpositive integers. The contragredient of $\pi$ is $\widetilde{\pi} = \chi^{-\kappa} |\cdot|_{\C}^{-t}$, so that
\begin{equation}
\label{eq:L(s,tildepi)C}
L(s,\widetilde{\pi}) = \zeta_{\C}\left(s - t + \frac{\|\kappa\|}{2}\right).
\end{equation}

\subsubsection{$L$-Functions for Representations of $\GL_n(\R)$}

Essentially square-integrable representations of $\GL_n(\R)$ exist only for $n \in \{1,2\}$. An essentially square-integrable representation of $\GL_1(\R)$ must be a character of the form $\pi(x) = \chi^{\kappa}(x) |x|_{\R}^t$ for some $\kappa \in \{0,1\}$ and $t \in \C$, where $x \in \GL_1(\R) = \R^{\times}$ and $\chi$ is the canonical character
\[\chi(x) \coloneqq \sgn(x) = \frac{x}{|x|_{\R}}.\]
The $L$-function of $\pi$ is
\begin{equation}
\label{eq:L(s,pi)R1}
L(s,\pi) = \zeta_{\R}(s + t + \kappa),
\end{equation}
where
\begin{equation}
\label{eq:zetaRint}
\zeta_{\R}(s) \coloneqq \pi^{-\frac{s}{2}} \Gamma\left(\frac{s}{2}\right) = \int_{\R^{\times}} |x|_{\R}^s \exp\left(-\pi x^2\right) \, d^{\times} x.
\end{equation}
This integral representation of $\zeta_{\R}(s)$ converges absolutely for $\Re(s) > 0$ and extends meromorphically to the entire complex plane with poles at nonpositive even integers. The contragredient is $\widetilde{\pi} = \chi^{\kappa} |\cdot|_{\R}^{-t}$, so that
\begin{equation}
\label{eq:L(s,tildepi)R1}
L(s,\widetilde{\pi}) = \zeta_{\R}(s - t + \kappa).
\end{equation}

For $n = 2$, we note that
\[|\cdot|_{\R}^t \boxplus \chi |\cdot|_{\R}^t \cong \Ind_{\GL_1(\C)}^{\GL_2(\R)} |\cdot|_{\C}^t,\]
where $\GL_1(\C)$ is viewed as a subgroup of $\GL_2(\R)$ via the identification $a + ib \mapsto \begin{psmallmatrix} a & b \\ -b & a \end{psmallmatrix}$. For $\kappa \neq 0$, the essentially discrete series representation of weight $\|\kappa\| + 1$,
\[D_{\|\kappa\| + 1} \otimes \left|\det\right|_{\R}^t \coloneqq \Ind_{\GL_1(\C)}^{\GL_2(\R)} \chi^{\kappa} |\cdot|_{\C}^t \cong \Ind_{\GL_1(\C)}^{\GL_2(\R)} \chi^{-\kappa} |\cdot|_{\C}^t,\]
is essentially square-integrable, and every essentially square-integrable representation of $\GL_2(\R)$ is of the form $\pi = D_{\kappa} \otimes \left|\det\right|_{\R}^t$ for some integer $\kappa \geq 2$ and $t \in \C$.
The $L$-function of $\pi$ is
\begin{equation}
\label{eq:L(s,pi)R2}
L(s,\pi) = \zeta_{\C}\left(s + t + \frac{\kappa - 1}{2}\right) = \zeta_{\R}\left(s + t + \frac{\kappa - 1}{2}\right) \zeta_{\R}\left(s + t + \frac{\kappa + 1}{2}\right).
\end{equation}
The contragredient of $\pi$ is $\widetilde{\pi} = D_{\kappa} \otimes \left|\det\right|_{\R}^{-t}$, so that
\begin{equation}
\label{eq:L(s,tildepi)R2}
L(s,\widetilde{\pi}) = \zeta_{\C}\left(s - t + \frac{\kappa - 1}{2}\right) = \zeta_{\R}\left(s - t + \frac{\kappa - 1}{2}\right) \zeta_{\R}\left(s - t + \frac{\kappa + 1}{2}\right).
\end{equation}

\subsubsection{Epsilon Factors}

To any induced representations of Whittaker type $\pi$ of $\GL_n(F)$ and $\pi'$ of $\GL_m(F)$ and any nontrivial additive character $\psi$ of $F$, one can associate the epsilon factor $\e(s,\pi,\psi)$ of $\pi$, $\e(s,\pi',\psi)$ of $\pi'$, and $\e(s,\pi \times \pi',\psi)$ of $\pi \times \pi'$, which arise via the local functional equations for the Godement--Jacquet zeta integral and $\GL_n \times \GL_m$ Rankin--Selberg integral respectively. In particular, the local functional equation for the Godement--Jacquet zeta integral is
\begin{equation}
\label{eq:GJfunceq}
\frac{Z(1 - s,\widetilde{\beta},\widehat{\Phi})}{L(1 - s,\widetilde{\pi})} = \e(s,\pi,\psi) \frac{Z(s,\beta,\Phi)}{L(s,\pi)},
\end{equation}
where $\widetilde{\beta}(g) \coloneqq \beta\left(\prescript{t}{}{g}^{-1}\right)$ with $\prescript{t}{}{g}$ denoting the transpose of $g$, while the Fourier transform $\widehat{\Phi}$ is
\[\widehat{\Phi}(y) \coloneqq \int\limits_{\Mat_{n \times n}(F)} \Phi(x) \overline{\psi}(\Tr(x \prescript{t}{}{y})) \, dx.\]
Note that this normalisation of the Fourier transform differs from that in \cite{GJ72}, and that the epsilon factor is dependent on this normalisation. For the local functional equation for the $\GL_n \times \GL_m$ Rankin--Selberg integral, see, for example, \cite[Theorem 2.1]{Jac09}.

Upon decomposing $\pi$ and $\pi'$ as isobaric sums as in \eqref{eq:pipi'isobaric}, we have the identities
\begin{equation}
\label{eq:epsilonfactorise}
\e(s,\pi,\psi) = \prod_{j = 1}^{r} \e(s,\pi_j,\psi), \qquad \e(s,\pi \times \pi',\psi) = \prod_{j = 1}^{r} \prod_{j' = 1}^{r'} \e(s,\pi_j \times \pi_{j'}',\psi)
\end{equation}
by \cite[Theorem (3.1), Proposition (8.4), Proposition (9.4)]{JP-SS83} for nonarchimedean $F$ and via the local Langlands correspondence for archimedean $F$. When $\pi'$ is the trivial representation of $\GL_1(F)$, we have the equality $\e(s,\pi,\psi) = \e(s,\pi \times \pi',\psi)$ between the epsilon factor of $\pi$ defined via the Godement--Jacquet zeta integral and the epsilon factor of $\pi \times \pi'$ defined via the $\GL_n \times \GL_1$ Rankin--Selberg integral.

For nonarchimedean $F$, the epsilon factors $\e(s,\pi,\psi)$ and $\e(s,\pi \times \pi',\psi)$ are units in $\C[q^s,q^{-s}]$ of the form
\begin{equation}
\label{eq:epsilonfactorunit}
\e(s,\pi,\psi) = \e\left(\frac{1}{2},\pi,\psi\right) q^{-c(\pi) \left(s - \frac{1}{2}\right)}, \qquad \e(s,\pi \times \pi',\psi) = \e\left(\frac{1}{2},\pi \times \pi',\psi\right) q^{-c(\pi \times \pi') \left(s - \frac{1}{2}\right)}
\end{equation}
for some nonnegative integers $c(\pi)$ and $c(\pi \times \pi')$.

For archimedean $F$, we have that
\begin{equation}
\label{eq:epsilonfactor}
\e(s,\pi,\psi) = \begin{dcases*}
i^{-\kappa} & if $F = \R$ and $\pi = \chi^{\kappa} |\cdot|_{\R}^t$ for $\kappa \in \{0,1\}$,	\\
i^{-\kappa} & if $F = \R$ and $\pi = D_{\kappa} \otimes \left|\det\right|_{\R}^t$ for $\kappa \geq 2$,	\\
i^{-\|\kappa\|} & if $F = \C$ and $\pi = \chi^{\kappa} |\cdot|_{\C}^t$ for $\kappa \in \Z$,
\end{dcases*}
\end{equation}
which, via \eqref{eq:GJfunceq}, may be used to determine $\e(s,\pi,\psi)$ and $\e(s,\pi \times \pi',\psi)$ for \emph{any} induced representations of Whittaker type $\pi$ and $\pi'$, not just for essentially square-integrable representations.

\section{Nonarchimedean Newform Theory}
\label{sect:nonarchimedean}

We now detail the theory of the conductor exponent, newforms, and oldforms associated to generic irreducible admissible smooth representations of $\GL_n(F)$ with $F$ nonarchimedean (or more generally associated to induced representations of Whittaker or Langlands type; cf.\ \hyperref[sect:irredrep]{Section \ref*{sect:irredrep}}), as well as the relation between newforms and test vectors for $\GL_n \times \GL_m$ Rankin--Selberg integrals and test vectors for Godement--Jacquet zeta integrals. The results herein are for the most part well-known; we recall them as motivation for \hyperref[sect:archimedean]{Section \ref*{sect:archimedean}}, in which we discuss analogous yet new results for archimedean $F$.

\subsection{The Conductor Exponent, the Newform, and the Newform \texorpdfstring{$K$}{K}-Type}

Let $F$ be a nonarchimedean local field and let $K = \GL_n(\OO)$ be the maximal compact subgroup of $\GL_n(F)$, unique up to conjugation. For a nonnegative integer $m$, we define the following finite index subgroup of $K$:
\[K_1\left(\pp^m\right) \coloneqq \left\{k \in K : k_{n,1},\ldots,k_{n,n - 1}, k_{n,n} - 1 \in \pp^m \right\}.\]
(This is not to be confused with $K_1 \coloneqq \GL_1(\OO) = \OO^{\times}$, the maximal compact subgroup of $\GL_1(F) = F^{\times}$.) Given an induced representation of Langlands type $(\pi,V_{\pi})$ of $\GL_n(F)$, we define the vector subspace $V_{\pi}^{K_1(\pp^m)}$ of $V_{\pi}$ consisting of $K_1(\pp^m)$-fixed vectors:
\[V_{\pi}^{K_1(\pp^m)} \coloneqq \left\{v \in V_{\pi} : \pi(k) \cdot v = v \text{ for all } k \in K_1\left(\pp^m\right)\right\}.\]

The following theorem is due to Casselman \cite[Theorem 1]{Cas73} for $n = 2$ and Jacquet, Piatetski-Shapiro, and Shalika for arbitrary $n$ (though cf.\ \hyperref[rem:JacquetMatringe]{Remark \ref*{rem:JacquetMatringe}}).

\begin{theorem}[{Jacquet--Piatetski-Shapiro--Shalika \cite[Th\'{e}or\`{e}me (5)]{JP-SS81}}]
\label{thm:JPPSconductor}
Let $(\pi,V_{\pi})$ be an induced representation of Langlands type of $\GL_n(F)$. There exists a minimal nonnegative integer $m$ for which $V_{\pi}^{K_1(\pp^m)}$ is nontrivial. For this minimal value of $m$, $V_{\pi}^{K_1(\pp^m)}$ is one-dimensional.
\end{theorem}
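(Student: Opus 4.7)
My plan is to work entirely in the Whittaker model $\WW(\pi,\psi)$, which is isomorphic to $V_\pi$ since $\pi$ is of Langlands type and therefore generic. The $K_1(\pp^m)$-fixed condition on $V_\pi$ translates into right-invariance of Whittaker functions under $K_1(\pp^m)$, and the key structural observation is that $\GL_{n-1}(\OO)$, embedded in $K_n$ via $k' \mapsto \diag(k',1)$, sits inside every $K_1(\pp^m)$. This is what will allow $\GL_n \times \GL_{n-1}$ Rankin--Selberg theory against a spherical second factor to detect $K_1(\pp^m)$-fixedness.

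\emph{Existence.} I would test against the spherical Whittaker function $W'^\circ$ attached to an unramified generic $\pi'$ of $\GL_{n-1}(F)$. Standard nonarchimedean Rankin--Selberg theory produces some $W \in \WW(\pi,\psi)$ with $\Psi(s,W,W'^\circ) = L(s,\pi \times \pi')$. Averaging this $W$ under the right action of $K_1(\pp^m)$ for sufficiently large $m$ yields a vector in $\WW(\pi,\psi)^{K_1(\pp^m)}$; the right-$\GL_{n-1}(\OO)$-invariance of $W'^\circ$ combined with a change of variables on $\Ngp_{n-1}(F) \backslash \GL_{n-1}(F)$ shows that the ``inner'' $\GL_{n-1}(\OO)$-part of $K_1(\pp^m)$ acts trivially on the integral, while the remaining ``last-row'' directions in $K_1(\pp^m)$ can be absorbed for $m$ large. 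The averaged vector is therefore nonzero.

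\emph{Uniqueness at the minimum.} Let $m$ be minimal and $W_1, W_2 \in \WW(\pi,\psi)^{K_1(\pp^m)}$. For each unramified generic $\pi'$, write $\Psi(s,W_i,W'^\circ) = P_i(q^{-s},q^s) \, L(s,\pi \times \pi')$ with $P_i \in \C[q^{-s},q^s]$. The local functional equation for $\GL_n \times \GL_{n-1}$ Rankin--Selberg integrals couples $P_i$ to its dual counterpart through $\e(s,\pi \times \pi',\psi)$, whose degree in $q^{-s}$ is pinned down by \eqref{eq:epsilonfactorunit}. The minimality of $m$ should force $P_i$ to be constant in $s$, for otherwise a suitable linear combination of $W_1$ and $W_2$ would descend to a vector fixed by a strictly larger open compact subgroup, contradicting minimality. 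Once each $P_i$ is constant, the map $W \mapsto \Psi(s,W,W'^\circ)/L(s,\pi \times \pi')$ sends $\WW(\pi,\psi)^{K_1(\pp^m)}$ into $\C$; letting $\pi'$ vary over a Zariski-dense family of unramified parameters (and invoking nonvanishing/density of the resulting Whittaker functionals) gives injectivity, whence the dimension bound.

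\emph{Main obstacle.} The delicate step is showing rigorously that at the minimal level the polynomials $P_i$ really are constants, which comes down to a precise matching of conductor degrees under the local functional equation. The cleanest route, in the spirit of the original Jacquet--Piatetski-Shapiro--Shalika argument, is an induction on $n$ using the mirabolic subgroup $P_n$ together with the Bernstein--Zelevinsky filtration of $\pi|_{P_n}$, reducing the dimension claim to essentially square-integrable and supercuspidal building blocks where it can be verified directly. I expect the combinatorial bookkeeping between the conductor of $\pi$, the degrees of the Rankin--Selberg $\e$-factors, and the filtration quotients --- rather than the Rankin--Selberg machinery itself --- to be the real source of difficulty.
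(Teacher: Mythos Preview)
The paper does not give its own proof of this theorem. \hyperref[thm:JPPSconductor]{Theorem \ref*{thm:JPPSconductor}} sits in the survey \hyperref[sect:nonarchimedean]{Section \ref*{sect:nonarchimedean}}, where it is quoted from \cite[Th\'{e}or\`{e}me (5)]{JP-SS81} with no argument; the paper's new contributions are the archimedean analogues in \hyperref[sect:archimedean]{Section \ref*{sect:archimedean}}. So there is no in-paper proof to compare your proposal against.

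That said, the paper does describe the shape of the original nonarchimedean argument, and your sketch is broadly aligned with it: Jacquet, Piatetski-Shapiro, and Shalika first establish the test-vector statement (\hyperref[thm:JPPSWhittaker]{Theorem \ref*{thm:JPPSWhittaker}}) via $\GL_n \times \GL_{n-1}$ Rankin--Selberg integrals against spherical $\pi'$, and then deduce \hyperref[thm:JPPSconductor]{Theorem \ref*{thm:JPPSconductor}} from it (see the last paragraph before \hyperref[sect:condexp]{Section \ref*{sect:condexp}}). Note also \hyperref[rem:JacquetMatringe]{Remark \ref*{rem:JacquetMatringe}}: the original proof of the test-vector step was incomplete, and the corrected arguments are in \cite{Jac12} and \cite{Mat13}. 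Matringe's route is indeed through Bernstein--Zelevinsky derivatives, as you anticipate in your ``main obstacle'' paragraph.

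On the sketch itself, two remarks. First, your existence step is more elaborate than needed: smoothness of $\pi$ already gives that every vector is $K_1(\pp^m)$-fixed for some $m$, so the space is nontrivial for large $m$ without any Rankin--Selberg input. Second, the heart of your uniqueness argument --- ``minimality of $m$ should force $P_i$ to be constant, for otherwise a suitable linear combination would descend to a vector fixed by a strictly larger subgroup'' --- is not a step that works as stated. A nonconstant $P_i(q^{-s},q^s)$ does not by itself manufacture invariance under a larger $K_1(\pp^{m'})$; what is actually used is a precise degree count coming from the functional equation (the degree of $\e(s,\pi \times \pi',\psi)$ in $q^{-s}$, cf.~\eqref{eq:epsilonfactorunit}) together with the injectivity lemma you allude to (this is \cite[Lemme (3.5)]{JP-SS81}, the nonarchimedean version of \hyperref[lem:GLn-1uniqueness]{Lemma \ref*{lem:GLn-1uniqueness}}). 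You have correctly identified this as the delicate point, but the mechanism you propose for it is not the right one.
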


\begin{definition}
\label{def:condnewform}
We define the conductor exponent of $\pi$ to be this minimal nonnegative integer $m$ and denote it by $c(\pi)$; we then call the ideal $\pp^{c(\pi)}$ the conductor of $\pi$. The newform of $\pi$ is defined to be the nonzero vector $v^{\circ} \in V_{\pi}^{K_1(\pp^{c(\pi)})}$, unique up to scalar multiplication.
\end{definition}

The uniqueness of the newform may be thought of as being a multiplicity-one theorem for newforms. The reason for naming this distinguished vector a newform is due to its relation to the classical theory of modular forms: as shown by Casselman \cite[Section 3]{Cas73}, an automorphic form on $\GL_2(\A_{\Q})$ whose associated Whittaker function is a pure tensor composed of newforms in the Whittaker model is the ad\`{e}lic lift of a classical newform in the sense of Atkin and Lehner \cite{AL70}.

\begin{remark}
There is no consensus on the name of this distinguished vector: Casselman \cite{Cas73} leaves it unnamed, Jacquet, Piatetski-Shapiro, and Shalika \cite{JP-SS81} name it the essential vector, whereas Reeder \cite{Ree91} calls it the new vector, and Schmidt \cite{Sch02}, regarding $\pi$ as being the local component of an automorphic representation, refers to it as the local newform. When viewed in the Whittaker model, this vector is referred to by Popa \cite{Pop08} as the Whittaker newform, whereas Matringe \cite{Mat13} calls it the essential Whittaker function. Similarly, some authors instead call $c(\pi)$ the conductor of $\pi$, while others yet refer to $q^{c(\pi)}$ as the conductor of $\pi$. Perhaps a more apt name for $q^{c(\pi)}$ is the absolute conductor of $\pi$, being the absolute norm of the ideal $\pp^{c(\pi)}$.
\end{remark}

\begin{remark}
Under the local Langlands correspondence, which gives a bijection between irreducible admissible smooth representations $\pi$ of $\GL_n(F)$ and $n$-dimensional Frobenius semisimple Weil--Deligne representations $\rho$ of $F$, the conductor exponent $c(\pi)$ is equal to the Artin exponent $a(\rho)$.
\end{remark}

If $c(\pi) = 0$, so that $K_1(\pp^{c(\pi)}) = K$, then $\pi$ must be a spherical representation and we say that $\pi$ is unramified. If $c(\pi) > 0$, then $\pi$ is said to be ramified. In this sense, the conductor exponent is a measure of the extent of ramification of $\pi$: it quantifies how ramified $\pi$ may be.

Since $\pi$ is admissible, $\Hom_K(\tau,\pi|_K)$ is finite-dimensional for each irreducible smooth representation $\tau$ of $K$. We say that such a representation $\tau$ is a \emph{$K$-type} of $\pi$ if $\Hom_K(\tau,\pi|_K)$ is nontrivial, and we call $\dim \Hom_K(\tau,\pi|_K)$ the \emph{multiplicity} of $\tau$ in $\pi$. The complexity of an irreducible smooth representation $\tau$ of $K$ can be measured by its \emph{level} $m$, which is the least nonnegative integer $m$ for which $\tau$ factors through the finite group $\GL_n(\OO/\pp^m)$. In \cite{Hum22}, the author proved the existence of a distinguished $K$-type of $\pi$ that occurs with multiplicity one and is closely associated to the newform and the conductor exponent.

\begin{theorem}[{\cite[Theorem 4.11]{Hum22}}]
\label{thm:nonarchimedeanKtype}
Let $(\pi,V_{\pi})$ be an induced representation of Langlands type of $\GL_n(F)$. Among the $K$-types of $\pi$ whose restriction to
\[K_{n - 1,1} \coloneqq \left\{\begin{pmatrix} a & b \\ 0 & 1 \end{pmatrix} \in K_n : a \in K_{n - 1}, \ b \in \Mat_{(n - 1) \times 1}(\OO)\right\}\]
contains the trivial representation, there exists a unique $K$-type $\tau^{\circ}$ of minimal level. Furthermore, $\tau^{\circ}$ occurs with multiplicity one in $\pi$, the level of $\tau^{\circ}$ is equal to the conductor exponent $c(\pi)$, and the subspace of $V_{\pi}$ of $\tau^{\circ}$-isotypic $K_{n - 1,1}$-invariant vectors is equal to the one-dimensional subspace $V_{\pi}^{K_1(\pp^{c(\pi)})}$ spanned by the newform $v^{\circ}$.
\end{theorem}

\begin{definition}
\label{def:nonarchimedeanKtype}
We call the distinguished $K$-type $\tau^{\circ}$ the \emph{newform $K$-type}.
\end{definition}

\subsection{Oldforms}

While Jacquet, Piatetski-Shapiro, and Shalika merely show that $V_{\pi}^{K_1(\pp^{c(\pi)})}$ is one-dimensional, one can also calculate the dimension of $V_{\pi}^{K_1(\pp^m)}$ for all $m \geq c(\pi)$ in terms of a binomial coefficient; for $n = 2$, this is due to Casselman \cite[Corollary to the Proof]{Cas73}, while Reeder has proven this result for arbitrary $n$.

\begin{theorem}[{Reeder \cite[Theorem 1]{Ree91}}]
\label{thm:Reeder}
Let $(\pi,V_{\pi})$ be an induced representation of Langlands type of $\GL_n(F)$ with $n \geq 2$. We have that
\[\dim V_{\pi}^{K_1(\pp^m)} = \begin{dcases*}
\binom{m - c(\pi) + n - 1}{n - 1} & if $m \geq c(\pi)$,	\\
0 & otherwise.
\end{dcases*}\]
\end{theorem}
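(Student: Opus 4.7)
My plan is to work entirely within the Whittaker model $\WW(\pi,\psi)$ of $\pi$, so that $V_{\pi}^{K_1(\pp^m)}$ is identified with the subspace of Whittaker functions invariant under right translation by $K_1(\pp^m)$. The proof splits naturally into a lower bound established by explicit construction and a matching upper bound established by induction on $n$; the base case $n = 2$ is provided by Casselman's original analysis.

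For the lower bound, I would produce $\binom{m - c(\pi) + n - 1}{n - 1}$ linearly independent invariant vectors as follows. Starting from the newform $W^{\circ} \in \WW(\pi,\psi)^{K_1(\pp^{c(\pi)})}$ given by Theorem \ref{thm:JPPSconductor}, I would define, for each tuple $(k_1, \ldots, k_n)$ of nonnegative integers with $k_1 + \cdots + k_n = m - c(\pi)$, a twisted translate $W_{k_1,\ldots,k_n}$ obtained by averaging $\pi(g) \cdot W^{\circ}$ over coset representatives for $K_1(\pp^m) \backslash K_1(\pp^{c(\pi)})$ labelled by the tuple; equivalently, these arise by applying a commuting family of level-raising operators coming from double cosets $K_1(\pp^m) \diag(1,\ldots,\varpi,\ldots,1) K_1(\pp^m)$. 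The number of such tuples equals the claimed binomial coefficient, and linear independence follows from distinct leading asymptotics of these Whittaker functions when restricted to the diagonal torus $\Agp_n(F)$ — since by the Iwasawa decomposition a Whittaker function is determined by its values on $\Agp_n(F)$, and the averaging process is designed to produce distinct support behavior along the $n$ coordinate directions.

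For the upper bound, I would induct on $n$, exploiting the restriction $\pi|_{\Pgp_n(F)}$ to the mirabolic subgroup and the associated Bernstein--Zelevinsky derivatives. Concretely, by parameterizing the Iwasawa decomposition of $K_1(\pp^m)$-invariants and taking Jacquet modules along the parabolic of type $(n-1,1)$, one obtains a short exact sequence whose kernel is $\WW(\pi,\psi)^{K_1(\pp^{m-1})}$ and whose cokernel is expressible in terms of $K_1(\pp^{m})$-invariants for an auxiliary generic representation of $\GL_{n-1}(F)$ whose conductor is controlled by $c(\pi)$. Pascal's identity
\[
\binom{m - c(\pi) + n - 1}{n - 1} = \binom{m - 1 - c(\pi) + n - 1}{n - 1} + \binom{m - c(\pi) + n - 2}{n - 2}
\]
then matches the dimensions in this exact sequence, closing the induction.

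The main technical obstacle lies precisely in this upper-bound step: verifying the recursive short exact sequence, and in particular tracking how the conductor exponent transforms under the Bernstein--Zelevinsky derivative / Jacquet functor. This does not follow formally from the multiplicity-one statement of Theorem \ref{thm:JPPSconductor}; it requires a genuine structural input about how the essential vector interacts with the filtration of $\pi|_{\Pgp_n(F)}$, together with careful bookkeeping of support properties of $K_1(\pp^m)$-invariant Whittaker functions near the cusp $a_n \to 0$ in the diagonal torus.
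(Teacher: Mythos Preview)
The paper does not prove this theorem at all. Theorem~\ref{thm:Reeder} appears in Section~\ref{sect:nonarchimedean}, which is explicitly a survey of known nonarchimedean results included as motivation; the statement is attributed to Reeder with a citation and no argument is given. There is therefore no proof in the paper to compare your proposal against.

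As to the proposal itself: the lower-bound half is broadly in the spirit of Reeder's actual argument, which constructs oldforms by applying an explicit family of commuting Hecke-type operators to the newform. Your description of these operators is imprecise, however --- Reeder does not average over $K_1(\pp^m)\backslash K_1(\pp^{c(\pi)})$ but rather uses translates by specific diagonal elements together with a $U_{\varpi}$-type operator, and the linear-independence argument requires more than ``distinct leading asymptotics'' since several of the resulting Whittaker functions can have overlapping torus support.

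The upper-bound half is where your proposal has a genuine gap, and you essentially concede this in your final paragraph. The short exact sequence you invoke does not exist in the form you describe: the Jacquet module along the $(n-1,1)$ parabolic does not produce a single generic representation of $\GL_{n-1}(F)$ with a cleanly computable conductor, and there is no a priori reason the kernel of your putative sequence should be exactly the level-$\pp^{m-1}$ invariants. Reeder's own upper bound proceeds quite differently, via a careful analysis of the Iwahori--Hecke algebra action and the structure of the $K_1(\pp^m)$-fixed vectors as a module over it, not via Bernstein--Zelevinsky derivatives. So your outline, while not unreasonable as a heuristic, is not a proof and does not match either the paper (which gives none) or Reeder's original argument.
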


Casselman and Reeder also give a basis for each of these spaces in terms of the action of certain Hecke operators on the newform. For $m > c(\pi)$, we call $V_{\pi}^{K_1(\pp^m)}$ the space of oldforms of exponent $m$. Once again, the reason for naming these distinguished vectors oldforms is due to their relation to the classical theory of modular forms: an automorphic form on $\GL_2(\A_{\Q})$ whose associated Whittaker function is a pure tensor composed of Whittaker newforms at all but finitely many places and of Whittaker oldforms at the remaining places corresponds to an oldform in the sense of Atkin and Lehner \cite{AL70}.

In \cite{Hum22}, the author showed that spaces of oldforms can be described in terms of distinguished $K$-types.

\begin{theorem}[{\cite[Theorem 4.11]{Hum22}}]
\label{thm:nonarchimedeanoldformKtype}
Let $(\pi,V_{\pi})$ be an induced representation of Langlands type of $\GL_n(F)$. For each $m \geq c(\pi)$, there exists a unique $K$-type $\tau_m$ of $\pi$ of level $m$ whose restriction to $K_{n - 1,1}$ contains the trivial representation. Furthermore, this $K$-type occurs with multiplicity
\[\binom{m - c(\pi) + n - 2}{n - 2},\]
and the direct sum indexed by nonnegative integers $\ell \in \{c(\pi),\ldots,m\}$ of the subspaces of $V_{\pi}$ of $\tau_{\ell}$-isotypic $K_{n - 1,1}$-invariant vectors is equal to $V_{\pi}^{K_1(\pp^m)}$, the space of oldforms of exponent $m$.
\end{theorem}

\subsection{Additivity and Inductivity of the Conductor Exponent}

Associated to any induced representation of Langlands type $\pi$ of $\GL_n(F)$ is an integer $c(\pi)$ defined via the epsilon factor as in \eqref{eq:epsilonfactorunit}.

\begin{theorem}[{Jacquet--Piateski-Shapiro--Shalika \cite[Section 5]{JP-SS83}}]
\label{thm:nonarchimedeanepsilonconductor}
Let $\pi$ be an induced representation of Langlands type of $\GL_n(F)$. The integer $c(\pi)$ appearing in the epsilon factor $\e(s,\pi,\psi)$ as in \eqref{eq:epsilonfactorunit} is equal to the conductor exponent of $\pi$.
\end{theorem}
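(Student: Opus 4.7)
The plan is to extract the epsilon factor exponent from the local functional equation for $\GL_n \times \GL_{n-1}$ Rankin--Selberg integrals, using the newform as a test vector. Fix an unramified principal series representation $\pi'$ of $\GL_{n-1}(F)$ with Langlands parameters $t_1,\ldots,t_{n-1}$ and spherical Whittaker function $W'^{\circ}$, and let $W^{\circ} \in \WW(\pi,\psi)$ be the newform produced by \hyperref[thm:JPPSconductor]{Theorem \ref*{thm:JPPSconductor}}. Write $c_{\e}(\pi)$ for the integer appearing in the epsilon factor via \eqref{eq:epsilonfactorunit}, which for clarity we distinguish from the representation-theoretic conductor $c(\pi)$ of \hyperref[def:condnewform]{Definition \ref*{def:condnewform}}; the goal is to prove $c_{\e}(\pi) = c(\pi)$.

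The first ingredient is the test vector property of the newform, $\Psi(s, W^{\circ}, W'^{\circ}) = L(s, \pi \times \pi')$ for spherical $\pi'$, which is a separate theorem of Jacquet--Piatetski-Shapiro--Shalika. The second ingredient is the key calculation describing the behaviour of the newform under the involution $W(g) \mapsto \widetilde{W}(g) \coloneqq W(w_n {}^{t}g^{-1})$, which intertwines $\WW(\pi,\psi)$ with $\WW(\widetilde{\pi},\psi)$. Because this involution conjugates $K_1(\pp^{c(\pi)})$ to a congruence subgroup of the same index whose ``pivot'' entry lies in the upper-left rather than the lower-right corner, $\widetilde{W^{\circ}}$ agrees up to a nonzero scalar with the right translate of the newform $\widetilde{W}^{\circ}$ of $\widetilde{\pi}$ by an element $\alpha \in \Agp_n(F)$ whose determinant has valuation $c(\pi)$; the uniqueness clause of \hyperref[thm:JPPSconductor]{Theorem \ref*{thm:JPPSconductor}} applied to $\widetilde{\pi}$ (which shares the representation-theoretic conductor of $\pi$) pins down this scalar.

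Feeding these into the local functional equation
\[\widetilde{\Psi}(1-s, \rho(w_{n,n-1}) \widetilde{W^{\circ}}, \widetilde{W'^{\circ}}) = \omega_{\pi'}(-1)^{n-1} \gamma(s, \pi \times \pi', \psi) \Psi(s, W^{\circ}, W'^{\circ})\]
and changing variables to absorb the translate by $\alpha$, the test vector property applied to $\widetilde{\pi} \times \widetilde{\pi'}$ identifies the left-hand side with a unit multiple of $q^{(n-1) c(\pi)(s-1/2)} L(1-s, \widetilde{\pi} \times \widetilde{\pi'})$. Dividing by the $L$-functions yields
\[\e(s, \pi \times \pi', \psi) = (\text{unit}) \cdot q^{-(n-1) c(\pi)(s - 1/2)}.\]
On the other hand, applying \eqref{eq:epsilonfactorise} to $\pi' = |\cdot|^{t_1} \boxplus \cdots \boxplus |\cdot|^{t_{n-1}}$, together with the twist identity $\e(s, \pi \times |\cdot|^{t_j}, \psi) = \e(s + t_j, \pi, \psi)$ and \eqref{eq:epsilonfactorunit}, expresses the same epsilon factor as $(\text{unit}) \cdot q^{-(n-1) c_{\e}(\pi)(s - 1/2)}$. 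Comparing $q$-exponents forces $c_{\e}(\pi) = c(\pi)$.

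The main obstacle is the involution calculation in the second paragraph: making precise the claim that $\widetilde{W^{\circ}}$ is the translate of $\widetilde{W}^{\circ}$ by an element of determinant valuation $c(\pi)$. Concretely, one must compute $w_n {}^{t}K_1(\pp^{c(\pi)})^{-1} w_n^{-1}$ explicitly, identify a normalising diagonal element that conjugates this back into $K_1(\pp^{c(\pi)})$, and then apply the uniqueness clause of \hyperref[thm:JPPSconductor]{Theorem \ref*{thm:JPPSconductor}} to $\widetilde{\pi}$. Once that identification is in hand, the remainder is routine bookkeeping of $L$-functions and Gauss sums.
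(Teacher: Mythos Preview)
The paper does not supply its own proof of this statement: \hyperref[thm:nonarchimedeanepsilonconductor]{Theorem \ref*{thm:nonarchimedeanepsilonconductor}} sits in \hyperref[sect:nonarchimedean]{Section \ref*{sect:nonarchimedean}}, which is explicitly a survey of known nonarchimedean results, and the theorem is attributed directly to \cite[Section 5]{JP-SS83}. So there is no ``paper's proof'' to compare against beyond that citation.

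Your proposal is essentially the argument of Jacquet--Piatetski-Shapiro--Shalika. The two black boxes you invoke --- the test vector identity $\Psi(s,W^{\circ},W^{\prime\circ}) = L(s,\pi\times\pi')$ for spherical $\pi'$ and the local functional equation --- are exactly the inputs used in \cite{JP-SS81,JP-SS83}, and comparing $q$-exponents on both sides is how the identification $c_{\e}(\pi)=c(\pi)$ is extracted there. One small correction to your description of the involution step: the map $W\mapsto\widetilde{W}$ does not directly convert right $K_1(\pp^{c(\pi)})$-invariance into right $K_1(\pp^{c(\pi)})$-invariance for $\widetilde{\pi}$; rather, $\widetilde{W^{\circ}}$ is right-invariant under the \emph{transpose} group ${}^{t}K_1(\pp^{c(\pi)})$ (congruence conditions on the last column rather than the last row). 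The diagonal element you seek is the one that conjugates ${}^{t}K_1(\pp^{c(\pi)})$ back to $K_1(\pp^{c(\pi)})$ --- concretely something of the shape $\diag(1,\ldots,1,\varpi^{c(\pi)})$ up to conventions --- and it is the interaction of this element with the $\GL_{n-1}$-embedding in the dual integral that produces the factor $q^{-(n-1)c(\pi)(s-1/2)}$. With that adjustment your outline is correct and matches the cited source.
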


From the multiplicativity of epsilon factors \eqref{eq:epsilonfactorise}, we have the following.

\begin{theorem}[{Jacquet--Piateski-Shapiro--Shalika \cite[Theorem (3.1)]{JP-SS83}}]
\label{thm:nonarchimedeanadditiveconductor}
For an induced representation of Langlands type $\pi = \pi_1 \boxplus \cdots \boxplus \pi_r$ of $\GL_n(F)$, we have that
\[c(\pi) = \sum_{j = 1}^{r} c(\pi_j).\]
\end{theorem}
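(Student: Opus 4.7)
The plan is to deduce the additivity of the conductor exponent as a direct consequence of the two ingredients that have just been collected: the identification of $c(\pi)$ with the integer appearing in the epsilon factor (Theorem \ref{thm:nonarchimedeanepsilonconductor}) and the multiplicativity of epsilon factors across isobaric sums (equation \eqref{eq:epsilonfactorise}). The argument is essentially formal once these two ingredients are in hand; there is no genuinely new analytic input to supply.

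First, I would apply Theorem \ref{thm:nonarchimedeanepsilonconductor} to $\pi$ and to each $\pi_j$ separately. By the explicit shape \eqref{eq:epsilonfactorunit} of the epsilon factor, this gives
\[
\e(s,\pi,\psi) = \e\!\left(\tfrac{1}{2},\pi,\psi\right) q^{-c(\pi)\left(s - \frac{1}{2}\right)}, \qquad \e(s,\pi_j,\psi) = \e\!\left(\tfrac{1}{2},\pi_j,\psi\right) q^{-c(\pi_j)\left(s - \frac{1}{2}\right)}.
\]
Next, I would invoke the first identity of \eqref{eq:epsilonfactorise}, namely $\e(s,\pi,\psi) = \prod_{j=1}^{r} \e(s,\pi_j,\psi)$, which holds for any induced representation of Langlands type $\pi = \pi_1 \boxplus \cdots \boxplus \pi_r$ by the cited result of Jacquet, Piatetski-Shapiro, and Shalika.

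Substituting the two expressions above into this multiplicative identity and comparing the degrees of the resulting monomials in $q^{-s}$ on both sides, one reads off
\[
-c(\pi)\left(s - \tfrac{1}{2}\right) = -\sum_{j=1}^{r} c(\pi_j)\left(s - \tfrac{1}{2}\right),
\]
which, since the linear form $s - 1/2$ is nonzero, forces $c(\pi) = \sum_{j=1}^{r} c(\pi_j)$ as desired.

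The main conceptual obstacle has, in effect, already been surmounted in the input results: Theorem \ref{thm:nonarchimedeanepsilonconductor} requires the nontrivial identification between the analytic conductor exponent (defined via Whittaker model fixed vectors in Theorem \ref{thm:JPPSconductor}) and the exponent in the epsilon factor, while the multiplicativity statement \eqref{eq:epsilonfactorise} is established via the stability of epsilon factors under parabolic induction. Given both, the additivity statement is an immediate comparison of exponents and warrants no deeper argument.
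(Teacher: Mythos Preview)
Your proposal is correct and follows essentially the same approach as the paper: the paper simply states that the theorem follows from the multiplicativity of epsilon factors \eqref{eq:epsilonfactorise}, which, combined with \hyperref[thm:nonarchimedeanepsilonconductor]{Theorem \ref*{thm:nonarchimedeanepsilonconductor}} and the shape \eqref{eq:epsilonfactorunit}, gives the result by comparing exponents of $q^{-s}$ exactly as you describe.
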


Thus the conductor exponent $c(\pi)$ is additive with respect to isobaric sums; equivalently, the conductor $\pp^{c(\pi)}$ is multiplicative.

\begin{remark}
In the classical setting of automorphic forms on the upper half-plane, \hyperref[thm:nonarchimedeanadditiveconductor]{Theorem \ref*{thm:nonarchimedeanadditiveconductor}} manifests itself via the conductor of an Eisenstein newform: an Eisenstein newform, in the sense of \cite{You19}, is associated to a pair of primitive Dirichlet characters, and the conductor of such a newform is the product of the conductors of the two Dirichlet characters.
\end{remark}

Finally, the epsilon factor is inductive in degree zero: if $\pi$ and $\pi'$ are induced representations of Langlands type of $\GL_n(E)$, where $E$ is a finite cyclic extension of $F$ of degree $m \geq 2$, and $\AI_{E/F} \pi$ and $\AI_{E/F} \pi'$ denote the induced representations of Langlands type of $\GL_{mn}(F)$ obtained by induction \cite{HH95}, then
\[\frac{\e\left(s,\AI_{E/F} \pi, \psi\right)}{\e\left(s,\AI_{E/F} \pi',\psi\right)} = \frac{\e\left(s,\pi,\psi \circ \Tr_{E/F}\right)}{\e\left(s,\pi',\psi \circ \Tr_{E/F}\right)}.\]
Taking $\pi'$ to be the isobaric sum of $n$ copies of the trivial representation, we deduce the following.

\begin{theorem}
\label{thm:nonarchimedeaninductiveconductor}
For an induced representation of Langlands type $\pi$ of $\GL_n(E)$, where $E$ is a finite cyclic extension of $F$ of degree $m \geq 2$, we have that
\[c\left(\AI_{E/F} \pi\right) = f_{E/F} c(\pi) + d_{E/F} n,\]
where $f_{E/F}$ denotes the residual degree of $E/F$ and $d_{E/F}$ denotes the valuation of the discriminant of $E/F$.
\end{theorem}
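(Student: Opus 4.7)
The plan is to specialise the inductivity identity
\[\frac{\e\left(s,\AI_{E/F} \pi, \psi\right)}{\e\left(s,\AI_{E/F} \pi',\psi\right)} = \frac{\e\left(s,\pi,\psi \circ \Tr_{E/F}\right)}{\e\left(s,\pi',\psi \circ \Tr_{E/F}\right)}\]
stated just before the theorem to $\pi' = \left(|\cdot|_E^0\right)^{\boxplus n}$, the isobaric sum of $n$ copies of the trivial character of $E^{\times}$, and then to read off the exponent of $q_F^{-(s - 1/2)}$ on each side using \eqref{eq:epsilonfactorunit}, \eqref{eq:epsilonfactorise}, and \hyperref[thm:nonarchimedeanepsilonconductor]{Theorem \ref*{thm:nonarchimedeanepsilonconductor}}.

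First I would compute $c(\AI_{E/F} \pi')$. Since automorphic induction commutes with isobaric summation, $\AI_{E/F} \pi' \cong \left(\AI_{E/F} |\cdot|_E^0\right)^{\boxplus n}$, and under the local Langlands correspondence $\AI_{E/F} |\cdot|_E^0$ corresponds to $\Ind_{W_E}^{W_F} \mathbf{1}$, which decomposes as $\bigoplus_{\chi} \chi$ over the characters $\chi$ of $\mathrm{Gal}(E/F)$ (each viewed as a character of $F^{\times}$ via local class field theory). The conductor--discriminant formula for the cyclic (hence abelian) extension $E/F$ gives $d_{E/F} = \sum_{\chi} c(\chi)$, and combining this with \hyperref[thm:nonarchimedeanadditiveconductor]{Theorem \ref*{thm:nonarchimedeanadditiveconductor}} yields $c(\AI_{E/F} \pi') = n d_{E/F}$.

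Next I would handle the additive character $\psi \circ \Tr_{E/F}$ on $E$, which is not unramified. Writing $\mathcal{D}_{E/F}$ for the different of $E/F$, the level of $\psi \circ \Tr_{E/F}$ is $-v_E(\mathcal{D}_{E/F})$; since $\mathfrak{d}_{E/F} = \Norm_{E/F}(\mathcal{D}_{E/F})$ and $\Norm_{E/F}(\pp_E) = \pp_F^{f_{E/F}}$, one has $v_E(\mathcal{D}_{E/F}) = d_{E/F}/f_{E/F}$. Fixing an unramified additive character $\psi_E$ of $E$, there exists $\gamma \in E^{\times}$ with $v_E(\gamma) = d_{E/F}/f_{E/F}$ for which $(\psi \circ \Tr_{E/F})(x) = \psi_E(\gamma x)$, and the standard change-of-additive-character formula for epsilon factors gives
\[\e\left(s,\sigma,\psi \circ \Tr_{E/F}\right) = \omega_{\sigma}(\gamma) |\gamma|_E^{n(s - 1/2)} \e(s,\sigma,\psi_E)\]
for any induced representation $\sigma$ of Whittaker type of $\GL_n(E)$. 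Applied to both $\pi$ and $\pi'$ (each a representation of $\GL_n(E)$), the $|\gamma|_E^{n(s-1/2)}$ factors cancel in the right-hand ratio of the inductivity identity.

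Equating exponents of $q_F^{-(s - 1/2)}$ on the two sides is then mechanical: using $q_E = q_F^{f_{E/F}}$ and $c(\pi') = 0$, the right-hand side has exponent $f_{E/F} c(\pi)$, while the left-hand side has exponent $c(\AI_{E/F} \pi) - n d_{E/F}$. This yields $c(\AI_{E/F} \pi) = f_{E/F} c(\pi) + n d_{E/F}$. The main obstacle is purely bookkeeping: correctly tracking the non-unramified additive character $\psi \circ \Tr_{E/F}$ and translating between $q_E$- and $q_F$-valuations via the identity $v_F(\mathfrak{d}_{E/F}) = f_{E/F} v_E(\mathcal{D}_{E/F})$. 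Otherwise the argument is a direct consequence of the multiplicativity of epsilon factors and the conductor--discriminant formula.
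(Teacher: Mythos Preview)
Your proposal is correct and follows exactly the approach the paper indicates: the paper's ``proof'' is simply the sentence ``Taking $\pi'$ to be the isobaric sum of $n$ copies of the trivial representation, we deduce the following,'' and you have supplied the details that are left implicit there --- namely the conductor--discriminant computation of $c(\AI_{E/F}\pi')$ and the bookkeeping for the non-unramified additive character $\psi\circ\Tr_{E/F}$.
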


\begin{remark}
This result also has a classical manifestation. Let $\psi$ be a Hecke Gr\"{o}\ss{}encharakter of a quadratic extension $E$ of $\Q$ with conductor $\qq \subset \OO_E$. By automorphic induction, one can associate to $\psi$ a classical newform $f_{\psi} : \Hb \to \C$ of conductor $\Norm(\qq) D_{E/\Q}$, where $\Norm(\qq) \coloneqq \# \OO_E / \qq$ is the absolute norm of the integral ideal $\qq$ and $D_{E/\Q}$ is the absolute discriminant of $E/\Q$.
\end{remark}

\subsection{Test Vectors for Rankin--Selberg Integrals}

Next, we discuss the relation between newforms and test vectors for Rankin--Selberg integrals. We first recall the test vector problem for $\GL_n \times \GL_m$ Rankin--Selberg integrals.

\begin{problem}
Given induced representations of Langlands type $\pi$ of $\GL_n(F)$ and $\pi'$ of $\GL_m(F)$ with $n \geq m$, determine the existence of right $K_n$- and $K_m$-finite Whittaker functions $W \in \WW(\pi,\psi)$ and $W' \in \WW(\pi',\overline{\psi})$, and additionally a right $K_n$-finite Schwartz--Bruhat function $\Phi \in \Ss(\Mat_{1 \times n}(F)) = \Ss(F^n)$ should $m$ be equal to $n$, such that
\[L(s,\pi \times \pi') = \begin{dcases*}
\Psi(s,W,W') & if $m < n$,	\\
\Psi(s,W,W',\Phi) & if $m = n$.
\end{dcases*}\]
\end{problem}

In full generality, this problem remains unresolved. For $m = n - 1$, $\pi'$ a spherical representation of Langlands type, and $W' = W^{\prime\circ} \in \WW(\pi',\overline{\psi})$ the spherical vector normalised as in \hyperref[sect:sphericalrep]{Section \ref*{sect:sphericalrep}}, this has been solved by Jacquet, Piatetski-Shapiro, and Shalika.

\begin{theorem}[{Jacquet--Piatetski-Shapiro--Shalika \cite[Th\'{e}or\`{e}me (4)]{JP-SS81}, Jacquet \cite{Jac12}, Matringe \cite[Corollary 3.3]{Mat13}}]
\label{thm:JPPSWhittaker}
For $n \geq 2$, let $\pi$ be an induced representation of Langlands type of $\GL_n(F)$. There exists a Whittaker function $W \in \WW(\pi,\psi)$ such that for any spherical representation of Langlands type $\pi'$ of $\GL_{n - 1}(F)$ with spherical Whittaker function $W^{\prime\circ} \in \WW(\pi',\overline{\psi})$,
\[\Psi(s,W,W^{\prime\circ}) = L(s,\pi \times \pi')\]
for $\Re(s)$ sufficiently large.

Moreover, there exists a unique such function $W^{\circ} \in \WW(\pi,\psi)$ that additionally satisfies
\[W^{\circ}\left(g\begin{pmatrix} k & 0 \\ 0 & 1 \end{pmatrix}\right) = W^{\circ}(g)\]
for all $k \in K_{n - 1}$. Up to multiplication by a scalar, this function is the newform $v^{\circ} \in V_{\pi}^{K_1(\pp^{c(\pi)})}$ viewed in the Whittaker model.
\end{theorem}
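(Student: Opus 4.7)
The plan is to reduce the $\GL_n \times \GL_{n - 1}$ Rankin--Selberg integral to a sum over the cocharacter lattice of the diagonal torus via the Iwasawa decomposition, and then to recognise the resulting series as the Euler product defining $L(s,\pi \times \pi')$; the key point is that a single $W$ depending only on $\pi$ must work uniformly for every spherical $\pi'$.

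First I would apply the Iwasawa decomposition $\GL_{n - 1}(F) = \Ngp_{n - 1}(F) \Agp_{n - 1}(F) K_{n - 1}$ to \eqref{eq:RankinSelbergnm} with $m = n - 1$. Assuming $W$ is right-invariant under the embedding $k \mapsto \begin{pmatrix} k & 0 \\ 0 & 1 \end{pmatrix}$ of $K_{n - 1}$ into $\GL_n(F)$, the $K_{n - 1}$-integration collapses to $\vol(K_{n - 1}) = 1$, since $W^{\prime \circ}$ is spherical. The $\Ngp_{n - 1}(F)$-integration likewise collapses, because $\psi_n$ restricts to $\psi_{n - 1}$ under the same embedding while $W^{\prime \circ}$ transforms by $\overline{\psi_{n - 1}}$. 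What remains is the torus integral
\[\Psi(s,W,W^{\prime \circ}) = \int_{\Agp_{n - 1}(F)} W \begin{pmatrix} a & 0 \\ 0 & 1 \end{pmatrix} W^{\prime \circ}(a) \delta_{n - 1}^{-1}(a) \left|\det a\right|^{s - \frac{1}{2}} \, d^{\times} a.\]

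Next I would invoke the Casselman--Shalika formula for $W^{\prime \circ}$, which vanishes on $\Agp_{n - 1}(F)$ outside the dominant Weyl chamber and equals $\delta_{n - 1}^{1/2}$ times a Schur polynomial in the Satake parameters $\beta_1,\ldots,\beta_{n - 1}$ of $\pi'$ on that chamber. The torus integral thereby becomes a Dirichlet series in $q^{-s}$ pairing the diagonal values of $W$ against Schur polynomials indexed by dominant tuples $(m_1,\ldots,m_{n - 1})$. The crucial input is an explicit formula of Matringe for the diagonal values of the newform $v^{\circ}$ viewed in the Whittaker model: these values are themselves Schur polynomials in the Langlands parameters $\alpha_1,\ldots,\alpha_n$ of $\pi$, supported on the same dominant cone. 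Applying the Cauchy identity then collapses the resulting pairing into $\prod_{i, j} (1 - \alpha_i \beta_j q^{-s})^{-1} = L(s,\pi \times \pi')$, for any choice of Satake parameters $\beta_j$, establishing uniformity in $\pi'$.

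For the uniqueness assertion, averaging any test vector over the embedded $K_{n - 1}$ produces a right-$K_{n - 1}$-invariant candidate $W^{\circ}$ without destroying the Rankin--Selberg identity, since $W^{\prime \circ}$ is itself $K_{n - 1}$-spherical. Such a candidate is automatically fixed under $K_1(\pp^m)$ for some $m \geq c(\pi)$; a non-vanishing argument, together with the definition of $L(s,\pi \times \pi')$ as the generator of the fractional ideal of Rankin--Selberg integrals, precludes $m < c(\pi)$. The multiplicity-one statement of \hyperref[thm:JPPSconductor]{Theorem \ref*{thm:JPPSconductor}} then pins $W^{\circ}$ down, up to a scalar, as the newform $v^{\circ}$ in the Whittaker model. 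The principal obstacle is establishing Matringe's explicit formula for the diagonal values of the newform in a shape amenable to the Cauchy identity: this requires an induction on $n$ through Bernstein--Zelevinsky derivatives relating $\pi$ restricted to the mirabolic subgroup to Whittaker models on smaller general linear groups, together with a careful analysis of the action of the spherical Hecke algebra on $v^{\circ}$, and constitutes the combinatorial heart of the argument.
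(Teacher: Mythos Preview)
The paper does not contain a proof of this theorem: \hyperref[thm:JPPSWhittaker]{Theorem \ref*{thm:JPPSWhittaker}} sits in the survey section on the nonarchimedean theory and is attributed to Jacquet--Piatetski-Shapiro--Shalika, with the paper explicitly noting in \hyperref[rem:JacquetMatringe]{Remark \ref*{rem:JacquetMatringe}} that the original argument was incomplete and was later repaired independently by Jacquet and by Matringe. The paper's own contribution is the archimedean analogue, \hyperref[thm:testvector]{Theorem \ref*{thm:testvector}}, whose proof proceeds by an entirely different mechanism (convolution sections, Godement sections, and a double induction reducing $\GL_n \times \GL_n$ to $\GL_n \times \GL_{n-1}$ to $\GL_{n-1} \times \GL_{n-1}$). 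So there is nothing in the paper to compare your sketch against directly.

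That said, your sketch is a recognisable outline of one of the known nonarchimedean arguments, closest in spirit to Miyauchi's approach \cite{Miu14} rather than Matringe's: reduce to the torus via Iwasawa, insert Casselman--Shalika on the spherical side, insert the Schur-polynomial formula for the diagonal values of the newform, and close with the Cauchy identity. Two points deserve flagging. First, you correctly identify the diagonal-value formula as the heart of the matter but then defer it entirely; Matringe's actual proof does not compute these values explicitly but instead constructs the essential Whittaker function via Bernstein--Zelevinsky derivatives and verifies the test-vector property structurally, so your attribution blends two distinct strategies. Second, your uniqueness argument is muddled: a right-$K_{n-1}$-invariant smooth vector is fixed by \emph{some} open compact subgroup, but there is no reason this subgroup should be of the form $K_1(\pp^m)$, and the appeal to the fractional-ideal definition does not by itself preclude other levels. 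The clean uniqueness argument is the one the paper invokes in the archimedean setting as \hyperref[lem:GLn-1uniqueness]{Lemma \ref*{lem:GLn-1uniqueness}} (which is \cite[Lemme (3.5)]{JP-SS81} nonarchimedeanly): if two right-$K_{n-1}$-invariant Whittaker functions give the same Rankin--Selberg integral against every spherical $W^{\prime\circ}$, their difference is annihilated by all such integrals and hence vanishes identically.
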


That is, $W^{\circ} \in \WW(\pi,\psi)$ is a right $K_{n - 1}$-invariant test vector for the $\GL_n \times \GL_{n - 1}$ Rankin--Selberg integral for all spherical representations $\pi'$ of $\GL_{n - 1}(F)$. We call $W^{\circ}$ the Whittaker newform of $\pi$.

\begin{remark}
\label{rem:JacquetMatringe}
The proof of \cite[Th\'{e}or\`{e}me (4)]{JP-SS81} is in fact incomplete, as was observed by Matringe; correct proofs have subsequently independently been given by Jacquet \cite[Theorem 1]{Jac12} and Matringe \cite[Corollary 3.3]{Mat13}.
\end{remark}

\begin{remark}
The normalisation of the Whittaker newform is such that $W^{\circ}(1_n) = 1$.
\end{remark}

The Whittaker newform is a test vector for more than just the $\GL_n \times \GL_{n - 1}$ Rankin--Selberg integral for all spherical representations $\pi'$ of $\GL_{n - 1}(F)$.

\begin{theorem}[{Kim \cite[Theorem 2.2.1]{Kim10}, Matringe \cite[Corollary 3.3]{Mat13}}]
\label{thm:KimMatringe}
Let $\pi$ be an induced representation of Langlands type of $\GL_n(F)$ with $n \geq 2$. For $m \in \{1, \ldots, n - 2\}$ and for every spherical representation of Langlands type $\pi'$ of $\GL_m(F)$ with spherical Whittaker function $W^{\prime\circ} \in \WW(\pi',\overline{\psi})$, the Whittaker newform $W^{\circ} \in \WW(\pi,\psi)$ of $\pi$ satisfies
\[\Psi(s,W^{\circ},W^{\prime\circ}) = L(s,\pi \times \pi')\]
for $\Re(s)$ sufficiently large.
\end{theorem}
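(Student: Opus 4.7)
The plan is to deduce the theorem from the $m = n-1$ case already established in \hyperref[thm:JPPSWhittaker]{Theorem \ref*{thm:JPPSWhittaker}}, via a padding argument: enlarge $\pi'$ to a spherical representation of $\GL_{n-1}(F)$ by appending auxiliary unramified characters, invoke the base case there, and then peel off the auxiliary factors.

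Concretely, given a spherical representation of Langlands type $\pi'$ of $\GL_m(F)$, I would form the isobaric sum
\[\Pi' \coloneqq \pi' \boxplus |\cdot|^{s_1} \boxplus \cdots \boxplus |\cdot|^{s_{n-1-m}}\]
for auxiliary parameters $s_1, \ldots, s_{n-1-m} \in \C$ chosen generically so that $\Pi'$ is an irreducible spherical representation of Langlands type of $\GL_{n-1}(F)$, with spherical Whittaker function $W^{\circ}_{\Pi'}$ normalised via the Jacquet integral of \hyperref[sect:sphericalrep]{Section \ref*{sect:sphericalrep}}. Applying \hyperref[thm:JPPSWhittaker]{Theorem \ref*{thm:JPPSWhittaker}} to the pair $(\pi, \Pi')$ together with the factorisations \eqref{eq:isobaricRS} and \eqref{eq:twistedLfunction} gives
\[\Psi(s, W^\circ, W^{\circ}_{\Pi'}) = L(s, \pi \times \Pi') = L(s, \pi \times \pi') \prod_{j=1}^{n-1-m} L(s + s_j, \pi)\]
for $\Re(s)$ sufficiently large.

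The heart of the argument is then an unfolding identity relating $W^{\circ}_{\Pi'}\begin{psmallmatrix} g & 0 \\ 0 & 1_{n-1-m} \end{psmallmatrix}$, for $g \in \GL_m(F)$, back to $W^{\prime\circ}(g)$. Unfolding the Jacquet integral defining $W^{\circ}_{\Pi'}$ via the Bruhat decomposition of $\GL_{n-1}(F)$ adapted to the parabolic $\Pgp_{(m,1,\ldots,1)}(F)$ should express $W^{\circ}_{\Pi'}$ on such matrices as $W^{\prime\circ}(g)$ multiplied by a product of convergent local integrals in the auxiliary parameters $s_j$. Substituting into the left-hand side of the displayed identity and using the right $K_{n-1}$-invariance of $W^\circ$ from \hyperref[thm:JPPSWhittaker]{Theorem \ref*{thm:JPPSWhittaker}} to absorb the compact-rotation pieces from the Iwasawa decomposition, the $\GL_n \times \GL_{n-1}$ integral factors as $\Psi(s, W^\circ, W^{\prime\circ})$ times exactly $\prod_{j=1}^{n-1-m} L(s + s_j, \pi)$; cancelling this common nonzero factor (valid for generic $s_j$) yields the desired identity.

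The main obstacle is this unfolding and extraction step: one must verify that the auxiliary local integrals in the $s_j$ evaluate to precisely the $L$-factors $L(s + s_j, \pi)$ rather than merely to polynomial numerators that would give only divisibility. This requires careful bookkeeping with Iwasawa decompositions in $\GL_{n-1}(F)$, simultaneous convergence of the resulting integrals in a common half-plane to justify rearranging orders of integration, and crucial use of the right $K_{n-1}$-invariance of the Whittaker newform to collapse the extraneous compact variables. Once these technicalities are handled, the conclusion follows formally from the $m = n-1$ base case together with the multiplicativity \eqref{eq:isobaricRS} of Rankin--Selberg $L$-factors under isobaric sums.
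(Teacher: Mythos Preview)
The paper does not give its own proof of this statement. \hyperref[thm:KimMatringe]{Theorem \ref*{thm:KimMatringe}} appears in \hyperref[sect:nonarchimedean]{Section \ref*{sect:nonarchimedean}}, which is an expository survey of known nonarchimedean results; the theorem is simply attributed to Kim and Matringe, with the added remark that Miyauchi's Hecke-operator method for $m=1$ generalises to all $m \in \{1,\ldots,n-1\}$. There is therefore no proof in the paper to compare against.

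Regarding your proposal itself: the padding strategy is a reasonable heuristic, but the step you flag as the ``main obstacle'' is in fact the entire content of the argument, and you have not carried it out. The assertion that $W^{\circ}_{\Pi'}$ restricted to $\begin{psmallmatrix} g & 0 \\ 0 & 1_{n-1-m} \end{psmallmatrix}$ unfolds to $W^{\prime\circ}(g)$ times factors that, after integration against $W^{\circ}$, produce exactly $\prod_j L(s+s_j,\pi)$ is not a formality: spherical Whittaker functions on larger groups do not restrict multiplicatively to block-diagonal subgroups, and the propagation formulas relating them (nonarchimedean analogues of the identities in \hyperref[sect:WhittakernewformGodement]{Section \ref*{sect:WhittakernewformGodement}}) are integral transforms, not pointwise factorisations. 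Making this precise requires either an explicit recursion for the values of $W^{\circ}$ on the diagonal torus (this is Miyauchi's route, via the Casselman--Shalika formula and Hecke relations) or the Bernstein--Zelevinsky derivative machinery that Matringe uses. As written, your outline identifies the right shape of the reduction but defers exactly the computation that constitutes the proof.
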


Miyauchi also gave a proof when $m = 1$ \cite[Theorem 5.1]{Miy14} by a different method that generalises easily to prove the result for $m \in \{2, \ldots, n - 1\}$.

A similar theory also holds for the case $m = n$.

\begin{theorem}[{Kim \cite[Theorem 2.1.1]{Kim10}}]
\label{thm:Kim}
Let $\pi$ be an induced representation of Langlands type of $\GL_n(F)$. Then for every spherical representation of Langlands type $\pi'$ of $\GL_n(F)$ with spherical Whittaker function $W^{\prime\circ} \in \WW(\pi',\overline{\psi})$, the Whittaker newform $W^{\circ} \in \WW(\pi,\psi)$ of $\pi$ satisfies
\[\Psi(s,W^{\circ},W^{\prime\circ},\Phi^{\circ}) = L(s,\pi \times \pi')\]
for $\Re(s)$ sufficiently large, where $\Phi^{\circ} \in \Ss(\Mat_{1 \times n}(F))$ is given by
\[\Phi^{\circ}(x_1,\ldots,x_n) \coloneqq \begin{dcases*}
\frac{\omega_{\pi}^{-1}(x_n)}{\vol(K_0(\pp^{c(\pi)}))} & if $x_1,\ldots,x_{n - 1} \in \pp^{c(\pi)}$ and $x_n \in \OO^{\times}$,	\\
0 & otherwise,
\end{dcases*}\]
if $c(\pi) > 0$, where $\omega_{\pi}$ denotes the central character of $\pi$ and
\[K_0\left(\pp^m\right) \coloneqq \left\{k \in K : k_{n,1},\ldots,k_{n,n-1} \in \pp^m\right\},\]
while for $c(\pi) = 0$,
\[\Phi^{\circ}(x_1,\ldots,x_n) \coloneqq \begin{dcases*}
1 & if $x_1,\ldots,x_n \in \OO$,	\\
0 & otherwise.
\end{dcases*}\]
\end{theorem}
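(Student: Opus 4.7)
The plan is to reduce the $\GL_n \times \GL_n$ Rankin--Selberg integral to a linear combination of $\GL_n \times \GL_{n-1}$ Rankin--Selberg integrals, the evaluation of which is furnished by \hyperref[thm:JPPSWhittaker]{Theorem \ref*{thm:JPPSWhittaker}}. The case $c(\pi) = 0$, in which both $\pi$ and $\pi'$ are spherical and $\Phi^{\circ}$ is the characteristic function of $\OO^n$, is the classical unramified calculation of Jacquet--Piatetski-Shapiro--Shalika, so I concentrate on the ramified case $c(\pi) > 0$.

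Parametrise $g \in \GL_n(F)$ using the $(n-1,1)$-parabolic Iwasawa decomposition $g = \begin{pmatrix} h & 0 \\ 0 & d \end{pmatrix} k$ with $h \in \GL_{n-1}(F)$, $d \in F^{\times}$, and $k \in K_n$, so that $e_n g = d \cdot e_n k$. Since $k \in K_n$ has primitive bottom row, the support constraint on $\Phi^{\circ}$ forces $d \in \OO^{\times}$ and $k \in K_0(\pp^{c(\pi)})$, with $\Phi^{\circ}(e_n g) = \omega_{\pi}^{-1}(d k_{n,n})/\vol(K_0(\pp^{c(\pi)}))$ on this locus. Three transformation laws then collapse the residual integral: (i) $W^{\circ}(g k_0) = \omega_{\pi}(k_{0,n,n}) W^{\circ}(g)$ for $k_0 \in K_0(\pp^{c(\pi)})$, which follows from one-dimensionality of $V_{\pi}^{K_1(\pp^{c(\pi)})}$ combined with the observation that $K_0/K_1 \cong (\OO/\pp^{c(\pi)})^{\times}$ via $k_0 \mapsto k_{0,n,n}$ and the induced character on this quotient must agree with $\omega_{\pi}$ on the central elements $\lambda \cdot 1_n$; (ii) right $K_n$-invariance of the spherical $W^{\prime\circ}$; (iii) unramifiedness of the central character $\omega_{\pi'}$. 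Using the central factorisation $\begin{pmatrix} h & 0 \\ 0 & d \end{pmatrix} = d \cdot \begin{pmatrix} h/d & 0 \\ 0 & 1 \end{pmatrix}$ to strip off the $d$-dependence via $\omega_{\pi}$ and $\omega_{\pi'}$, these laws conspire to cancel all of the $d$, $k$, and modulus factors, leaving
\[\Psi(s, W^{\circ}, W^{\prime\circ}, \Phi^{\circ}) = \int_{\Ngp_{n-1}(F) \backslash \GL_{n-1}(F)} W^{\circ}\begin{pmatrix} h' & 0 \\ 0 & 1 \end{pmatrix} W^{\prime\circ}\begin{pmatrix} h' & 0 \\ 0 & 1 \end{pmatrix} |\det h'|^{s-1} \, dh'.\]

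The principal difficulty is identifying this reduced integral with $L(s, \pi \times \pi')$. The plan is to exploit the Casselman--Shalika formula for the spherical $W^{\prime\circ}$: writing $\pi' = \bigboxplus_{j=1}^n |\cdot|^{t_j}$ and $y_j \coloneqq q^{-t_j}$, Laplace expansion along the final column of the determinantal expression for the Schur polynomial $s_\lambda(y)$ with $\lambda_n = 0$, together with the identity $\delta_n^{1/2}(\diag(h',1)) = \delta_{n-1}^{1/2}(h') |\det h'|^{1/2}$, yields
\[W^{\prime\circ}\begin{pmatrix} h' & 0 \\ 0 & 1 \end{pmatrix} = (-1)^{n-1} |\det h'|^{1/2} \sum_{j=1}^n c_j W^{\prime\prime,(j)\circ}(h'), \qquad c_j \coloneqq \frac{\prod_{k \neq j} y_k}{\prod_{k \neq j}(y_j - y_k)},\]
where $W^{\prime\prime,(j)\circ}$ denotes the spherical Whittaker function on $\GL_{n-1}(F)$ for $\pi'_{(j)} \coloneqq \bigboxplus_{k \neq j} |\cdot|^{t_k}$. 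Substituting this expansion into the reduced integral and applying \hyperref[thm:JPPSWhittaker]{Theorem \ref*{thm:JPPSWhittaker}} term by term produces $\Psi = (-1)^{n-1} \sum_{j=1}^n c_j \prod_{k \neq j} L(s + t_k, \pi)$; the equality with $\prod_k L(s + t_k, \pi) = L(s, \pi \times \pi')$ then reduces, upon writing $f(y) \coloneqq 1/L(s + t, \pi)|_{q^{-t} = y}$, to the Lagrange-interpolation identity $\sum_j c_j f(y_j) = (-1)^{n-1}$. This holds provided $\deg f \leq n - 1$, which is exactly the content of $c(\pi) > 0$: at least one essentially square-integrable constituent of $\pi$ is then ramified and contributes a trivial $L$-factor, forcing the total degree of $L(s,\pi)$ to drop below $n$. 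The identity itself is immediate from the vanishing at infinity of the rational function $f(y) \prod_k y_k / (y \prod_k(y - y_k))$ upon summing its residues at $y = y_1, \ldots, y_n$ and $y = 0$.
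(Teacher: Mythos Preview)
The paper does not prove \hyperref[thm:Kim]{Theorem \ref*{thm:Kim}}: it appears in the nonarchimedean survey \hyperref[sect:nonarchimedean]{Section \ref*{sect:nonarchimedean}} with attribution to Kim's thesis, the only further remark being that Miyauchi's Hecke-operator method also applies. There is therefore no in-paper argument to compare against directly.

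Your proof is correct, with two unstated points worth making explicit. First, the Casselman--Shalika expansion $W^{\prime\circ}\begin{psmallmatrix} h' & 0 \\ 0 & 1 \end{psmallmatrix} = (-1)^{n-1}|\det h'|^{1/2}\sum_j c_j W^{\prime\prime,(j)\circ}(h')$ holds only on the $\GL_n$-dominant cone; off it (say when the valuation of $a'_{n-1}$ is $\leq -2$) the bialternant $s_{(\lambda',0)}$ need not vanish and the right side can be nonzero while the left is zero. This is harmless because $W^\circ\begin{psmallmatrix} h' & 0 \\ 0 & 1 \end{psmallmatrix}$ already vanishes there: the newform is right $\Ngp_n(\OO)$-invariant since $\Ngp_n(\OO) \subset K_1(\pp^{c(\pi)})$, so the usual conjugation argument confines its toral support to $|a_1|\leq\cdots\leq|a_n|$, and extending each summand's domain to all of $\Ngp_{n-1}(F)\backslash\GL_{n-1}(F)$ costs nothing. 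Second, your coefficients $c_j$ require the $y_j$ to be pairwise distinct; the degenerate case follows by holomorphic continuation in $(t_1',\ldots,t_n')$. One further nitpick: a ramified essentially square-integrable constituent need not have \emph{trivial} $L$-factor --- an unramified twist of Steinberg has degree-$1$ $L$-factor --- but since such a constituent sits on $\GL_{n_j}$ with $n_j \geq 2$, the total degree of $L(s,\pi)^{-1}$ still drops below $n$, and your interpolation identity applies.

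For context, the paper does prove the archimedean analogue (\hyperref[thm:GLnxGLn]{Theorem \ref*{thm:GLnxGLn}} via \hyperref[prop:(n,n)reduction]{Proposition \ref*{prop:(n,n)reduction}}) by a structurally similar but technically distinct route: it inserts an integral propagation formula \eqref{eq:W'circviaW0'circ} for $W^{\prime\circ}$ and peels off a single factor $L(s+t_1',\pi)$ via the convolution-section identity \eqref{eq:WhittakerPieri}, reducing to the $\GL_n\times\GL_{n-1}$ case already in hand. Your method instead uses the compact support of $\Phi^\circ$ to collapse the $K_n$- and central directions first, then expands $W^{\prime\circ}$ as a \emph{finite} sum of $\GL_{n-1}$ spherical Whittaker functions and reassembles all $n$ Euler factors at once through the residue identity. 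This is intrinsically nonarchimedean --- it rests on $L(s,\pi)^{-1}$ being polynomial in $q^{-s}$ --- whereas the paper's archimedean argument never touches the support of $\Phi^\circ$ directly and works one factor at a time.
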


Again, this can also be proven via the method of Miyauchi \cite{Miy14}.

\begin{remark}
Little is known about test vectors for Rankin--Selberg integrals when $\pi'$ is ramified. Kim \cite[Proposition 2.2.2]{Kim10} has shown that Whittaker newforms are not test vectors when $\pi'$ is ramified: if $\pi'$ is a ramified representation of $\GL_m(F)$ with $m < n$ and $W^{\circ},W^{\prime\circ}$ are newforms of $\pi$ and $\pi'$ respectively, then $\Psi\left(s,W^{\circ},W^{\prime\circ}\right) = 0$ for all $s \in \C$. For $n = m = 2$, Kim has determined test vectors for certain pairs of representations $\pi,\pi'$ \cite{Kim10}. Recently, Kurinczuk and Matringe have explicitly determined test vectors for $n = m$ arbitrary, $\pi$ supercuspidal, and $\pi' = \widetilde{\pi} \otimes \chi$ for some unramified character $\chi$ of $F^{\times}$ \cite{KM19}. Booker, Krishnamurthy, and Lee have resolved a \emph{weakened} version of the test vector problem in \cite{BKL20} when $m < n$: they construct vectors $W \in \WW(\pi,\psi)$ and $W' \in \WW(\pi',\overline{\psi})$ for which $\Psi(s,W,W')$ is a multiple of $L(s,\pi \times \pi')$ by a nonzero polynomial in $q^{-s}$; an analogous result when $m = n$ is due to Jo \cite[Theorem 1.1]{Jo23}. Finally, Jacquet, Piatetski-Shapiro, and Shalika have shown that for each pair of representations $\pi,\pi'$ of $\GL_n(F),\GL_m(F)$ with $n > m$, there exist finite collections $\{W_j\} \subset \WW(\pi,\psi)$ and $\{W_j'\} \subset \WW(\pi',\overline{\psi})$ of right $K_n$- and $K_m$-finite Whittaker functions for which $\sum_j \Psi(s,W_j,W_j') = L(s,\pi \times \pi')$ \cite[Theorem (2.7)]{JP-SS83}; a similar result (additionally involving Schwartz--Bruhat functions $\{\Phi_j\} \subset \Ss(\Mat_{1 \times n}(F))$) also holds for $m = n$.
\end{remark}

\subsection{Test Vectors for Godement--Jacquet Zeta Integrals}

Finally, we mention the relation between newforms and test vectors for Godement--Jacquet zeta integrals. The test vector problem for the Godement--Jacquet zeta integral is the following.

\begin{problem}
Given an induced representation of Langlands type $(\pi,V_{\pi})$ of $\GL_n(F)$, determine the existence of $K$-finite vectors $v_1 \in V_{\pi}$, $\widetilde{v_2} \in V_{\widetilde{\pi}}$, and a bi-$K$-finite Schwartz--Bruhat function $\Phi \in \Ss(\Mat_{n \times n}(F))$ such that
\[Z(s,\beta,\Phi) = L(s,\pi).\]
\end{problem}

This has been solved for spherical representations by Godement and Jacquet and for nonspherical representations by the author.

\begin{theorem}[{Godement--Jacquet \cite[Lemma 6.10]{GJ72}, Humphries \cite[Theorem 1.2]{Hum21}}]
\label{thm:GJ}
Let $(\pi,V_{\pi})$ be an induced representation of Langlands type of $\GL_n(F)$. Let $\beta(g)$ denote the matrix coefficient $\langle \pi(g) \cdot v^{\circ}, \widetilde{v}^{\circ} \rangle$, where $v^{\circ} \in V_{\pi}$ denotes the newform and $\widetilde{v}^{\circ} \in V_{\widetilde{\pi}}$ is the corresponding newform normalised such that $\beta(1_n) = 1$. Then
\[Z(s,\beta,\Phi) = L(s,\pi)\]
for $\Re(s)$ sufficiently large, where $\Phi \in \Ss(\Mat_{n \times n}(F))$ is given by
\[\Phi(x) \coloneqq \begin{dcases*}
\frac{\omega_{\pi}^{-1}\left(x_{n,n}\right)}{\vol(K_0(\pp^{c(\pi)}))} & if $x \in \Mat_{n \times n}(\OO)$ with $x_{n,1},\ldots,x_{n,n - 1} \in \pp^{c(\pi)}$ and $x_{n,n} \in \OO^{\times}$,	\\
0 & otherwise
\end{dcases*}\]
if $c(\pi) > 0$, while for $c(\pi) = 0$,
\[\Phi(x) \coloneqq \begin{dcases*}
1 & if $x \in \Mat_{n \times n}(\OO)$,	\\
0 & otherwise.
\end{dcases*}\]
\end{theorem}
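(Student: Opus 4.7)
The plan is to split the argument by conductor.

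For $c(\pi) = 0$, this is \cite[Lemma 6.10]{GJ72}: $v^\circ$ and $\widetilde v^\circ$ are spherical, $\beta$ is the zonal spherical function of $\pi$, and $\Phi$ is the indicator of $\Mat_{n \times n}(\OO)$. One applies the Cartan decomposition $\GL_n(F) = K \cdot \Agp_n^+(F) \cdot K$, uses the $K$-bi-invariance of $\beta$ and of $\Phi$, invokes the Macdonald--Casselman--Shalika formula for $\beta$ on $\Agp_n^+(F)$, and sums the resulting geometric series in the Satake parameters of $\pi$ to obtain $L(s,\pi)$.

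For $c(\pi) > 0$, the key structural input is that the newform line $V_\pi^{K_1(\pp^{c(\pi)})}$ carries the action of $K_0(\pp^{c(\pi)})/K_1(\pp^{c(\pi)}) \cong \OO^\times/(1+\pp^{c(\pi)})$ via the character $k \mapsto \omega_\pi(k_{n,n})$; this follows by restricting to central elements $\lambda \cdot 1_n$ with $\lambda \in \OO^\times$. Consequently $\beta(k_1 g k_2) = \omega_\pi^{-1}(k_{1,n,n})\, \omega_\pi(k_{2,n,n})\, \beta(g)$ for $k_1, k_2 \in K_0(\pp^{c(\pi)})$, and a direct computation on the support of $\Phi$ shows the matching transformation $\Phi(xk) = \omega_\pi^{-1}(k_{n,n}) \Phi(x)$, so that $\beta(g) \Phi(g)$ is right $K_0(\pp^{c(\pi)})$-invariant.

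I would then apply the Iwasawa decomposition $g = pk$ for the maximal parabolic $\Pgp_{(n-1,1)}(F)$. Since the last row of $pk$ equals $p_{n,n} \cdot (k_{n,1}, \ldots, k_{n,n})$, the support of $\Phi$ forces $k \in K_0(\pp^{c(\pi)})$ and $p_{n,n} \in \OO^\times$; the $k$-integral then collapses to $\vol(K_0(\pp^{c(\pi)}))$, cancelling the normaliser in $\Phi$. After parameterising $p = p_{n,n} \cdot \begin{psmallmatrix} h/p_{n,n} & y \\ 0 & 1\end{psmallmatrix}$ and absorbing the $\omega_\pi(p_{n,n})$ factors via the central character, the $p_{n,n}$-integral over $\OO^\times$ collapses to $\vol(\OO^\times)$. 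What remains is a mirabolic integral which one identifies, via the Bernstein realisation of the invariant pairing on $\pi \times \widetilde\pi$ as a mirabolic Whittaker integral together with a standard change of variables, with the $\GL_n \times \GL_1$ Rankin--Selberg integral of the Whittaker newform $W^\circ \in \WW(\pi, \psi)$ against the spherical Whittaker function of the trivial character of $\GL_1(F)$. The conclusion $Z(s, \beta, \Phi) = L(s, \pi)$ then follows from Theorem \ref{thm:KimMatringe} for $n \geq 3$ and Theorem \ref{thm:JPPSWhittaker} for $n = 2$ (the case $n = 1$ reduces to a direct Tate-style computation).

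The principal obstacle is the Bernstein mirabolic pairing, which is only absolutely convergent in the tempered range and must be regularised by analytic continuation in the Langlands parameters of $\pi$; the Fubini exchanges used in the reduction must likewise be justified in the half-plane of large $\Re(s)$ where $Z(s, \beta, \Phi)$ converges absolutely, and the identities then extend to all $s$ by meromorphic continuation.
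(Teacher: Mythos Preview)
This statement is not proved in the present paper: it appears in the survey \hyperref[sect:nonarchimedean]{Section \ref*{sect:nonarchimedean}} with attribution to \cite[Lemma 6.10]{GJ72} for the unramified case and to \cite[Theorem 1.2]{Hum20} for $c(\pi)>0$. The paper does, however, prove the archimedean analogue \hyperref[thm:GJram]{Theorem \ref*{thm:GJram}}, and the method there --- establishing the operator identity \eqref{eq:Pieri}, i.e.\ $\int_{\GL_n} (\pi(g)\cdot v^{\circ})\,\Phi(g)\,|\det g|^{s+\frac{n-1}{2}}\,dg = L(s,\pi)\,v^{\circ}$, and then simply pairing with $\widetilde v^{\circ}$ --- is what one should expect the argument in \cite{Hum20} to mirror in the nonarchimedean setting. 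That route bypasses any unfolding of the matrix coefficient itself.

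Your approach is genuinely different, and the first half (the $K_0(\pp^{c(\pi)})$-equivariance of $\beta$ and $\Phi$, and the collapse of the $K$-integral under Iwasawa for $\Pgp_{(n-1,1)}$) is reasonable. The gap is in the last step. After your reductions you are left with an integral of $\beta$ over a region of the mirabolic, and you propose to insert Bernstein's realisation of the invariant pairing as a mirabolic Whittaker integral, $\beta(g)=\int_{\Ngp_{n-1}\backslash P_n} W^{\circ}(pg)\,\widetilde W^{\circ}(p)\,dp$. But this produces a \emph{double} integral over two copies of (quotients of) the mirabolic, together with the constraint from the support of $\Phi$; it does not collapse to the one-variable $\GL_n\times\GL_1$ Rankin--Selberg integral $\int_{F^\times} W^{\circ}\begin{psmallmatrix} a & 0 \\ 0 & 1_{n-1}\end{psmallmatrix} |a|^{s-\frac{n-1}{2}}\,d^\times a$ without a substantial further unfolding that you have not indicated. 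In particular one must integrate out $\widetilde W^{\circ}$, and there is no evident mechanism in your sketch for doing so --- $\widetilde W^{\circ}$ is the newform of $\widetilde\pi$, not a spherical vector, so it does not disappear under a $K_{n-1}$-average. You also need to control the support condition $g\in\Mat_{n\times n}(\OO)$ on the upper $(n-1)\times n$ block after all your changes of variable, which interacts nontrivially with the mirabolic unfolding. As written, the passage ``one identifies \ldots\ with the $\GL_n\times\GL_1$ Rankin--Selberg integral'' is the entire content of the ramified case and is not justified.
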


\section{Archimedean Newform Theory}
\label{sect:archimedean}

\subsection{Analogues of Nonarchimedean Results}

For an induced representation of Whittaker type $(\pi,V_{\pi})$ of $\GL_n(F)$ with $F$ archimedean, it is not so clear what the definition of the conductor exponent and the newform of $\pi$ ought to be. When $F$ is nonarchimedean, the conductor exponent may be defined either as the least nonnegative integer $m$ for which $V_{\pi}^{K_1(\pp^m)}$ is nonempty or as the exponent appearing in the epsilon factor $\e(s,\pi,\psi)$. Neither of these properties, however, can be easily imported to the archimedean setting. Indeed, there is no obvious analogue of the congruence subgroup $K_1(\pp^m)$ (though cf.\ \cite{JN19}), while the epsilon factor $\e(s,\pi,\psi)$ is an integral power of $i$ for all $s \in \C$; this integer is therefore only determined modulo $4$, and so cannot be directly used to define the conductor exponent.

Our starting observation that leads us to define the conductor exponent and newform in the archimedean setting is that the key property shared by the newform defined in terms of the congruence subgroup $K_1(\pp^{c(\pi)})$ in \hyperref[thm:JPPSconductor]{Theorem \ref*{thm:JPPSconductor}} and the Whittaker newform defined in terms of a test vector for the $\GL_n \times \GL_{n - 1}$ Rankin--Selberg integral in \hyperref[thm:JPPSWhittaker]{Theorem \ref*{thm:JPPSWhittaker}} is its invariance under the action of the subgroup $K_{n - 1} \ni k'$ embedded in $\GL_n(F)$ via the map $k' \mapsto \begin{psmallmatrix} k' & 0 \\ 0 & 1 \end{psmallmatrix}$. Moreover, \hyperref[thm:JPPSconductor]{Theorem \ref*{thm:JPPSconductor}} essentially states that the newform is the ``simplest'' such vector for which this is so, in the sense that $V_{\pi}^{K_1(\pp^m)}$ is trivial for $m < c(\pi)$. The following lemma exemplifies the necessity of $K_{n - 1}$-invariance for the Whittaker newform.

\begin{lemma}
Let $(\pi,V_{\pi})$ be an induced representation of Langlands type of $\GL_n(F)$ with $n \geq 2$, and let $W \in \WW(\pi,\psi)$ be a right $K_n$-finite Whittaker function, so that the action of $\pi \begin{psmallmatrix} k' & 0 \\ 0 & 1 \end{psmallmatrix}$ on $W$ for $k' \in K_{n - 1}$ generates a finite-dimensional representation $\tau'$ of $K_{n - 1}$. If $\Hom_{K_{n - 1}}(1,\tau')$ is trivial, then the $\GL_n \times \GL_{n - 1}$ Rankin--Selberg integral $\Psi\left(s,W,W^{\prime\circ}\right)$ is identically equal to zero for every spherical representation of Langlands type $\pi'$ of $\GL_{n - 1}(F)$ with spherical Whittaker function $W^{\prime\circ} \in \WW(\pi',\overline{\psi})$.
\end{lemma}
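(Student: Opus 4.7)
The idea is to exploit the right $K_{n-1}$-invariance of the spherical Whittaker function $W^{\prime\circ}$, collapse the Rankin--Selberg integral to an average of $W$ over the embedded copy of $K_{n - 1}$, and then recognise that average as the projection of $W$ onto the trivial $K_{n-1}$-isotypic subspace of $\tau'$, which is zero by hypothesis.

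I would first restrict to $\Re(s)$ large enough that $\Psi(s,W,W^{\prime\circ})$ converges absolutely; the conclusion for general $s$ follows by meromorphic continuation. Using the Iwasawa decomposition $\GL_{n - 1}(F) = \Ngp_{n - 1}(F) \Agp_{n - 1}(F) K_{n - 1}$ with quotient Haar measure $\delta_{n - 1}^{-1}(a) \, d^{\times} a \, dk$ on $\Ngp_{n - 1}(F) \backslash \GL_{n - 1}(F)$, and using that $W^{\prime\circ}(ak) = W^{\prime\circ}(a)$ and $|\det(ak)| = |\det a|$ for $k \in K_{n - 1}$, Fubini gives
\[
\Psi(s,W,W^{\prime\circ}) = \int_{\Agp_{n - 1}(F)} \widetilde{W}(a) \, W^{\prime\circ}(a) \, \delta_{n - 1}^{-1}(a) \left|\det a\right|^{s - 1/2} \, d^{\times} a,
\]
where
\[
\widetilde{W}(a) \coloneqq \int_{K_{n - 1}} W\begin{pmatrix} ak & 0 \\ 0 & 1 \end{pmatrix} dk.
\]

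The key step is to factor $\begin{psmallmatrix} ak & 0 \\ 0 & 1 \end{psmallmatrix} = \begin{psmallmatrix} a & 0 \\ 0 & 1 \end{psmallmatrix} \begin{psmallmatrix} k & 0 \\ 0 & 1 \end{psmallmatrix}$, whence, by the definition of the right-regular action,
\[
\widetilde{W}(a) = \left(\int_{K_{n - 1}} \pi\begin{pmatrix} k & 0 \\ 0 & 1 \end{pmatrix} \cdot W \, dk\right) \begin{pmatrix} a & 0 \\ 0 & 1 \end{pmatrix}.
\]
The vector-valued operator $v \mapsto \int_{K_{n - 1}} \pi\begin{psmallmatrix} k & 0 \\ 0 & 1 \end{psmallmatrix} \cdot v \, dk$, restricted to the finite-dimensional $K_{n - 1}$-representation $V_{\tau'} \ni W$, is by Peter--Weyl precisely the projection onto $V_{\tau'}^{K_{n - 1}} \cong \Hom_{K_{n - 1}}(1,\tau')$. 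The hypothesis forces this space to be zero, so $\int_{K_{n - 1}} \pi\begin{psmallmatrix} k & 0 \\ 0 & 1 \end{psmallmatrix} \cdot W \, dk$ vanishes as a vector in $V_{\pi}$, hence $\widetilde{W}(a) = 0$ for all $a \in \Agp_{n - 1}(F)$. Therefore $\Psi(s,W,W^{\prime\circ}) = 0$ for $\Re(s)$ large, and identically in $s$ by meromorphic continuation.

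I do not anticipate a real obstacle; the argument amounts to a change of variables combined with the standard fact that Haar-averaging over a compact group projects onto the fixed-vector subspace. The only mildly delicate point is the interchange of the $K_{n - 1}$-integral with the $\Agp_{n - 1}(F)$-integral, which is justified by absolute convergence of the Rankin--Selberg integral for $\Re(s)$ large and the compactness of $K_{n - 1}$.
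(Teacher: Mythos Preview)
Your proof is correct and follows essentially the same approach as the paper: Iwasawa decomposition, right $K_{n-1}$-invariance of $W^{\prime\circ}$, and recognition of the $K_{n-1}$-average as the projection onto the trivial isotypic component, which vanishes by hypothesis. The only cosmetic difference is that the paper phrases the argument via a vector $v \in V_{\pi}$ and the Whittaker functional $\Lambda$, whereas you work directly in the Whittaker model; the underlying computation is identical.
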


\begin{proof}
We may write $W(g) = \Lambda(\pi(g) \cdot v)$ for some $v \in V_{\pi}$. Via the Iwasawa decomposition with respect to the standard Borel subgroup and the fact that $W^{\prime\circ}$ is the spherical Whittaker function, the Rankin--Selberg integral $\Psi\left(s,W,W^{\prime\circ}\right)$, defined in \eqref{eq:RankinSelbergnm}, is equal to
\[\int_{\Agp_{n - 1}(F)} W^{\prime\circ}(a') \left|\det a'\right|^{s - \frac{1}{2}} \delta_{n - 1}^{-1}(a') \Lambda\left(\pi\begin{pmatrix} a' & 0 \\ 0 & 1 \end{pmatrix} \cdot \int_{K_{n - 1}} \pi\begin{pmatrix} k' & 0 \\ 0 & 1 \end{pmatrix} \cdot v \, dk'\right) \, d^{\times} a',\]
and the integral over $K_{n - 1}$ vanishes whenever $\Hom_{K_{n - 1}}(1,\tau')$ is trivial.
\end{proof}

This leads us to search for vectors $v \in V_{\pi}$ that are invariant under the action of $K_{n - 1}$ and to define the newform of $\pi$ to be the ``simplest'' such vector. A natural way to interpret ``simplest'' is to search for vectors in $K$-types of $\pi$ that are ``small'' in a sense that we make precise. Once we have located the newform of $\pi$, we show that, when viewed in the Whittaker model, it is a Whittaker newform in the sense of \hyperref[thm:JPPSWhittaker]{Theorem \ref*{thm:JPPSWhittaker}}. In this way, we work in the reverse direction of the nonarchimedean setting, where Jacquet, Piatetski-Shapiro, and Shalika \cite{JP-SS81} first prove \hyperref[thm:JPPSWhittaker]{Theorem \ref*{thm:JPPSWhittaker}} and then use this to deduce \hyperref[thm:JPPSconductor]{Theorem \ref*{thm:JPPSconductor}}.

\subsection{\texorpdfstring{$K$}{K}-Types}

The representation $\pi$ being admissible means that $\Hom_K \left(\tau, \pi|_K\right)$ is finite-dimensional for any $\tau$ in $\widehat{K}$, the set of equivalence classes of irreducible representations of $K$. We call $\tau$ a $K$-type of $\pi$ should the vector space $\Hom_K \left(\tau, \pi|_K\right)$ be nontrivial. To each $\tau \in \widehat{K}$, one can associate a nonnegative integer $\deg \tau$ called the Howe degree of $\tau$ \cite{How89}; this is the archimedean analogue of the level of an irreducible smooth representation of $\GL_n(\OO)$. In order to define this, we first recall the theory of highest weights for the two groups $\Ugp(n)$ and $\Ogp(n)$, then explain how the Howe degree of an irreducible representation is defined in terms of the highest weight.

\subsubsection{Highest Weight Theory for \texorpdfstring{$\Ugp(n)$}{U(n)}}

The equivalence classes of finite-dimensional irreducible representations of the unitary group
\[\Ugp(n) \coloneqq \left\{k \in \Mat_{n \times n}(\C) : k \prescript{t}{}{\overline{k}} = 1_n\right\}\]
are parametrised by the set of highest weights, which we may identify with $n$-tuples of integers that are nonincreasing:
\[\Lambda_n \coloneqq \left\{\mu = \left(\mu_1,\ldots,\mu_n\right) \in \Z^n : \mu_1 \geq \cdots \geq \mu_n\right\}.\]
We denote by $\tau_{\mu}$ an irreducible representation of $\Ugp(n)$ with highest weight $\mu \in \Lambda_n$. The Howe degree of $\tau_{\mu}$ is given by
\begin{equation}
\label{eq:HowedegreeC}
\deg \tau_{\mu} = \sum_{j = 1}^{n} \|\mu_j\|.
\end{equation}

\begin{remark}
There is another natural invariant that one associate to an irreducible representation $\tau_{\mu}$, namely its Vogan norm $\|\tau_{\mu}\|_{\V}$ \cite{Vog81}, which is
\[\|\tau_{\mu}\|_{\V}^2 = \sum_{j = 1}^{n} \left(\mu_j + n + 1 - 2j\right)^2.\]
\end{remark}

\subsubsection{Highest Weight Theory for \texorpdfstring{$\Ogp(n)$}{O(n)}}

The equivalence classes of finite-dimensional irreducible representations $\tau_{\mu}$ of the orthogonal group
\[\Ogp(n) \coloneqq \left\{k \in \Mat_{n \times n}(\R) : k \prescript{t}{}{k} = 1_n\right\}\]
are parametrised by an $n$-tuple of integers $\mu$, which we again call highest weights (though unlike $\Ugp(n)$, the compact Lie group $\Ogp(n)$ is not connected). These highest weights $\mu$ are precisely those for which the highest weight vector of the irreducible representation of $\Ugp(n)$ with highest weight $\mu$ generates $\tau_{\mu}$ when restricted to $\Ogp(n)$, and are of the form
\[\mu = (\mu_1,\ldots,\mu_m,\underbrace{\eta,\ldots,\eta}_{n - 2m \text{ times}},\underbrace{0,\ldots,0}_{m \text{ times}}) \in \N_0^n,\]
where $m \in \{0,\ldots,\lfloor \frac{n}{2}\rfloor\}$, $\mu_1 \geq \cdots \geq \mu_m \geq 1$, $\eta \in \{0,1\}$, and $\N_0 \coloneqq \N \cup \{0\}$. We again let $\Lambda_n$ denote the set of highest weights; note in particular that $\Lambda_1 = \{0,1\}$. We denote by $\tau_{\mu}$ an irreducible representation of $\Ogp(n)$ with highest weight $\mu \in \Lambda_n$. The Howe degree of $\tau_{\mu}$ is
\begin{equation}
\label{eq:HowedegreeR}
\deg \tau_{\mu} = \sum_{j = 1}^{n} \mu_j.
\end{equation}

\begin{remark}
The Vogan norm $\|\tau_{\mu}\|_{\V}$ of $\tau_{\mu}$ is
\[\|\tau_{\mu}\|_{\V}^2 = \sum_{j = 1}^{m} \left(\mu_j + n - 2j\right)^2 + \sum_{j = m + 1}^{\left\lfloor \frac{n}{2} \right\rfloor} (n - 2j)^2.\]
\end{remark}

\subsection{The Conductor Exponent, the Newform, and the Newform \texorpdfstring{$K$}{K}-Type}
\label{sect:condexp}

Let $(\pi,V_{\pi})$ be an induced representation of Whittaker type of $\GL_n(F)$. We define the projection map $\Pi^{K_{n - 1}} : V_{\pi} \to V_{\pi}$ given by
\[\Pi^{K_{n - 1}}(v) \coloneqq \int_{K_{n - 1}} \pi\begin{pmatrix} k' & 0 \\ 0 & 1 \end{pmatrix} \cdot v \, dk',\]
whose image is the subspace of $K_{n - 1}$-invariant vectors; for the sake of consistency and completeness, we define $\Pi^{K_0}$ to be the identity when $n = 1$. We also define the projection map
\[\Pi^{\tau}(v) \coloneqq \int_{K_n} \xi^{\tau}(k) \pi(k) \cdot v \, dk\]
for each irreducible representation $\tau \in \widehat{K_n}$, where
\[\xi^{\tau}(k) \coloneqq (\dim \tau) \Tr \tau(k^{-1})\]
is the elementary idempotent associated to $\tau$. The image of $V_{\pi}$ under $\Pi^{\tau}$ is the $\tau$-isotypic subspace $V_{\pi}^{\tau}$ of $V_{\pi}$, which is finite-dimensional since $\pi$ is admissible and is trivial unless $\tau$ is a $K$-type of $\pi$. The composition of these two projections is the projection
\[\left(\Pi^{\tau,K_{n - 1}}\right)(v) \coloneqq \left(\Pi^{K_{n - 1}} \circ \Pi^{\tau}\right)(v) = \left(\Pi^{\tau} \circ \Pi^{K_{n - 1}}\right)(v) = \int_{K_n} \xi^{\tau,K_{n - 1}}(k) \pi(k) \cdot v \, dk\]
onto the subspace of $K_{n - 1}$-invariant $\tau$-isotypic vectors
\[V_{\pi}^{\tau,K_{n - 1}} \coloneqq \left(\Pi^{\tau,K_{n - 1}}\right)(V_{\pi}) = \left\{v \in V_{\pi}^{\tau} : \pi\begin{pmatrix} k' & 0 \\ 0 & 1 \end{pmatrix} \cdot v = v \text{ for all $k' \in K_{n - 1}$}\right\}.\]
Here
\begin{equation}
\label{eq:xitauKn-1}
\xi^{\tau,K_{n - 1}}(k) \coloneqq \int_{K_{n - 1}} \xi^{\tau}\left(\begin{pmatrix} k' & 0 \\ 0 & 1 \end{pmatrix} k\right) \, dk' = \int_{K_{n - 1}} \xi^{\tau}\left(k \begin{pmatrix} k' & 0 \\ 0 & 1 \end{pmatrix}\right) \, dk'.
\end{equation}
Note that $\xi^{\tau,K_{n - 1}}$ is identically equal to zero if and only if $\Hom_{K_{n - 1}}(1,\tau|_{K_{n - 1}})$ is trivial. Finally, for any nonnegative integer $m$, we set
\[V_{\pi}(m)^{K_{n - 1}} \coloneqq \bigoplus_{\substack{\tau \in \widehat{K_n} \\ \deg \tau = m}} V_{\pi}^{\tau,K_{n - 1}},\]
the subspace of $K_{n - 1}$-invariant vectors that are $\tau$-isotypic for some $\tau \in \widehat{K_n}$ of degree $m$. We prove the following.

\begin{theorem}[{Cf.\ \hyperref[thm:JPPSconductor]{Theorems \ref*{thm:JPPSconductor}} and \ref{thm:nonarchimedeanKtype}}]
\label{thm:conductornewform}
Let $(\pi,V_{\pi})$ be an induced representation of Whittaker type of $\GL_n(F)$. There exists a minimal nonnegative integer $m$ for which $V_{\pi}(m)^{K_{n - 1}}$ is nontrivial. For this minimal value of $m$, $V_{\pi}(m)^{K_{n - 1}}$ is one-dimensional.
\end{theorem}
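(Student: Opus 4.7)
The plan is to translate the statement into a combinatorial assertion about branching rules from $K_n$ to $K_{n - 1}$ together with the $K_n$-type decomposition of $\pi|_{K_n}$. By admissibility, for each $\tau \in \widehat{K_n}$ the $\tau$-isotypic component of $V_\pi$ is finite-dimensional and isomorphic to $\tau \otimes \Hom_{K_n}(\tau, \pi|_{K_n})$, and so the projection $\Pi^{\tau|_{K_{n - 1}}}$ has image of dimension
\[
\dim V_\pi^{\tau|_{K_{n - 1}}} = [\pi|_{K_n} : \tau] \cdot \dim \Hom_{K_{n - 1}}(1, \tau|_{K_{n - 1}}).
\]
Summing over $\tau$ with $\deg \tau = m$ yields $\dim V_\pi^{K_{n - 1}(m)}$, so the problem splits into two essentially independent questions: which $\tau$ admit a nonzero $K_{n - 1}$-invariant, and which $\tau$ occur in $\pi|_{K_n}$, and in each case with what multiplicity.

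The first question is answered by the classical branching law, which is multiplicity-free both for $\Ugp(n) \downarrow \Ugp(n - 1)$ (Gelfand--Tsetlin interlacing) and for $\Ogp(n) \downarrow \Ogp(n - 1)$. For $F = \C$ the presence of a $K_{n - 1}$-invariant forces the highest weight of $\tau$ to be of the form $\mu = (\mu_1, 0, \ldots, 0, \mu_n)$ with $\mu_1 \geq 0 \geq \mu_n$, and then $\deg \tau_\mu = \mu_1 - \mu_n$; for $F = \R$ the analogous branching confines $\mu$ to a similarly restricted family whose Howe degree is linear in the two free parameters. The second question is handled via Frobenius reciprocity applied to the Iwasawa decomposition: writing $\pi = \Ind_{\Pgp(F)}^{\GL_n(F)} \bigboxtimes_{j = 1}^{r} \pi_j$, one has
\[
\pi|_{K_n} \cong \Ind_{K_{n_1} \times \cdots \times K_{n_r}}^{K_n} \bigotimes_{j = 1}^{r} \pi_j|_{K_{n_j}}
\]
as $K_n$-representations, and since each $\pi_j$ is essentially square-integrable, its restriction to $K_{n_j}$ is completely explicit (a character of $\Ugp(1)$ or of $\Ogp(1)$ when $n_j = 1$, and the restriction of $D_\kappa \otimes |\det|_{\R}^{t}$ to $\Ogp(2)$ when $F = \R$ and $n_j = 2$).

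With these two inputs in hand, I would minimise $\deg \tau_\mu$ over those highest weights $\mu$ satisfying the branching-rule constraint and appearing in $\pi|_{K_n}$. The expectation is that the minimiser is pinned down by extremality: the parameters $\mu_1$ and $\mu_n$ are forced to be the largest positive and smallest negative weights dictated by the Levi data, and a Kostka-type count then shows that for this extremal choice the multiplicity $[\pi|_{K_n} : \tau_\mu]$ is exactly one. The hardest part of the argument is precisely this uniqueness-and-multiplicity-one step at the minimum, as opposed to the easier existence statement. The $F = \R$ case will be additionally delicate because of the middle constant weight $\eta \in \{0, 1\}$ in the $\Ogp(n)$ highest-weight parametrisation and because the essentially discrete series factors $D_\kappa \otimes |\det|_{\R}^{t}$ on $\GL_2(\R)$ contribute via the nonabelian subgroup $\Ogp(2)$, whose branching to $\Ogp(1)$ requires separate analysis.
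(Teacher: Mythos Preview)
Your proposal is correct and follows essentially the same approach as the paper: reduce to branching via the identity $\dim V_\pi^{\tau|_{K_{n-1}}} = [\pi|_{K_n}:\tau]\cdot\dim\Hom_{K_{n-1}}(1,\tau|_{K_{n-1}})$, use Mackey--Frobenius to compute $[\pi|_{K_n}:\tau]$ in terms of branching from $K_n$ to $K_{n_1}\times\cdots\times K_{n_r}$, and then carry out the iterated branching combinatorics explicitly. One small correction: for $F=\R$ the $\Ogp(n-1)$-invariant condition forces $\mu=(\mu_1,0,\ldots,0)$, so there is only \emph{one} free parameter (and $\eta$ plays no role), which actually simplifies that half of the argument relative to what you anticipate; the genuine extra work in the real case comes, as you say, from the $\Ogp(2)$-factors contributed by the discrete series $D_{\kappa_j}$, which the paper handles by iterating a separate $\Ogp(n)\downarrow\Ogp(n-2)\times\Ogp(2)$ branching rule.
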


That is, we show that among the $K$-types $\tau \in \widehat{K_n}$ of $\pi$ for which $V_{\pi}^{\tau,K_{n - 1}}$ is nontrivial, there exists a unique such $K$-type $\tau^{\circ}$ minimising $\deg \tau$; moreover, $V_{\pi}^{\tau^{\circ},K_{n - 1}}$ is one-dimensional.

\begin{definition}[{Cf.\ \hyperref[def:condnewform]{Definitions \ref*{def:condnewform}} and \ref{def:nonarchimedeanKtype}}]
\label{def:archcondnewfom}
We call $\deg \tau^{\circ}$ the conductor exponent of $\pi$ and denote it by $c(\pi)$, and the nonzero vector $v^{\circ} \in V_{\pi}(c(\pi))^{K_{n - 1}} = V_{\pi}^{\tau^{\circ},K_{n - 1}}$, unique up to scalar multiplication, is called the newform of $\pi$. The $K$-type $\tau^{\circ}$ containing the newform is called the newform $K$-type of $\pi$.
\end{definition}

Analogously to the nonarchimedean theory of the conductor exponent, we regard $c(\pi)$ as quantifying the extent to which $\pi$ is ramified; once again, the conductor exponent $c(\pi)$ is zero if and only if $\pi$ is unramified, so that $\pi$ is a spherical representation. Moreover, the fact that $v^{\circ}$ is unique up to scalar multiplication may be thought of as a multiplicity-one theorem for newforms.

\begin{remark}
In general, the newform $K$-type $\tau^{\circ}$ is not equal to the minimal $K$-type (in the sense of Vogan \cite{Vog81}) of $\pi$, and $\deg \tau^{\circ}$ is not equal to the Vogan norm of the minimal $K$-type. On the other hand, we shall see that the Howe degree of the minimal $K$-type \emph{is} equal to $\deg \tau^{\circ} = c(\pi)$. This gives another way to define the conductor exponent, albeit with the downside that the connection between the minimal $K$-type and the newform is unclear.
\end{remark}

\begin{remark}
\label{rem:permutation}
The conductor exponent and newform $K$-type of an induced representation of Whittaker type $\pi = \pi_1 \boxplus \cdots \boxplus \pi_r$ remain unchanged when $\pi$ is replaced by $\pi_{\sigma(1)} \boxplus \cdots \boxplus \pi_{\sigma(r)}$ for any permutation $\sigma \in S_r$.
\end{remark}

\subsection{Oldforms}

We also define oldforms in the archimedean setting.

\begin{definition}
The space of oldforms of exponent $m > c(\pi)$ is $V_{\pi}(m)^{K_{n - 1}}$.
\end{definition}

\begin{theorem}[{Cf.\ \hyperref[thm:Reeder]{Theorems \ref*{thm:Reeder}} and \ref{thm:nonarchimedeanoldformKtype}}]
\label{thm:oldforms}
Let $(\pi,V_{\pi})$ be an induced representation of Whittaker type of $\GL_n(F)$ with $n \geq 2$. We have that
\[\dim V_{\pi}(m)^{K_{n - 1}} = \begin{dcases*}
\binom{\frac{m - c(\pi)}{2} + n - 2}{n - 2} & if $m \geq c(\pi)$ and $m \equiv c(\pi) \pmod{2}$,	\\
0 & otherwise.
\end{dcases*}\]
\end{theorem}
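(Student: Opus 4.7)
By Frobenius reciprocity in the compact group $K_n$, the dimension of $V_\pi^{\tau|_{K_{n-1}}}$ factors as $[\pi|_{K_n} : \tau] \cdot [\tau|_{K_{n-1}} : \mathbf{1}]$, so
\[
\dim V_\pi^{K_{n-1}(m)} = \sum_{\tau \in \widehat{K_n},\; \deg \tau = m} [\pi|_{K_n} : \tau] \cdot [\tau|_{K_{n-1}} : \mathbf{1}].
\]
The plan is to evaluate this sum combinatorially using the classical branching rules for the pair $(K_n, K_{n-1})$ together with the compact realization of $\pi|_{K_n}$.

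First I would identify the $K_n$-types with nonzero $K_{n-1}$-invariants. For $F = \C$, the branching rule for $\Ugp(n) \supset \Ugp(n-1)$ shows these are precisely the $\tau_\mu$ with highest weight $\mu = (q, 0, \ldots, 0, -p)$ for $p, q \in \N_0$, each with $[\tau_\mu|_{K_{n-1}} : \mathbf{1}] = 1$ and Howe degree $p + q$; the parallel statement for $\Ogp(n) \supset \Ogp(n-1)$ in the real case singles out the $\tau_{(a, 0, \ldots, 0)}$ of Howe degree $a$, possibly twisted by the determinant character.

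Next, via the Iwasawa decomposition, $\pi|_{K_n} \cong \Ind_{K_n \cap M}^{K_n}(\sigma|_{K_n \cap M})$ with $\sigma = \pi_1 \boxtimes \cdots \boxtimes \pi_r$ and $M$ the Levi of the defining parabolic, so by a second Frobenius reciprocity $[\pi|_{K_n} : \tau_\mu] = [\tau_\mu|_{K_n \cap M} : \sigma|_{K_n \cap M}]$. For $F = \C$ we have $M = \GL_1(\C)^n$, so $K_n \cap M \cong \Ugp(1)^n$, and $\sigma|_{K_n \cap M}$ is the integral character $(\kappa_1, \ldots, \kappa_n)$ arising from $\pi_j = \chi^{\kappa_j}|\cdot|_{\C}^{t_j}$; the relevant multiplicity is therefore the weight multiplicity of $(\kappa_1, \ldots, \kappa_n)$ in $\tau_{(q, 0, \ldots, 0, -p)}$. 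I would compute this via Gelfand--Tsetlin patterns: the highest weight $(q, 0, \ldots, 0, -p)$ forces every successive row of the pattern to take the form $(q^{(k)}, 0, \ldots, 0, -p^{(k)})$ with $0 \leq q^{(k)} \leq q^{(k+1)}$ and $0 \leq p^{(k)} \leq p^{(k+1)}$, and matching the prescribed weight forces $q^{(k)} - p^{(k)} = \kappa_1 + \cdots + \kappa_k$. Writing $\delta_k \coloneqq (q^{(k)} + p^{(k)}) - (q^{(k-1)} + p^{(k-1)}) = |\kappa_k| + 2 e_k$ for $k \geq 2$ reduces the count to the number of nonnegative integer solutions of $e_2 + \cdots + e_n = (m - c(\pi))/2$, which equals $\binom{(m - c(\pi))/2 + n - 2}{n - 2}$.

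For $F = \R$ the same strategy applies, with the preparatory step that each essentially square-integrable $\GL_2(\R)$-factor is re-expanded via $D_\kappa \otimes |\det|^t = \Ind_{\GL_1(\C)}^{\GL_2(\R)} \chi^{\kappa-1}|\cdot|_{\C}^t$ and induction in stages, re-writing $\pi$ as induced from a (non-standard) Levi built out of $\GL_1(\R)$ and $\GL_1(\C)$ factors. An analogous argument using orthogonal Gelfand--Tsetlin patterns together with the branching rule for $\Ogp(n) \supset \Ogp(n-1)$ yields the same binomial coefficient. I expect the main obstacle to be the more delicate combinatorics of the orthogonal patterns, with additional parity and sign-character bookkeeping; exploiting the fact that the relevant $\Ogp(n)$-types are labeled by a single nonnegative integer (rather than two indices $p, q$ as in the complex case) shows the count collapses to the same formula. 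Specialising $m = c(\pi)$ additionally recovers the uniqueness statement of Theorem~\ref{thm:conductornewform}.
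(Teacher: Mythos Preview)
Your complex case is correct and matches the paper's argument almost exactly: the paper iterates the branching rule $\Ugp(n)\supset\Ugp(n-1)\times\Ugp(1)$ (\hyperref[lem:U_{n - 1}xU_1branching]{Lemma \ref*{lem:U_{n - 1}xU_1branching}}) to obtain \hyperref[lem:U_1^{n}branching]{Lemma \ref*{lem:U_1^{n}branching}}, and your Gelfand--Tsetlin count is precisely the same iteration packaged differently; your parameters $e_2,\ldots,e_n$ correspond to the ordered $(n-2)$-tuples in the paper's proof of \hyperref[lem:U_1^{n}branching]{Lemma \ref*{lem:U_1^{n}branching}}.

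The real case, however, has a genuine gap. Your proposed shortcut of rewriting each discrete series factor as $D_{\kappa}\otimes|\det|^{t}=\Ind_{\GL_1(\C)}^{\GL_2(\R)}\chi^{\kappa-1}|\cdot|_{\C}^{t}$ and then invoking induction in stages does not place you in a parabolic induction framework: $\GL_1(\C)\hookrightarrow\GL_2(\R)$ is not a Levi subgroup, and since $\GL_2(\R)\neq\GL_1(\C)\cdot\Ogp(2)$, the Mackey restriction-induction formula does not collapse to a single term. Consequently you cannot simply replace $K_n\cap M$ by a product of $\Ogp(1)$ and $\SO(2)$ factors and run Frobenius reciprocity as in the complex case. (A side remark: the $\Ogp(n)$-types with $\Ogp(n-1)$-invariants are exactly $\tau_{(a,0,\ldots,0)}$; no determinant twist appears, since $\det|_{\Ogp(n-1)}$ is nontrivial.)

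The paper avoids this by staying with the genuine Levi $\Mgp_{\Pgp}(\R)\cong\prod_j\GL_{n_j}(\R)$ and feeding in the explicit $\Ogp(2)$-decomposition $D_{\kappa}|_{\Ogp(2)}\cong\bigoplus_{\ell\geq\kappa,\ \ell\equiv\kappa\ (2)}\tau_{(\ell,0)}$ (\hyperref[lem:HomOn]{Lemma \ref*{lem:HomOn}}). This introduces an extra summation over the $\ell_j$, and the remaining work is a two-stage branching $\Ogp(n)\supset\Ogp(2(n-r))\times\Ogp(1)^{2r-n}$ followed by $\Ogp(2(n-r))\supset\Ogp(2)^{n-r}$ (\hyperref[lem:O_{n-n'}xO_1^{2r - n}branching]{Lemmata \ref*{lem:O_{n-n'}xO_1^{2r - n}branching}} and \ref{lem:O_2^{n - r}branching}), together with the Vandermonde-type identity of \hyperref[lem:binom]{Lemma \ref*{lem:binom}} to collapse the resulting double sum. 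If you want to salvage your approach, the cleanest fix is to drop the $\GL_1(\C)$ idea and instead input this $\Ogp(2)$-decomposition directly; after that your orthogonal-pattern combinatorics will go through, but with the additional sum over $\ell_j$ that the paper handles in \hyperref[lem:Rnewformoldformconductor]{Lemma \ref*{lem:Rnewformoldformconductor}}.
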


More precisely, we prove that when $m \geq c(\pi)$ with $m \equiv c(\pi) \pmod{2}$, either $\Hom_{K_n}(\tau, \pi|_{K_n})$ or $\Hom_{K_{n - 1}}(1,\tau|_{K_{n - 1}})$ is trivial for all but one $K$-type $\tau$ with $\deg \tau = m$, and for this particular $K$-type,
\begin{align*}
\dim \Hom_{K_n}\left(\tau, \pi|_{K_n}\right) & = \binom{\frac{m - c(\pi)}{2} + n - 2}{n - 2},	\\
\dim \Hom_{K_{n - 1}}\left(1,\tau|_{K_{n - 1}}\right) & = 1.
\end{align*}
The fact that $V_{\pi}(m)^{K_{n - 1}}$ is trivial when $m \equiv c(\pi) + 1 \pmod{2}$ follows more generally from a result of Fan \cite{Fan18}, namely that for $\tau \in \widehat{K_n}$, $V_{\pi}^{\tau}$ is trivial whenever $\deg \tau \equiv c(\pi) + 1 \pmod{2}$.

We may think of $\bigoplus_{j = 0}^{m} V_{\pi}(j)^{K_{n - 1}}$ as being the archimedean analogue of $V_{\pi}^{K_1(\pp^m)}$. This space has dimension $\binom{\lfloor \frac{m - c(\pi)}{2} \rfloor + n - 1}{n - 1}$ if $m \geq c(\pi)$ and dimension zero otherwise. This aligns closely with the dimension of $V_{\pi}^{K_1(\pp^m)}$ as in \hyperref[thm:Reeder]{Theorem \ref*{thm:Reeder}}, namely $\binom{m - c(\pi) + n - 1}{n - 1}$ if $m \geq c(\pi)$ and zero otherwise.

\begin{remark}
For nonarchimedean $F$, Reeder \cite{Ree91} explicitly describes the space of oldforms as the image of the space of newforms under the action of certain elements of the Hecke algebra. It would be of interest to generalise this to the archimedean setting. In particular, when viewed in the Whittaker model, it would be of interest to describe oldforms in terms of the action of certain differential operators on the newform. For $\GL_2(\R)$, these are simply the Maa\ss{} raising and lowering operators, while the theory for $\GL_3(\R)$ is explored in \cite{BM19}.
\end{remark}

\subsection{Additivity and Inductivity of the Conductor Exponent}

Associated to any induced representation of Whittaker type $\pi$ of $\GL_n(F)$ is the epsilon factor $\e(s,\pi,\psi)$; with $\psi$ chosen as in \hyperref[sect:HaarF]{Section \ref*{sect:HaarF}}, the epsilon factor is an integral power of $i$ via \eqref{eq:epsilonfactorise} and \eqref{eq:epsilonfactor}. This integer is only determined modulo $4$, in contrast to the nonarchimedean setting. In this regard, the following theorem is not so instructive as \hyperref[thm:nonarchimedeanepsilonconductor]{Theorem \ref*{thm:nonarchimedeanepsilonconductor}}.

\begin{theorem}[{Cf.\ \hyperref[thm:nonarchimedeanepsilonconductor]{Theorem \ref*{thm:nonarchimedeanepsilonconductor}}}]
\label{thm:epsilonconductor}
Let $\pi$ be an induced representation of Whittaker type of $\GL_n(F)$. The epsilon factor $\e(s,\pi,\psi)$ is equal to $i^{-c(\pi)}$, where $c(\pi)$ denotes the conductor exponent of $\pi$.
\end{theorem}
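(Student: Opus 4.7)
The plan is to reduce to essentially square-integrable representations and verify the claim by direct computation in each of the three base cases. First, decompose $\pi$ as an isobaric sum $\pi = \pi_1 \boxplus \cdots \boxplus \pi_r$ with each $\pi_j$ essentially square-integrable. By the multiplicativity of epsilon factors \eqref{eq:epsilonfactorise} and the additivity of the conductor exponent (\hyperref[thm:additiveconductor]{Theorem \ref*{thm:additiveconductor}}), both $\e(s, \pi, \psi)$ and $i^{-c(\pi)}$ factor as the corresponding products over $j$. Hence it suffices to verify that $\e(s, \pi_j, \psi) = i^{-c(\pi_j)}$ for each essentially square-integrable constituent.

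For the two one-dimensional cases the verification is immediate from the definitions. If $F = \R$ and $\pi_j = \chi^{\kappa} |\cdot|_{\R}^t$ with $\kappa \in \{0, 1\}$, then $K_n = \Ogp(1)$ acts on $V_{\pi_j}$ via $\chi^{\kappa}$, which has Howe degree $\kappa$ by \eqref{eq:HowedegreeR}; since $K_{n - 1}$ is trivial, the newform $K$-type is $\chi^{\kappa}$ itself and $c(\pi_j) = \kappa$, matching the value $i^{-\kappa}$ from \eqref{eq:epsilonfactor}. If $F = \C$ and $\pi_j = \chi^{\kappa} |\cdot|_{\C}^t$ with $\kappa \in \Z$, then $K_n = \Ugp(1)$ acts via the character of highest weight $\kappa$, which has Howe degree $\|\kappa\|$ by \eqref{eq:HowedegreeC}, matching $i^{-\|\kappa\|}$.

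The only substantive case is $F = \R$ and $\pi_j = D_{\kappa} \otimes |\det|_{\R}^t$ on $\GL_2(\R)$ with $\kappa \geq 2$, where $K_n = \Ogp(2)$ and $K_{n - 1} = \Ogp(1)$. Here I would invoke the classical description of the $\SO(2)$-weights of $D_{\kappa}$ (namely $\pm \kappa, \pm(\kappa + 2), \pm(\kappa + 4), \ldots$, each with multiplicity one), together with the fact that the two-dimensional $\Ogp(2)$-irreducible $\tau_{(m, 0)}$ restricts to $\SO(2)$ as the sum of the weight-$\pm m$ characters, to conclude that the $\Ogp(2)$-type decomposition of $D_{\kappa}$ is $\bigoplus_{\ell \in \N_0} \tau_{(\kappa + 2\ell, 0)}$ with each summand occurring once. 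Next, the branching rule for $\Ogp(2) \supset \Ogp(1)$ (with $\Ogp(1)$ embedded in the upper-left corner) shows that each $\tau_{(m, 0)}$ contains the trivial character of $\Ogp(1)$ with multiplicity one. Since $\deg \tau_{(m, 0)} = m$, the newform $K$-type is $\tau_{(\kappa, 0)}$ and $c(\pi_j) = \kappa$, matching $i^{-\kappa}$.

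The main technical point is the discrete series case, which requires the precise $\Ogp(2)$-type decomposition of $D_{\kappa}$ and the branching to $\Ogp(1)$; both are classical but must be assembled carefully. An equally essential prerequisite is the additivity of the conductor exponent (\hyperref[thm:additiveconductor]{Theorem \ref*{thm:additiveconductor}}), which enables the reduction to the essentially square-integrable case and must be established independently through analysis of $K$-types in normalised parabolically induced representations.
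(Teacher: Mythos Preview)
Your proposal is correct and follows essentially the same path as the paper. The only organizational difference is that the paper does not first reduce to the essentially square-integrable constituents via additivity; instead it computes $c(\pi)$ for the full induced representation directly using branching laws (Lemmata~\ref{lem:Cnewformoldformconductor} and~\ref{lem:Rnewformoldformconductor}), obtaining $c(\pi)=\sum_j \|\kappa_j\|$ for $F=\C$ and $c(\pi)=\sum_j \kappa_j$ for $F=\R$, and then reads off both \hyperref[thm:additiveconductor]{Theorem~\ref*{thm:additiveconductor}} and \hyperref[thm:epsilonconductor]{Theorem~\ref*{thm:epsilonconductor}} simultaneously from this formula together with \eqref{eq:epsilonfactorise} and \eqref{eq:epsilonfactor}. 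Your decomposition of the $\Ogp(2)$-types of $D_{\kappa}$ is exactly what the paper records in the proof of \hyperref[lem:HomOn]{Lemma~\ref*{lem:HomOn}}, so the discrete series base case is handled identically; the additivity you invoke as a prerequisite is, in the paper, a by-product of the same branching computation rather than a separate input.
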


Upon decomposing $\pi$ as the isobaric sums $\pi = \pi_1 \boxplus \cdots \boxplus \pi_r$, we have the identity
\[i^{-c(\pi)} = \e(s,\pi,\psi) = \prod_{j = 1}^{r} \e(s,\pi_j,\psi) = i^{-c(\pi_1) - \cdots - c(\pi_r)}\]
via \eqref{eq:epsilonfactor}. This is only enough to conclude that $c(\pi)$ is congruent to $c(\pi_1) + \cdots + c(\pi_r)$ modulo $4$, rather than an equality of conductor exponents. Nonetheless, we still prove by different means that the conductor exponent is additive with respect to isobaric sums.

\begin{theorem}[{Cf.\ \hyperref[thm:nonarchimedeanadditiveconductor]{Theorem \ref*{thm:nonarchimedeanadditiveconductor}}}]
\label{thm:additiveconductor}
For an induced representation of Whittaker type $\pi = \pi_1 \boxplus \cdots \boxplus \pi_r$ of $\GL_n(F)$, we have that
\[c(\pi) = \sum_{j = 1}^{r} c(\pi_j).\]
\end{theorem}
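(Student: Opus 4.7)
The plan is to establish both $c(\pi) \leq \sum_{j=1}^r c(\pi_j)$ and $c(\pi) \geq \sum_{j=1}^r c(\pi_j)$ via a $K_n$-type analysis of $\pi$. By transitivity of parabolic induction together with \hyperref[rem:permutation]{Remark \ref*{rem:permutation}}, and after extending \hyperref[def:archcondnewfom]{Definition \ref*{def:archcondnewfom}} to arbitrary induced representations of Whittaker type, it suffices by induction on $r$ to treat $r = 2$: $\pi = \pi_1 \boxplus \pi_2$ arises as parabolic induction of $\pi_1 \boxtimes \pi_2$ from the Levi $M_P = \GL_{n_1}(F) \times \GL_{n_2}(F)$ with $n = n_1 + n_2$.

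Frobenius reciprocity yields the $K_n$-restriction
\[
V_\pi|_{K_n} \cong \Ind_{K_{n_1} \times K_{n_2}}^{K_n} \left(V_{\pi_1} \otimes V_{\pi_2}\right),
\]
so a $K_n$-type $\tau$ appears in $\pi$ if and only if some tensor product $\tau_1 \boxtimes \tau_2$ appearing in $\tau|_{K_{n_1} \times K_{n_2}}$ has each $\tau_j$ a $K_{n_j}$-type of $\pi_j$. For the upper bound one builds an explicit $K_n$-type $\tau^{\circ}$ from the newform $K$-types $\tau_j^{\circ}$ provided by \hyperref[thm:conductornewform]{Theorem \ref*{thm:conductornewform}}. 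When $F = \C$, each $\tau_j^{\circ}$ is a $\Ugp(n_j)$-representation of rational hook highest weight $(a_j, 0, \ldots, 0, -b_j)$ with $a_j + b_j = c(\pi_j)$, and one takes $\tau^{\circ}$ to have $\Ugp(n)$-highest weight $(a_1 + a_2, 0, \ldots, 0, -(b_1 + b_2))$ of Howe degree $c(\pi_1) + c(\pi_2)$; a Pieri-type computation for $\Ugp(n) \supset \Ugp(n_1) \times \Ugp(n_2)$ verifies that $\tau_1^{\circ} \boxtimes \tau_2^{\circ}$ occurs in $\tau^{\circ}|_{K_{n_1} \times K_{n_2}}$, while the $\Ugp(n) \to \Ugp(n-1)$ interlacing branching rule produces a one-dimensional space of $\Ugp(n-1)$-invariants in $\tau^{\circ}$. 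The case $F = \R$ proceeds in parallel via classical spherical harmonic decompositions, taking $\tau^{\circ}$ to be the $\Ogp(n)$-representation of hook highest weight $(k, 0, \ldots, 0)$ with $k = \sum_j \deg \tau_j^{\circ}$.

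For the lower bound, let $\tau$ be any $K_n$-type of $\pi$ admitting a nonzero $K_{n-1}$-invariant vector. The $K_{n-1}$-invariance constrains the highest weight of $\tau$ to hook shape: $\Ugp(n-1)$-interlacing forces $\Ugp(n)$-types to $(a, 0, \ldots, 0, -b)$ with $\deg \tau = a + b$, and the analogous $\Ogp(n-1)$-branching forces $\Ogp(n)$-types to $(k, 0, \ldots, 0)$ with $\deg \tau = k$. Frobenius reciprocity then produces a constituent $\tau_1 \boxtimes \tau_2$ of $\tau|_{K_{n_1} \times K_{n_2}}$ with each $\tau_j$ a $K_{n_j}$-type of $\pi_j$. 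The central combinatorial input is that when $\tau$ has hook shape, every such constituent consists of hook-shaped pieces $\tau_j$ with $\deg \tau_1 + \deg \tau_2 \leq \deg \tau$; in particular, each $\tau_j$ automatically admits $K_{n_j - 1}$-invariant vectors, so \hyperref[thm:conductornewform]{Theorem \ref*{thm:conductornewform}} applied to $\pi_j$ yields $\deg \tau_j \geq c(\pi_j)$, whence $\deg \tau \geq c(\pi_1) + c(\pi_2)$.

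The hard part will be this Levi-branching preservation of hook shape and of Howe degree, particularly for $F = \R$ where the $\Ogp(n)$ highest-weight classification involves the additional $\eta$-parameter and where summands $\pi_j = D_{\kappa_j} \otimes \left|\det\right|^{t_j}$ of essentially discrete series type over $\GL_2(\R)$ contribute two-dimensional newform $K$-types $(\kappa_j, 0) \in \Lambda_2$ rather than one-dimensional characters. Once these branching facts are verified in the required generality, additivity of the conductor exponent follows by summing the two bounds.
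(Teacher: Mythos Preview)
Your approach is correct but takes a genuinely different route from the paper. The paper does not argue by induction on $r$ or by two-sided bounds. Instead, for $\pi = \pi_1 \boxplus \cdots \boxplus \pi_r$ with each $\pi_j$ essentially square-integrable, it computes $\dim \Hom_{K_n}(\tau,\pi|_{K_n})$ for every hook-shaped $\tau$ in one shot, by iterating the branching law all the way down to $\Ugp(1)^n$ (for $F = \C$) or to the block-diagonal subgroup $\Ogp(n_1) \times \cdots \times \Ogp(n_r)$ (for $F = \R$); see \hyperref[lem:Cnewformoldformconductor]{Lemmata \ref*{lem:Cnewformoldformconductor}} and \ref{lem:Rnewformoldformconductor}. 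This produces the explicit highest weight of the newform $K$-type --- $(\sum_j \max\{\kappa_j,0\},0,\ldots,0,\sum_j \min\{\kappa_j,0\})$ over $\C$ and $(\sum_j \kappa_j,0,\ldots,0)$ over $\R$ --- from which $c(\pi) = \sum_j \|\kappa_j\|$ (resp.\ $\sum_j \kappa_j$) is read off directly; additivity then reduces to the base cases $c(\chi^{\kappa}|\cdot|^t) = \|\kappa\|$ and $c(D_\kappa \otimes |\det|^t) = \kappa$.

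The paper's route has the advantage of simultaneously delivering \hyperref[thm:conductornewform]{Theorems \ref*{thm:conductornewform}}, \ref{thm:oldforms}, and \ref{thm:epsilonconductor} from the same computation. Your inductive argument is more conceptual and sidesteps the iterated-branching combinatorics, at the cost of needing the Levi-branching fact you flag as the hard part. That fact does hold, and the cleanest proof uses exactly the harmonic-polynomial models the paper sets up in \hyperref[sect:hhp]{Section \ref*{sect:hhp}}: since $\HH_{p,q}(\C^n)$ sits inside the full space of bihomogeneous polynomials, which factors over $\C^{n_1} \times \C^{n_2}$ into pieces whose $\Ugp(n_j)$-constituents are all of the form $\tau_{(p_j,0,\ldots,0,-q_j)}$ with $p_j + q_j \leq p + q$ restricted to that block, every $\Ugp(n_1) \times \Ugp(n_2)$-constituent of a hook is a product of hooks of total Howe degree at most that of the ambient one; the real case is identical with $\HH_p(\R^n)$. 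So your ``hard part'' is genuinely available, just not packaged in the paper as a standalone branching lemma.
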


The epsilon factor is again inductive in degree zero, so that if $\pi$ and $\pi'$ are induced representations of Whittaker type of $\GL_n(\C)$ and $\AI_{\C/\R} \pi$ and $\AI_{\C/\R} \pi'$ denote the induced representation of Whittaker type of $\GL_{2n}(\R)$ obtained by induction \cite{Hen10}, then
\[\frac{\e\left(s,\AI_{\C/\R} \pi, \psi\right)}{\e\left(s,\AI_{\C/\R} \pi',\psi\right)} = \frac{\e\left(s,\pi,\psi \circ \Tr_{\C/\R}\right)}{\e\left(s,\pi',\psi \circ \Tr_{\C/\R}\right)}.\]
Taking $\pi'$ to be the isobaric sum of $n$ copies of the trivial representation, we find that
\[c\left(\AI_{\C/\R} \pi\right) \equiv c(\pi) + n \pmod{4}.\]
We prove by different means that this congruence can be replaced by an equality.

\begin{theorem}[{Cf.\ \hyperref[thm:nonarchimedeaninductiveconductor]{Theorem \ref*{thm:nonarchimedeaninductiveconductor}}}]
\label{thm:inductiveconductor}
For an induced representation of Whittaker type $\pi$ of $\GL_n(\C)$, we have that
\[c\left(\AI_{\C/\R} \pi\right) = c(\pi) + n.\]
\end{theorem}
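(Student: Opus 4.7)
The plan is to reduce to essentially square-integrable $\pi$ via Theorem \ref{thm:additiveconductor} and then handle this base case by direct computation. Since $\AI_{\C/\R}$ is defined through the local Langlands correspondence and is realised on the Weil parameter side as $\Ind_{W_{\C}}^{W_{\R}}$, which commutes with direct sums, one has
\[
\AI_{\C/\R}\left(\bigboxplus_{j = 1}^{r} \pi_j\right) = \bigboxplus_{j = 1}^{r} \AI_{\C/\R}(\pi_j).
\]
Decomposing $\pi$ as an isobaric sum of essentially square-integrable representations $\pi_j$ of $\GL_{n_j}(\C)$ and applying Theorem \ref{thm:additiveconductor} to both sides of the claimed equality, the problem reduces to proving $c(\AI_{\C/\R} \pi_j) = c(\pi_j) + n_j$ for each $j$. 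Since $\GL_n(\C)$ admits essentially square-integrable representations only when $n = 1$, it suffices to treat $\pi = \chi^{\kappa} |\cdot|_{\C}^t$ with $\kappa \in \Z$ and $t \in \C$.

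For such a character, the preliminaries give an explicit identification of $\AI_{\C/\R}(\pi)$: it equals $|\cdot|_{\R}^t \boxplus \chi |\cdot|_{\R}^t$ when $\kappa = 0$ and $D_{\|\kappa\| + 1} \otimes |\det|_{\R}^t$ when $\kappa \neq 0$. In the first case, Theorem \ref{thm:additiveconductor} combined with the identities $c(|\cdot|_{\R}^t) = 0$ and $c(\chi |\cdot|_{\R}^t) = 1$ yields $c(\AI_{\C/\R} \pi) = 1 = c(\pi) + 1$, having used $c(|\cdot|_{\C}^t) = 0$. In the second case, it suffices to prove $c(\chi^\kappa |\cdot|_{\C}^t) = \|\kappa\|$ on the $\GL_1(\C)$ side and $c(D_\kappa \otimes |\det|_{\R}^t) = \kappa$ on the $\GL_2(\R)$ side, which together give $c(\AI_{\C/\R}\pi) = \|\kappa\| + 1 = c(\pi) + 1$.

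The main obstacle is thus the direct verification of these base-case conductor values from Definition \ref{def:archcondnewfom}, i.e.\ pinpointing in each case the $K$-type $\tau^{\circ}$ of minimal Howe degree containing a $K_{n - 1}$-invariant vector. For characters of $\C^{\times}$ and $\R^{\times}$, $V_\pi|_{K_1}$ is itself a single character, so the answer is immediate from \eqref{eq:HowedegreeC} and \eqref{eq:HowedegreeR}. For $D_\kappa \otimes |\det|_{\R}^t$ with $\kappa \geq 2$, one invokes the classical branching of $D_\kappa$ to $\Ogp(2)$, whose constituents are the $\tau_{(m)}$ for integers $m \geq \kappa$ with $m \equiv \kappa \pmod{2}$, together with a short check in an explicit model of $\tau_{(m)}$ showing that $\tau_{(m)}|_{\Ogp(1)}$ contains the trivial representation with multiplicity one. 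Equation \eqref{eq:HowedegreeR} then gives $\deg \tau_{(\kappa)} = \kappa$, and hence $c(D_\kappa \otimes |\det|_{\R}^t) = \kappa$. Once these base cases are in place, the inductivity identity assembles from the reduction above.
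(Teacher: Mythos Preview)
Your proposal is correct and follows essentially the same approach as the paper: reduce via additivity (Theorem \ref{thm:additiveconductor}) and the compatibility of $\AI_{\C/\R}$ with isobaric sums to the case $\pi = \chi^{\kappa}|\cdot|_{\C}^{t}$, then use the explicit identification of $\AI_{\C/\R}(\chi^{\kappa}|\cdot|_{\C}^{t})$ together with the base-case conductor values. The only difference is packaging: the paper has already computed $c(\chi^{\kappa}|\cdot|_{\C}^{t}) = \|\kappa\|$, $c(\chi^{\kappa}|\cdot|_{\R}^{t}) = \kappa$, and $c(D_{\kappa} \otimes |\det|_{\R}^{t}) = \kappa$ as by-products of the general branching analysis in Section \ref{sect:newformKtype}, so it simply quotes these and concludes in one line, whereas you sketch them separately for the specific low-rank cases needed.
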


\subsection{Test Vectors for Rankin--Selberg Integrals}

One can once again ask about the existence of test vectors for $\GL_n \times \GL_m$ Rankin--Selberg integrals. This is known for spherical representations $\pi$ and $\pi'$ with $m \in \{n - 1,n\}$, due to Stade \cite[Theorem 3.4]{Sta01}, \cite[Theorem 1.1]{Sta02} (see also \cite[Lemma 4.2]{GLO08} and \cite[Theorem 3.1]{IsSt13}), while recent work of Ishii and Miyazaki deals with the case of pairs of principal series representations of certain specific forms \cite[Theorem 2.5]{IM22}. We prove the existence of a test vector for nonspherical representations $\pi$ of $\GL_n(F)$ provided that $\pi'$ is a spherical representation of Langlands type of $\GL_{n - 1}(F)$ and $W' = W^{\prime\circ} \in \WW\left(\pi',\overline{\psi}\right)$ is the spherical vector normalised as in \hyperref[sect:sphericalrep]{Section \ref*{sect:sphericalrep}}.

\begin{theorem}[{Cf.\ \hyperref[thm:JPPSWhittaker]{Theorem \ref*{thm:JPPSWhittaker}}}]
\label{thm:testvector}
Let $\pi$ be an induced representation of Langlands type of $\GL_n(F)$ with $n \geq 2$. There exists a Whittaker function $W \in \WW(\pi,\psi)$ such that for any spherical representation of Langlands type $\pi'$ of $\GL_{n - 1}(F)$ with spherical Whittaker function $W^{\prime\circ} \in \WW(\pi',\overline{\psi})$,
\[\Psi(s,W,W^{\prime\circ}) = L(s,\pi \times \pi')\]
for $\Re(s)$ sufficiently large.

Moreover, there exists a unique such function $W^{\circ} \in \WW(\pi,\psi)$ that additionally satisfies
\[W^{\circ}\left(g\begin{pmatrix} k & 0 \\ 0 & 1 \end{pmatrix}\right) = W^{\circ}(g)\]
for all $k \in K_{n - 1}$. Up to multiplication by a scalar, this function is the newform $v^{\circ} \in V_{\pi}(c(\pi))^{K_{n - 1}}$ viewed in the Whittaker model.
\end{theorem}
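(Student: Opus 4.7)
The plan is to take $W^{\circ} \in \WW(\pi,\psi)$ to be the image in the Whittaker model of the newform $v^{\circ} \in V_{\pi}^{\tau^{\circ}|_{K_{n-1}}}$ produced by Theorem \ref{thm:conductornewform}, with a specific normalization, and to evaluate the Rankin--Selberg integral $\Psi(s, W^{\circ}, W^{\prime\circ})$ directly. Since $W^{\circ}$ is right $K_{n-1}$-invariant under the block embedding $k' \mapsto \diag(k',1)$ and $W^{\prime\circ}$ is right $K_{n-1}$-invariant, the Iwasawa decomposition on $\GL_{n-1}(F)$ together with the left $\Ngp_{n-1}(F)$-equivariance of both Whittaker functions reduces the integral \eqref{eq:RankinSelbergnm} to a Mellin-type integral of $W^{\circ}(\diag(a',1)) W^{\prime\circ}(a')$ over $\Agp_{n-1}(F)$, weighted by $|\det a'|^{s - 1/2} \delta_{n-1}^{-1}(a')$. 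In particular, the same averaging argument shows that for any right $K_n$-finite $W$, the value of $\Psi(s, W, W^{\prime\circ})$ is unchanged upon replacing $W$ by its $K_{n-1}$-average on the right, so it is enough to consider $K_{n-1}$-invariant test vectors.

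To evaluate the resulting torus integral I would proceed by induction on the number $r$ of isobaric summands in the Langlands decomposition $\pi = \pi_1 \boxplus \cdots \boxplus \pi_r$. The base case $r = 1$ reduces to essentially square-integrable $\pi$, which forces either $n = 1$ over any $F$ or $n = 2$ over $\R$; both of these can be handled directly using the explicit $L$-factors \eqref{eq:L(s,pi)C}, \eqref{eq:L(s,pi)R1}, \eqref{eq:L(s,pi)R2} and the classical computations underlying Popa \cite{Pop08}. For the inductive step, I would write $W^{\circ}$ as a Jacquet integral applied to a specific element $f^{\bullet}$ of the induced model whose image under the spectral projector onto the newform $K_n$-type $\tau^{\circ}$ generates the one-dimensional space $V_{\pi}^{\tau^{\circ}|_{K_{n-1}}}$. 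Substituting into the Mellin integral and splitting the torus integration along the Levi block structure $(n_1,\ldots,n_r)$ should produce an iterated integral that unfolds to a product of lower-rank Rankin--Selberg integrals whose inner factors are spherical Whittaker functions, exactly as in Stade's formula; the multiplicativity of $L(s,\pi \times \pi')$ under isobaric sums \eqref{eq:isobaricRS} and the additivity of conductor exponents from Theorem \ref{thm:additiveconductor} make this factorization compatible with the target identity. When $\pi$ itself is spherical the computation specializes to Stade's formulae, which will accordingly be re-derived as the unramified shadow of the same argument.

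The main obstacle will be producing the explicit formula for $W^{\circ}$ on the torus: one must identify the element $f^{\bullet}$ whose Jacquet integral is the newform, and then compute the projector $\int_{K_n} \xi^{\tau^{\circ}|_{K_{n-1}}}(k) f^{\bullet}(w_n u k)\,dk$ from \eqref{eq:xitauKn-1} precisely enough that the resulting Mellin integral is recognizable as the product of gamma factors on the right-hand side of \eqref{eq:L(s,pi)C}--\eqref{eq:L(s,tildepi)R2}. This is fundamentally a question in the branching combinatorics of highest weights from $\Ugp(n)$ to $\Ugp(n-1)$ or from $\Ogp(n)$ to $\Ogp(n-1)$, and is where the Howe-degree minimality encoded in \eqref{eq:HowedegreeC}--\eqref{eq:HowedegreeR} is decisive: it forces $f^{\bullet}$ into a uniquely determined shape and pins down the normalization. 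Once this has been settled, the uniqueness statement follows immediately from Theorem \ref{thm:conductornewform}, since the space of right $K_{n-1}$-invariant vectors of minimal Howe degree is one-dimensional: any right $K_{n-1}$-invariant test vector must be a scalar multiple of $W^{\circ}$, and a general test vector becomes $W^{\circ}$ after averaging over $K_{n-1}$.
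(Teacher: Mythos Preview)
Your overall architecture (take the newform in the Whittaker model, induct on the isobaric structure, and reduce to lower-rank Rankin--Selberg integrals) matches the paper's, but the inductive step you sketch is where the real difficulty lies, and your proposal does not supply the mechanism that makes it work. Writing $W^{\circ}$ as a Jacquet integral and ``splitting the torus integration along the Levi block structure'' does not by itself produce a factorization into lower-rank Rankin--Selberg integrals; the Jacquet integral over $\Ngp_n(F)$ entangles the blocks, and the paper explicitly remarks that direct Mellin-transform computations of this type run into serious combinatorial obstacles (this is the Ishii--Oda phenomenon alluded to in \hyperref[sect:further]{Section \ref*{sect:further}}). The paper's device is different: it first establishes a \emph{propagation formula} (\hyperref[lem:propagationformula]{Lemma \ref*{lem:propagationformula}} and its discrete-series analogue) expressing $W^{\circ}\begin{psmallmatrix} g & 0 \\ 0 & 1\end{psmallmatrix}$ as an integral over $\GL_{n-1}(F)$ of the Whittaker newform $W_0^{\circ}$ of $\pi_0 = \pi_2 \boxplus \cdots \boxplus \pi_r$ against an explicit standard Schwartz function. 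This comes from writing the newform in the induced model as a Godement section (\hyperref[sect:newformGodement]{Section \ref*{sect:newformGodement}}), which in turn relies on the convolution-section identity of \hyperref[prop:Pieri]{Proposition \ref*{prop:Pieri}}. Substituting the propagation formula into $\Psi(s,W^{\circ},W^{\prime\circ})$ and unfolding then yields $\Psi(s,W_0^{\circ},W^{\prime\circ},\Phi_0^{\circ})$ times an inner $\GL_{n-1}$ integral that the convolution-section identity evaluates as $L(s,\pi_1 \times \pi')$; this is the content of \hyperref[prop:(n,n-1)reduction]{Propositions \ref*{prop:(n,n-1)reduction}} and \ref{prop:(n,n-1)reduction2}. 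The induction is a \emph{double} induction alternating with \hyperref[thm:GLnxGLn]{Theorem \ref*{thm:GLnxGLn}}, not a single induction on $r$.

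Your uniqueness argument also has a gap. \hyperref[thm:conductornewform]{Theorem \ref*{thm:conductornewform}} only gives one-dimensionality of the $K_{n-1}$-invariant subspace at the \emph{minimal} Howe degree; there are oldforms of higher degree that are also $K_{n-1}$-invariant, and nothing you have said rules out a linear combination of the newform and oldforms being another test vector. The paper handles this with \hyperref[lem:GLn-1uniqueness]{Lemma \ref*{lem:GLn-1uniqueness}}, a Whittaker--Plancherel argument showing that a $K_{n-1}$-invariant $W$ is determined by the values $\Psi(s,W,W^{\prime\circ})$ as $\pi'$ ranges over all spherical representations.
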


We again call $W^{\circ}$ the Whittaker newform. The normalisation of the Whittaker newform is specified in \hyperref[def:cannorm]{Definition \ref*{def:cannorm}}.

Stade \cite[Theorem 1.1]{Sta02} has shown that for all spherical representations $\pi,\pi'$ of $\GL_n(\R)$,
\[\Psi(s,W^{\circ},W^{\prime\circ},\Phi^{\circ}) = L(s,\pi \times \pi'),\]
where $\Phi^{\circ} \in \Ss(\Mat_{1 \times n}(\R))$ is given by $\Phi^{\circ}(x) \coloneqq \exp\left(-\pi x \prescript{t}{}{x}\right)$; see also \cite[Lemma 4.1]{GLO08} and \cite[Theorem 3.1]{IsSt13}. We extend this to nonspherical representations $\pi$.

\begin{theorem}[{Cf.\ \hyperref[thm:Kim]{Theorem \ref*{thm:Kim}}}]
\label{thm:GLnxGLn}
Let $\pi$ be an induced representation of Langlands type of $\GL_n(F)$. For every spherical representation of Langlands type $\pi'$ of $\GL_n(F)$ with spherical Whittaker function $W^{\prime\circ} \in \WW(\pi',\overline{\psi})$, the Whittaker newform $W^{\circ} \in \WW(\pi,\psi)$ of $\pi$ satisfies
\[\Psi(s,W^{\circ},W^{\prime\circ},\Phi^{\circ}) = L(s,\pi \times \pi')\]
for $\Re(s)$ sufficiently large, where $\Phi^{\circ} \in \Ss(\Mat_{1 \times n}(F))$ is given by
\begin{equation}
\label{eq:Phicirc}
\left(\dim \tau^{\circ}\right) \overline{P^{\circ}}(x) \exp\left(-d_F \pi x \prescript{t}{}{\overline{x}}\right), \qquad d_F \coloneqq [F:\R] = \begin{dcases*}
1 & if $F = \R$,	\\
2 & if $F = \C$,
\end{dcases*}
\end{equation}
with $P^{\circ}$ a homogeneous harmonic polynomial that depends only on the newform $K$-type $\tau^{\circ}$ of $\pi$.
\end{theorem}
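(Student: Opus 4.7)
My plan is to reduce the $\GL_n \times \GL_n$ Rankin--Selberg integral to the $\GL_n \times \GL_{n - 1}$ integral of \hyperref[thm:testvector]{Theorem \ref*{thm:testvector}}, combined with an auxiliary $\GL_1$ Tate-type integral, by exploiting the specific structure of $\Phi^{\circ}$. I will start with the Iwasawa decomposition $g = ak$ with $a \in \Agp_n(F)$ and $k \in K_n$, and use $W^{\prime\circ}(ak) = W^{\prime\circ}(a)$ (sphericity) together with the $K_n$-invariance of $x \mapsto x \, \prescript{t}{}{\overline{x}}$ to reduce $\exp(-d_F \pi \cdot e_n ak \, \prescript{t}{}{\overline{e_n ak}}) = \exp(-d_F \pi |a_n|_F)$. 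The homogeneity of $P^{\circ}$ yields $\overline{P^{\circ}(e_n ak)} = \overline{\eta(a_n)} \, \overline{P^{\circ}(e_n k)}$ for an explicit homogeneity factor $\eta$, so the $k$-dependence localises in the inner integral
\[T(a) \coloneqq \int_{K_n} W^{\circ}(ak) \overline{P^{\circ}(e_n k)} \, dk.\]

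The key step is to identify $T(a)$ as $(\dim \tau^{\circ})^{-1} W^{\circ}(a)$, so that the prefactor $\dim \tau^{\circ}$ in $\Phi^{\circ}$ cancels. Writing $W^{\circ}(ak) = \Lambda(\pi(a) \pi(k) \cdot v^{\circ})$, it suffices to show that $T v^{\circ} \coloneqq \int_{K_n} \pi(k) v^{\circ} \, \overline{P^{\circ}(e_n k)} \, dk$ is proportional to $v^{\circ}$. A change of variables $k \mapsto k_0^{-1} k$ combined with $e_n k_0 = e_n$ for $k_0 \in \begin{pmatrix} K_{n - 1} & 0 \\ 0 & 1 \end{pmatrix}$ shows that $T v^{\circ}$ is $K_{n - 1}$-invariant. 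Choosing $P^{\circ}$ to be the harmonic polynomial on $F^n$ whose restriction to the unit sphere $K_{n - 1} \backslash K_n$ realises the $K_{n - 1}$-fixed vector of $\tau^{\circ}$, the function $k \mapsto \overline{P^{\circ}(e_n k)}$ becomes a matrix coefficient of $\overline{\tau^{\circ}}$ against this line, and Schur orthogonality lands $T v^{\circ}$ in $V_{\pi}^{\tau^{\circ}}$. Thus $T v^{\circ} \in V_{\pi}^{\tau^{\circ}|_{K_{n - 1}}}$, which is one-dimensional by \hyperref[thm:conductornewform]{Theorem \ref*{thm:conductornewform}}, forcing proportionality with $v^{\circ}$; the scalar emerges as $(\dim \tau^{\circ})^{-1}$ from the Schur normalisation of $P^{\circ}$.

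After this $K_n$-reduction, the remaining $\Agp_n(F)$-integral
\[\int_{\Agp_n(F)} W^{\circ}(a) W^{\prime\circ}(a) \overline{\eta(a_n)} \exp(-d_F \pi |a_n|_F) |\det a|^s \delta_n^{-1}(a) \, d^{\times} a\]
will be evaluated using the explicit structure of the spherical Whittaker $W^{\prime\circ}_{\pi', n}$ for $\pi' = \bigboxplus_{j = 1}^{n} |\cdot|^{t'_j}$. A Stade-type reduction identity relates the Mellin transform of $W^{\prime\circ}_{\pi', n}(\diag(a_1, \ldots, a_{n - 1}, a_n))$ in $a_n$ against the Gaussian weight $\overline{\eta(a_n)} \exp(-d_F \pi |a_n|_F) |a_n|^{s'}$ to a product of a local $L$-factor and the spherical Whittaker $W^{\prime\prime\circ}_{\pi''}(\diag(a_1, \ldots, a_{n - 1}))$ of the truncated representation $\pi'' \coloneqq \bigboxplus_{j < n} |\cdot|^{t'_j}$ on $\GL_{n - 1}(F)$; this identity follows from the closed-form expression for $W^{\prime\circ}_{\pi', n}$ along the $(n - 1, 1)$-parabolic via the Jacquet integral of $\pi'$. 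Collapsing the $a_n$-integration leaves an $\Agp_{n - 1}(F)$-integral that coincides with the Iwasawa form of $\Psi(s, W^{\circ}, W^{\prime\prime\circ}) = L(s, \pi \times \pi'')$ by \hyperref[thm:testvector]{Theorem \ref*{thm:testvector}}. Combining the two $L$-factors via \eqref{eq:isobaricRS} and \eqref{eq:twistedLfunction} yields $L(s, \pi \times \pi')$.

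The principal obstacle is the multiplicity-one identification in the second paragraph: making the choice of $P^{\circ}$ precise (realising the $K_{n - 1}$-zonal spherical vector of $\tau^{\circ}$ as an explicit harmonic polynomial) and computing the Schur constant so that it matches $(\dim \tau^{\circ})^{-1}$ exactly. A secondary but still substantial obstacle is the Stade-type reduction in the third paragraph, which requires a careful analysis of the Jacquet integral of the spherical section $f^{\circ}_{\pi'}$ along the mirabolic parabolic; this is tractable because $\pi'$ is a fully induced principal series with explicit Gindikin--Karpelevich-type factorisation, but bookkeeping the $L$-factors so that they combine correctly into $L(s, \pi \times \pi')$ will require care, particularly in tracking the twist $t'_n$ and the normalisation of $\eta$.
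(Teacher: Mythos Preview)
Your $K_n$-reduction in the second paragraph is correct and clean: the identity $(\dim\tau^{\circ})\overline{P^{\circ}}(e_n k)=\xi^{\tau^{\circ}|_{K_{n-1}}}(k)$ holds (it is exactly the computation in the proof of \hyperref[prop:newformidentity]{Proposition \ref*{prop:newformidentity}} combined with \hyperref[lem:QenkC]{Lemmata \ref*{lem:QenkC}} and \ref{lem:QenkR}), so $(\dim\tau^{\circ})\,T(a)=\Pi^{\tau^{\circ}|_{K_{n-1}}}W^{\circ}(a)=W^{\circ}(a)$ as you claim.

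The third paragraph, however, contains a genuine gap. The asserted ``Stade-type reduction identity'' --- that the Mellin transform of $W^{\prime\circ}(\diag(a_1,\ldots,a_n))$ in $a_n$ against the Gaussian weight equals an $L$-factor times $W^{\prime\prime\circ}(\diag(a_1,\ldots,a_{n-1}))$ --- is false. Already for $n=2$ one sees this directly: using $W^{\prime\circ}(\diag(a_1,a_2))=|a_2|^{t_1'+t_2'}\cdot 2|a_1/a_2|^{(t_1'+t_2'+1)/2}K_{(t_1'-t_2')/2}(2\pi|a_1/a_2|)$ for $F=\R$, the $a_2$-integral retains $a_1$-dependence through the Bessel factor and the Gaussian and does not collapse to a constant multiple of $|a_1|^{t_1'}$. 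More structurally, you have overlooked that $W^{\circ}(a)$ also depends on $a_n$; even after a central-character change of variables to strip this off, the remaining $\Agp_{n-1}$-integrand involves $W^{\prime\circ}_{\GL_n}(\diag(a',1))$, which is \emph{not} the $\GL_{n-1}$ spherical Whittaker function $W^{\prime\prime\circ}(a')$, so the result does not match the Iwasawa form of $\Psi(s,W^{\circ},W^{\prime\prime\circ})$.

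The paper's argument (\hyperref[prop:(n,n)reduction]{Proposition \ref*{prop:(n,n)reduction}}) avoids the Iwasawa decomposition entirely. It inserts the propagation formula \eqref{eq:W'circviaW0'circ} for $W^{\prime\circ}$, which expresses it as a $\GL_{n-1}$-integral of $W_0^{\prime\circ}$ against a Schwartz kernel; after unfolding, the inner integral over $\GL_n(F)$ is recognised as the convolution-section identity \eqref{eq:WhittakerPieri}, producing $L(s+t_1',\pi)\,W^{\circ}\begin{psmallmatrix}h&0\\0&1\end{psmallmatrix}$, and what remains is precisely $\Psi(s,W^{\circ},W_0^{\prime\circ})$. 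These two ingredients --- the propagation formula for $W^{\prime\circ}$ and the convolution section for $W^{\circ}$ --- are what replace your nonexistent $a_n$-reduction and are the missing step in your plan.
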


We discuss this distinguished polynomial $P^{\circ}$ in further detail in \hyperref[sect:hhp]{Section \ref*{sect:hhp}}; it is the unique homogeneous harmonic polynomial that is right $K_{n - 1}$-invariant and satisfies $P^{\circ}(e_n) = 1$ in the vector space of such polynomials that forms a model of $\tau^{\circ}$.

In the nonarchimedean setting, it is not only the case that the $\GL_n \times \GL_{n - 1}$ Rankin--Selberg integral $\Psi(s,W^{\circ},W^{\prime\circ})$ is equal to $L(s,\pi \times \pi')$ for all spherical representations $\pi'$ of $\GL_{n - 1}(F)$, but also for all spherical representations $\pi'$ of $\GL_m(F)$ and all $m \in \{1, \ldots, n - 1\}$; see \hyperref[thm:KimMatringe]{Theorem \ref*{thm:KimMatringe}}. In the archimedean setting, on the other hand, it is widely believed (see, for example, \cite[Section 2.6]{Bum89}) that $\Psi(s,W,W')$ is \emph{never} equal to $L(s,\pi \times \pi')$ when $\pi'$ is an induced representation of Whittaker type of $\GL_m(F)$ with $m \in \{1, \ldots, n - 2\}$ and the Whittaker functions $W \in \WW(\pi,\psi)$ and $W' \in \WW(\pi',\overline{\psi})$ are right $K_n$- and $K_m$-finite respectively.

Notably, Ishii and Stade \cite[Theorem 3.2]{IsSt13}, furthering the work of Hoffstein and Murty \cite{HM89}, have shown that for all spherical representations $\pi$ of $\GL_n(\R)$ and $\pi'$ of $\GL_{n - 2}(\R)$,
\begin{equation}
\label{eq:GLnxGLn-2}
\Psi(s,W^{\circ},W^{\prime\circ}) = L(s,\pi \times \pi') \frac{1}{2\pi i} \int_{\sigma - i\infty}^{\sigma + i\infty} \frac{L\left(w,\widetilde{\pi}\right)}{2 L(s + w,\pi')} \, dw
\end{equation}
for some sufficiently large $\sigma$. Though we do not include a proof, our methods may also be used to show the identity \eqref{eq:GLnxGLn-2} when $\pi$ is a ramified induced representation of Langlands type and $W^{\circ}$ is the Whittaker newform.

\begin{remark}
Just as in the nonarchimedean setting, little is known about test vectors for archimedean Rankin--Selberg integrals when $\pi'$ is ramified. The nonarchimedean method of Kim \cite[Proposition 2.2.2]{Kim10} remains valid in the archimedean setting in showing that Whittaker newforms are not test vectors when $\pi'$ is ramified: if $\pi'$ is a ramified representation of $\GL_m(F)$ with $m < n$ and $W^{\circ},W^{\prime\circ}$ are newforms of $\pi$ and $\pi'$ respectively, then $\Psi\left(s,W^{\circ},W^{\prime\circ}\right) = 0$ for all $s \in \C$. The test vector problem has been resolved for $m = n = 2$ by Miyazaki \cite[Theorem 6.1]{Miy18} and Hirano, Ishii, and Miyazaki \cite[Appendix A]{HIM22} and for $m = 2$ and $n = 3$ by Hirano, Ishii, and Miyazaki \cite{HIM16,HIM22}. Ishii and Miyazaki have also resolved this problem for $n$ arbitrary and $m \in \{n,n - 1\}$ when both $\pi$ and $\pi'$ are principal series representations \cite[Theorems 2.5 and 2.9]{IM22}. Akin to the nonarchimedean work of Jo \cite{Jo23}, the author and Jo have resolved a \emph{weakened} version of the test vector problem in \cite{HJ24} when $m = n$, where it is shown that there exists an explicit right $K_n$-finite Schwartz--Bruhat function $\Phi \in \Ss(\Mat_{1 \times n}(F))$ for which $\Psi(s,W^{\circ},W^{\prime\circ},\Phi)$ is a multiple of $L(s,\pi \times \pi')$ by a nonzero polynomial in $s$.
\end{remark}

\subsection{Test Vectors for Godement--Jacquet Zeta Integrals}

Test vectors for archimedean Godement--Jacquet zeta integrals are known to exist; see \cite{Lin18} for $F = \R$ and \cite{Ish19} for the case $F = \C$. We give a new resolution of the test vector problem via newforms.

\begin{remark}
We were unable to verify certain aspects of \cite{Lin18}; cf.\ \cite[Footnote \textdagger]{Hum21}.
\end{remark}

\begin{theorem}[{Cf.\ \hyperref[thm:GJ]{Theorem \ref*{thm:GJ}}}]
\label{thm:GJram}
Let $(\pi,V_{\pi})$ be an induced representation of Whittaker type of $\GL_n(F)$. Let $\beta(g)$ denote the matrix coefficient $\langle \pi(g) \cdot v^{\circ}, \widetilde{v}^{\circ} \rangle$, where $v^{\circ} \in V_{\pi}$ denotes the newform and $\widetilde{v}^{\circ} \in V_{\widetilde{\pi}}$ is the corresponding newform normalised such that $\beta(1_n) = 1$. Then
\[Z(s,\beta,\Phi) = L(s,\pi)\]
for $\Re(s)$ sufficiently large, where $\Phi \in \Ss(\Mat_{n \times n}(F))$ is given by
\begin{equation}
\label{eq:Phi}
\left(\dim \tau^{\circ}\right) \overline{P^{\circ}}(e_n x) \exp\left(-d_F \pi \Tr\left(x \prescript{t}{}{\overline{x}}\right)\right),
\end{equation}
with $P^{\circ}$ a homogeneous harmonic polynomial that depends only on the newform $K$-type $\tau^{\circ}$ of $\pi$.
\end{theorem}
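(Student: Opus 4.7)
The strategy is to exploit the multiplicity-one characterisation of the newform to reduce $Z(s,\beta,\Phi)$ to the computation of a single scalar, which is then identified with $L(s,\pi)$ via the $\GL_n \times \GL_n$ Rankin--Selberg integral of \hyperref[thm:GLnxGLn]{Theorem \ref*{thm:GLnxGLn}}. Setting $\Phi_s(g) := \Phi(g) |\det g|^{s + (n-1)/2}$, consider the convolution $\pi(\Phi_s) v^{\circ} := \int_{\GL_n(F)} \Phi_s(g) \pi(g) \cdot v^{\circ} \, dg$. Under the embedding $k \mapsto \begin{psmallmatrix} k & 0 \\ 0 & 1 \end{psmallmatrix}$ of $K_{n-1}$ into $K_n$, we have $e_n \begin{psmallmatrix} k & 0 \\ 0 & 1 \end{psmallmatrix} = e_n$ and $\Tr((kg) \prescript{t}{}{\overline{kg}}) = \Tr(g \prescript{t}{}{\overline{g}})$, so $\Phi_s$ is left-$K_{n-1}$-invariant. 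Moreover, $\Phi_s(kg)$ depends on $k \in K_n$ only through $e_n k$ via $v \mapsto \overline{P^{\circ}}(vg)$; viewed on the sphere $K_{n-1} \backslash K_n$, this has spherical harmonic decomposition supported in $K_n$-types of degree at most $c(\pi)$. Combining with \hyperref[thm:oldforms]{Theorem \ref*{thm:oldforms}}, which forces $V_{\pi}^{\tau|_{K_{n-1}}}$ to vanish whenever $\deg \tau < c(\pi)$, it follows that $\pi(\Phi_s) v^{\circ}$ lies in the one-dimensional space $V_{\pi}^{\tau^{\circ}|_{K_{n-1}}}$ of \hyperref[thm:conductornewform]{Theorem \ref*{thm:conductornewform}}. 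Hence $\pi(\Phi_s) v^{\circ} = c(s) v^{\circ}$, and pairing with $\widetilde{v}^{\circ}$ yields $Z(s,\beta,\Phi) = c(s) \beta(1_n) = c(s)$.

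To evaluate $c(s)$, apply the Whittaker functional of $\pi$:
\[
c(s) W^{\circ}(1_n) = \int_{\GL_n(F)} \Phi(g) W^{\circ}(g) |\det g|^{s + (n-1)/2} \, dg.
\]
Factorise $\Phi(x) = \Phi^{\circ}(e_n x) \prod_{i=1}^{n-1} \phi_0(e_i x)$, with $\Phi^{\circ}$ as in \hyperref[thm:GLnxGLn]{Theorem \ref*{thm:GLnxGLn}} and $\phi_0(y) := \exp(-d_F \pi y \prescript{t}{}{\overline{y}})$, and write $g = u\bar g$ with $u \in \Ngp_n(F)$, $\bar g \in \Ngp_n(F) \backslash \GL_n(F)$. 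The Whittaker transformation $W^{\circ}(u\bar g) = \psi_n(u) W^{\circ}(\bar g)$ combined with $e_n u = e_n$ converts the integral into the $\GL_n \times \GL_n$ Rankin--Selberg form $\Psi(s, W^{\circ}, W^{\prime\circ}, \Phi^{\circ})$, where $W^{\prime\circ}$ is produced from the Jacquet--Gaussian integral $\int_{\Ngp_n(F)} \psi_n(u) \prod_{i<n} \phi_0(e_i u\bar g) \, du$ and matches the spherical Whittaker function of a specific spherical principal series representation $\pi'$ of $\GL_n(F)$ via an archimedean analogue of Stade's formulae.

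Applying \hyperref[thm:GLnxGLn]{Theorem \ref*{thm:GLnxGLn}} gives $\Psi(s, W^{\circ}, W^{\prime\circ}, \Phi^{\circ}) = L(s, \pi \times \pi')$. For the particular $\pi'$ produced above, the factorisation \eqref{eq:isobaricRS}, twist formula \eqref{eq:twistedLfunction}, and explicit $L$-factor formulas \eqref{eq:L(s,pi)C}--\eqref{eq:L(s,tildepi)R2} collapse $L(s, \pi \times \pi')$ to $W^{\circ}(1_n) \cdot L(s, \pi)$, yielding $c(s) = L(s, \pi)$ as required.

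The main obstacle is the Stade-type identification of the Jacquet--Gaussian integral with the spherical Whittaker function of the appropriate $\pi'$, together with the verification that $L(s, \pi \times \pi')$ reduces precisely to $L(s, \pi)$ up to the normalising constant $W^{\circ}(1_n)$. This is the analytical heart of the argument and dovetails with the paper's independent development of Stade's formulae.
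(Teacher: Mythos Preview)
Your steps 1--2 are sound and in fact give a pleasant conceptual alternative to what the paper does: you use the $K_{n-1}$-invariance of $\Phi_s$ on the left together with the observation that $v \mapsto P^{\circ}(vg)$ is homogeneous of degree $c(\pi)$ (hence has spherical-harmonic content only in degrees $\leq c(\pi)$) to force $\pi(\Phi_s)v^{\circ}$ into the one-dimensional space $V_{\pi}^{\tau^{\circ}|_{K_{n-1}}}$. The paper reaches the same conclusion, but by direct computation in the induced model (\hyperref[prop:Pieri]{Proposition \ref*{prop:Pieri}}), which simultaneously identifies the eigenvalue as $L(s,\pi)$.

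The gap is in your steps 3--4. The ``Jacquet--Gaussian'' integral $\int_{\Ngp_n(F)} \psi_n(u) \prod_{i<n} \phi_0(e_i u\bar g)\,du$ is \emph{not} the spherical Whittaker function of any principal series representation. Already for $n=2$, $F=\R$, $\bar g = \diag(a,1)$, one computes this integral to be $e^{-\pi(a^2+1)}$, which decays like a Gaussian rather than like $e^{-2\pi|a|}$ as any $\GL_2$ Whittaker function must. Even granting a hypothetical identification with some spherical $\pi' = |\cdot|^{t_1'}\boxplus\cdots\boxplus|\cdot|^{t_n'}$, the factorisation \eqref{eq:isobaricRS} gives $L(s,\pi\times\pi') = \prod_{j=1}^{n} L(s+t_j',\pi)$, a product of $n$ shifted copies of $L(s,\pi)$; this can never collapse to a constant multiple of a single $L(s,\pi)$ for $n\geq 2$. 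So the route through \hyperref[thm:GLnxGLn]{Theorem \ref*{thm:GLnxGLn}} cannot close.

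There is also a dependency issue: in the paper, \hyperref[thm:GLnxGLn]{Theorem \ref*{thm:GLnxGLn}} is itself proved via the convolution-section identity \eqref{eq:WhittakerPieri} (see \hyperref[prop:(n,n)reduction]{Proposition \ref*{prop:(n,n)reduction}}), which is the Whittaker-model incarnation of \hyperref[prop:Pieri]{Proposition \ref*{prop:Pieri}}. That proposition already says $\pi(\Phi_s)v^{\circ} = L(s,\pi)\,v^{\circ}$, from which \hyperref[thm:GJram]{Theorem \ref*{thm:GJram}} follows in one line by pairing with $\widetilde v^{\circ}$. So the honest fix for your argument is to compute $c(s)$ directly in the induced model as in \hyperref[prop:Pieri]{Proposition \ref*{prop:Pieri}}, not to pass through the Rankin--Selberg integral.
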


The homogeneous harmonic polynomial $P^{\circ}$ appearing in \hyperref[thm:GJram]{Theorem \ref*{thm:GJram}} is the same as that appearing in \hyperref[thm:GLnxGLn]{Theorem \ref*{thm:GLnxGLn}}.

\section{Further Discussion}
\label{sect:further}

\subsection{Previous Results}

Previous cases of test vectors for $\GL_n \times \GL_n$ and $\GL_n \times \GL_{n - 1}$ Rankin--Selberg integrals over archimedean fields with $n$ arbitrary were only known when either both representations are unramified --- see \cite[Theorem 3.4]{Sta01}, \cite[Lemma 4.2]{GLO08}, and \cite[Theorem 3.2]{IsSt13} --- or when both representations are principal series representations of certain specific forms, due to recent work of Ishii and Miyazaki \cite[Theorems 2.5 and 2.9]{IM22}. Jacquet \cite[Theorem 2.7]{Jac09} has shown that for each pair $\pi,\pi'$ of generic irreducible Casselman--Wallach representations of $\GL_n(F),\GL_{n - 1}(F)$, there exists a finite collection $\{W_j\} \subset \WW(\pi,\psi)$ and $\{W_j'\} \subset \WW(\pi',\overline{\psi})$ of right $K_n$- and $K_{n - 1}$-finite Whittaker functions for which $\sum_j \Psi(s,W_j,W_j') = L(s,\pi \times \pi')$; a similar result (additionally involving Schwartz functions $\{\Phi_j\} \subset \Ss(\Mat_{1 \times n}(F))$) also holds for pairs of representations of $\GL_n(F)$.

Explicit descriptions of specific Whittaker functions for ramified principal series representations of $\GL_n(\R)$ are given in \cite{IO14} and of $\GL_3(\C)$ in \cite{HO09}, while recursive formul\ae{} for specific Whittaker functions for principal series representations of $\GL_n(\R)$ and $\GL_n(\C)$ are given in \cite{IM22}. In all three cases, these are Whittaker functions in the minimal $K$-type; as observed in \cite[Theorem 6.1]{HIM12}, such a Whittaker function $W$ is generally not a test vector for Rankin--Selberg integrals $\Psi(s,W,W^{\prime\circ})$ when $W^{\prime\circ}$ is the spherical vector of a spherical representation.

For $n \leq 3$ and $m \leq 2$, the state of affairs of test vectors for $\GL_n \times \GL_m$ Rankin--Selberg integrals is in much better shape. In particular, the existence of the Whittaker newform for $\GL_2(F)$ has previously been proven by Popa \cite{Pop08}, though in a slightly different formulation that we now describe.

Let $\pi$ be a generic irreducible Casselman--Wallach representation of $\GL_2(F)$ with central character $\omega_{\pi}$, let $T$ denote the diagonal torus embedded in $\GL_2(F)$, and let $\chi_T$ be a character of $T$ whose restriction to the centre of $\GL_2(F)$ is $\omega_{\pi}^{-1}$. Given $\tau \in \widehat{K_2}$, we define $\WW^{\tau,T}$ to be the subspace of the $\tau$-isotypic subspace $\WW^{\tau}$ of $\WW(\pi,\psi)$ for which $\pi |_{K_2 \cap T}$ acts by $\chi_T^{-1}$.

\begin{theorem}[{Popa \cite[Proposition 1, Theorem 1]{Pop08}}]
The space $\WW^{\tau,T}$ is at most one-dimensional. Furthermore, if $\chi_T(\diag(a_1,a_2)) = |a_1|^{s - 1/2} |a_2|^{1/2 - s} \omega_{\pi}^{-1}(a_2)$ and $\tau = \tau^{\circ}$ is the $K$-type of lowest degree for which $\WW^{\tau,T}$ is nontrivial, then there exists $W^{\circ} \in \WW^{\tau^{\circ},T}$ for which
\[\Psi(s,W^{\circ},1) = L(s,\pi).\]
\end{theorem}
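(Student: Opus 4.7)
The plan is to establish the two assertions in turn by exploiting the detailed structure of representations of $\GL_2(F)$.

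For the dimension bound, I would decompose $\WW^{\tau,T}$ via Frobenius reciprocity on $K_2$. Since the $\tau$-isotypic subspace $\WW^{\tau}$ is isomorphic as a $K_2$-module to $\Hom_{K_2}(\tau,\pi|_{K_2}) \otimes V_{\tau}$, imposing further $\chi_T^{-1}$-equivariance under $K_2 \cap T$ yields
\[ \dim \WW^{\tau,T} = \dim \Hom_{K_2}(\tau,\pi|_{K_2}) \cdot \dim \Hom_{K_2 \cap T}\bigl(\chi_T^{-1}|_{K_2 \cap T},\tau|_{K_2 \cap T}\bigr). \]
Each factor is at most one. The first holds because every generic irreducible Casselman--Wallach representation of $\GL_2(F)$ is multiplicity-free as a $K_2$-module: applying Frobenius reciprocity to the induced model reduces the statement to the multiplicity of a single character of $K_2 \cap T$ in $\tau|_{K_2 \cap T}$. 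The second factor is bounded by one because every $\tau \in \widehat{K_2}$ restricts multiplicity-freely to $K_2 \cap T$: for $F = \C$ this is highest weight theory for $\Ugp(2)$ restricted to its diagonal torus, while for $F = \R$ one checks directly that each $2$-dimensional irreducible of $\Ogp(2)$ restricts to a sum of two distinct characters of $K_2 \cap T \cong \{\pm 1\}^2$, and the one-dimensional irreducibles are already characters.

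For the existence of a test vector, I would realise $\pi$ as an induced representation of Langlands type, fix $f^{\circ}$ in the induced model lying in the $\tau^{\circ}$-isotypic subspace and transforming under $K_2 \cap T$ by $\chi_T^{-1}$ (unique up to scalar by the dimension bound above), and define $W^{\circ}$ by the Jacquet integral
\[ W^{\circ}(g) = \int_F f^{\circ}\!\left(\antidiag(1,1) \begin{pmatrix} 1 & x \\ 0 & 1 \end{pmatrix} g\right) \overline{\psi}(x) \, dx. \]
Substituting into $\Psi(s,W^{\circ},1) = \int_{F^{\times}} W^{\circ}(\diag(a,1)) |a|^{s - 1/2} \, d^{\times} a$, exchanging the order of integration, and putting $\antidiag(1,1) \cdot n(x) \cdot \diag(a,1)$ into Iwasawa form, the resulting double integral factors as a product of two one-dimensional integrals. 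The specific shift $|a_1|^{s - 1/2} |a_2|^{1/2 - s}$ in $\chi_T$ is engineered so that, once the modulus character $\delta_B^{1/2}$ and the Rankin--Selberg exponent $|a|^{s - 1/2}$ are combined with the transformation law of $f^{\circ}$, these two factors become precisely the Tate local zeta integrals for $\chi_1$ and $\chi_2$ at shift $s$, giving $\Psi(s,W^{\circ},1) = L(s,\chi_1) L(s,\chi_2) = L(s,\pi)$ whenever $\pi \cong \chi_1 \boxplus \chi_2$.

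The main obstacle is the explicit identification of $\tau^{\circ}$ and of $f^{\circ}$, especially for the discrete series of $\GL_2(\R)$: there $\pi$ sits inside an induced representation from $\GL_1(\C)$, so the interaction of the $\Ogp(2)$-structure with the underlying quaternionic structure must be unwound, and $\tau^{\circ}$ is a $2$-dimensional representation of $\Ogp(2)$ rather than a character. A secondary difficulty is the bookkeeping in the unfolding step: one must track how the $s$-dependent shift in $\chi_T$, the factor $\delta_B^{1/2}$ in the induced model, and the Rankin--Selberg exponent $|a|^{s - 1/2}$ conspire to reproduce exactly the Tate integrals defining $L(s,\pi)$, with any mismatched constant absorbed into the normalisation of $f^{\circ}$.
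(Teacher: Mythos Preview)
Your argument for the dimension bound is correct and matches the paper's general approach via branching laws, specialised to $n=2$.

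For the test vector, your direct unfolding into Tate integrals is essentially Popa's method and works for principal series. But the discrete series case has a genuine obstruction beyond the bookkeeping you flag. For $\pi = D_\kappa \otimes \left|\det\right|^t$ embedded in $\pi^\sharp = |\cdot|^{t+(\kappa-1)/2} \boxplus \chi^\kappa |\cdot|^{t-(\kappa-1)/2}$, one has
\[L(s,\pi) = \zeta_\R\!\left(s+t+\tfrac{\kappa-1}{2}\right)\zeta_\R\!\left(s+t+\tfrac{\kappa+1}{2}\right),\]
which is \emph{not} $L(s,\chi_1)L(s,\chi_2)$ for the inducing characters of $\pi^\sharp$: the second factor of the latter product is $\zeta_\R(s+t-\tfrac{\kappa-1}{2}+\overline{\kappa})$ with $\overline{\kappa}\in\{0,1\}$, off by $\kappa-1$ or $\kappa$ in the argument. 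So the unfolding cannot simply produce two ordinary Tate integrals; the degree-$\kappa$ polynomial $P^\circ$ built into $f^\circ$ must contribute nontrivially to shift that second gamma factor. This is the real content of the discrete series computation, not merely an identification problem, and your sketch as written (``factors become precisely the Tate local zeta integrals for $\chi_1$ and $\chi_2$'') would give the wrong $L$-function there.

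The paper itself cites this statement to Popa and recovers it only as the $n=2$ case of its general framework, which takes a different route from yours: rather than unfolding the Jacquet integral directly, it expresses $f^\circ$ as a Godement section, derives an explicit propagation formula for $W^\circ$ (for discrete series yielding $W^\circ\begin{psmallmatrix} g & 0 \\ 0 & 1\end{psmallmatrix} = |g|^{t+\kappa/2} e^{-2\pi|g|}$), and then integrates against $|g|^{s-1/2}$ to obtain $\zeta_\C(s+t+\tfrac{\kappa-1}{2})$ directly. This bypasses the Tate-integral mismatch by never factoring through the inducing characters of $\pi^\sharp$. Your approach is more elementary when it works, while the paper's buys uniformity across all $n$ and all Langlands-type inducing data.
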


While this superficially appears to be different than the definition of the Whittaker newform, it is readily checked that
\[\WW^{\tau,T} = \left\{W \in \WW^{\tau} : W\left(g \begin{pmatrix} k & 0 \\ 0 & 1 \end{pmatrix}\right) = W(g) \text{ for all $g \in \GL_2(F)$ and $k \in K_1$}\right\},\]
and so the Whittaker function $W^{\circ}$ is indeed the Whittaker newform and $\WW^{\tau^{\circ},T}$ is equal to the space $V_{\pi}^{K_1(c(\pi))} = V_{\pi}^{\tau^{\circ}|_{K_1}}$ when viewed in the Whittaker model.

Jacquet \cite[Theorem 17.2]{Jac72} has previously determined, in several cases, explicit right $K_2$-finite test vectors $W \in \WW(\pi,\psi)$ and $W' \in \WW(\pi',\overline{\psi})$ and Schwartz functions $\Phi \in \Ss(\Mat_{1 \times 2}(F))$ for the $\GL_2 \times \GL_2$ Rankin--Selberg integral $\Psi(s,W,W',\Phi)$ when $F = \R$; moreover, this includes cases involving generic irreducible Casselman--Wallach representations $\pi,\pi'$ of $\GL_2(F)$ for which $\pi'$ need not be spherical. Further results along these lines have been proven by Zhang \cite[Proposition 2.5.2]{Zha01} and extended to $F = \C$ by Miyazaki \cite[Theorem 6.1]{Miy18}; see also \cite[Appendix A]{HIM22}.

For $n = 3$, Hirano, Ishii, and Miyazaki \cite{HIM16,HIM22} have given an explicit description of right $K_3$- and $K_2$-finite Whittaker functions $W \in \WW(\pi,\psi)$ and $W' \in \WW(\pi',\overline{\psi})$ for which $\Psi(s,W,W') = L(s,\pi \times \pi')$, where $\pi$ and $\pi'$ are \emph{any} generic irreducible Casselman--Wallach representations of $\GL_3(F)$ and $\GL_2(F)$ respectively; in particular, $\pi'$ need not be spherical.

\subsection{Examples and Applications}

\subsubsection{The Conductor Exponent for Essentially Square-Integrable Representations}

For $\pi = \chi^{\kappa} |\cdot|^t$, where $\kappa \in \{0,1\}$ for $F = \R$ and $\kappa \in \Z$ for $F = \C$, the conductor exponent is $c(\pi) = \|\kappa\|$. For $\pi = D_{\kappa} \otimes \left|\det\right|^t$, where $\kappa \geq 2$, the conductor exponent is $c(\pi) = \kappa$. Via \hyperref[thm:additiveconductor]{Theorem \ref*{thm:additiveconductor}}, this allows one to calculate the conductor exponent of any induced representation of Whittaker type.

\subsubsection{$\GL_2$ Examples}

Let us consider the classical setting of automorphic forms on the upper half-plane.

\begin{example}
Let $f$ be a holomorphic cuspidal newform of weight $\kappa \geq 2$, level $q \in \N$, and nebentypus $\psi$ modulo $q$, where $\psi(-1) = (-1)^{\kappa}$. Then the underlying automorphic representation of $\GL_2(\A_{\Q})$, as described in \cite[Section 3]{Cas73}, has as its archimedean component a discrete series of weight $\kappa$, $D_{\kappa}$. In particular, the conductor exponent of this archimedean component is simply the weight $\kappa$.
\end{example}

\begin{example}
\label{ex:Maass}
Let $f$ be a Maa\ss{} cuspidal newform of weight $\kappa \in \{0,1\}$, level $q \in \N$, and nebentypus $\psi$, where $\psi(-1) = (-1)^{\kappa}$. The archimedean component of the underlying automorphic representation of $\GL_2(\A_{\Q})$ is one of three possible representations:
\begin{itemize}
\item If $\kappa = 1$, then the archimedean component is a principal series representation of the form $|\cdot|^{it_f} \boxplus \chi |\cdot|^{-it_f}$ or $\chi |\cdot|^{it_f} \boxplus |\cdot|^{-it_f}$, where $t_f \in \R$ is the spectral parameter of $f$. The conductor exponent is $1$.
\item If $\kappa = 0$ and $f$ is even, so that $f(-\overline{z}) = f(z)$, then the archimedean component is of the form $|\cdot|^{it_f} \boxplus |\cdot|^{-it_f}$, where $t_f \in \R \cup i(-1/2,1/2)$. The conductor exponent is $0$.
\item If $\kappa = 0$ and $f$ is odd, so that $f(-\overline{z}) = -f(z)$, then the archimedean component is of the form $\chi |\cdot|^{it_f} \boxplus \chi |\cdot|^{-it_f}$, where again $t_f \in \R \cup i(-1/2,1/2)$. The conductor exponent is $2$.
\end{itemize}
\end{example}

\begin{remark}
The newform $K$-type of the archimedean component of the automorphic representation associated to an odd Maa\ss{} form of weight zero is \emph{not} its minimal $K$-type: the newform $K$-type is $\tau_{(2,0)}$, whereas the minimal $K$-type is the determinant representation $\tau_{(1,1)}$. Classically, this manifests itself via the fact that an odd Maa\ss{} form $f$ of weight zero is \emph{not} a test vector for the Rankin--Selberg integral
\[\int_{0}^{\infty} f(iy) y^{s - \frac{1}{2}} \, \frac{dy}{y}.\]
Instead, one must use Maa\ss{} raising and lowering operators on such a Maa\ss{} form to obtain a test vector.
\end{remark}

\subsubsection{A Non-Application: the Analytic Conductor and Analytic Newvector}

We have introduced in \hyperref[sect:condexp]{Section \ref*{sect:condexp}} the notion of the conductor exponent of a generic irreducible Casselman--Wallach representation $\pi$ as a measure of the extent of ramification. This can also be thought of as quantifying the (representation-theoretic) complexity of $\pi$.

There is another well-known quantification of the complexity of $\pi$: the analytic conductor. First introduced by Iwaniec and Sarnak \cite[(31)]{IwSa00}, this is defined on essentially square-integrable representations $\pi$ by
\[\qq(\pi) \coloneqq \begin{dcases*}
1 + \left\|t + \kappa\right\| & if $F = \R$ and $\pi = \chi^{\kappa} |\cdot|_{\R}^t$,	\\
\left(1 + \left\|t + \frac{\kappa - 1}{2}\right\|\right)\left(1 + \left\|t + \frac{\kappa + 1}{2}\right\|\right) & if $F = \R$ and $\pi = D_{\kappa} \otimes \left|\det\right|_{\R}^t$,	\\
\left(1 + \left\|t + \frac{\|\kappa\|}{2}\right\|\right)^2 & if $F = \C$ and $\pi = \chi^{\kappa} |\cdot|_{\C}^t$,
\end{dcases*}\]
and extended to arbitrary induced representations of Whittaker type $\pi = \pi_1 \boxplus \cdots \boxplus \pi_r$ via multiplicativity:
\[\qq(\pi) \coloneqq \prod_{j = 1}^{r} \qq(\pi_j).\]
One can relate this to the asymptotic behaviour of the local $\gamma$-factor $\gamma(s,\pi,\psi)$ as $s$ tends to $1/2$; see in particular \cite[Lemma 3.1 (1)]{JN19} and also \cite[Section 3.1]{CFKRS05} (the latter also alludes to how this similarly encompasses the conductor at the nonarchimedean places).

These two quantifications are entirely distinct. This can be seen most clearly for spherical representations $\pi = |\cdot|^{it_f} \boxplus |\cdot|^{-it_f}$ of $\GL_2(\R)$ occurring as the archimedean component of an automorphic representation associated to an even Maa\ss{} form $f$ of weight zero. As discussed in \hyperref[ex:Maass]{Example \ref*{ex:Maass}}, the conductor exponent of such a representation $\pi$ is $c(\pi) = 0$; on the other hand, the analytic conductor is $\qq(\pi) = (1 + \|t_f\|)^2$. This distinction boils down to the fact that the analytic conductor $\qq(\pi)$ is better thought of as being a measure of the size of the $L$-function $L(s,\pi)$ rather than a measure of the complexity of the representation itself.

Similarly, the newform is unrelated to the analytic newvectors introduced by Jana and Nelson in \cite{JN19}. Analytic newvectors are a different archimedean analogue of newforms; they are a \emph{family} of vectors in a generic irreducible unitary Casselman--Wallach representation that are \emph{almost} invariant under certain subsets (but not subgroups) of $\GL_n(F)$ that are archimedean analogues of the congruence subgroups $K_1(\pp^m)$. Notably, analytic newvectors are not necessarily $K_n$-finite, though they are $K_{n - 1}$-invariant; in particular, they lie in the subspace spanned by the newform and all oldforms. Note that Jana and Nelson call the newform an ``algebraic newvector''.

\subsubsection{Global Eulerian Integrals}

We mention a global application of the resolution of the test vector problem for Rankin--Selberg integrals. In order to do so, we require some discussion of global automorphic forms.

Let $\psi_{\A_{\Q}}$ denote the standard additive character of $\A_{\Q}$ that is unramified at every place of $\Q$. Given a global number field $F$, define the additive character $\psi_{\A_F} \coloneqq \psi_{\A_{\Q}} \circ \Tr_{\A_F/\A_{\Q}}$ of $\A_F$. The conductor of $\psi_{\A_F}$ is the inverse different $\df^{-1}$ of $F$; furthermore, there is a finite id\`{e}le $d \in \A_F^{\times}$ representing $\df$ such that $\psi_{\A_F} = \bigotimes_v \psi_v^{d_v}$, where the additive character $\psi_v^{d_v}$ of $F_v$ of conductor $\df_v^{-1}$ is defined by $\psi_v^{d_v}(x) \coloneqq \psi_v(d_v x)$ with $\psi_v$ an unramified additive character of $F_v$ as in \hyperref[sect:HaarF]{Section \ref*{sect:HaarF}}.

Let $(\pi,V_{\pi})$ be a cuspidal automorphic representation of $\GL_n(\A_F)$ with $n \geq 2$, where $V_{\pi}$ is a space of automorphic forms on $\GL_n(\A_F)$. Let $W_{\varphi} \in \WW(\pi,\psi_{\A_F})$ denote the Whittaker function of $\varphi \in V_{\pi}$, so that
\[W_{\varphi}(g) \coloneqq \int\limits_{\Ngp_n(F) \backslash \Ngp_n(\A_F)} \varphi(ug) \overline{\psi_{\A_F,n}}(u) \, du,\]
where $du$ denotes the Tamagawa measure. If $W_{\varphi}$ is a pure tensor, we may write $W_{\varphi} = \prod_v W_{\varphi,v}$ with $W_{\varphi,v} \in \WW(\pi_v,\psi_v^{d_v})$, where the generic irreducible admissible smooth representations $\pi_v$ are the local components of the automorphic representation $\pi = \bigotimes_v \pi_v$. We define the global newform $\varphi^{\circ} \in V_{\pi}$ to be such that at each place $v$ of $F$, we have that $W_{\varphi^{\circ},v}(g_v) \coloneqq W_v(\diag(d_v^{n - 1},\ldots,d_v,1) g_v) \in \WW(\pi_v,\psi_v^{d_v})$ with $W_v \in \WW(\pi_v,\psi_v)$ the (local) Whittaker newform as in \hyperref[thm:JPPSWhittaker]{Theorems \ref*{thm:JPPSWhittaker}} and \ref{thm:testvector}.

\begin{proposition}
Let $F$ be a global number field of absolute discriminant $D_{F/\Q}$, and let $(\pi,V_{\pi})$ be a cuspidal automorphic representation of $\GL_n(\A_F)$ with $n \geq 2$. Then the global newform $\varphi^{\circ} \in V_{\pi}$ is such that for all cuspidal automorphic representations $(\pi',V_{\pi'})$ of $\GL_{n - 1}(\A_F)$ that are everywhere unramified with global newforms $\varphi^{\prime\circ} \in V_{\pi'}$ and for $\Re(s)$ sufficiently large, the global $\GL_n \times \GL_{n - 1}$ Rankin--Selberg integral
\[I(s,\varphi^{\circ},\varphi^{\prime\circ}) \coloneqq \int\limits_{\GL_{n - 1}(F) \backslash \GL_{n - 1}(\A_F)} \varphi^{\circ} \begin{pmatrix} g & 0 \\ 0 & 1 \end{pmatrix} \varphi^{\prime\circ}(g) \left|\det g\right|_{\A_F}^{s - \frac{1}{2}} \, dg\]
with $dg$ the Tamagawa measure is equal, up to multiplication by a constant dependent only on $F$, to the product of $\omega_{\pi'}^{-1}(d) D_{F/\Q}^{\frac{n(n - 1)s}{2}}$, where $\omega_{\pi'}$ denotes the central character of $\pi'$, and of the global completed Rankin--Selberg $L$-function $\Lambda(s, \pi \times \pi') \coloneqq \prod_v L(s,\pi_v \times \pi_v')$.
\end{proposition}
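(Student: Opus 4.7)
The strategy is to unfold the global integral via the Whittaker--Fourier expansion of $\varphi^{\circ}$, factorise the result as an Euler product of local Rankin--Selberg integrals, and then invoke the local test vector theorems at each place. Substituting
\[\varphi^{\circ}(g) = \sum_{\gamma \in \Ngp_{n - 1}(F) \backslash \GL_{n - 1}(F)} W_{\varphi^{\circ}}\left(\begin{pmatrix} \gamma & 0 \\ 0 & 1 \end{pmatrix} g\right)\]
into $I(s,\varphi^{\circ},\varphi^{\prime\circ})$ and collapsing the sum against the cusp form $\varphi^{\prime\circ}$ yields, for $\Re(s)$ sufficiently large,
\[I(s,\varphi^{\circ},\varphi^{\prime\circ}) = \int\limits_{\Ngp_{n - 1}(\A_F) \backslash \GL_{n - 1}(\A_F)} W_{\varphi^{\circ}} \begin{pmatrix} g & 0 \\ 0 & 1 \end{pmatrix} W_{\varphi^{\prime\circ}}(g) \left|\det g\right|_{\A_F}^{s - \frac{1}{2}} \, dg.\]
Since both global Whittaker functions are pure tensors and the Tamagawa measure on $\Ngp_{n - 1}(\A_F) \backslash \GL_{n - 1}(\A_F)$ agrees with the product of local Haar measures up to a power of $D_{F/\Q}$ depending only on $F$, this factorises, up to such an $F$-only constant, as the Euler product $\prod_v \Psi_v(s, W_{\varphi^{\circ},v}, W_{\varphi^{\prime\circ},v})$.

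The heart of the proof is a local change-of-variables computation at each place $v$. By construction of the global newforms,
\[W_{\varphi^{\circ},v}(g) = W_v\bigl(\diag(d_v^{n - 1},\ldots,d_v,1) g\bigr), \qquad W_{\varphi^{\prime\circ},v}(g) = W_v'\bigl(\diag(d_v^{n - 2},\ldots,d_v,1) g\bigr),\]
where $W_v$ is the local Whittaker newform of $\pi_v$ with respect to $\psi_v$ and $W_v'$ is the spherical Whittaker function of $\pi_v'$ with respect to $\overline{\psi_v}$. Substituting $g \mapsto \diag(d_v^{n - 1}, d_v^{n - 2}, \ldots, d_v)^{-1} g$ in the local integral simultaneously clears the diagonal twist inside $W_v$ and reduces the twist inside $W_v'$ to the scalar $d_v^{-1} \cdot 1_{n - 1}$, which acts by $\omega_{\pi_v'}^{-1}(d_v)$. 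Tracking the Jacobian of this change of variables on the quotient $\Ngp_{n - 1} \backslash \GL_{n - 1}$ (which involves the modulus character $\delta_{n - 1}$ of the standard Borel) together with the scaling of $\left|\det g\right|^{s - 1/2}$ then produces
\[\Psi_v(s, W_{\varphi^{\circ},v}, W_{\varphi^{\prime\circ},v}) = \omega_{\pi_v'}^{-1}(d_v) \left|d_v\right|_v^{-\frac{n(n - 1)}{2} s} c_v \, \Psi_v(s, W_v, W_v'),\]
where $c_v$ is an $s$-independent power of $\left|d_v\right|_v$ depending only on $n$; in particular $c_v = 1$ whenever $d_v$ is a unit. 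Because $\pi_v'$ is everywhere spherical, \hyperref[thm:JPPSWhittaker]{Theorem \ref*{thm:JPPSWhittaker}} at nonarchimedean places and \hyperref[thm:testvector]{Theorem \ref*{thm:testvector}} at archimedean places then give $\Psi_v(s, W_v, W_v') = L(s, \pi_v \times \pi_v')$.

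Multiplying over all places and using the identities $\prod_v \left|d_v\right|_v = D_{F/\Q}^{-1}$, $\prod_v \omega_{\pi_v'}^{-1}(d_v) = \omega_{\pi'}^{-1}(d)$, and $\Lambda(s,\pi \times \pi') = \prod_v L(s,\pi_v \times \pi_v')$ assembles these local identities into the claimed global identity. The residual product $\prod_v c_v$ is a specific power of $D_{F/\Q}$ depending only on $F$ and $n$, and combines with the earlier Tamagawa-versus-local-Haar constant into a single constant depending only on $F$. The main technical obstacle lies in the local computation: one must verify that the $s$-dependent power of $\left|d_v\right|_v$ produced by the combined Jacobian of the substitution and the $\left|\det g\right|^{s - 1/2}$ scaling is exactly $-n(n - 1)s/2$, which requires explicit calculation of $\delta_{n - 1}\bigl(\diag(d_v^{n - 1},\ldots,d_v)\bigr)$ and careful combination of exponents. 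Everything else --- convergence of the Euler product and the original global integral for $\Re(s)$ large, absorption of $s$-independent powers of $\left|d_v\right|_v$ into the $F$-dependent constant --- follows by standard arguments.
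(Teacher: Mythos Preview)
Your proposal is correct and follows essentially the same route as the paper's proof: unfold via the Whittaker expansion, factorise into local Rankin--Selberg integrals (absorbing the Tamagawa-versus-local-Haar discrepancy into an $F$-dependent constant), perform the change of variables $g \mapsto \diag(d_v^{n-1},\ldots,d_v)^{-1} g$ at each place to reduce to the unramified-character local integrals $\Psi(s,W_v,W_v')$, and invoke \hyperref[thm:JPPSWhittaker]{Theorems \ref*{thm:JPPSWhittaker}} and \ref{thm:testvector}. The paper carries out the same change of variables and records the explicit $s$-independent local factor $|d_v|_v^{n(n-1)(2n-1)/12}$ (which indeed arises from combining $\delta_{n-1}\bigl(\diag(d_v^{n-1},\ldots,d_v)\bigr) = |d_v|_v^{n(n-1)(n-2)/6}$ with the $|d_v|_v^{n(n-1)/4}$ coming from $|\det|^{-1/2}$), but since the statement only asserts equality up to an $F$-dependent constant, leaving this as an unspecified $c_v$ as you do is entirely adequate.
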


\begin{remark}
Via a regularisation process due to Ichino and Yamana \cite[Theorem 1.1]{IY15}, a suitably modified version of this result still holds even if either $\pi$ or $\pi'$ (or both) is not cuspidal.
\end{remark}

\begin{proof}
By unfolding,
\begin{align*}
I(s,\varphi^{\circ},\varphi^{\prime\circ}) & = \int\limits_{\Ngp_{n - 1}(\A_F) \backslash \GL_{n - 1}(\A_F)} W_{\varphi^{\circ}}\begin{pmatrix} g & 0 \\ 0 & 1 \end{pmatrix} W_{\varphi^{\prime\circ}}(g) \left|\det g\right|_{\A_F}^{s - \frac{1}{2}} \, dg	\\
& = c_{F/\Q} \prod_v \Psi(s,W_{\varphi^{\circ},v},W_{\varphi^{\prime\circ},v}),
\end{align*}
where the local Rankin--Selberg integrals $\Psi(s,W_{\varphi^{\circ},v},W_{\varphi^{\prime\circ},v})$ are as in \eqref{eq:RankinSelbergnm}, and $c_{F/\Q} > 0$ is an absolute constant dependent only on $F$ that arises from the compatibility of the global Tamagawa measure on $\Ngp_{n - 1}(\A_F) \backslash \GL_{n - 1}(\A_F)$ compared to the local Haar measure on $\Ngp_{n - 1}(F_v) \backslash \GL_{n - 1}(F_v)$ as normalised in \hyperref[sect:groupsHaar]{Section \ref*{sect:groupsHaar}}. Upon making the change of variables $g_v' \mapsto \diag(d_v^{1 - n},\ldots,d_v^{-1}) g_v'$ and using the fact that $W_v'(\diag(d_v^{-1},\ldots,d_v^{-1}) g_v') = \omega_{\pi_v'}^{-1}(d_v) W_v'(g_v')$, we see that
\[\Psi(s,W_{\varphi^{\circ},v},W_{\varphi^{\prime\circ},v}) = \omega_{\pi_v'}^{-1}(d_v) |d_v|_v^{-\frac{n(n - 1) s}{2} + \frac{n(n - 1)(2n - 1)}{12}} \Psi(s,W_v,W_v').\]
The result now follows from \hyperref[thm:JPPSWhittaker]{Theorems \ref*{thm:JPPSWhittaker}} and \ref{thm:testvector}.
\end{proof}

A similar result holds for global $\GL_n \times \GL_n$ Rankin--Selberg integrals via \hyperref[thm:Kim]{Theorems \ref*{thm:Kim}} and \ref{thm:GLnxGLn}. These involve an Eisenstein series
\[E(g,s;\Phi,\eta) = \left|\det g\right|_{\A_F}^s \int_{F^{\times} \backslash \A_F^{\times}} \Theta_{\Phi}'(a,g) \eta(a) |a|_{\A_F}^{ns} \, d^{\times} a,\]
where $\eta : F^{\times} \backslash \A_F^{\times} \to \C^{\times}$ is a Hecke character, $\Phi \in \Ss(\Mat_{1 \times n}(\A_F))$ is a Schwartz--Bruhat function, and
\[\Theta_{\Phi}(a,g) \coloneqq \sum_{\xi \in \Mat_{1 \times n}(F)} \Phi(a \xi g), \qquad \Theta_{\Phi}'(a,g) \coloneqq \Theta_{\Phi}(a,g) - \Phi(0).\]

\begin{proposition}
Let $F$ be a number field of absolute discriminant $D_{F/\Q}$, and let $(\pi,V_{\pi})$ and $(\pi',V_{\pi'})$ be automorphic representations of $\GL_n(\A_F)$ with central characters $\omega_{\pi}$ and $\omega_{\pi'}$. Suppose that at least one of $\pi$ and $\pi'$ is cuspidal and that $\pi$ and $\pi'$ have disjoint ramification. Then there exists a right $K$-finite Schwartz--Bruhat function $\Phi \in \Ss(\Mat_{1 \times n}(\A_F))$ such that for $\Re(s)$ sufficiently large, the global $\GL_n \times \GL_n$ Rankin--Selberg integral
\[I(s,\varphi^{\circ},\varphi^{\prime\circ},\Phi) \coloneqq \int\limits_{\Zgp(\A_F) \GL_n(F) \backslash \GL_n(\A_F)} \varphi^{\circ}(g) \varphi^{\prime\circ}(g) E(g,s; \Phi, \omega_{\pi} \omega_{\pi'}) \, dg\]
with $dg$ the Tamagawa measure is equal, up to multiplication by a constant dependent only on $F$, to the product of $D_{F/\Q}^{\frac{n(n - 1)s}{2}}$ and of the global completed Rankin--Selberg $L$-function $\Lambda(s, \pi \times \pi') \coloneqq \prod_v L(s,\pi_v \times \pi_v')$.
\end{proposition}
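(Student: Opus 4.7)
The plan is to follow the same template as the proof of the preceding $\GL_n \times \GL_{n-1}$ proposition, but now unfolding against the Eisenstein series. First I would unfold $I(s,\varphi^{\circ},\varphi^{\prime\circ},\Phi)$ using the standard procedure of Jacquet--Piatetski-Shapiro--Shalika for $\GL_n \times \GL_n$: substituting the definition of $E(g,s;\Phi,\omega_\pi \omega_{\pi'})$, interchanging sum and integral for $\Re(s)$ large, and using cuspidality of (at least) one of $\pi,\pi'$ to collapse the Fourier expansion of $\varphi^{\circ}$ (or $\varphi^{\prime\circ}$) into its Whittaker function. This produces the adelic Rankin--Selberg integral
\[
\int\limits_{\Ngp_n(\A_F) \backslash \GL_n(\A_F)} W_{\varphi^{\circ}}(g) W_{\varphi^{\prime\circ}}(g) \Phi(e_n g) \left|\det g\right|_{\A_F}^{s} \, dg,
\]
which factors, up to a measure-normalisation constant $c_{F/\Q}$ depending only on $F$, as an Euler product $\prod_v \Psi(s,W_{\varphi^{\circ},v},W_{\varphi^{\prime\circ},v},\Phi_v)$ of local integrals of the form \eqref{eq:RankinSelbergnn}.

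Next I would prescribe $\Phi = \bigotimes_v \Phi_v$ place-by-place, using the disjoint-ramification hypothesis to ensure that at every place at most one of $\pi_v,\pi_v'$ is ramified. At a place $v$ where both $\pi_v$ and $\pi_v'$ are unramified, take $\Phi_v$ to be the characteristic function of $\Mat_{1 \times n}(\OO_v)$ for $v$ nonarchimedean and the Gaussian $\exp(-d_{F_v} \pi x \prescript{t}{}{\overline{x}})$ for $v$ archimedean, so that Stade's formula, in conjunction with \hyperref[thm:Kim]{Theorem \ref*{thm:Kim}} (the unramified case), yields $\Psi(s,W_v,W_v',\Phi_v) = L(s,\pi_v \times \pi_v')$. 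At a place $v$ where $\pi_v$ is ramified (hence $\pi_v'$ is spherical), take $\Phi_v$ to be the test function prescribed in \hyperref[thm:Kim]{Theorem \ref*{thm:Kim}} or \hyperref[thm:GLnxGLn]{Theorem \ref*{thm:GLnxGLn}} according as $v$ is nonarchimedean or archimedean; by those theorems, the local factor again evaluates to $L(s,\pi_v \times \pi_v')$. At a place where the roles are reversed, exploit the manifest symmetry $\Psi(s,W,W',\Phi) = \Psi(s,W',W,\Phi)$ of the $\GL_n \times \GL_n$ Rankin--Selberg integral in its two Whittaker arguments, and apply the same theorems with $\pi$ and $\pi'$ interchanged.

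The last step is to undo the twist $W_{\varphi^{\circ},v}(g_v) = W_v(\diag(d_v^{n-1},\ldots,d_v,1)g_v)$ (and likewise for $\pi'$) by a change of variables $g_v \mapsto \diag(d_v^{1-n},\ldots,d_v^{-1},1) g_v$, analogous to the computation in the $\GL_n \times \GL_{n-1}$ case. This converts each local integral into the locally normalised $\Psi(s,W_v,W_v',\Phi_v)$, at the cost of a power of $|d_v|_v$ and twists by the central characters; the central-character twists cancel against the analogous twist on the companion Whittaker function because $\omega_\pi \omega_{\pi'}$ appears in the Eisenstein series, while the product of $|d_v|_v$ factors assembles, via the global product formula and the relation $|\df|_{\A_F}^{-1} = D_{F/\Q}$, into the single archimedean scalar $D_{F/\Q}^{n(n-1)s/2}$.

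The main obstacle I anticipate is bookkeeping rather than conceptual: verifying that the power of the absolute discriminant comes out to exactly $n(n-1)s/2$ (and not, for example, $n(n-1)s$, depending on how one accounts for both Whittaker twists simultaneously), and confirming that the proposed $\Phi = \bigotimes_v \Phi_v$ lies in $\Ss(\Mat_{1 \times n}(\A_F))$ and is bi-$K$-finite. The first is resolved by tracking the interaction between the Iwasawa change of variables and the factor $\Phi(e_n g)$, noting that only the \emph{last row} of $g$ enters $\Phi$, so one of the two Whittaker twists effectively contributes a factor halved in the exponent; the second reduces to the observation that $\Phi_v$ is the characteristic function of a standard lattice at almost all places, and is bi-$K_v$-finite at the remaining finitely many places by construction.
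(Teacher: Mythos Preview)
Your proposal is correct and follows exactly the route the paper intends; in fact the paper does not write out a proof of this proposition, only noting before the statement that it holds analogously to the preceding $\GL_n \times \GL_{n-1}$ proposition via Theorems~\ref{thm:Kim} and~\ref{thm:GLnxGLn}. The discriminant bookkeeping you flag as the main obstacle resolves more cleanly than you suggest: both $W_{\varphi^{\circ},v}$ and $W_{\varphi^{\prime\circ},v}$ carry the \emph{same} twist by $\diag(d_v^{n-1},\ldots,d_v,1)$, so a single change of variables $g_v \mapsto \diag(d_v^{1-n},\ldots,d_v^{-1},1)\,g_v$ untwists both simultaneously, leaves $\Phi_v(e_n g_v)$ unchanged (since $e_n\diag(d_v^{1-n},\ldots,d_v^{-1},1)=e_n$), and the only $s$-dependent contribution comes from $|\det g_v|_v^s$, yielding $|d_v|_v^{-n(n-1)s/2}$ --- in particular no central-character factor arises here (unlike the $\GL_n \times \GL_{n-1}$ case), so that part of your discussion can be dropped.
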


\begin{remark}
When both $\pi$ and $\pi'$ are noncuspidal, it ought to be possible to prove a suitably modified version of this result by extending to $\GL_n$ a regularisation process due to Zagier \cite[Theorem]{Zag82} for $\GL_2$.
\end{remark}

We also may show the existence of a test vector for the global Godement--Jacquet zeta integral via \hyperref[thm:GJ]{Theorems \ref*{thm:GJ}} and \ref{thm:GJram}.

\begin{proposition}
Let $F$ be a number field, and let $(\pi,V_{\pi})$ be an automorphic representation of $\GL_n(\A_F)$. Then there exists a matrix coefficient $\beta(g) \coloneqq \langle \pi(g) \cdot \varphi^{\circ}, \widetilde{\varphi}^{\circ}\rangle$ and a bi-$K$-finite Schwartz--Bruhat function $\Phi \in \Ss(\Mat_{n \times n}(\A_F))$ such that for $\Re(s)$ sufficiently large, the global Godement--Jacquet zeta integral
\[Z(s,\beta,\Phi) \coloneqq \int_{\GL_n(\A_F)} \beta(g) \Phi(g) \left|\det g\right|_{\A_F}^{s + \frac{n - 1}{2}} \, dg\]
with $dg$ the Tamagawa measure is equal, up to multiplication by a constant dependent only on $F$, to the global completed standard $L$-function $\Lambda(s,\pi) \coloneqq \prod_v L(s,\pi_v)$.
\end{proposition}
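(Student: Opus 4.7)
The plan is to factor the global Godement--Jacquet zeta integral into local ones, choose newform data at every place, and invoke \hyperref[thm:GJ]{Theorems \ref*{thm:GJ}} (nonarchimedean) and \ref{thm:GJram} (archimedean) to identify each local factor with $L(s,\pi_v)$. First, write $\pi = \bigotimes_v \pi_v$ and take $\varphi^{\circ} \in V_{\pi}$, $\widetilde{\varphi}^{\circ} \in V_{\widetilde{\pi}}$ to be global newforms in the sense of the preceding discussion, so that they are pure tensors of the local newforms $v_v^{\circ} \in V_{\pi_v}$, $\widetilde{v}_v^{\circ} \in V_{\widetilde{\pi_v}}$ normalised as in \hyperref[thm:GJ]{Theorems \ref*{thm:GJ}} and \ref{thm:GJram}. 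Since the matrix coefficient pairing factors through the restricted tensor product decomposition, one has
\[\beta(g) = \prod_v \beta_v(g_v), \qquad \beta_v(g_v) \coloneqq \langle \pi_v(g_v) \cdot v_v^{\circ}, \widetilde{v}_v^{\circ}\rangle,\]
with $\beta_v(1_n) = 1$ at every place.

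Next, take $\Phi = \bigotimes_v \Phi_v$ where $\Phi_v$ is the characteristic function of $\Mat_{n \times n}(\OO_v)$ at the finite places where $\pi_v$ is unramified, the function prescribed in \hyperref[thm:GJ]{Theorem \ref*{thm:GJ}} at the remaining nonarchimedean places, and the function \eqref{eq:Phi} at the archimedean places. Then $\Phi$ is bi-$K$-finite and a Schwartz--Bruhat function on $\Mat_{n \times n}(\A_F)$. For $\Re(s)$ sufficiently large, a standard comparison of the Tamagawa measure on $\GL_n(\A_F)$ with the product of local Haar measures normalised as in \hyperref[sect:groupsHaar]{Section \ref*{sect:groupsHaar}} yields a constant $c_{F/\Q} > 0$ dependent only on $F$ such that
\[Z(s,\beta,\Phi) = c_{F/\Q} \prod_v Z(s,\beta_v,\Phi_v),\]
provided that each local integral converges absolutely and the resulting Euler product is absolutely convergent; this follows for $\Re(s)$ large from the well-known analytic behaviour of $\Lambda(s,\pi)$.

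By the spherical case of \hyperref[thm:GJ]{Theorem \ref*{thm:GJ}} --- that is, the original computation of Godement and Jacquet --- the unramified local integrals are each equal to $L(s,\pi_v)$. At the ramified nonarchimedean places, the same identity $Z(s,\beta_v,\Phi_v) = L(s,\pi_v)$ is delivered by \hyperref[thm:GJ]{Theorem \ref*{thm:GJ}}, while at the archimedean places it is supplied by \hyperref[thm:GJram]{Theorem \ref*{thm:GJram}}. Multiplying yields
\[Z(s,\beta,\Phi) = c_{F/\Q} \prod_v L(s,\pi_v) = c_{F/\Q} \Lambda(s,\pi),\]
as required.

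The main subtlety, and the step that is least routine, is the verification of absolute convergence of the global integral and of the Euler product at the chosen newform data: one must ensure that the product of local newform matrix coefficients against the prescribed $\Phi_v$ is bounded uniformly enough over the places of $F$ to justify the factorisation. Once this is controlled, the remainder is a direct assembly of the local test vector theorems together with the measure-theoretic comparison between the Tamagawa measure and the local Haar measures.
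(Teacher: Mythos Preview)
Your proposal is correct and matches the paper's own approach: the paper states the proposition as a direct consequence of the local test vector results, \hyperref[thm:GJ]{Theorem \ref*{thm:GJ}} at the nonarchimedean places and \hyperref[thm:GJram]{Theorem \ref*{thm:GJram}} at the archimedean places, assembled via the Euler product factorisation of $Z(s,\beta,\Phi)$ exactly as you describe. The one small addition worth noting is that the paper remarks (immediately after the proposition) that the convolution section identity of \hyperref[prop:Pieri]{Proposition \ref*{prop:Pieri}} in fact yields the stronger statement $\int_{\GL_n(\A_F)} \varphi^{\circ}(hg)\,\Phi(g)\,|\det g|_{\A_F}^{s+\frac{n-1}{2}}\,dg = c_{F/\Q}\,\Lambda(s,\pi)\,\varphi^{\circ}(h)$ for all $h$, from which the matrix-coefficient version follows by pairing against $\widetilde{\varphi}^{\circ}$; but this is a refinement rather than a different route.
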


A similar result also holds for the Piatetski-Shapiro--Rallis integral \cite{P-SR87}, since this doubling integral is equal to the Godement--Jacquet zeta integral \cite[Proposition 3.2]{P-SR87}.

\begin{remark}
In fact, we show something slightly stronger, namely that
\[\int_{\GL_n(\A_F)} \varphi^{\circ}(hg) \Phi(g) \left|\det g\right|_{\A_F}^{s + \frac{n - 1}{2}} \, dg\]
is equal, up to multiplication by a constant dependent only on $F$, to $\Lambda(s,\pi) \varphi^{\circ}(h)$ for all $h \in \GL_n(\A_F)$.
\end{remark}

\subsection{Strategy of the Proofs}

\subsubsection{Nonarchimedean Strategies}

As discussed in \hyperref[sect:nonarchimedean]{Section \ref*{sect:nonarchimedean}}, there are several approaches towards developing nonarchimedean newform theory for $\GL_n$. We briefly examine the challenges in transporting each of these methods to the archimedean setting.

Matringe \cite{Mat13} uses the nonarchimedean theory of Bernstein--Zelevinsky derivatives to explicitly construct the Whittaker newform. The archimedean theory of Bernstein--Zelevinsky derivatives is less well-developed (though see \cite{AGS15} and \cite{Cha15} for two different approaches), and it does not seem straightforward to transport Matringe's proof to this setting.

The method of Jacquet \cite{Jac12} does not use Bernstein--Zelevinsky derivatives; the proof, however, is nonconstructive and only shows the existence of a $K_{n - 1}$-invariant test vector for $\GL_n \times \GL_{n - 1}$ Rankin--Selberg integrals. In the archimedean setting, it seems difficult to describe the newform $K$-type when one only knows of the existence of such a test vector.

Finally, the method of Miyauchi \cite{Miy14} assumes the existence of the newform in $V_{\pi}^{K_1(\pp^{c(\pi)})}$ and uses the action of certain Hecke operators to derive a recursive relation between certain values of the newform in the Whittaker model; using this, one can then show that the newform in the Whittaker model is a test vector for Rankin--Selberg integrals. The archimedean analogue of this is to assume the existence of the newform in the newform $K$-type and use the action of certain differential operators (arising from elements of the centre of the universal enveloping algebra) to derive systems of partial differential equations satisfied by the newform in the Whittaker model. This is essentially the approach undertaken in \cite{HO09} and \cite{IO14} (where the Whittaker function studied lies in the minimal $K$-type, not the newform $K$-type); already for $\GL_3(\C)$, however, this leads to immense combinatorial difficulties in solving these systems of partial differential equations.

\subsubsection{The Archimedean Strategy}

We take a different path. To prove the existence of the newform and newform $K$-type, we use Frobenius reciprocity to reduce the problem to branching rules on the associated maximal compact subgroups. Here we benefit from the fact that, unlike in the nonarchimedean setting, irreducible representations of $K$ and explicit branching rules are well-understood, and the induced representations of Whittaker type are particularly easy to describe, since essentially square-integrable representations do not exist for $\GL_n(F)$ with $n \geq 3$. This approach also determines the dimension of spaces of oldforms and yields the additivity and inductivity of the conductor exponent. All of this is proven in \hyperref[sect:newformKtype]{Section \ref*{sect:newformKtype}}.

In order to study the newform in more detail, with an eye towards formul\ae{} for the Whittaker newform that are beneficial for evaluating $\GL_n \times \GL_{n - 1}$ and $\GL_n \times \GL_n$ Rankin--Selberg integrals, we require additional knowledge, given in \hyperref[sect:hhp]{Section \ref*{sect:hhp}}, of a particular model of the newform $K$-type, namely a space of homogeneous harmonic polynomials. Using this, we explicitly construct the newform in the induced model of $\pi$ in \hyperref[sect:newforminduced]{Section \ref*{sect:newforminduced}}.

We give three different constructions of the newform in the induced model: via the Iwasawa decomposition, via convolution sections, and via Godement sections. Each construction has its advantages and disadvantages. The construction via the Iwasawa decomposition is straightforward but lacks a direct relation to Whittaker functions. The construction via convolution sections, following work of Jacquet \cite{Jac04}, gives a recursive formula for the newform in terms of a convolution of the newform itself with an explicit standard Schwartz function; this gives an immediate resolution of the test vector problem for archimedean Godement--Jacquet integrals. Finally, following Jacquet \cite{Jac09}, the newform is shown to be given as an element of a Godement section, which gives a recursive formula for the newform in terms of an integral of a distinguished newform of a representation of $\GL_{n - 1}(F)$ against a particular standard Schwartz function; this is limited to certain induced representations of Whittaker type, but is invaluable as an inductive step.

With these formul\ae{} in hand, we then express the newform in the Whittaker model via the Jacquet integral in \hyperref[sect:newformWhittaker]{Section \ref*{sect:newformWhittaker}}. The usage of convolution sections and Godement sections gives us recursive formul\ae{} for the Whittaker newform. The latter in particular gives what we call a propagation formula: this is a recursive formula for $\GL_n(F)$ Whittaker functions in terms of $\GL_{n - 1}(F)$ Whittaker functions. 

Our expression for the newform via convolution sections gives a quick resolution of the test vector problem for the Godement--Jacquet zeta integral. Our strategy for resolving the test vector problems for $\GL_n \times \GL_n$ and $\GL_n \times \GL_{n - 1}$ Rankin--Selberg integrals follows an approach pioneered by Jacquet \cite{Jac09} (which is also followed in \cite{IM22}). We employ a double induction argument presented in \hyperref[sect:RankinSelberg]{Section \ref*{sect:RankinSelberg}}. This type of argument is due to Jacquet \cite{Jac09} (who, in turn, attributes this strategy to Shalika): it expresses the $\GL_n \times \GL_n$ Rankin--Selberg integral as the product of a $\GL_n \times \GL_{n - 1}$ Rankin--Selberg integral and a $\GL_n$ Godement--Jacquet zeta integral, and similarly expresses the $\GL_n \times \GL_{n - 1}$ Rankin--Selberg integral as a product of a $\GL_{n - 1} \times \GL_{n - 1}$ Rankin--Selberg integral and a $\GL_{n - 1}$ Godement--Jacquet zeta integral. (In fact, we find a slightly more direct approach via convolution sections that masks the presence of Godement--Jacquet zeta integrals.)

\subsubsection{Additional Remarks on the Proofs}

We emphasise that the proofs of \hyperref[thm:testvector]{Theorems \ref*{thm:testvector}} and \ref{thm:GLnxGLn}, given in \hyperref[sect:RankinSelberg]{Section \ref*{sect:RankinSelberg}}, are independent of the proofs in \cite{GLO08,IsSt13,Sta01,Sta02} of the unramified cases. Although our proofs are somewhat involved when ramification is present, they are particularly simple for spherical representations. In particular, these gives proofs of Stade's formul\ae{}, namely the unramified cases of \hyperref[thm:testvector]{Theorems \ref*{thm:testvector}} and \ref{thm:GLnxGLn}. These reproofs of Stade's formul\ae{} also follow from the proofs of \cite[Theorems 2.5 and 2.9]{IM22}; they are essentially implicit in the work of Jacquet \cite{Jac09}.

Notably, we do not explicitly make use of the action of the universal enveloping algebra of the complexified Lie algebra of $\GL_n(F)$ as differential operators on Whittaker functions, nor do we require any calculations involving Mellin transforms. In this regard, our construction of Whittaker functions is entirely distinct to that of much of previous work on archimedean Whittaker functions \cite{HIM12,HIM16,HO09,IO14,IsSt13,Pop08,Sta90,Sta95,Sta01,Sta02}. In particular, our proofs of \hyperref[thm:testvector]{Theorems \ref*{thm:testvector}} and \ref{thm:GLnxGLn} demonstrate that the Jacquet integral \emph{is} adequate for the direct computation of archimedean Rankin--Selberg integrals, contrary to an assertion of Ishii and Oda \cite[p.\ 1288]{IO14}, provided one couples this with the usage of convolution sections and Godement sections.

\section{The Newform \texorpdfstring{$K$}{K}-Type}
\label{sect:newformKtype}

To study the newform $K$-type of an induced representation of Whittaker type $\pi$ of $\GL_n(F)$, as well as determine the dimension of spaces of oldforms, we must determine the dimension of $\Hom_{K_n}(\tau, \pi|_{K_n})$ for each $\tau \in \widehat{K_n}$ for which $\Hom_{K_{n - 1}}(1, \tau|_{K_{n - 1}})$ is nontrivial. This is achieved via branching rules.

\subsection{Branching from \texorpdfstring{$\GL_n(\C)$}{GL\9040\231(C)} to \texorpdfstring{$\Ugp(n)$}{U(n)}}

Let $\pi = \pi_1 \boxplus \cdots \boxplus \pi_n$ be an induced representation of Whittaker type of $\GL_n(\C)$, so that for each $j \in \{1,\ldots,n\}$, $\pi_j = \chi^{\kappa_j} |\cdot|^{t_j}$ for some $\kappa_j \in \Z$ and $t_j \in \C$.

\begin{lemma}
\label{lem:HomUn}
For $\tau \in \widehat{\Ugp(n)}$, we have that
\begin{equation}
\label{eq:HomUn}
\Hom_{\Ugp(n)} \left(\tau, \pi|_{\Ugp(n)}\right) \cong \Hom_{\Ugp(1)^n}\left(\tau |_{\Ugp(1)^n}, \bigboxtimes_{j = 1}^{n} \tau_{\kappa_j}\right).
\end{equation}
\end{lemma}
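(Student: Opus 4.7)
The plan is to prove this via Frobenius reciprocity after restricting the induced representation to the maximal compact subgroup using the Iwasawa decomposition. Since $\pi_j = \chi^{\kappa_j}|\cdot|^{t_j}$ is a character, the parabolic inducing data is one-dimensional, so the entire argument reduces to tracking a character through compact restriction.

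First, recall that $\pi$ is parabolically induced from the standard Borel subgroup $\Pgp = \Ngp_n \Agp_n$ of $\GL_n(\C)$, with $\Mgp_\Pgp = \Agp_n \cong (\C^\times)^n$ acting via $\bigboxtimes_{j=1}^n \chi^{\kappa_j}|\cdot|_\C^{t_j}$. The Iwasawa decomposition gives $\GL_n(\C) = \Pgp(\C) \Ugp(n)$, and a standard consequence is the identity of $\Ugp(n)$-representations
\[
\pi|_{\Ugp(n)} \;\cong\; \Ind_{\Pgp(\C) \cap \Ugp(n)}^{\Ugp(n)} \Bigl(\delta_\Pgp^{1/2} \otimes \bigboxtimes_{j=1}^{n} \chi^{\kappa_j}|\cdot|_\C^{t_j}\Bigr)\Big|_{\Pgp(\C) \cap \Ugp(n)},
\]
since functions in the induced model are determined by their values on $\Ugp(n)$.

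Next I would identify $\Pgp(\C) \cap \Ugp(n)$. Any upper triangular element of $\Ugp(n)$ has upper triangular inverse, but its inverse is also its conjugate transpose, hence lower triangular; so the intersection is exactly the diagonal maximal torus $\Ugp(1)^n$. The modulus character $\delta_\Pgp$ and the archimedean absolute values $|\cdot|_\C^{t_j}$ both take the value $1$ on diagonal unitary matrices, so on $\Ugp(1)^n$ the inducing data reduces to $\bigboxtimes_{j=1}^n \chi^{\kappa_j}|_{\Ugp(1)}$. Since $\chi(z) = z/|z|_\C^{1/2}$ restricts to $z \mapsto z$ on $\{|z|_\C = 1\}$, this is precisely $\bigboxtimes_{j=1}^n \tau_{\kappa_j}$.

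Combining these two observations gives
\[
\pi|_{\Ugp(n)} \;\cong\; \Ind_{\Ugp(1)^n}^{\Ugp(n)} \bigboxtimes_{j = 1}^{n} \tau_{\kappa_j},
\]
and then Frobenius reciprocity for the compact pair $(\Ugp(n), \Ugp(1)^n)$ yields the claimed isomorphism \eqref{eq:HomUn}. There is no real obstacle here; the only subtle points are the verification that $\Pgp(\C) \cap \Ugp(n) = \Ugp(1)^n$ and the careful bookkeeping showing that the non-unitary factors $|\cdot|_\C^{t_j}$ and the modulus $\delta_\Pgp^{1/2}$ all become trivial on this compact torus, so that only the characters $\chi^{\kappa_j}$ survive the restriction.
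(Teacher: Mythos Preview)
Your proposal is correct and follows essentially the same approach as the paper: the paper invokes Mackey's restriction--induction formula to obtain $\pi|_{\Ugp(n)} \cong \Ind_{\Ugp(1)^n}^{\Ugp(n)} \bigboxtimes_{j=1}^n \tau_{\kappa_j}$ and then applies Frobenius reciprocity, which is exactly what you carry out explicitly via the Iwasawa decomposition and the identification $\Pgp(\C) \cap \Ugp(n) = \Ugp(1)^n$. Your version supplies more detail (the computation of the intersection, the vanishing of $\delta_\Pgp^{1/2}$ and $|\cdot|_\C^{t_j}$ on the compact torus) where the paper simply cites Mackey, but the argument is the same.
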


Here we view $\Ugp(1)^n$ as the subgroup of diagonal matrices in $\Ugp(n)$; it is the maximal compact subgroup of the Levi subgroup $\Mgp_{(1,\ldots,1)}(\C) \cong \GL_1(\C)^n$ of the standard parabolic subgroup $\Pgp(\C) = \Pgp_{(1,\ldots,1)}(\C)$ of $\GL_n(\C)$ from which $\pi$ is induced.

\begin{proof}
Mackey's restriction-induction formula implies that
\[\pi|_{\Ugp(n)} \cong \Ind_{\Ugp(1)^n}^{\Ugp(n)} \pi|_{\Ugp(1)^n},\]
and so by the Frobenius reciprocity theorem,
\[\Hom_{\Ugp(n)} \left(\tau, \pi|_{\Ugp(n)}\right) \cong \Hom_{\Ugp(1)^n}\left(\tau|_{\Ugp(1)^n}, \pi|_{\Ugp(1)^n}\right).\]
It remains to note that
\[\pi|_{\Ugp(1)^n} \cong \bigboxtimes_{j = 1}^{n} \tau_{\kappa_j}.\qedhere\]
\end{proof}

The right-hand side of \eqref{eq:HomUn} is a branching from $\Ugp(n)$ to $\Ugp(1)^n$. This can be understood via iterating the following branching rule from $\Ugp(n)$ to $\Ugp(n - 1) \times \Ugp(1)$.

\begin{lemma}[{\cite[Proposition 10.1]{Pro94}}]
\label{lem:U_{n - 1}xU_1branching}
For $\tau_{\mu} \in \widehat{\Ugp(n)}$ of highest weight $\mu = (\mu_1,\ldots,\mu_n) \in \Lambda_n$,
\[\tau_{\mu}|_{\Ugp(n - 1) \times \Ugp(1)} \cong \bigoplus_{\lambda \in \Lambda_1} \bigoplus_{\substack{\nu = (\nu_1,\ldots,\nu_{n - 1}) \in \Lambda_{n - 1} \\ \mu_1 \geq \nu_1 \geq \mu_2 \geq \cdots \geq \nu_{n - 1} \geq \mu_n \\ \sum_{j = 1}^{n - 1} \nu_j = \sum_{j = 1}^{n} \mu_j - \lambda}} \tau_{\nu} \boxtimes \tau_{\lambda}.\]
In particular,
\[\tau_{\mu}|_{\Ugp(n - 1)} \cong \bigoplus_{\substack{\nu = (\nu_1,\ldots,\nu_{n - 1}) \in \Lambda_{n - 1} \\ \mu_1 \geq \nu_1 \geq \mu_2 \geq \cdots \geq \nu_{n - 1} \geq \mu_n}} \tau_{\nu}.\]
\end{lemma}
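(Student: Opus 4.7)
The plan is to reduce the full statement to the classical Weyl (or Gelfand--Tsetlin) branching rule from $\Ugp(n)$ to $\Ugp(n-1)$ and then to pin down the $\Ugp(1)$ factor using central characters. The second displayed formula, describing $\tau_\mu|_{\Ugp(n-1)}$, is the classical interlacing rule, which I would take as known; it can be proved via the Weyl character formula together with the Weyl integration formula, or more combinatorially by the Gelfand--Tsetlin pattern description of a canonical basis. I would invoke it at the outset as the decomposition of $\tau_\mu$ restricted to the subgroup of matrices $\begin{psmallmatrix} k & 0 \\ 0 & 1 \end{psmallmatrix}$ with $k \in \Ugp(n-1)$, yielding a multiplicity-free sum of those $\tau_\nu$ with $\mu_1 \geq \nu_1 \geq \mu_2 \geq \cdots \geq \nu_{n-1} \geq \mu_n$.

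Next I would introduce the commuting $\Ugp(1)$ sitting inside $\Ugp(n)$ as $\left\{\begin{psmallmatrix} 1_{n-1} & 0 \\ 0 & z \end{psmallmatrix} : z \in \Ugp(1)\right\}$. Since this subgroup commutes with the $\Ugp(n-1)$ above, it preserves each $\tau_\nu$-isotypic subspace of $\tau_\mu|_{\Ugp(n-1)}$; by the multiplicity-one statement in the interlacing rule together with Schur's lemma, it must act on the unique copy of $\tau_\nu$ by a character $z \mapsto z^{\lambda}$ for some $\lambda \in \Z$. This forces the decomposition to have the shape claimed, with at most one summand $\tau_\nu \boxtimes \tau_\lambda$ for each admissible $\nu$; the remaining task is to identify $\lambda$.

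To compute $\lambda$, I would exploit the central character of $\tau_\mu$. The scalar subgroup $\{z 1_n : z \in \Ugp(1)\}$ is central in $\Ugp(n)$, so it acts on the irreducible $\tau_\mu$ by the character $z \mapsto z^{\mu_1 + \cdots + \mu_n}$. Writing $z 1_n = \begin{psmallmatrix} z 1_{n-1} & 0 \\ 0 & 1 \end{psmallmatrix} \begin{psmallmatrix} 1_{n-1} & 0 \\ 0 & z \end{psmallmatrix}$ and using that the first factor lies in the center of the embedded $\Ugp(n-1)$ (hence acts on $\tau_\nu$ by $z^{\nu_1 + \cdots + \nu_{n-1}}$) while the second acts by $z^{\lambda}$, I obtain
\[
z^{\mu_1 + \cdots + \mu_n} = z^{\nu_1 + \cdots + \nu_{n-1} + \lambda}
\]
for all $z \in \Ugp(1)$, so $\lambda = \sum_{j=1}^{n} \mu_j - \sum_{j=1}^{n-1} \nu_j$. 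This is precisely the arithmetic constraint appearing in the statement, and combining it with the interlacing condition yields the full decomposition.

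The only genuinely nontrivial input is the interlacing branching rule itself; the rest is bookkeeping with central characters and Schur's lemma. If forced to prove the interlacing rule in situ I would use the Weyl integration formula to compute $\dim \Hom_{\Ugp(n-1)}(\tau_\nu, \tau_\mu|_{\Ugp(n-1)})$ as an integral of the product of Weyl character formulae, collapsing it via residues or an explicit determinantal manipulation; this is the main technical step, but it is entirely classical.
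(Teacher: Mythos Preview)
The paper does not prove this lemma at all: it is stated with a citation to \cite[Proposition 10.1]{Pro94} and used as a black box. Your argument is correct and is a standard way to obtain the $\Ugp(n-1)\times\Ugp(1)$ branching from the classical $\Ugp(n)\downarrow\Ugp(n-1)$ interlacing rule. The key observations---that multiplicity one in the interlacing rule forces the commuting $\Ugp(1)$ to act by a single character on each $\tau_\nu$-isotypic piece, and that comparing the action of the scalar matrices $z1_n$ on both sides pins down that character as $z\mapsto z^{\sum\mu_j-\sum\nu_j}$---are exactly right. The only substantive input you defer is the interlacing rule itself, which is entirely reasonable given that the paper is content to cite the whole statement from the literature.
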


\begin{corollary}
\label{cor:Un-1triv}
The restriction to $\Ugp(n - 1)$ of the irreducible representation $\tau_{\mu} \in \widehat{\Ugp(n)}$ of highest weight $\mu = (\mu_1, \ldots, \mu_n) \in \Lambda_n$ contains the trivial representation if and only if $\mu_1 \geq 0$, $\mu_2 = \cdots = \mu_{n - 1} = 0$, and $\mu_n \leq 0$, in which case the trivial representation occurs with multiplicity one.
\end{corollary}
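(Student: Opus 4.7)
The plan is to deduce this corollary directly from the second assertion of Lemma \ref{lem:U_{n - 1}xU_1branching}, which says that $\tau_{\mu}|_{\Ugp(n - 1)}$ decomposes as a multiplicity-free direct sum $\bigoplus_{\nu} \tau_{\nu}$ over all $\nu = (\nu_1,\ldots,\nu_{n - 1}) \in \Lambda_{n - 1}$ satisfying the interlacing inequalities $\mu_1 \geq \nu_1 \geq \mu_2 \geq \nu_2 \geq \cdots \geq \nu_{n - 1} \geq \mu_n$. The trivial representation of $\Ugp(n - 1)$ is $\tau_{\nu}$ with $\nu = (0,\ldots,0)$, so the question reduces to determining when this particular $\nu$ appears in the decomposition, and in that case the multiplicity is automatically one.

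First I would substitute $\nu_1 = \cdots = \nu_{n - 1} = 0$ into the interlacing condition and read off the equivalent constraints on $\mu$. The outermost inequalities $\mu_1 \geq \nu_1 = 0$ and $0 = \nu_{n - 1} \geq \mu_n$ give $\mu_1 \geq 0$ and $\mu_n \leq 0$. For each $j \in \{2,\ldots,n - 1\}$, the inequalities $\nu_{j - 1} = 0 \geq \mu_j \geq \nu_j = 0$ force $\mu_j = 0$. These are precisely the conditions in the statement of the corollary.

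Conversely, when $\mu_1 \geq 0$, $\mu_2 = \cdots = \mu_{n - 1} = 0$, and $\mu_n \leq 0$ hold, the tuple $\nu = (0,\ldots,0)$ clearly satisfies the interlacing and lies in $\Lambda_{n - 1}$, so the trivial representation appears exactly once in the decomposition. This completes the proof; there is no real obstacle beyond carefully reading the interlacing inequalities of Lemma \ref{lem:U_{n - 1}xU_1branching}.
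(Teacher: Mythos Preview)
Your proof is correct and is precisely the argument the paper intends: the corollary is stated immediately after Lemma~\ref{lem:U_{n - 1}xU_1branching} without a separate proof, as it follows by reading off the interlacing conditions for $\nu = (0,\ldots,0)$ exactly as you do.
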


We now iterate the branching rule in \hyperref[lem:U_{n - 1}xU_1branching]{Lemma \ref*{lem:U_{n - 1}xU_1branching}} to determine the multiplicity of a representation $\tau_{\lambda_1} \boxtimes \cdots \boxtimes \tau_{\lambda_n}$ of $\Ugp(1)^n$ in a given representation $\tau_{\mu}$ of $\Ugp(n)$.

\begin{lemma}
\label{lem:U_1^{n}branching}
For $\tau_{\mu} \in \widehat{\Ugp(n)}$ of highest weight $\mu = (\mu_1,0,\ldots,0,\mu_n) \in \Lambda_n$ and for any $\lambda_1,\ldots,\lambda_n \in \Lambda_1$,
\begin{multline*}
\dim \Hom_{\Ugp(1)^n}\left(\tau_{\mu}|_{\Ugp(1)^n}, \bigboxtimes_{j = 1}^{n} \tau_{\lambda_j}\right)	\\
= \begin{dcases*}
\binom{\ell + n - 2}{n - 2} & \parbox{.6\textwidth}{if $\displaystyle \mu_1 = \sum_{j = 1}^{n} \max\{\lambda_j,0\} + \ell$ and $\displaystyle \mu_n = \sum_{j = 1}^{n} \min\{\lambda_j,0\} - \ell$ for some $\ell \in \N_0$,}	\\
0 & otherwise.
\end{dcases*}
\end{multline*}
\end{lemma}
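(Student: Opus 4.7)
The plan is to prove the lemma by induction on $n$, applying the branching law from \hyperref[lem:U_{n - 1}xU_1branching]{Lemma \ref*{lem:U_{n - 1}xU_1branching}} at each step to peel off one factor of $\Ugp(1)$ at a time. The base case $n = 2$ is immediate: the branching from $\Ugp(2)$ to $\Ugp(1) \times \Ugp(1)$ in \hyperref[lem:U_{n - 1}xU_1branching]{Lemma \ref*{lem:U_{n - 1}xU_1branching}} shows that $\tau_{\lambda_1} \boxtimes \tau_{\lambda_2}$ occurs with multiplicity $1$ precisely when $\lambda_1 + \lambda_2 = \mu_1 + \mu_2$ and $\mu_1 \geq \lambda_j \geq \mu_2$ for $j \in \{1,2\}$, which is easily checked to be equivalent to the stated condition with $\ell = \mu_1 - \max\{\lambda_1,0\} - \max\{\lambda_2,0\} \in \N_0$ and gives $\binom{\ell + 0}{0} = 1$.

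For the inductive step, I would restrict $\tau_\mu|_{\Ugp(n)}$ first to $\Ugp(n-1) \times \Ugp(1)$. Because the middle entries of $\mu = (\mu_1, 0, \ldots, 0, \mu_n)$ all vanish, the interlacing constraint in \hyperref[lem:U_{n - 1}xU_1branching]{Lemma \ref*{lem:U_{n - 1}xU_1branching}} forces the constituents $\tau_\nu \boxtimes \tau_{\lambda_n}$ to have $\nu = (\nu_1, 0, \ldots, 0, \nu_{n-1})$, where $\mu_1 \geq \nu_1 \geq 0 \geq \nu_{n-1} \geq \mu_n$ and $\nu_1 + \nu_{n - 1} = \mu_1 + \mu_n - \lambda_n$. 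Crucially, each such $\nu$ has precisely the same shape required by the inductive hypothesis, applied now to $\Ugp(n-1)$.

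Applying the inductive hypothesis to $\tau_\nu|_{\Ugp(1)^{n-1}}$ with the remaining characters $\tau_{\lambda_1}, \ldots, \tau_{\lambda_{n-1}}$, the multiplicity is $\binom{\ell' + n - 3}{n - 3}$ whenever $\nu_1 = \sum_{j = 1}^{n - 1} \max\{\lambda_j, 0\} + \ell'$ and $\nu_{n-1} = \sum_{j = 1}^{n - 1} \min\{\lambda_j, 0\} - \ell'$ for some $\ell' \in \N_0$, and zero otherwise. Summing $\nu_1$ and $\nu_{n-1}$ forces $\sum_{j=1}^{n} \lambda_j = \mu_1 + \mu_n$, which is exactly the compatibility of the two equations $\mu_1 = \sum \max\{\lambda_j, 0\} + \ell$ and $\mu_n = \sum \min\{\lambda_j, 0\} - \ell$ in the statement, and I would define $\ell$ by the first of these.

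The remaining step is then to determine the range of $\ell'$. The inequalities $\mu_1 \geq \nu_1$ and $\nu_{n-1} \geq \mu_n$ translate into $\ell' \leq \ell + \max\{\lambda_n,0\}$ and $\ell' \leq \ell - \min\{\lambda_n,0\}$ respectively; a short case analysis on the sign of $\lambda_n$ shows that both constraints together simplify to $0 \leq \ell' \leq \ell$. Hence
\begin{equation*}
\dim \Hom_{\Ugp(1)^n}\left(\tau_\mu|_{\Ugp(1)^n}, \bigboxtimes_{j=1}^{n} \tau_{\lambda_j}\right) = \sum_{\ell' = 0}^{\ell} \binom{\ell' + n - 3}{n - 3} = \binom{\ell + n - 2}{n - 2},
\end{equation*}
where the last equality is the hockey stick identity. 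The main technical obstacle is the bookkeeping in this last step, namely verifying that the two upper bounds on $\ell'$ coming from the interlacing inequalities collapse uniformly to $\ell' \leq \ell$; this is where the hypothesis $\mu_2 = \cdots = \mu_{n-1} = 0$ (and hence the inductive shape being preserved) is used most essentially.
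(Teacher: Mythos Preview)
Your proof is correct and follows essentially the same approach as the paper: both iterate the branching law of \hyperref[lem:U_{n - 1}xU_1branching]{Lemma \ref*{lem:U_{n - 1}xU_1branching}}, with the paper unrolling the iteration all at once and counting the resulting chains directly as ordered $(n-2)$-tuples in $\{0,\ldots,\ell\}$, while you organise the same count as a formal induction on $n$ and close with the hockey-stick identity. The bookkeeping---in particular the verification that the two interlacing bounds on $\ell'$ collapse to $\ell' \leq \ell$---is identical in substance to the paper's reduction to the single chain of inequalities $\mu_1 \geq \nu_{1,1} + \max\{\lambda_1,0\} \geq \cdots \geq \sum_j \max\{\lambda_j,0\}$.
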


\begin{proof}
We take $\mu = (\mu_1,0,\ldots,0,\mu_n)$ in \hyperref[lem:U_{n - 1}xU_1branching]{Lemma \ref*{lem:U_{n - 1}xU_1branching}} and then iterate this branching rule in order to find that
\[\tau_{\mu}|_{\Ugp(1)^n} \cong \bigoplus_{j = 1}^{n - 1} \bigoplus_{\lambda_j \in \Lambda_1} \bigoplus_{\substack{(\nu_{j,1},0,\ldots,0, \nu_{j,n - j}) \in \Lambda_{n - j} \\ \nu_{j,1} \leq \nu_{j - 1,1}, \ \nu_{j,n - j} \geq \nu_{j - 1,n - j + 1} \\ \nu_{j,1} + \nu_{j,n - j} = \nu_{j - 1,1} + \nu_{j - 1,n - j + 1} - \lambda_j}} \bigboxtimes_{j = 1}^{n} \tau_{\lambda_j},\]
where we define $\nu_{0,1} \coloneqq \mu_1$, $\nu_{0,n} \coloneqq \mu_n$, and $\lambda_n \coloneqq \nu_{n - 1,1}$. By induction, the condition $\nu_{j,1} + \nu_{j,n - j} = \nu_{j - 1,1} + \nu_{j - 1,n - j + 1} - \lambda_j$ implies that
\[\mu_1 + \mu_n - \sum_{k = 1}^{j} \lambda_k = \begin{dcases*}
\nu_{j,1} + \nu_{j,n - j} & for $j \in \{1, \ldots, n - 2\}$,	\\
\nu_{n - 1,1} & for $j = n - 1$,	\\
0 & for $j = n$.
\end{dcases*}\]
The multiplicity of $\tau_{\lambda_1} \boxtimes \cdots \boxtimes \tau_{\lambda_n}$ in $\tau_{\mu}$ is thereby equal to
\[\# \left\{(\nu_{1,1},\ldots,\nu_{n - 2,1}) \in \N_0^{n - 2} : \nu_{j - 1,1} \geq \nu_{j,1} + \max\{\lambda_j,0\} \text{ for all $j \in \{1, \ldots, n - 1\}$}\right\},\]
namely the number of $(n - 2)$-tuples $(\nu_{1,1},\ldots,\nu_{n - 2,1})$ for which the system of inequalities
\[\mu_1 \geq \nu_{1,1} + \max\{\lambda_1,0\} \geq \cdots \geq \nu_{n - 2,1} + \sum_{j = 1}^{n - 2} \max\{\lambda_j,0\} \geq \sum_{j = 1}^{n} \max\{\lambda_j,0\}\]
holds. This is zero unless there exists some $\ell \in \N_0$ such that
\[\mu_1 = \sum_{j = 1}^{n} \max\{\lambda_j,0\} + \ell, \qquad \mu_n = \sum_{j = 1}^{n} \min\{\lambda_j,0\} - \ell,\]
in which case the multiplicity is precisely the number of ordered $(n - 2)$-tuples taking values between $0$ and $\ell$, which is
\[\binom{\ell + n - 2}{n - 2}.\qedhere\]
\end{proof}

With this in hand, we can now explicitly determine the right-hand side of \eqref{eq:HomUn}.

\begin{lemma}
\label{lem:Cnewformoldformconductor}
Suppose that the restriction of $\tau_{\mu} \in \widehat{\Ugp(n)}$ to $\Ugp(n - 1)$ contains the trivial representation. Then the highest weight of $\tau_{\mu}$ is of the form $\mu = (\mu_1,0,\ldots,0,\mu_n) \in \Lambda_n$, the trivial representation occurs with multiplicity one, and
\begin{multline*}
\dim \Hom_{\Ugp(1)^n}\left(\tau_{\mu}|_{\Ugp(1)^n}, \bigboxtimes_{j = 1}^{n} \tau_{\kappa_j}\right)	\\
= \begin{dcases*}
\binom{\ell + n - 2}{n - 2} & \parbox{.6\textwidth}{if $\displaystyle \mu = \left(\sum_{j = 1}^{n} \max\{\kappa_j,0\} + \ell,0,\ldots,0,\sum_{j = 1}^{n} \min\{\kappa_j,0\} - \ell\right)$ for some $\ell \in \N_0$,}	\\
0 & otherwise.
\end{dcases*}
\end{multline*}
\end{lemma}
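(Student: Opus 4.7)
The plan is to obtain this statement as an immediate consequence of the two preceding lemmas, since the hypothesis pins down the shape of $\mu$ very strongly and the quantity to compute is precisely the one supplied by the branching law in \hyperref[lem:U_1^{n}branching]{Lemma \ref*{lem:U_1^{n}branching}}.

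First, I would invoke \hyperref[cor:Un-1triv]{Corollary \ref*{cor:Un-1triv}}, which is the specialisation of the $\Ugp(n) \downarrow \Ugp(n-1)$ branching rule in \hyperref[lem:U_{n - 1}xU_1branching]{Lemma \ref*{lem:U_{n - 1}xU_1branching}} to the case of the trivial representation: the interlacing conditions $\mu_1 \geq 0 \geq \mu_2 \geq \cdots \geq \mu_{n - 1} \geq 0 \geq \mu_n$ force the inner entries to vanish, so $\mu = (\mu_1, 0, \ldots, 0, \mu_n)$ with $\mu_1 \geq 0 \geq \mu_n$, and there is a unique such interlacing $n-1$-tuple, giving multiplicity one. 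This simultaneously establishes the shape of $\mu$ and the multiplicity-one assertion.

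Second, with the shape of $\mu$ now known to match the hypothesis of \hyperref[lem:U_1^{n}branching]{Lemma \ref*{lem:U_1^{n}branching}}, I would apply that lemma directly with $\lambda_j = \kappa_j$. The lemma supplies both the existence condition on the weights --- namely $\mu_1 = \sum_{j = 1}^{n} \max\{\kappa_j, 0\} + \ell$ and $\mu_n = \sum_{j = 1}^{n} \min\{\kappa_j, 0\} - \ell$ for some $\ell \in \N_0$ --- and the binomial coefficient $\binom{\ell + n - 2}{n - 2}$ for the dimension, which is exactly the formula claimed.

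There is essentially no obstacle to overcome: the only bookkeeping issue is to reconcile the two constraints on $\mu_1$ and $\mu_n$ coming from \hyperref[cor:Un-1triv]{Corollary \ref*{cor:Un-1triv}} (signs of $\mu_1$ and $\mu_n$) with those coming from \hyperref[lem:U_1^{n}branching]{Lemma \ref*{lem:U_1^{n}branching}}, but both sets of constraints are manifestly compatible since $\sum_j \max\{\kappa_j, 0\} + \ell \geq 0$ and $\sum_j \min\{\kappa_j, 0\} - \ell \leq 0$ automatically. The proof is therefore a one-paragraph assembly of the two preceding lemmas.
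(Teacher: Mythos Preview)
Your proposal is correct and matches the paper's proof exactly: the paper's proof is the single sentence ``This is a direct consequence of \hyperref[lem:U_1^{n}branching]{Lemma \ref*{lem:U_1^{n}branching}} together with \hyperref[cor:Un-1triv]{Corollary \ref*{cor:Un-1triv}},'' which is precisely the two-step assembly you describe.
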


\begin{proof}
This is a direct consequence of \hyperref[lem:U_1^{n}branching]{Lemma \ref*{lem:U_1^{n}branching}} together with \hyperref[cor:Un-1triv]{Corollary \ref*{cor:Un-1triv}}.
\end{proof}

\begin{proof}[{Proofs of {\hyperref[thm:conductornewform]{Theorems \ref*{thm:conductornewform}}}, {\ref{thm:oldforms}}, {\ref{thm:epsilonconductor}}, and {\ref{thm:additiveconductor}} for $F = \C$}]
\hyperref[lem:HomUn]{Lemmata \ref*{lem:HomUn}} and \ref{lem:Cnewformoldformconductor} combine to complete the proofs of \hyperref[thm:conductornewform]{Theorems \ref*{thm:conductornewform}} and \ref{thm:oldforms} for $F = \C$, noting that for $\pi = \pi_1 \boxplus \cdots \boxplus \pi_n$ with $\pi_j = \chi^{\kappa_j} |\cdot|^{t_j}$, the newform $K$-type $\tau^{\circ} = \tau_{\mu^{\circ}}$ has highest weight
\[\mu^{\circ} = \left(\sum_{j = 1}^{n} \max\{\kappa_j,0\},0,\ldots,0,\sum_{j = 1}^{n} \min\{\kappa_j,0\}\right),\]
so that, recalling the definition \eqref{eq:HowedegreeC} of $\deg \tau^{\circ}$,
\[c(\pi) \coloneqq \deg \tau^{\circ} = \left\|\sum_{j = 1}^{n} \max\{\kappa_j,0\}\right\| + \left\|\sum_{j = 1}^{n} \min\{\kappa_j,0\}\right\| = \sum_{j = 1}^{n} \|\kappa_j\|.\]
\hyperref[thm:epsilonconductor]{Theorem \ref*{thm:epsilonconductor}} then holds for $F = \C$ via the fact that
\[\e(s,\pi,\psi) = \prod_{j = 1}^{n} \e(s,\pi_j,\psi) = \prod_{j = 1}^{n} i^{-\|\kappa_j\|} = i^{-\|\kappa_1\| - \cdots - \|\kappa_n\|} = i^{-c(\pi)},\]
recalling \eqref{eq:epsilonfactor}, while the case $n = 1$ implies \hyperref[thm:additiveconductor]{Theorem \ref*{thm:additiveconductor}} for $F = \C$.
\end{proof}

\begin{remark}
\label{rem:Unparity}
Note that $\deg \tau_{\mu(\ell)} = \deg \tau^{\circ} + 2\ell$ for
\[\mu(\ell) \coloneqq \left(\sum_{j = 1}^{n} \max\{\kappa_j,0\} + \ell,0,\ldots,0,\sum_{j = 1}^{n} \min\{\kappa_j,0\} - \ell\right),\]
and in particular that $\deg \tau_{\mu(\ell)} \equiv \deg \tau^{\circ} \pmod{2}$. This congruence holds not just for a $K$-type $\tau$ of $\pi$ for which $\Hom_{\Ugp(n - 1)}(1,\tau|_{\Ugp(n - 1)})$ is nontrivial, but for any $K$-type of $\pi$; see \cite[Theorem 2.1]{Fan18}.
\end{remark}

\begin{remark}
The minimal $K$-type of $\pi$ has highest weight
\[\mu = (\kappa_{\sigma(1)},\ldots,\kappa_{\sigma(n)}),\]
where $\sigma$ is a permutation for which $\kappa_{\sigma(1)} \geq \cdots \geq \kappa_{\sigma(n)}$. The corresponding Vogan norm is
\[\|\tau_{\mu}\|_{\V}^2 = \sum_{j = 1}^{n} \left(\kappa_{\sigma(j)} + n + 1 - 2j\right)^2.\]
This minimal $K$-type is the newform $K$-type $\tau^{\circ}$ if and only if $\kappa_{\sigma(2)} = \cdots = \kappa_{\sigma(n - 1)} = 0$.

In general, the Vogan norm of the minimal $K$-type is not equal to $c(\pi)$. On the other hand, the same cannot be said for the Howe degree: the Howe degree of the minimal $K$-type is
\[\sum_{j = 1}^{n} \left\|\kappa_{\sigma(j)}\right\| = \deg \tau^{\circ} = c(\pi).\]
\end{remark}

\subsection{Branching from \texorpdfstring{$\GL_n(\R)$}{GL\9040\231(R)} to \texorpdfstring{$\Ogp(n)$}{O(n)}}

Let $\pi = \pi_1 \boxplus \cdots \boxplus \pi_r$ be an induced representation of Whittaker type of $\GL_n(\R)$. This is induced from a standard parabolic subgroup $\Pgp(\R) = \Pgp_{(n_1,\ldots,n_r)}(\R)$ of $\GL_n(\R)$; when $n_j = 1$, $\pi_j$ is of the form $\chi^{\kappa_j} |\cdot|^{t_j}$ for some $\kappa_j \in \{0,1\}$ and $t_j \in \C$, while when $n_j = 2$, $\pi_j$ is of the form $D_{\kappa_j} \otimes \left|\det\right|^{t_j}$ for some $\kappa_j \geq 2$ and $t_j \in \C$.

\begin{lemma}
\label{lem:HomOn}
For $\tau \in \widehat{\Ogp(n)}$, the vector space $\Hom_{\Ogp(n)} \left(\tau, \pi|_{\Ogp(n)}\right)$ is isomorphic to
\[\bigoplus_{\substack{j = 1 \\ n_j = 2}}^{r} \bigoplus_{\substack{\ell_j = \kappa_j \\ \ell_j \equiv \kappa_j \hspace{-.25cm} \pmod{2}}}^{\infty} \Hom_{\Ogp(n_1) \times \cdots \times \Ogp(n_r)} \left(\tau|_{\Ogp(n_1) \times \cdots \times \Ogp(n_r)}, \bigboxtimes_{\substack{j = 1 \\ n_j = 1}}^{r} \tau_{\kappa_j} \boxtimes \bigboxtimes_{\substack{j = 1 \\ n_j = 2}}^{r} \tau_{(\ell_j,0)}\right).\]
\end{lemma}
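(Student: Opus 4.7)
The plan mirrors the argument used for $F = \C$ in \hyperref[lem:HomUn]{Lemma \ref*{lem:HomUn}}, with the added complication that the Levi blocks $\GL_{n_j}(\R)$ may have rank two, so branching from $\GL_2(\R)$ to $\Ogp(2)$ enters as a nontrivial intermediate step. First, I would invoke Mackey's restriction-induction formula: the Iwasawa decomposition $\GL_n(\R) = \Pgp(\R) \Ogp(n)$ together with the identification $\Pgp(\R) \cap \Ogp(n) = \Ogp(n_1) \times \cdots \times \Ogp(n_r)$ yields
\[
\pi|_{\Ogp(n)} \cong \Ind_{\Ogp(n_1) \times \cdots \times \Ogp(n_r)}^{\Ogp(n)} \bigboxtimes_{j=1}^{r} \pi_j|_{\Ogp(n_j)},
\]
and Frobenius reciprocity then converts $\Hom_{\Ogp(n)}(\tau,\pi|_{\Ogp(n)})$ into
\[
\Hom_{\Ogp(n_1) \times \cdots \times \Ogp(n_r)}\left(\tau|_{\Ogp(n_1) \times \cdots \times \Ogp(n_r)}, \bigboxtimes_{j=1}^{r} \pi_j|_{\Ogp(n_j)}\right).
\]

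Next, I would decompose each restricted factor explicitly. When $n_j = 1$, the representation $\pi_j = \chi^{\kappa_j} |\cdot|^{t_j}$ restricts to $\Ogp(1) = \{\pm 1\}$ as the character $\chi^{\kappa_j}$, which is by definition the $\Ogp(1)$-representation $\tau_{\kappa_j}$. When $n_j = 2$, the factor $|\det|^{t_j}$ is trivial on $\Ogp(2)$, so $\pi_j|_{\Ogp(2)} = D_{\kappa_j}|_{\Ogp(2)}$, and the classical branching law for essentially discrete series reads
\[
D_{\kappa_j}|_{\Ogp(2)} \cong \bigoplus_{\substack{\ell_j \geq \kappa_j \\ \ell_j \equiv \kappa_j \pmod{2}}} \tau_{(\ell_j, 0)},
\]
each summand occurring with multiplicity one. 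Substituting these decompositions into the Frobenius reciprocity expression, expanding the outer tensor product multilinearly, and interchanging $\Hom$ with the resulting algebraic direct sums in its second argument (justified because the summands are finite-dimensional irreducibles of the compact group $\prod_j \Ogp(n_j)$) yields the claimed double direct sum indexed by the tuples $(\ell_j)_{j : n_j = 2}$.

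The main obstacle is establishing the branching law for $D_{\kappa_j}|_{\Ogp(2)}$. This is classical but merits careful justification; one route is to analyse $\SO(2) = \Ugp(1)$-weights, noting that $D_{\kappa_j}|_{\Ugp(1)}$ has weights $\pm \kappa_j, \pm(\kappa_j + 2), \pm(\kappa_j + 4), \ldots$ while $\tau_{(\ell, 0)}|_{\SO(2)}$ has weights $\pm \ell$, and then using that each irreducible two-dimensional $\Ogp(2)$-representation is uniquely determined by its pair of $\SO(2)$-weights together with the action of a reflection in $\Ogp(2) \setminus \SO(2)$. An alternative is to apply Mackey theory directly to $D_{\kappa_j} = \Ind_{\GL_1(\C)}^{\GL_2(\R)} \chi^{\kappa_j} |\cdot|_{\C}^{t_j}$, exploiting the fact that $\GL_1(\C) \cap \Ogp(2) = \Ugp(1)$ and that the double coset space $\Ugp(1) \backslash \Ogp(2) / \GL_1(\C)$ is a single point.
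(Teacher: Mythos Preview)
Your main argument is correct and follows essentially the same route as the paper: Mackey's restriction--induction formula plus Frobenius reciprocity reduces to computing each $\pi_j|_{\Ogp(n_j)}$, and you identify these restrictions exactly as the paper does. The paper simply asserts the branching $D_{\kappa_j}|_{\Ogp(2)} \cong \bigoplus_{\ell_j \geq \kappa_j,\ \ell_j \equiv \kappa_j \pmod 2} \tau_{(\ell_j,0)}$ without further comment, whereas you supply a justification via $\SO(2)$-weights; that elaboration is fine and correct.

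One caution about your second alternative for the discrete series branching: the double coset space $\Ogp(2) \backslash \GL_2(\R) / \GL_1(\C)$ is \emph{not} a single point. Indeed, $\GL_1(\C) \cong \R_{>0} \times \SO(2)$ inside $\GL_2(\R)$, so $\Ogp(2) \cdot \GL_1(\C) = \R_{>0} \cdot \Ogp(2)$ is only the two-dimensional group of conformal linear maps, not all of the four-dimensional $\GL_2(\R)$. Mackey theory applied to $\Ind_{\GL_1(\C)}^{\GL_2(\R)}$ restricted to $\Ogp(2)$ therefore involves a nontrivial family of double cosets, and the analysis is not as immediate as you suggest. Your weight-based route is the cleaner one; stick with it.
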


Here we view $\Ogp(n_1) \times \cdots \times \Ogp(n_r)$ as a subgroup of block-diagonal matrices in $\Ogp(n)$; it is the maximal compact subgroup of the Levi subgroup $\Mgp_{\Pgp}(\R) \cong \GL_{n_1}(\R) \times \cdots \times \GL_{n_r}(\R)$ of the parabolic subgroup $\Pgp(\R) = \Pgp_{(n_1,\ldots,n_r)}(\R)$.

\begin{proof}
Mackey's restriction-induction formula implies that
\[\pi|_{\Ogp(n)} \cong \Ind_{\Ogp(n_1) \times \cdots \times \Ogp(n_r)}^{\Ogp(n)} \pi|_{\Ogp(n_1) \times \cdots \times \Ogp(n_r)},\]
and $\Hom_{\Ogp(n)} \left(\tau, \pi|_{\Ogp(n)}\right)$ is isomorphic to
\[\Hom_{\Ogp(n_1) \times \cdots \times \Ogp(n_r)}\left(\tau|_{\Ogp(n_1) \times \cdots \times \Ogp(n_r)}, \pi|_{\Ogp(n_1) \times \cdots \times \Ogp(n_r)}\right)\]
by the Frobenius reciprocity theorem. Note that if $n_j = 1$, so that $\pi_j = \chi^{\kappa_j} |\cdot|^{t_j}$, then $\pi_j |_{\Ogp(1)} \cong \tau_{\kappa_j}$, while if $n_j = 2$, so that $\pi_j = D_{\kappa_j} \otimes \left|\det\right|^{t_j}$, then
\[\pi_j |_{\Ogp(2)} \cong \bigoplus_{\substack{\ell_j = \kappa_j \\ \ell_j \equiv \kappa_j \hspace{-.25cm} \pmod{2}}}^{\infty} \tau_{(\ell_j,0)},\]
from which it follows that
\[\pi|_{\Ogp(n_1) \times \cdots \times \Ogp(n_r)} \cong \bigoplus_{\substack{j = 1 \\ n_j = 2}}^{r} \bigoplus_{\substack{\ell_j = \kappa_j \\ \ell_j \equiv \kappa_j \hspace{-.25cm} \pmod{2}}}^{\infty} \bigboxtimes_{\substack{j = 1 \\ n_j = 1}}^{r} \tau_{\kappa_j} \boxtimes \bigboxtimes_{\substack{j = 1 \\ n_j = 2}}^{r} \tau_{(\ell_j,0)}.\qedhere\]
\end{proof}

By restricting in stages and using the fact that $\#\{j : n_j = 1\} = 2r - n$ and $\#\{j : n_j = 2\} = n - r$, we deduce the following.

\begin{corollary}
\label{cor:dimHomOn}
For $\tau \in \widehat{\Ogp(n)}$, $\dim \Hom_{\Ogp(n)} \left(\tau, \pi|_{\Ogp(n)}\right)$ is equal to
\begin{multline}
\label{eq:HomOn}
\sum_{\substack{j = 1 \\ n_j = 2}}^{r} \sum_{\substack{\ell_j = \kappa_j \\ \ell_j \equiv \kappa_j \hspace{-.25cm} \pmod{2}}}^{\infty} \sum_{\nu_{2(n - r)} \in \Lambda_{2(n - r)}} \dim \Hom_{\Ogp(2)^{n - r}} \left(\tau_{\nu_{2(n - r)}} |_{\Ogp(2)^{n - r}}, \bigboxtimes_{\substack{j = 1 \\ n_j = 2}}^{r} \tau_{(\ell_j,0)} \right)	\\
\times \dim \Hom_{\Ogp(2(n - r)) \times \Ogp(1)^{2r - n}} \left(\tau |_{\Ogp(2(n - r)) \times \Ogp(1)^{2r - n}}, \tau_{\nu_{2(n - r)}} \boxtimes \bigboxtimes_{\substack{j = 1 \\ n_j = 1}}^{r} \tau_{\kappa_j}\right).
\end{multline}
\end{corollary}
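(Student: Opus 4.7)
The plan is to deduce this from \hyperref[lem:HomOn]{Lemma \ref*{lem:HomOn}} by restricting $\tau$ to $\Ogp(n_1) \times \cdots \times \Ogp(n_r)$ in two stages, through the intermediate compact subgroup $K \coloneqq \Ogp(2(n - r)) \times \Ogp(1)^{2r - n}$. Using \hyperref[rem:permutation]{Remark \ref*{rem:permutation}}, I first reorder the factors so that all indices $j$ with $n_j = 2$ precede those with $n_j = 1$, which leaves $\pi$ unchanged up to isomorphism; then $\Ogp(n_1) \times \cdots \times \Ogp(n_r)$ is identified with the block-diagonal subgroup $H \coloneqq \Ogp(2)^{n - r} \times \Ogp(1)^{2r - n}$ of $K$, and both $K$ and $H$ embed into $\Ogp(n)$ in the obvious block-diagonal manner.

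Next, by complete reducibility for the compact group $K$, I decompose $\tau|_K$ into $K$-isotypic pieces. Since $K$ is a direct product, its irreducibles are precisely the outer tensor products $\tau_{\nu_{2(n - r)}} \boxtimes \bigboxtimes_{j : n_j = 1} \tau_{\kappa_j'}$ with $\nu_{2(n - r)} \in \Lambda_{2(n - r)}$ and $\kappa_j' \in \{0,1\}$. Inserting this decomposition into the Hom space supplied by \hyperref[lem:HomOn]{Lemma \ref*{lem:HomOn}}, and using that $H$ is also a direct product, factors each resulting Hom into the product of $\dim \Hom_{\Ogp(2)^{n - r}}(\tau_{\nu_{2(n - r)}}|_{\Ogp(2)^{n - r}}, \bigboxtimes_{j : n_j = 2} \tau_{(\ell_j, 0)})$ and $\prod_{j : n_j = 1} \dim \Hom_{\Ogp(1)}(\tau_{\kappa_j'}, \tau_{\kappa_j})$. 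The latter factors equal $1$ when $\kappa_j' = \kappa_j$ for every such $j$ and vanish otherwise, which collapses the summation over the $\Ogp(1)$-characters and forces the $\Ogp(1)$-components of the summand in the first displayed dimension in \eqref{eq:HomOn}.

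Finally, I observe that the multiplicity of the $K$-irreducible $\tau_{\nu_{2(n - r)}} \boxtimes \bigboxtimes_{j : n_j = 1} \tau_{\kappa_j}$ inside $\tau|_K$ equals $\dim \Hom_K(\tau|_K, \tau_{\nu_{2(n - r)}} \boxtimes \bigboxtimes_{j : n_j = 1} \tau_{\kappa_j})$ by complete reducibility, which is precisely the second dimension appearing in \eqref{eq:HomOn}; assembling the pieces yields the claimed formula. No serious obstacle is expected, since each step is an instance of transitivity of restriction coupled with Schur's lemma; the only care needed is to permute the parabolic data so that the block-diagonal embedding $H \subset K \subset \Ogp(n)$ is genuinely a chain of subgroups, and then to verify that the one-dimensional character-matching on the $\Ogp(1)$-factors correctly eliminates the spurious summation over the auxiliary characters $(\kappa_j')$.
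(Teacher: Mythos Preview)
Your proposal is correct and follows essentially the same approach as the paper, which simply says the corollary follows from \hyperref[lem:HomOn]{Lemma \ref*{lem:HomOn}} ``by restricting in stages'' through the intermediate subgroup $\Ogp(2(n-r)) \times \Ogp(1)^{2r-n}$; you have merely filled in the details of that restriction. One minor point: invoking \hyperref[rem:permutation]{Remark \ref*{rem:permutation}} is not quite the right justification for reordering the blocks, since that remark concerns invariance of the conductor exponent and newform $K$-type rather than isomorphism of $\pi$; the cleaner argument is that conjugation by a permutation matrix in $\Ogp(n)$ carries one block-diagonal embedding to the other and leaves $\tau|_{\cdot}$ unchanged up to isomorphism, so the branching multiplicities agree.
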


To understand \eqref{eq:HomOn}, which involves branching from $\Ogp(n)$ to various subgroups, we make use of the following branching rule from $\Ogp(n)$ to $\Ogp(n - 1) \times \Ogp(1)$.

\begin{lemma}[{\cite[Proposition 10.1]{Pro94}}]
\label{lem:O_{n - 1}xO_1branching}
For $\tau_{\mu} \in \widehat{\Ogp(n)}$ of highest weight $\mu = (\mu_1,\ldots,\mu_n) \in \Lambda_n$,
\[\tau_{\mu}|_{\Ogp(n - 1) \times \Ogp(1)} \cong \bigoplus_{\lambda \in \Lambda_1} \bigoplus_{\substack{\nu = (\nu_1,\ldots,\nu_{n - 1}) \in \Lambda_{n - 1} \\ \mu_1 \geq \nu_1 \geq \mu_2 \geq \cdots \geq \nu_{n - 1} \geq \mu_n \\ \sum_{j = 1}^{n - 1} \nu_j \equiv \sum_{j = 1}^{n} \mu_j - \lambda \hspace{-.25cm} \pmod{2}}} \tau_{\nu} \boxtimes \tau_{\lambda}.\]
In particular,
\[\tau_{\mu}|_{\Ogp(n - 1)} \cong \bigoplus_{\substack{\nu = (\nu_1,\ldots,\nu_{n - 1}) \in \Lambda_{n - 1} \\ \mu_1 \geq \nu_1 \geq \mu_2 \geq \cdots \geq \nu_{n - 1} \geq \mu_n}} \tau_{\nu}.\]
\end{lemma}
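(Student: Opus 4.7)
The plan is to reduce the statement to the classical Zhelobenko branching rule from $\SO(n)$ to $\SO(n-1)$ and then account for the disconnected components of the orthogonal groups, essentially reconstructing the argument of Proctor in the cited reference.

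First I would view $\Ogp(n-1) \times \Ogp(1) \subset \Ogp(n)$ as the stabiliser of the standard basis vector $e_n$ together with its orthogonal complement, so that restriction from $\Ogp(n)$ to this subgroup factors through restriction to $\SO(n)$, branching to $\SO(n-1)$, and then lifting back via the action of the two extra generators of the component group $(\Ogp(n-1) \times \Ogp(1))/\SO(n-1)$, represented by $\diag(-1,1_{n-2},1) \in \Ogp(n-1)$ and $-1 \in \Ogp(1)$. The second, simpler display in the lemma then follows immediately by summing over $\lambda \in \Lambda_1 = \{0,1\}$, since only the parity congruence is lost when the $\Ogp(1)$-factor is forgotten.

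Second I would invoke the classical Zhelobenko branching rule from $\SO(n)$ to $\SO(n-1)$, which asserts that $\tau_\mu|_{\SO(n-1)}$ decomposes multiplicity-freely over those $\nu$ satisfying the interlacing condition $\mu_1 \geq \nu_1 \geq \mu_2 \geq \cdots \geq \nu_{n-1} \geq \mu_n$. Lifting this to $\Ogp$-representations requires tracking, for each summand, how the two extra generators act. One checks that the $\Ogp(n-1)$-generator uniquely pins down the $\Ogp(n-1)$-extension of each $\SO(n-1)$-isotypic component, while the required parity condition $\lambda \equiv \sum_j \mu_j - \sum_j \nu_j \pmod{2}$ arises by matching the action of the central element $-1_n = \diag(-1_{n-1},-1)$, which acts on $\tau_\mu$ by the scalar $(-1)^{\sum_j \mu_j}$ and on $\tau_\nu \boxtimes \tau_\lambda$ by $(-1)^{\sum_j \nu_j + \lambda}$.

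The main obstacle is the bookkeeping for the associate pairs of $\Ogp(n)$-representations that arise when $n$ is even with $\mu_{n/2} > 0$: in this case $\tau_\mu$ and $\tau_\mu \otimes \det$ restrict to isomorphic representations of $\SO(n)$, and one must untangle these to correctly identify the $\Ogp(n-1)$-extensions of the $\SO(n-1)$-isotypic components. A careful case analysis across the parities of $n$ and the positivity of $\mu_{\lfloor n/2 \rfloor}$, exactly as in \cite{Pro94}, is required to ensure that the decomposition formula holds uniformly; accordingly, invoking Proctor's Proposition 10.1 directly is the cleanest route, with the sketch above indicating why the stated interlacing and parity conditions are the correct ones.
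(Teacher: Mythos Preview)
The paper does not prove this lemma at all: it is stated as a citation of \cite[Proposition 10.1]{Pro94} with no argument supplied. Your proposal therefore goes strictly beyond what the paper does, and your closing recommendation --- to invoke Proctor's result directly --- is exactly the paper's approach.

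Your sketch of Proctor's argument is broadly correct in spirit (Zhelobenko branching for $\SO(n) \supset \SO(n-1)$ plus bookkeeping for the component group, with the parity congruence forced by the action of the central element $-1_n$), though the phrase ``restriction from $\Ogp(n)$ to this subgroup factors through restriction to $\SO(n)$'' is imprecise, since $\Ogp(n-1) \times \Ogp(1)$ is not contained in $\SO(n)$. What one actually does is restrict $\tau_\mu$ to $\SO(n)$, apply Zhelobenko to reach $\SO(n-1)$, and then determine how each $\SO(n-1)$-summand extends to $\Ogp(n-1) \times \Ogp(1)$ inside the original $\Ogp(n)$-module; the case analysis you flag for associate pairs when $n$ is even is indeed the delicate point. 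But since the paper treats this as a black-box citation, simply quoting Proctor suffices.
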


\begin{corollary}
\label{cor:On-1triv}
The restriction to $\Ogp(n - 1)$ of the irreducible representation $\tau_{\mu} \in \widehat{\Ogp(n)}$ of highest weight $\mu = (\mu_1, \ldots, \mu_n) \in \Lambda_n$ contains the trivial representation if and only if $\mu_1 \geq 0$ and $\mu_2 = \cdots = \mu_n = 0$, in which case the trivial representation occurs with multiplicity one.
\end{corollary}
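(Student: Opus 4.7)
The plan is to derive the corollary directly from the branching law for $\tau_{\mu}|_{\Ogp(n-1)}$ given in the second part of Lemma \ref{lem:O_{n - 1}xO_1branching}. The trivial representation of $\Ogp(n-1)$ is $\tau_{0}$, where $0 = (0,\ldots,0)$ lies in $\Lambda_{n-1}$, so by that decomposition its multiplicity in $\tau_{\mu}|_{\Ogp(n-1)}$ equals the number of $\nu \in \Lambda_{n-1}$ equal to $0$ satisfying the interlacing inequalities $\mu_1 \geq \nu_1 \geq \mu_2 \geq \cdots \geq \nu_{n-1} \geq \mu_n$. Specializing to $\nu = 0$, the interlacing condition reads $\mu_1 \geq 0 \geq \mu_2 \geq 0 \geq \mu_3 \geq \cdots \geq 0 \geq \mu_n$.

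Next I would invoke the explicit description of $\Lambda_n$ recalled in Section \ref{sect:archimedean}: every highest weight $\mu \in \Lambda_n$ lies in $\N_0^n$, so in particular $\mu_j \geq 0$ for each $j$. Combined with the interlacing chain above, this forces $\mu_2 = \mu_3 = \cdots = \mu_n = 0$, while the inequality $\mu_1 \geq 0$ is automatic. Conversely, when $\mu_2 = \cdots = \mu_n = 0$ and $\mu_1 \geq 0$, the choice $\nu = 0$ manifestly satisfies the interlacing constraints, so $\tau_0$ does occur.

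Finally, since the decomposition in Lemma \ref{lem:O_{n - 1}xO_1branching} is a direct sum indexed by distinct $\nu \in \Lambda_{n-1}$ (each $\tau_{\nu}$ appearing once), and since $\nu = 0$ is the unique weight that could contribute a copy of the trivial representation, the multiplicity is exactly one whenever the condition holds. There is essentially no obstacle here: the argument is a purely combinatorial unpacking of the interlacing rule, perfectly parallel to the $\Ugp(n)$ case treated in Corollary \ref{cor:Un-1triv}. The only point requiring a brief sanity check is that no extra parity or $\det$-twist constraint intervenes for the plain restriction to $\Ogp(n-1)$; such a parity condition is present in the finer $\Ogp(n-1) \times \Ogp(1)$ branching of Lemma \ref{lem:O_{n - 1}xO_1branching} but is irrelevant here, since we average (that is, sum) over the two characters of $\Ogp(1)$.
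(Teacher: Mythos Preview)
Your argument is correct and is exactly the intended one: the paper states this as an immediate corollary of the branching law in Lemma~\ref{lem:O_{n - 1}xO_1branching}, and your proof is the natural unpacking of that, using the interlacing condition $\mu_1 \geq 0 \geq \mu_2 \geq \cdots \geq 0 \geq \mu_n$ together with the fact that $\Lambda_n \subset \N_0^n$. Your remark that the $\Ogp(1)$-parity constraint disappears upon restricting only to $\Ogp(n-1)$ is also correct and matches the second displayed formula of the lemma.
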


Now we iterate the branching rule in \hyperref[lem:O_{n - 1}xO_1branching]{Lemma \ref*{lem:O_{n - 1}xO_1branching}}.

\begin{lemma}
\label{lem:O_{n-n'}xO_1^{2r - n}branching}
For $\tau_{\mu} \in \widehat{\Ogp(n)}$ of highest weight $\mu = (\mu_1,0,\ldots,0) \in \Lambda_n$, and for $\lambda_1,\ldots,\lambda_{2r - n} \in \Lambda_1$ and $\nu_{2(n - r)} \in \Lambda_{2(n - r)}$,
\[\dim \Hom_{\Ogp(2(n - r)) \times \Ogp(1)^{2r - n}}\left(\tau_{\mu}|_{\Ogp(2(n - r)) \times \Ogp(1)^{2r - n}}, \tau_{\nu_{2(n - r)}} \boxtimes \bigboxtimes_{j = 1}^{2r - n} \tau_{\lambda_j}\right)\]
is equal to
\[\begin{dcases*}
1 & if $\nu_{2(n - r)} = (\mu_1 - \lambda_1 - 2\ell,0,\ldots,0)$ for some $\ell \in \N_0$,	\\
0 & otherwise
\end{dcases*}\]
if $\#\{j : n_j = 1\} = 1$, while if $\#\{j : n_j = 1\} \geq 2$, this is equal to
\[\begin{dcases*}
\binom{\ell + n' - 1}{n' - 1} & if $\displaystyle \nu_{2(n - r)} = \left(\mu_1 - \sum_{j = 1}^{2r - n} \lambda_j - 2\ell,0,\ldots,0\right)$ for some $\ell \in \N_0$,	\\
0 & otherwise.
\end{dcases*}\]
\end{lemma}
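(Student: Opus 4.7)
The plan is to iterate Lemma \ref{lem:O_{n - 1}xO_1branching} exactly $2r - n$ times, peeling off one $\Ogp(1)$-factor at each stage, and to exploit the fact that the ``sandwich'' shape of $\mu = (\mu_1, 0, \ldots, 0)$ is preserved at every intermediate step. Concretely, I would first establish by inspection of the interlacing inequalities that if $\tau_{(m, 0, \ldots, 0)} \in \widehat{\Ogp(N)}$ with $N \geq 2$, then
\[
\tau_{(m, 0, \ldots, 0)}|_{\Ogp(N - 1) \times \Ogp(1)} \cong \bigoplus_{\lambda \in \{0, 1\}} \bigoplus_{\substack{0 \leq m' \leq m \\ m' \equiv m - \lambda \hspace{-.25cm} \pmod{2}}} \tau_{(m', 0, \ldots, 0)} \boxtimes \tau_{\lambda},
\]
since every intermediate highest weight entry $\nu_j$ for $j \geq 2$ is squeezed between two zeros by the interlacing and must itself be nonnegative. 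In particular, for the multiplicity in the lemma to be nonzero, the $\Ogp(2(n - r))$-part $\nu_{2(n - r)}$ must itself be of the form $(m, 0, \ldots, 0)$.

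Iterating this observation $2r - n$ times, I would then express the required multiplicity as the number of chains $\mu_1 = m_0 \geq m_1 \geq \cdots \geq m_{2r - n} = m$ of nonnegative integers satisfying $m_k \equiv m_{k - 1} - \lambda_k \pmod{2}$ for each $k \in \{1, \ldots, 2r - n\}$. Setting $d_k \coloneqq (m_{k - 1} - m_k - \lambda_k)/2$, the parity condition forces each $d_k \in \Z$, and a case check against $\lambda_k \in \{0, 1\}$ gives $d_k \geq 0$. Telescoping yields $d_1 + \cdots + d_{2r - n} = \ell$, where $\ell \coloneqq (\mu_1 - m - \sum_{j = 1}^{2r - n} \lambda_j)/2$ must lie in $\N_0$; so $m = \mu_1 - \sum_{j = 1}^{2r - n} \lambda_j - 2\ell$, and the multiplicity equals the number of ordered $(2r - n)$-tuples of nonnegative integers summing to $\ell$, namely $\binom{\ell + n' - 1}{n' - 1}$ with $n' = 2r - n$.

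The edge case $\#\{j : n_j = 1\} = 1$, equivalently $2r - n = 1$, reduces to a single direct application of Lemma \ref{lem:O_{n - 1}xO_1branching} and yields multiplicity $1$, reproducing the first displayed formula in the statement. I do not anticipate any serious obstacle; the only points requiring care are the inductive shape-preservation observation and the bookkeeping of the parity constraints across the telescoping sum.
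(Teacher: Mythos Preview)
Your proposal is correct and follows essentially the same route as the paper: iterate Lemma~\ref{lem:O_{n - 1}xO_1branching} a total of $2r-n$ times, use the interlacing to see that every intermediate highest weight stays of the form $(m,0,\ldots,0)$, and count chains subject to the parity constraint. Your substitution $d_k=(m_{k-1}-m_k-\lambda_k)/2$ converting the chain count into compositions of $\ell$ into $n'=2r-n$ nonnegative parts is a tidy way to arrive at $\binom{\ell+n'-1}{n'-1}$; the paper instead phrases the count as ordered $(n'-1)$-tuples between $0$ and $\ell$, which is the same thing.
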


\begin{proof}
We take $\mu = (\mu_1,0,\ldots,0)$ in \hyperref[lem:O_{n - 1}xO_1branching]{Lemma \ref*{lem:O_{n - 1}xO_1branching}}; the case $\#\{j : n_j = 1\} = 1$ is then immediate, while if $\#\{j : n_j = 1\} \geq 2$, we iterate this branching rule in order to find that
\[\tau_{\mu}|_{\Ogp(2(n - r)) \times \Ogp(1)^{2r - n}} \cong \bigoplus_{j = 1}^{2r - n} \bigoplus_{\lambda_j \in \Lambda_1} \bigoplus_{\substack{(\nu_{j,1},0,\ldots,0) \in \Lambda_{n - j} \\ \nu_{j,1} \leq \nu_{j - 1,1} \\ \nu_{j,1} \equiv \nu_{j - 1,1} - \lambda_j \hspace{-.25cm} \pmod{2}}} \tau_{(\nu_{n',1},0,\ldots,0)} \boxtimes \bigboxtimes_{j = 1}^{2r - n} \tau_{\lambda_j},\]
where we set $\nu_{0,1} \coloneqq \mu_1$. It follows that for fixed $\mu = (\mu_1,0,\ldots,0) \in \Lambda_n$, $\lambda_1,\ldots,\lambda_{2r - n} \in \Lambda_1$, and $\nu_{2(n - r)} \in \Lambda_{2(n - r)}$, the multiplicity of $\tau_{\nu_{2(n - r)}} \boxtimes \tau_{\lambda_1} \boxtimes \cdots \boxtimes \tau_{\lambda_{2r - n}}$ in $\tau_{\mu}$ is zero unless $\nu_{2(n - r)}$ is of the form $(\nu_{n',1},0,\ldots,0)$ for some $\nu_{n',1} \in \N_0$, in which case it is equal to
\[\# \left\{(\nu_{1,1},\ldots,\nu_{n' - 1,1}) \in \N_0^{n' - 1} : \nu_{j - 1,1} \geq \nu_{j,1}, \ \nu_{j,1} \equiv \nu_{j - 1,1} - \lambda_j \hspace{-.25cm} \pmod{2} \text{ for all $j \in \{1, \ldots, n'\}$}\right\},\]
namely the number of $(n' - 1)$-tuples $(\nu_{1,1},\ldots,\nu_{n' - 1,1})$ for which the system of inequalities
\[\mu_1 \geq \nu_{1,1} + \lambda_1 \geq \cdots \geq \nu_{n',1} + \sum_{j = 1}^{2r - n} \lambda_j\]
holds with each quantity being of the same parity. This is zero unless
\[\mu_1 = \nu_{n',1} + \sum_{j = 1}^{2r - n} \lambda_j + 2\ell\]
for some $\ell \in \N_0$, in which case the multiplicity is precisely the number of ordered $(n' - 1)$-tuples taking values between $0$ and $\ell$, which is
\[\binom{\ell + n' - 2}{n' - 2}.\qedhere\]
\end{proof}

We also require a special case of the branching rule from $\Ogp(n)$ to $\Ogp(n - 2) \times \Ogp(2)$.

\begin{lemma}[{\cite[Proposition 10.3]{Pro94}}]
\label{lem:O_{n-2}xO_2branching}
For $n \geq 3$ and $\tau_{\mu} \in \widehat{\Ogp(n)}$ of highest weight $\mu = (\mu_1,0,\ldots,0) \in \Lambda_n$,
\[\tau_{\mu}|_{\Ogp(n - 2) \times \Ogp(2)} \cong \bigoplus_{\substack{\lambda = (\lambda_1,0) \in \Lambda_2 \\ \lambda_1 \leq \mu_1}} \bigoplus_{\substack{\nu = (\nu_1,0,\ldots,0) \in \Lambda_{n - 2} \\ \nu_1 \leq \mu_1 - \lambda_1 \\ \nu_1 \equiv \mu_1 - \lambda_1 \hspace{-.25cm} \pmod{2}}} \tau_{\nu} \boxtimes \tau_{\lambda}.\]
\end{lemma}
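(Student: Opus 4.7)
Although this lemma is cited from \cite[Proposition 10.3]{Pro94} and could be invoked directly, I would sketch an alternative proof via the realization of one-row $\Ogp(n)$-representations as spaces of spherical harmonics. The plan is to identify $\tau_{(\mu_1,0,\ldots,0)}$ with the space $\HH^{\mu_1}(\R^n)$ of homogeneous harmonic polynomials of degree $\mu_1$ on $\R^n$, with $\Ogp(n)$ acting by $(k \cdot P)(v) \coloneqq P(k^{-1}v)$, and then to decompose $\HH^{\mu_1}(\R^n)$ as an $\Ogp(n-2) \times \Ogp(2)$-module by splitting coordinates as $v = (x,y) \in \R^{n-2} \times \R^2$, so that the subgroup acts block-diagonally.

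First, I would apply the classical Fischer decomposition on each factor and tensor to obtain
\[\C[x,y]_{\mu_1} = \bigoplus_{\substack{a,b,c,c' \geq 0 \\ a + b + 2c + 2c' = \mu_1}} \left(x_1^2 + \cdots + x_{n-2}^2\right)^c \left(y_1^2 + y_2^2\right)^{c'} \cdot \HH^a(\R^{n-2}) \otimes \HH^b(\R^2),\]
a direct-sum decomposition as $\Ogp(n-2) \times \Ogp(2)$-modules, where $\C[x,y]_{\mu_1}$ denotes polynomials of total degree $\mu_1$. Next, I would intersect with $\ker(\Delta_x + \Delta_y)$. Fixing a pair $(a,b)$ and setting $N = (\mu_1 - a - b)/2$, the $\HH^a(\R^{n-2}) \otimes \HH^b(\R^2)$-isotypic component of $\C[x,y]_{\mu_1}$ is spanned by the vectors $r^{2c} s^{2(N-c)} H \otimes K$ with $0 \leq c \leq N$, writing $r^2 = |x|^2$ and $s^2 = |y|^2$; this component is empty unless $a + b \leq \mu_1$ and $a + b \equiv \mu_1 \pmod{2}$. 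Using the standard identities
\[\Delta_x(r^{2c} H) = 2c(2c + 2a + n - 4) r^{2c-2} H, \qquad \Delta_y(s^{2c'} K) = 4c'(c' + b) s^{2c'-2} K,\]
the condition $(\Delta_x + \Delta_y)f = 0$ becomes a nondegenerate two-term recursion on the coefficients $\alpha_c$ of $f = \sum_{c = 0}^{N} \alpha_c r^{2c} s^{2(N-c)} H \otimes K$, with one-dimensional solution space. Finally, I would identify $\HH^a(\R^{n-2}) \cong \tau_{(a,0,\ldots,0)}$ as an $\Ogp(n-2)$-module and $\HH^b(\R^2) \cong \tau_{(b,0)}$ as an $\Ogp(2)$-module, and set $\nu_1 = a$, $\lambda_1 = b$ to recover the claimed branching.

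The main point to verify is that the recursion coefficients $2c(2c + 2a + n - 4)$ and $4(N - c + 1)(N - c + 1 + b)$ never vanish for $1 \leq c \leq N$, which is automatic since $n \geq 3$ and $a, b \geq 0$; this ensures that $\ker(\Delta_x + \Delta_y)$ meets each $(a,b)$-isotypic component in exactly one copy of $\HH^a(\R^{n-2}) \otimes \HH^b(\R^2)$. As a consistency check, summing $\dim \tau_{(\lambda_1,0)}$ over admissible $\lambda_1$ for fixed $\nu_1$ reproduces the multiplicity $\mu_1 - \nu_1 + 1$ of $\tau_{(\nu_1,0,\ldots,0)}$ in $\tau_{(\mu_1,0,\ldots,0)}|_{\Ogp(n-2)}$, which matches the result of iterating Lemma \ref{lem:O_{n - 1}xO_1branching} twice to descend from $\Ogp(n)$ to $\Ogp(n-2)$.
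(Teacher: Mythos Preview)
Your argument is correct. The paper does not prove this lemma at all; it simply imports it as a citation from Proctor \cite[Proposition~10.3]{Pro94}, where the general $\Ogp(n)\downarrow\Ogp(n-2)\times\Ogp(2)$ branching rule is obtained combinatorially via Gelfand patterns and modification rules for orthogonal tableaux. Your route is genuinely different: you exploit the fact that the one-row representations $\tau_{(\mu_1,0,\ldots,0)}$ admit the explicit model $\HH^{\mu_1}(\R^n)$, apply the separation-of-variables Fischer decomposition on $\R^{n-2}\times\R^2$, and reduce the harmonicity condition to a two-term recursion whose nondegeneracy pins down exactly one copy of each admissible $\HH^a(\R^{n-2})\otimes\HH^b(\R^2)$. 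This is more elementary and self-contained for the special highest weight at hand, and it dovetails nicely with the harmonic-polynomial machinery used elsewhere in \hyperref[sect:hhp]{Section~\ref*{sect:hhp}}; the trade-off is that it does not generalise beyond one-row shapes, whereas Proctor's tableau calculus handles arbitrary $\mu$. Your consistency check against the iterated $\Ogp(n)\to\Ogp(n-1)\to\Ogp(n-2)$ branching is also sound.
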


We again iterate this branching rule.

\begin{lemma}
\label{lem:O_2^{n - r}branching}
For $\tau_{\mu} \in \widehat{\Ogp(2(n - r))}$ of highest weight $\mu = (\mu_1,0,\ldots,0) \in \Lambda_{2(n - r)}$ and for $(\lambda_{j,1},\lambda_{j,2}) \in \Lambda_2$ with $j \in \{1,\ldots,n - r\}$,
\begin{multline*}
\dim \Hom_{\Ogp(2)^{n - r}}\left(\tau_{\mu}|_{\Ogp(2)^{n - r}}, \bigboxtimes_{j = 1}^{n - r} \tau_{(\lambda_{j,1},\lambda_{j,2})}\right)	\\
= \begin{dcases*}
1 & if $\#\{j : n_j = 2\} = 1$ and $(\lambda_{1,1},\lambda_{1,2}) = (\mu_1,0)$,	\\
\binom{\ell + n - r - 2}{n - r - 2} & \parbox{.6\textwidth}{if $\#\{j : n_j = 2\} \geq 2$, $\displaystyle \mu_1 = \sum_{j = 1}^{n - r} \lambda_{j,1} + 2\ell$ for some $\ell \in \N_0$, and $\lambda_{j,2} = 0$ for all $j \in \{1, \ldots, n - r\}$,}	\\
0 & otherwise.
\end{dcases*}
\end{multline*}
\end{lemma}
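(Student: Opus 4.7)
The plan is to mirror the proof of Lemma \ref{lem:U_1^{n}branching} using the $\Ogp(2m) \to \Ogp(2(m-1)) \times \Ogp(2)$ branching law of Lemma \ref{lem:O_{n-2}xO_2branching} in place of the $\Ugp(m) \to \Ugp(m-1) \times \Ugp(1)$ branching law. First, the case $\#\{j : n_j = 2\} = 1$ (i.e.\ $n - r = 1$) is immediate, since then $\tau_{\mu}$ is already the irreducible $\Ogp(2)$-representation of highest weight $(\mu_1,0)$, so the Hom space is one-dimensional exactly when $(\lambda_{1,1},\lambda_{1,2}) = (\mu_1,0)$ and zero otherwise.

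For $n - r \geq 2$, I would iterate Lemma \ref{lem:O_{n-2}xO_2branching} a total of $n - r - 1$ times, peeling off one $\Ogp(2)$-factor at each step; this is valid because every intermediate orthogonal group $\Ogp(2m)$ encountered satisfies $2m \geq 4 \geq 3$. Setting $\nu_0 \coloneqq \mu_1$, the $j$-th application (starting from highest weight $(\nu_{j-1},0,\ldots,0)$) produces a direct sum over $(\lambda_{j,1},0) \in \Lambda_2$ with $\lambda_{j,1} \leq \nu_{j-1}$ and over $(\nu_j,0,\ldots,0)$ with $\nu_j \leq \nu_{j-1} - \lambda_{j,1}$ and $\nu_j \equiv \nu_{j-1} - \lambda_{j,1} \pmod{2}$. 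Two observations are immediate: only $\Ogp(2)$-summands with trivial second coordinate ever appear, so the Hom space vanishes unless $\lambda_{j,2} = 0$ for every $j$; and the final ($(n-r-1)$-th) step leaves a remaining $\Ogp(2)$-factor of highest weight $(\nu_{n-r-1},0)$ that must equal $\tau_{(\lambda_{n-r,1},0)}$, forcing $\nu_{n-r-1} = \lambda_{n-r,1}$.

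It then remains to count the number of tuples $(\nu_1,\ldots,\nu_{n-r-2}) \in \N_0^{n-r-2}$ satisfying $\nu_{j-1} \geq \nu_j + \lambda_{j,1}$ and $\nu_j \equiv \nu_{j-1} - \lambda_{j,1} \pmod{2}$ for each $j \in \{1,\ldots,n-r-1\}$, with the boundary values $\nu_0 = \mu_1$ and $\nu_{n-r-1} = \lambda_{n-r,1}$. The parity conditions chain together to give $\nu_j \equiv \mu_1 - \sum_{k=1}^{j} \lambda_{k,1} \pmod{2}$. Making the substitution $a_j \coloneqq \nu_j - \sum_{k = j+1}^{n-r} \lambda_{k,1}$ converts the inequalities into
\[
\mu_1 - \sum_{k=1}^{n-r} \lambda_{k,1} \geq a_1 \geq a_2 \geq \cdots \geq a_{n-r-2} \geq a_{n-r-1} = 0,
\]
with every $a_j$ congruent to $\mu_1 - \sum_{k=1}^{n-r} \lambda_{k,1}$ modulo $2$. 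The count is therefore zero unless $\mu_1 - \sum_{k=1}^{n-r} \lambda_{k,1} = 2\ell$ for some $\ell \in \N_0$, in which case the further substitution $a_j = 2 b_j$ reduces the problem to counting ordered $(n-r-2)$-tuples $\ell \geq b_1 \geq \cdots \geq b_{n-r-2} \geq 0$, giving $\binom{\ell + n - r - 2}{n - r - 2}$.

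The argument is essentially routine; the only genuine bookkeeping point is verifying that the parity constraints propagated through the iteration combine to a single parity condition shared by all $a_j$, which is precisely what allows the clean substitution $a_j = 2b_j$ to reduce the count to the same binomial coefficient appearing in Lemma \ref{lem:U_1^{n}branching}.
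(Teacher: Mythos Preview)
Your argument is correct and follows essentially the same route as the paper: iterate Lemma~\ref{lem:O_{n-2}xO_2branching} to peel off $\Ogp(2)$-factors, observe that only highest weights with trivial second coordinate arise, and reduce to counting weakly decreasing chains of a fixed parity. Your explicit substitution $a_j = \nu_j - \sum_{k>j}\lambda_{k,1}$ and the observation that $a_{n-r-1}=0$ forces all $a_j$ to be even make the final count slightly cleaner than the paper's presentation, but the underlying computation is identical.
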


\begin{proof}
The result follows from Schur's lemma if $\#\{j : n_j = 2\} = 1$. If $\#\{j : n_j = 2\} \geq 2$, iterating \hyperref[lem:O_{n-2}xO_2branching]{Lemma \ref*{lem:O_{n-2}xO_2branching}} yields
\[\tau_{\mu}|_{\Ogp(2)^{n - r}} \cong \bigoplus_{j = 1}^{n - r - 1} \bigoplus_{\substack{(\lambda_{j,1},0) \in \Lambda_2 \\ \lambda_{j,1} \leq \nu_{j - 1,1}}} \bigoplus_{\substack{(\nu_{j,1},0,\ldots,0) \in \Lambda_{2(n - r - j)} \\ \nu_{j,1} \leq \nu_{j - 1,1} - \lambda_{j,1} \\ \nu_{j,1} \equiv \nu_{j - 1,1} - \lambda_{j,1} \hspace{-.25cm} \pmod{2}}} \bigboxtimes_{j = 1}^{n - r} \tau_{(\lambda_{j,1},\lambda_{j,2})},\]
where we again set $\nu_{0,1} \coloneqq \mu_1$. So the multiplicity of $\tau_{(\lambda_{1,1},\lambda_{1,2})} \boxtimes \cdots \boxtimes \tau_{(\lambda_{n - r,1},\lambda_{n - r,2})}$ in $\tau_{\mu}$ is equal to the number of $(n - r - 1)$-tuples $(\nu_{1,1}, \ldots, \nu_{n - r - 1})$ for which the system of inequalities
\[\mu_1 \geq \nu_{1,1} + \lambda_{1,1} \geq \nu_{2,1} + \lambda_{1,1} + \lambda_{2,1} \geq \cdots \geq \nu_{n - r - 1,1} + \sum_{j = 1}^{n - r - 1} \lambda_{j,1} \geq \sum_{j = 1}^{n - r} \lambda_{j,1}\]
holds with each quantity being of the same parity. This is zero unless
\[\mu_1 = \sum_{j = 1}^{n - r} \lambda_{j,1} + 2\ell\]
for some $\ell \in \N_0$, in which case the multiplicity is precisely the number of ordered $(n - r - 2)$-tuples taking values between $0$ and $\ell$, which is
\[\binom{\ell + n - r - 2}{n - r - 2}.\qedhere\]
\end{proof}

Shortly we shall require the following combinatorial identity involving binomial coefficients.

\begin{lemma}
\label{lem:binom}
For $k,m,n \in \N_0$, we have that
\[\sum_{j = 0}^{k} \binom{j + m}{m} \binom{k - j + n}{n} = \binom{k + m + n + 1}{m + n + 1}.\]
\end{lemma}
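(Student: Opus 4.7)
The identity is a standard Vandermonde-type convolution, and the cleanest route is via generating functions. The plan is to recognise both sides as coefficients in the Taylor expansion of $(1-x)^{-(m+n+2)}$ around $x = 0$ and then compare.

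First I would recall the negative binomial series
\[\frac{1}{(1-x)^{N+1}} = \sum_{k = 0}^{\infty} \binom{k + N}{N} x^k,\]
valid as a formal power series identity (or for $|x| < 1$) for every $N \in \N_0$. Applying this with $N = m$ and $N = n$ gives
\[\frac{1}{(1-x)^{m+1}} = \sum_{j = 0}^{\infty} \binom{j + m}{m} x^j, \qquad \frac{1}{(1-x)^{n+1}} = \sum_{i = 0}^{\infty} \binom{i + n}{n} x^i.\]

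Multiplying these two series, the coefficient of $x^k$ in the Cauchy product is precisely the left-hand side $\sum_{j = 0}^{k} \binom{j + m}{m} \binom{k - j + n}{n}$. On the other hand, the product equals $(1-x)^{-(m+n+2)}$, whose coefficient of $x^k$ is $\binom{k + m + n + 1}{m + n + 1}$ by a final application of the negative binomial series with $N = m + n + 1$. Equating coefficients yields the stated identity.

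There is no real obstacle; the only point to mention is that the whole argument can be carried out formally in $\Z[[x]]$, so no convergence issues arise. An alternative would be a direct combinatorial proof: both sides count lattice paths from $(0,0)$ to $(k, m+n+1)$ using unit right and up steps, split according to the height $j + m$ at which the path crosses a fixed horizontal line, or equivalently both count multisets of size $m + n + 1$ from a $(k+1)$-element set split by the largest element of the ``first block'' of size $m + 1$; but for the purposes of the paper, the generating function proof is shorter and self-contained.
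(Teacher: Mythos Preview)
Your proof is correct and is essentially identical to the paper's own proof: the paper also uses the negative binomial series to expand $(1-x)^{-(m+1)}$, $(1-x)^{-(n+1)}$, and $(1-x)^{-(m+n+2)}$, multiplies the first two via the Cauchy product, and equates coefficients of $x^k$.
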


\begin{proof}
Such an identity begs for a proof via a generating series:
\[\begin{split}
\sum_{k = 0}^{\infty} \binom{k + m + n + 1}{m + n + 1} x^k & = \frac{1}{(1 - x)^{m + n + 2}}	\\
& = \frac{1}{(1 - x)^{m + 1}} \frac{1}{(1 - x)^{n + 1}}	\\
& = \sum_{k_1 = 0}^{\infty} \binom{k_1 + m}{m} x^{k_1} \sum_{k_2 = 0}^{\infty} \binom{k_2 + n}{n} x^{k_2}	\\
& = \sum_{k = 0}^{\infty} \sum_{j = 0}^{k} \binom{j + m}{m} \binom{k - j + n}{n} x^k. \qedhere
\end{split}\]
\end{proof}

With these results in hand, we can now explicitly determine \eqref{eq:HomOn}.

\begin{lemma}
\label{lem:Rnewformoldformconductor}
Suppose that the restriction of $\tau_{\mu} \in \widehat{\Ogp(n)}$ to $\Ogp(n - 1)$ contains the trivial representation. Then the highest weight of $\tau_{\mu}$ is of the form $\mu = (\mu_1,0,\ldots,0)$, the trivial representation occurs with multiplicity one, and
\begin{multline}
\label{eq:Rnewformoldformconductor}
\sum_{\substack{j = 1 \\ n_j = 2}}^{r} \sum_{\substack{\ell_j = \kappa_j \\ \ell_j \equiv \kappa_j \hspace{-.25cm} \pmod{2}}}^{\infty} \sum_{\nu_{2(n - r)} \in \Lambda_{2(n - r)}} \dim \Hom_{\Ogp(2)^{n - r}} \left(\tau_{\nu_{2(n - r)}} |_{\Ogp(2)^{n - r}}, \bigboxtimes_{\substack{j = 1 \\ n_j = 2}}^{r} \tau_{(\ell_j,0)} \right)	\\
\times \dim \Hom_{\Ogp(2(n - r)) \times \Ogp(1)^{2r - n}} \left(\tau_{\mu} |_{\Ogp(2(n - r)) \times \Ogp(1)^{2r - n}}, \tau_{\nu_{2(n - r)}} \boxtimes \bigboxtimes_{\substack{j = 1 \\ n_j = 1}}^{r} \tau_{\kappa_j}\right)	\\
= \begin{dcases*}
\binom{\ell + n - 2}{n - 2} & if $\displaystyle \mu = \left(\sum_{j = 1}^{r} \kappa_j + 2\ell,0,\ldots,0\right)$ for some $\ell \in \N_0$,	\\
0 & otherwise.
\end{dcases*}
\end{multline}
\end{lemma}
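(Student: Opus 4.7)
The structural part of the statement is immediate: the hypothesis that $\Hom_{\Ogp(n-1)}\left(1, \tau_{\mu}|_{\Ogp(n-1)}\right)$ is nontrivial combined with \hyperref[cor:On-1triv]{Corollary \ref*{cor:On-1triv}} forces $\mu = (\mu_1, 0, \ldots, 0)$ with $\mu_1 \in \N_0$, and guarantees that the trivial representation occurs with multiplicity one in $\tau_{\mu}|_{\Ogp(n-1)}$. The remainder of the proof is a combinatorial evaluation of the double sum on the left-hand side of \eqref{eq:Rnewformoldformconductor}.

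The plan is to substitute the explicit branching formulae from \hyperref[lem:O_{n-n'}xO_1^{2r - n}branching]{Lemma \ref*{lem:O_{n-n'}xO_1^{2r - n}branching}} and \hyperref[lem:O_2^{n - r}branching]{Lemma \ref*{lem:O_2^{n - r}branching}} into the two Hom-dimensions. The second factor is nonzero only when $\nu_{2(n - r)} = (\mu_1 - \sum_{n_j = 1} \kappa_j - 2 \ell_1, 0, \ldots, 0)$ for some $\ell_1 \in \N_0$, in which case it equals $\binom{\ell_1 + 2r - n - 1}{2r - n - 1}$; the first factor is then nonzero only when the leading entry of $\nu_{2(n - r)}$ also equals $\sum_{n_j = 2} \ell_j + 2 \ell_2$ for some $\ell_2 \in \N_0$, in which case it equals $\binom{\ell_2 + n - r - 2}{n - r - 2}$. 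Enforcing the parity constraint $\ell_j \equiv \kappa_j \pmod{2}$ and lower bound $\ell_j \geq \kappa_j$ via $\ell_j = \kappa_j + 2 m_j$ with $m_j \in \N_0$, and equating the two expressions for the leading entry of $\nu_{2(n - r)}$, one obtains $\mu_1 = \sum_{j = 1}^{r} \kappa_j + 2 \ell$ with $\ell = \ell_1 + \ell_2 + \sum_{n_j = 2} m_j$, confirming the non-vanishing criterion.

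For fixed such $\ell$, the left-hand side of \eqref{eq:Rnewformoldformconductor} therefore reduces to
\[\sum_{\substack{\ell_1, \ell_2, M \geq 0 \\ \ell_1 + \ell_2 + M = \ell}} \binom{\ell_1 + 2r - n - 1}{2r - n - 1} \binom{M + n - r - 1}{n - r - 1} \binom{\ell_2 + n - r - 2}{n - r - 2},\]
where the middle binomial counts the nonnegative integer solutions $(m_j)_{n_j = 2}$ of $\sum m_j = M$. The plan is then to collapse this three-fold convolution by two successive applications of \hyperref[lem:binom]{Lemma \ref*{lem:binom}}, first on the $(\ell_2, M)$-convolution and then on the resulting $(\ell_1, \cdot)$-convolution, yielding $\binom{\ell + (2r - n - 1) + (n - r - 1) + (n - r - 2) + 2}{(2r - n - 1) + (n - r - 1) + (n - r - 2) + 2} = \binom{\ell + n - 2}{n - 2}$, as required.

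The main obstacle lies not in the convolution manipulation, which is mechanical, but rather in the bookkeeping across the degenerate configurations in which $\#\{j : n_j = 1\}$ or $\#\{j : n_j = 2\}$ equals $0$ or $1$. In each of these boundary cases, \hyperref[lem:O_{n-n'}xO_1^{2r - n}branching]{Lemma \ref*{lem:O_{n-n'}xO_1^{2r - n}branching}} or \hyperref[lem:O_2^{n - r}branching]{Lemma \ref*{lem:O_2^{n - r}branching}} takes a simplified form and the corresponding outer sum collapses; the approach is to handle each such case by hand, employing the convention $\binom{k}{-1} = \delta_{k, 0}$ where necessary, and verify that the reduced sum still evaluates to $\binom{\ell + n - 2}{n - 2}$.
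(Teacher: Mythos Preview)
Your proposal is correct and follows the paper's approach: invoke \hyperref[cor:On-1triv]{Corollary \ref*{cor:On-1triv}} for the structural claims, substitute \hyperref[lem:O_{n-n'}xO_1^{2r - n}branching]{Lemmata \ref*{lem:O_{n-n'}xO_1^{2r - n}branching}} and \ref{lem:O_2^{n - r}branching} for the two Hom-dimensions, and reduce to a convolution of binomials evaluated via \hyperref[lem:binom]{Lemma \ref*{lem:binom}}.

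The only substantive difference is in how the final multi-index sum is collapsed. The paper keeps the individual shifts $\alpha_i = (\ell_j - \kappa_j)/2$ explicit, interchanges the order of summation, and telescopes the $\alpha_i$-sums one at a time using $\binom{n-1}{k-1} = \binom{n}{k} - \binom{n-1}{k}$ before a single application of \hyperref[lem:binom]{Lemma \ref*{lem:binom}}; you instead group the $m_j$ by their total $M$, absorb them into the single binomial $\binom{M + n - r - 1}{n - r - 1}$ counting weak compositions, and then apply \hyperref[lem:binom]{Lemma \ref*{lem:binom}} twice. Your route is arguably cleaner, as it replaces an $\textit{ad hoc}$ iterated telescoping by a uniform Vandermonde-type convolution; the paper's route has the mild advantage that it makes the degenerate case $\#\{j : n_j = 2\} = 0$ fall out directly from \hyperref[lem:O_{n-n'}xO_1^{2r - n}branching]{Lemma \ref*{lem:O_{n-n'}xO_1^{2r - n}branching}} alone, whereas in your framework you still need the convention $\binom{k}{-1} = [k = -1]$ to make the $M$- and $\ell_2$-factors disappear correctly in the boundary cases.
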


\begin{proof}
The first claim two claims are \hyperref[cor:On-1triv]{Corollary \ref*{cor:On-1triv}}. To prove the identity \eqref{eq:Rnewformoldformconductor}, we first note that if $\#\{j : n_j = 2\} = 0$, \hyperref[lem:O_{n-n'}xO_1^{2r - n}branching]{Lemma \ref*{lem:O_{n-n'}xO_1^{2r - n}branching}} implies the result upon replacing $2r - n$ with $2r - n - 1$. If $\#\{j : n_j = 2\} \geq 1$, we combine \hyperref[lem:O_{n-n'}xO_1^{2r - n}branching]{Lemmata \ref*{lem:O_{n-n'}xO_1^{2r - n}branching}} and \ref{lem:O_2^{n - r}branching} to see that the inner sum over $\nu_{2(n - r)} \in \Lambda_{2(n - r)}$ in the left-hand side of \eqref{eq:Rnewformoldformconductor} is equal to zero unless
\[\mu_1 = \sum_{\substack{j = 1 \\ n_j = 1}}^{r} \kappa_j + \sum_{\substack{j = 1 \\ n_j = 2}}^{r} \ell_j + 2\ell\]
for some $\ell \in \N_0$, so that
\[\nu_{2(n - r)} = \left(\sum_{\substack{j = 1 \\ n_j = 2}}^{r} \ell_j + 2(\ell - \ell'),0,\ldots,0\right)\]
for some $\ell' \in \{0, \ldots, \ell\}$, in which case this inner sum is equal to
\[\sum_{\ell' = 0}^{\ell} \binom{\ell' + n' - 2}{n' - 2} \binom{\ell - \ell' + n - r - 2}{n - r - 2}.\]
Coupled with \hyperref[cor:dimHomOn]{Corollary \ref*{cor:dimHomOn}}, we find that the left-hand side of \eqref{eq:Rnewformoldformconductor} is equal to zero unless
\[\mu_1 = \sum_{j = 1}^{r} \kappa_j + 2\ell\]
for some $\ell \in \N_0$, in which case it is equal to
\[\sum_{i = 1}^{n - r} \sum_{\alpha_i = 0}^{\infty} \sum_{\ell' = 0}^{\ell - \sum_{i = 1}^{n - r} \alpha_i} \binom{\ell' + 2r - n - 1}{2r - n - 1} \binom{\ell - \ell' - \sum_{i = 1}^{n - r} \alpha_i + n - r - 2}{n - r - 2}\]
upon defining $\alpha_i$ such that there is an equality of the sets $\{\alpha_i\}$ and $\{(\ell_j - \kappa_j)/2 : n_j = 2\}$. Interchanging the order of summation, this becomes
\begin{equation}
\label{eq:triplesum}
\sum_{\ell' = 0}^{\ell} \binom{\ell' + 2r - n - 1}{2r - n - 1} \sum_{i = 1}^{n - r} \sum_{\alpha_i = 0}^{\ell - \ell' - \sum_{m = 1}^{i - 1} \alpha_m} \binom{\ell - \ell' - \sum_{i = 1}^{n - r} \alpha_i + n - r - 2}{n - r - 2}.
\end{equation}
Using the fact that $\binom{n - 1}{k - 1} = \binom{n}{k} - \binom{n - 1}{k}$, the sum over $\alpha_{n - r}$ telescopes to
\[\binom{\ell - \ell' - \sum_{i = 1}^{n - r - 1} \alpha_i + n - r - 1}{n - r - 1},\]
and so upon iterating this process, we find that the inner double sum in \eqref{eq:triplesum} is equal to
\[\binom{\ell - \ell' + 2n - r - 2}{2n - r - 2},\]
at which point \hyperref[lem:binom]{Lemma \ref*{lem:binom}} completes the proof.
\end{proof}

\begin{proof}[{Proofs of {\hyperref[thm:conductornewform]{Theorems \ref*{thm:conductornewform}}}, {\ref{thm:oldforms}}, {\ref{thm:epsilonconductor}}, {\ref{thm:additiveconductor}}, and {\ref{thm:inductiveconductor}} for $F = \R$}]
\hyperref[lem:HomOn]{Lemmata \ref*{lem:HomOn}} and \ref{lem:Rnewformoldformconductor} combine to complete the proofs of \hyperref[thm:conductornewform]{Theorems \ref*{thm:conductornewform}} and \ref{thm:oldforms} for $F = \R$: for $\pi = \pi_1 \boxplus \cdots \boxplus \pi_r$ with $\pi_j = \chi^{\kappa_j} |\cdot|^{t_j}$ when $n_j = 1$ and $\pi_j = D_{\kappa_j} \otimes \left|\det\right|^{t_j}$ when $n_j = 2$, the newform $K$-type $\tau^{\circ} = \tau_{\mu^{\circ}}$ has highest weight
\[\mu^{\circ} = \left(\sum_{j = 1}^{r} \kappa_j,0,\ldots,0\right),\]
so that, recalling the definition \eqref{eq:HowedegreeR} of $\deg \tau^{\circ}$,
\[c(\pi) \coloneqq \deg \tau^{\circ} = \sum_{j = 1}^{r} \kappa_j.\]
\hyperref[thm:epsilonconductor]{Theorem \ref*{thm:epsilonconductor}} then holds for $F = \R$ via the fact that
\[\e(s,\pi,\psi) = \prod_{j = 1}^{r} \e(s,\pi_j,\psi) = \prod_{j = 1}^{r} i^{-\kappa_j} = i^{-\kappa_1 - \cdots - \kappa_r} = i^{-c(\pi)},\]
while the cases $n = \#\{j : n_j = 1\} = 1$, so that $\pi = \chi^{\kappa} |\cdot|^t$, and $n = 2\#\{j : n_j = 2\} = 2$, so that $\pi = D_{\kappa} \otimes \left|\det\right|^t$, imply \hyperref[thm:additiveconductor]{Theorem \ref*{thm:additiveconductor}} for $F = \R$. Finally, for an induced representation of Whittaker type of $\GL_n(\C)$ of the form $\pi = \chi^{\kappa_1} |\cdot|_{\C}^{t_1} \boxplus \cdots \boxplus \chi^{\kappa_n} |\cdot|_{\C}^{t_n}$, the induced representation $\AI_{\C/\R} \pi$ is isomorphic to the isobaric sum
\[\bigboxplus_{j = 1}^{n} \Ind_{\GL_1(\C)}^{\GL_2(\R)} \chi^{\kappa_j} |\cdot|_{\C}^{t_j},\]
and
\[\Ind_{\GL_1(\C)}^{\GL_2(\R)} \chi^{\kappa} |\cdot|_{\C}^t \cong \begin{dcases*}
|\cdot|_{\R}^t \boxplus \chi |\cdot|_{\R}^t & if $\kappa = 0$,	\\
D_{\|\kappa\| + 1} \otimes \left|\det\right|_{\R}^t & if $\kappa \neq 0$,
\end{dcases*}\]
so that
\[c\left(\AI_{\C/\R} \pi\right) = \sum_{j = 1}^{n} \left(\|\kappa_j\| + 1\right) = c(\pi) + n,\]
thereby proving \hyperref[thm:inductiveconductor]{Theorem \ref*{thm:inductiveconductor}}.
\end{proof}

\begin{remark}
Just as was observed in \hyperref[rem:Unparity]{Remark \ref*{rem:Unparity}} for $F = \C$, the Howe degree of a $K$-type of $\pi$ is always congruent to $\deg \tau^{\circ}$ modulo $2$.
\end{remark}

\begin{remark}
From \cite[Proposition 4.3]{Lin18}, the minimal $K$-type of $\pi = \pi_1 \boxplus \cdots \boxplus \pi_r$ has highest weight
\[\mu = \left(\kappa_{\sigma(1)},\ldots,\kappa_{\sigma(r)},\underbrace{0,\ldots,0}_{n - r \text{ times}}\right),\]
where $\sigma$ is a permutation for which $\kappa_{\sigma(1)} \geq \cdots \geq \kappa_{\sigma(r)}$. This minimal $K$-type is $\tau^{\circ}$ if and only if $r = n - 1$ and $\kappa_j = 0$ whenever $n_j = 1$ or $r = n$ and $\kappa_j \neq 0$ for at most one $j$.

Once again, the Howe degree of the minimal $K$-type is
\[\sum_{j = 1}^{n} \kappa_{\sigma(j)} = \deg \tau^{\circ} = c(\pi).\]
\end{remark}

\section{Homogeneous Harmonic Polynomials}
\label{sect:hhp}

Having identified the newform $K$-type, we now study a particular model, a space of homogeneous harmonic polynomials, of this representation of $K$. This allows us to explicitly describe the matrix coefficients of this representation, which are used to construct the explicit Schwartz functions $\Phi^{\circ} \in \Ss(\Mat_{1 \times n}(F))$ and $\Phi \in \Ss(\Mat_{n \times n}(F))$ given in \eqref{eq:Phicirc} and \eqref{eq:Phi}.

\subsection{Homogeneous Harmonic Polynomials and Representations of \texorpdfstring{$\Ugp(n)$}{U(n)}}
\label{sect:harmonicU(n)}

For nonnegative integers $p,q$, let $\HH_{p,q}(\C^n)$ denote the vector space consisting of harmonic polynomials that are homogeneous of bidegree $(p,q)$, namely the set of polynomials $P(z) = P(z_1,\ldots,z_n,\overline{z_1},\ldots,\overline{z_n})$ in $z \in \Mat_{1 \times n}(\C) = \C^n$ that are annihilated by the Laplacian
\[\Delta = 4 \sum_{j = 1}^{n} \frac{\dee^2}{\dee z_j \dee \overline{z_j}}\]
and satisfy
\[P(\lambda z) = P\left(\lambda z_1, \ldots, \lambda z_n,\overline{\lambda z_1},\ldots,\overline{\lambda z_n}\right) = \lambda^p \overline{\lambda}^q P\left(z_1,\ldots,z_n,\overline{z_1},\ldots,\overline{z_n}\right)\]
for all $\lambda \in \C$. The dimension of $\HH_{p,q}(\C^n)$ is $1$ for $n = 1$ and
\[\frac{(p + q + n - 1) (p + n - 2)! (q + n - 2)!}{p! q! (n - 2)! (n - 1)!} = \frac{p + q + n - 1}{n - 1} \binom{p + n - 2}{n - 2} \binom{q + n - 2}{n - 2}\]
for $n \geq 2$.

Let $\tau$ be an irreducible representation of $\Ugp(n)$ of highest weight $\mu = (p,0,\ldots,0,-q)$; note that for $n = 1$, either $p$ or $q$ must be zero. Then $\HH_{p,q}(\C^n)$ is a model of $\tau$, where the group $\Ugp(n) \ni k$ acts on $\HH_{p,q}(\C^n) \ni P$ via right translation, namely
\[(\tau(k) \cdot P)\left(z_1,\ldots,z_n,\overline{z_1},\ldots,\overline{z_n}\right) \coloneqq P\left(\begin{pmatrix} z_1 & \cdots & z_n \end{pmatrix} k, \begin{pmatrix} \overline{z_1} & \cdots & \overline{z_n} \end{pmatrix} \overline{k}\right);\]
that is, $(\tau(k) \cdot P)(z) \coloneqq P(zk)$. We define a $\Ugp(n)$-invariant inner product on $\HH_{p,q}(\C^n) \ni P,Q$ by
\[\langle P,Q \rangle \coloneqq \int_{\Ugp(n)} P(e_n k) \overline{Q(e_n k)} \, dk.\]

From the branching rule in \hyperref[lem:U_{n - 1}xU_1branching]{Lemma \ref*{lem:U_{n - 1}xU_1branching}}, there is a one-dimensional subspace of $\HH_{p,q}(\C^n)$ that is invariant under the action of $\Ugp(n - 1) \ni k'$ embedded in $\Ugp(n)$ via $k' \mapsto \begin{psmallmatrix} k' & 0 \\ 0 & 1 \end{psmallmatrix}$, which we can describe explicitly.

\begin{lemma}[{\cite[Proposition 12.2.6]{Rud08}}]
There exists a unique homogeneous harmonic polynomial $P^{\circ} \in \HH_{p,q}(\C^n)$ satisfying $P^{\circ}(e_n) = 1$ and $\tau\begin{psmallmatrix} k' & 0 \\ 0 & 1 \end{psmallmatrix} \cdot P^{\circ} = P^{\circ}$ for all $k' \in \Ugp(n - 1)$, namely
\begin{equation}
\label{eq:PcircUn}
P^{\circ}(z) \coloneqq \sum_{\nu = 0}^{\min\{p,q\}} \frac{(-1)^{\nu} \binom{p}{\nu} \binom{q}{\nu}}{\binom{\nu + n - 2}{n - 2}} \left(z_1 \overline{z_1} + \cdots + z_{n - 1} \overline{z_{n - 1}}\right)^{\nu} z_n^{p - \nu} \overline{z_n}^{q - \nu}.
\end{equation}
In particular, for $n = 1$, so that either $p$ or $q$ is equal to $0$,
\begin{equation}
\label{eq:PjcircC}
P^{\circ}(z_1) = \begin{dcases*}
z_1^p & for $p \in \N_0$ and $q = 0$,	\\
\overline{z_1}^q & for $p = 0$ and $q \in \N_0$.
\end{dcases*}
\end{equation}
\end{lemma}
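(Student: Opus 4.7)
The plan is to split the claim into uniqueness and existence. For uniqueness, I would invoke Corollary \ref{cor:Un-1triv} applied to the highest weight $(p, 0, \ldots, 0, -q) \in \Lambda_n$: the subspace of vectors in $\HH_{p,q}(\C^n) \cong \tau_{(p,0,\ldots,0,-q)}$ fixed by the embedded copy of $\Ugp(n - 1)$ is at most one-dimensional. Hence the normalization $P^{\circ}(e_n) = 1$ determines $P^{\circ}$ uniquely, provided one can exhibit a $\Ugp(n - 1)$-invariant element with nonzero value at $e_n$; the rest of the proof produces such an element explicitly.

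For existence with $n \geq 2$, I would exploit the transitivity of $\Ugp(n - 1)$ on spheres in $\C^{n - 1}$: the algebra of $\Ugp(n - 1)$-invariant polynomials on $\C^n$ is generated by $r^2 \coloneqq z_1 \overline{z_1} + \cdots + z_{n - 1} \overline{z_{n - 1}}$, $z_n$, and $\overline{z_n}$. Homogeneity of bidegree $(p, q)$ then forces the ansatz
\[P^{\circ}(z) = \sum_{\nu = 0}^{\min\{p,q\}} c_{\nu} (r^2)^{\nu} z_n^{p - \nu} \overline{z_n}^{q - \nu},\]
and choosing $c_0 = 1$ yields $P^{\circ}(e_n) = 1$, since at $e_n$ only the $\nu = 0$ term survives.

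It then remains to impose $\Delta P^{\circ} = 0$. A direct computation gives
\[\sum_{j = 1}^{n - 1} \frac{\dee^2}{\dee z_j \dee \overline{z_j}} (r^2)^{\nu} = \nu(\nu + n - 2)(r^2)^{\nu - 1} \qquad \text{and} \qquad \frac{\dee^2}{\dee z_n \dee \overline{z_n}} z_n^a \overline{z_n}^b = ab \, z_n^{a - 1} \overline{z_n}^{b - 1},\]
so that, matching coefficients of $(r^2)^{\nu - 1} z_n^{p - \nu} \overline{z_n}^{q - \nu}$ in $\Delta P^{\circ} = 0$ for each $\nu \geq 1$, one obtains the two-term recursion
\[c_{\nu} \nu(\nu + n - 2) + c_{\nu - 1}(p - \nu + 1)(q - \nu + 1) = 0.\]
Iterating with $c_0 = 1$ produces $c_{\nu} = (-1)^{\nu} \binom{p}{\nu} \binom{q}{\nu} / \binom{\nu + n - 2}{n - 2}$, matching \eqref{eq:PcircUn}; the numerator $(p - \nu + 1)(q - \nu + 1)$ vanishes precisely when $\nu$ exceeds $\min\{p, q\}$, so the truncation of the sum is automatic. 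The $n = 1$ case is immediate: $\Ugp(0)$ is trivial, and harmonicity of $c z_1^p \overline{z_1}^q$ forces $pq = 0$, recovering \eqref{eq:PjcircC}. I do not anticipate any real obstacles; the only delicate point is the bookkeeping of the Laplacian computation, and the lemma is essentially the classical bi-graded zonal spherical harmonic construction for $\Ugp(n)$.
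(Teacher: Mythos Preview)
Your argument is correct. The paper itself does not supply a proof of this lemma; it simply cites \cite[Proposition 12.2.6]{Rud08} and records the explicit formula. Your direct verification --- uniqueness via \hyperref[cor:Un-1triv]{Corollary \ref*{cor:Un-1triv}} and existence via the $\Ugp(n-1)$-invariant ansatz together with the two-term recursion coming from $\Delta P^{\circ} = 0$ --- is exactly the standard route and recovers the stated coefficients. The Laplacian bookkeeping is accurate: the identity $\sum_{j=1}^{n-1} \partial_{z_j} \partial_{\overline{z_j}} (r^2)^{\nu} = \nu(\nu + n - 2)(r^2)^{\nu - 1}$ and the recursion $c_{\nu} \nu(\nu + n - 2) + c_{\nu-1}(p - \nu + 1)(q - \nu + 1) = 0$ both check out, and iterating with $c_0 = 1$ indeed gives $c_{\nu} = (-1)^{\nu} \binom{p}{\nu}\binom{q}{\nu} / \binom{\nu + n - 2}{n - 2}$. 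There is nothing to compare against in the paper beyond the citation; your write-up is a self-contained proof where the paper offers none.
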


\begin{remark}
When restricted to the unit sphere in $\C^n$, this polynomial is sometimes referred to as the zonal spherical harmonic of bidegree $(p,q)$ in dimension $n$. When $z_1 \overline{z_1} + \cdots + z_{n - 1} \overline{z_{n - 1}} = 1 - z_n \overline{z_n}$, $P^{\circ}(z_1,\ldots,z_n,\overline{z_1},\ldots,\overline{z_n})$ is a polynomial in $z_n,\overline{z_n}$ and can be expressed in terms of the generalised Zernike polynomial $P_{q,p}^{n - 2}$ (also called a generalised disc polynomial) or the Jacobi polynomial $P_q^{(n - 2,p - q)}$.
\end{remark}

We make crucial use of the fact that for all $P \in \HH_{p,q}(\C^n)$ and $k \in \Ugp(n)$, $P(e_n k)$ is equal to a matrix coefficient of $\tau$. This can be thought of as an explicit form of Schur orthogonality.

\begin{lemma}[{\cite[Theorem 12.2.5]{Rud08}}]
\label{lem:QenkC}
The reproducing kernel for $\HH_{p,q}(\C^n)$ is $(\dim \tau) P^{\circ}$, so that for all $P \in \HH_{p,q}(\C^n)$ and $k \in \Ugp(n)$,
\[P(e_n k) = (\dim \tau) \left\langle \tau(k) \cdot P, P^{\circ}\right\rangle.\]
In particular, $\langle P^{\circ}, P^{\circ} \rangle = (\dim \tau)^{-1}$. Moreover, for all $P \in \HH_{p,q}(\C^n)$ and $z \in \C^n$,
\[P(z) = \dim \tau \int_{\Ugp(n)} P(e_n k^{-1}) P^{\circ}(zk) \, dk.\]
\end{lemma}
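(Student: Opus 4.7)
The plan is to prove all three assertions abstractly from the representation theory of compact groups, without recourse to the explicit polynomial formula \eqref{eq:PcircUn} for $P^{\circ}$. The only inputs are the $\Ugp(n-1)$-invariance of $P^{\circ}$, the normalization $P^{\circ}(e_n) = 1$, the multiplicity-one statement from \hyperref[cor:Un-1triv]{Corollary \ref*{cor:Un-1triv}} that the subspace of $\Ugp(n-1)$-invariants in $\HH_{p,q}(\C^n) \cong \tau$ is one-dimensional, the unitarity of the inner product defined on $\HH_{p,q}(\C^n)$, and Schur orthogonality for matrix coefficients of $\tau$.

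First I would apply the Riesz representation theorem to the linear functional $P \mapsto P(e_n)$ on the finite-dimensional Hilbert space $\HH_{p,q}(\C^n)$ to produce a unique $Q \in \HH_{p,q}(\C^n)$ with $P(e_n) = \langle P, Q\rangle$ for every $P$. Because $e_n$ is fixed by the image of $\Ugp(n-1)$ under $k' \mapsto \begin{psmallmatrix} k' & 0 \\ 0 & 1 \end{psmallmatrix}$, unitarity of $\tau$ forces $Q$ to be $\Ugp(n-1)$-invariant, whence $Q = c P^{\circ}$ for some $c \in \C$. For arbitrary $k \in \Ugp(n)$, substituting $\tau(k) \cdot P$ in place of $P$ yields $P(e_n k) = (\tau(k) \cdot P)(e_n) = c \langle \tau(k) \cdot P, P^{\circ}\rangle$. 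To identify $c$, I specialize to $P = P^{\circ}$, take absolute squares, and integrate over $k \in \Ugp(n)$: the definition of the inner product gives $\int_{\Ugp(n)} |P^{\circ}(e_n k)|^2 \, dk = \langle P^{\circ}, P^{\circ}\rangle$, while Schur orthogonality gives $\int_{\Ugp(n)} |\langle \tau(k) \cdot P^{\circ}, P^{\circ}\rangle|^2 \, dk = \langle P^{\circ}, P^{\circ}\rangle^2 / \dim \tau$. Equating the two expressions yields $\langle P^{\circ}, P^{\circ}\rangle = 1/\dim \tau$ and hence $c = \dim \tau$, establishing both the first formula and the normalization identity.

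For the reproducing formula, both sides are homogeneous of bidegree $(p,q)$ in $z$, so it suffices to verify the identity for $z = e_n h$ with $h \in \Ugp(n)$. After the substitution $k \mapsto h^{-1} k$, the right-hand side becomes $\dim \tau \int_{\Ugp(n)} (\tau(h) \cdot P)(e_n k^{-1}) P^{\circ}(e_n k) \, dk$. Using the first identity to rewrite both $(\tau(h) \cdot P)(e_n k^{-1}) = \dim \tau \langle \tau(k^{-1}) \tau(h) \cdot P, P^{\circ}\rangle$ and $P^{\circ}(e_n k) = \dim \tau \langle \tau(k) \cdot P^{\circ}, P^{\circ}\rangle$ as matrix coefficients of $\tau$, and then invoking Schur orthogonality once more, the integral collapses to $(\dim \tau)^{-1} \langle P^{\circ}, P^{\circ}\rangle \langle \tau(h) \cdot P, P^{\circ}\rangle$; combining the prefactors gives $\dim \tau \langle \tau(h) \cdot P, P^{\circ}\rangle$, which is $P(e_n h) = P(z)$ by the first part.

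The main obstacle is simply careful bookkeeping with complex conjugations so that Schur orthogonality is applied in the correct form (in particular, remembering that $\langle \tau(k^{-1}) v, w\rangle = \overline{\langle \tau(k) w, v\rangle}$ by unitarity); once the correct pairings are lined up, the computation is entirely mechanical, and the explicit expression \eqref{eq:PcircUn} for $P^{\circ}$ is never actually needed.
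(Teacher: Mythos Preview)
The paper does not actually prove this lemma; it is simply quoted from \cite[Theorem 12.2.5]{Rud08}. Your self-contained argument via Riesz representation and Schur orthogonality is a clean way to supply the missing proof, and the overall strategy is correct. One small gap: the step ``Equating the two expressions yields $\langle P^{\circ}, P^{\circ}\rangle = 1/\dim\tau$ and hence $c = \dim\tau$'' does not follow from Schur orthogonality alone. The identity $\langle P^{\circ}, P^{\circ}\rangle = |c|^2 \langle P^{\circ}, P^{\circ}\rangle^2/\dim\tau$ gives only the single relation $|c|^2 \langle P^{\circ}, P^{\circ}\rangle = \dim\tau$. To pin down both quantities you must also invoke the normalisation $P^{\circ}(e_n) = 1$ (which you did list as an input): setting $P = P^{\circ}$ and $k = 1_n$ in $P(e_n k) = c\langle \tau(k)\cdot P, P^{\circ}\rangle$ gives $c\langle P^{\circ}, P^{\circ}\rangle = 1$, whence $c$ is real and positive, and the two equations together yield $c = \dim\tau$ and $\langle P^{\circ}, P^{\circ}\rangle = (\dim\tau)^{-1}$. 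With this correction the rest of your argument, including the reduction of the reproducing formula to $z = e_n h$ via homogeneity and transitivity of $\Ugp(n)$ on the unit sphere, goes through as written.
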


Finally, we also require the following identity, which states that homogeneous harmonic polynomials $P \in \HH_{p,q}(\C^n)$ are eigenfunctions of the Fourier transform.

\begin{lemma}[{Hecke's Identity; cf.\ \cite[Chapter IV, Theorem 3.4]{SW71}}]
\label{lem:HeckeC}
For any homogeneous harmonic polynomial $P \in \HH_{p,q}(\C^n)$ and $w \in \C^n$, we have that
\[\int_{\C^n} P(z) \exp\left(-2\pi z \prescript{t}{}{\overline{z}}\right) \overline{\psi}\left(z \prescript{t}{}{\overline{w}}\right) \, dz = i^{-p - q} P(w) \exp\left(-2\pi w \prescript{t}{}{\overline{w}}\right).\]
\end{lemma}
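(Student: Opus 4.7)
The plan is to deduce this identity from the classical Hecke identity for real harmonic polynomials on $\R^{2n}$ \cite[Chapter IV, Theorem 3.4]{SW71}, upon realising $\HH_{p,q}(\C^n)$ as a subspace of the space of homogeneous harmonic polynomials of degree $p + q$ on $\R^{2n}$. Beyond this conceptual reduction, the proof is essentially a bookkeeping exercise tracking how normalisation constants transform under the identification $\C^n \cong \R^{2n}$.

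First I would write $z_j = x_j + i y_j$ and $w_j = u_j + i v_j$ and use the identification $\C^n \cong \R^{2n}$ sending $z$ to $X \coloneqq (x_1,\ldots,x_n,y_1,\ldots,y_n)$ and $w$ to $W$ defined analogously. Under this identification, $z \prescript{t}{}{\overline{z}} = \|X\|^2$, while the formul\ae{} $\dee/\dee z_j = (\dee/\dee x_j - i \dee/\dee y_j)/2$ and $\dee/\dee \overline{z_j} = (\dee/\dee x_j + i \dee/\dee y_j)/2$ yield the identity $\Delta = 4\sum_j \dee^2/\dee z_j \dee \overline{z_j} = \sum_j (\dee^2/\dee x_j^2 + \dee^2/\dee y_j^2)$, so the complex Laplacian coincides with the standard Laplacian on $\R^{2n}$. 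Consequently every $P \in \HH_{p,q}(\C^n)$, viewed as a polynomial in the real coordinates, is harmonic and homogeneous of degree $p + q$ on $\R^{2n}$. Moreover, the self-dual Haar measure satisfies $dz = 2^n \, dX$ and a direct calculation gives
\[\overline{\psi}(z \prescript{t}{}{\overline{w}}) = \exp(-2\pi i(z \prescript{t}{}{\overline{w}} + \overline{z} \prescript{t}{}{w})) = \exp(-4\pi i X \cdot W).\]

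I would then invoke the classical Hecke identity: for any $Q \in \HH_{k}(\R^{2n})$ and $W' \in \R^{2n}$,
\[\int_{\R^{2n}} Q(Y) \exp(-\pi \|Y\|^2) \exp(-2\pi i Y \cdot W') \, dY = i^{-k} Q(W') \exp(-\pi \|W'\|^2).\]
Applying this after the rescaling $Y = \sqrt{2} X$, $W' = \sqrt{2} W$ --- which turns $\exp(-2\pi\|X\|^2)$ into $\exp(-\pi\|Y\|^2)$ and $\exp(-4\pi i X \cdot W)$ into $\exp(-2\pi i Y \cdot W')$ --- and combining with the homogeneity identities $P(X) = 2^{-(p+q)/2} P(Y)$ and $P(W') = 2^{(p+q)/2} P(W)$, all powers of $2$ cancel and the integral collapses to $i^{-p-q} P(W) \exp(-2\pi \|W\|^2)$. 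Translating back via $P(W) = P(w)$ and $\|W\|^2 = w \prescript{t}{}{\overline{w}}$ gives the claimed identity.

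The only genuine obstacle is tracking the normalisation constants correctly --- the factor $2^n$ from $dz$, the Jacobian $2^{-n}$ of the dilation $X \mapsto Y = \sqrt{2} X$, and the weights $2^{\pm(p+q)/2}$ arising from homogeneity --- but these conspire to combine cleanly, so there is no conceptual difficulty once the reduction to the real-variable Hecke identity has been made.
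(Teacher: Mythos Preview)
Your proof is correct, and all the normalisation bookkeeping checks out: the factor $2^n$ from $dz$, the Jacobian $2^{-n}$ from the dilation $Y = \sqrt{2}X$, and the homogeneity factors $2^{\pm(p+q)/2}$ do indeed cancel as you claim.

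Your route differs from the paper's, however. The paper does not reduce to the real Hecke identity but instead proves the complex case directly: it first verifies the formula for the single highest weight vector $P(z) = z_1^p \overline{z_n}^q$ by pulling the monomial out of the Fourier integral via differentiation in $\overline{w_1}$ and $w_n$, using only that the Gaussian is its own Fourier transform; it then extends to arbitrary $P \in \HH_{p,q}(\C^n)$ by writing $P$ as a linear combination of $\Ugp(n)$-translates of the highest weight vector and using the unitary invariance of the Gaussian and the bilinear pairing. Your approach has the advantage of treating the real and complex cases uniformly and quoting the cited reference directly; the paper's approach avoids the coordinate change and the somewhat delicate constant-tracking by exploiting the irreducibility of $\HH_{p,q}(\C^n)$ under $\Ugp(n)$, which fits naturally with the representation-theoretic viewpoint used elsewhere in the paper.
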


\begin{proof}
First we prove this for $P(z) = z_1^p \overline{z_n}^q$, the highest weight vector of $\tau$. In this case,
\begin{align*}
i^{-p - q} P(w) \exp\left(-2\pi w \prescript{t}{}{\overline{w}}\right) & = (-2\pi i)^{-p - q} \frac{\dee^{p + q}}{\dee \overline{w_1}^p \dee w_n^q} \exp\left(-2\pi w \prescript{t}{}{\overline{w}}\right)	\\
& = (-2\pi i)^{-p - q} \frac{\dee^{p + q}}{\dee \overline{w_1}^p \dee w_n^q} \int_{\C^n} \exp\left(-2\pi z \prescript{t}{}{\overline{z}}\right) \overline{\psi}\left(z \prescript{t}{}{\overline{w}}\right) \, dz	\\
& = \int_{\C^n} P(z) \exp\left(-2\pi z \prescript{t}{}{\overline{z}}\right) \overline{\psi}\left(z \prescript{t}{}{\overline{w}}\right) \, dz.
\end{align*}
For any other $Q \in \HH_{p,q}(\C^n)$, we may write $Q(z)$ as a linear combination of elements of the form $P(zk)$ with $k \in \Ugp(n)$, and so using the above calculation with $w$ replaced by $wk$ and making the change of variables $z \mapsto zk$ yields the result upon recalling that $k \prescript{t}{}{\overline{k}} = 1_n$.
\end{proof}

\subsection{Homogeneous Harmonic Polynomials and Representations of \texorpdfstring{$\Ogp(n)$}{O(n)}}

Similarly, for a nonnegative integer $p$, let $\HH_p(\R^n)$ denote the vector space consisting of homogeneous harmonic polynomials of degree $p$, namely the set of polynomials $P(x) = P(x_1,\ldots,x_n)$ in $x \in \Mat_{1 \times n}(\R) = \R^n$ that are annihilated by the Laplacian
\[\Delta = \sum_{j = 1}^{n} \frac{\dee^2}{\dee x_j^2}\]
and satisfy $P(\lambda x) = \lambda^p P(x)$ for all $\lambda \in \R$. This space has dimension $1$ for $n = 1$ and $p \in \{0,1\}$ and has dimension
\[\binom{p + n - 2}{n - 2} + \binom{p + n - 3}{n - 3} = \frac{(2p + n - 2) (p + n - 3)!}{p!(n - 2)!} = \frac{2p + n - 2}{p + n - 2} \binom{p + n - 2}{n - 2}\]
for $n \geq 2$ and $p \in \N_0$.

Let $\tau$ be an irreducible representation of $\Ogp(n)$ of highest weight $\mu = (p,0,\ldots,0)$, where $p$ is a nonnegative integer; note that $p \in \{0,1\}$ for $n = 1$. Then $\HH_p(\R^n)$ is a model of $\tau$, where the group $\Ogp(n) \ni k$ acts on the space $\HH_p(\R^n) \ni P$ via right translation, namely $(\tau(k) \cdot P)(x) \coloneqq P(xk)$. We define an $\Ogp(n)$-invariant inner product on $\HH_p(\R^n) \ni P,Q$ by
\[\langle P,Q\rangle \coloneqq \int_{\Ogp(n)} P(e_n k) \overline{Q(e_n k)} \, dk.\]

We record the following results, all of which are analogous to those for $\HH_{p,q}(\C^n)$ in \hyperref[sect:harmonicU(n)]{Section \ref*{sect:harmonicU(n)}}.

\begin{lemma}[{\cite[Section 2.1.2]{AH12}}]
There exists a unique homogeneous harmonic polynomial $P^{\circ} \in \HH_p(\R^n)$ satisfying $P^{\circ}(e_n) = 1$ and $\tau\begin{psmallmatrix} k' & 0 \\ 0 & 1 \end{psmallmatrix} \cdot P^{\circ} = P^{\circ}$ for all $k' \in \Ogp(n - 1)$, namely
\begin{equation}
\label{eq:PcircOn}
P^{\circ}(x) \coloneqq \sum_{\substack{\nu = 0 \\ \nu \equiv 0 \hspace{-.25cm} \pmod{2}}}^{p} \frac{i^{\nu} p! \Gamma\left(\frac{n - 1}{2}\right)}{2^{\nu} \left(\frac{\nu}{2}\right)! (p - \nu)! \Gamma\left(\frac{\nu + n - 1}{2}\right)} \left(x_1^2 + \cdots + x_{n - 1}^2\right)^{\frac{\nu}{2}} x_n^{p - \nu}.
\end{equation}
In particular, for $n = 1$, so that $p \in \{0,1\}$,
\begin{equation}
\label{eq:PjcircR1}
P^{\circ}(x_1) = x_1^p,
\end{equation}
while for $n = 2$, so that $p \in \N_0$,
\begin{equation}
\label{eq:PjcircR2}
P^{\circ}(x_1,x_2) = \frac{1}{2} \left(x_2 - i x_1\right)^p + \frac{1}{2} \left(x_2 + i x_1\right)^p = \sum_{\substack{\nu = 0 \\ \nu \equiv 0 \hspace{-.25cm} \pmod{2}}}^{p} \binom{p}{\nu} (ix_1)^{\nu} x_2^{p - \nu}.
\end{equation}
\end{lemma}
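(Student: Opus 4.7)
The plan is to argue first for uniqueness and then to determine the explicit form by imposing the three constraints: right $\Ogp(n-1)$-invariance, homogeneity of degree $p$, and harmonicity. Uniqueness of $P^{\circ}$ up to scalar follows from the branching law in \hyperref[lem:O_{n - 1}xO_1branching]{Lemma \ref*{lem:O_{n - 1}xO_1branching}} (or its specialization, \hyperref[cor:On-1triv]{Corollary \ref*{cor:On-1triv}}): the trivial representation of $\Ogp(n-1)$ occurs in $\tau_{(p,0,\ldots,0)}|_{\Ogp(n-1)}$ with multiplicity exactly one. The normalization $P^{\circ}(e_n) = 1$ then pins down the scalar.

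Next, I would exploit $\Ogp(n-1)$-invariance: since $\Ogp(n-1)$ acts on $(x_1,\ldots,x_{n-1})$ by orthogonal transformations while fixing $x_n$, any such invariant polynomial is a polynomial in $r^2 \coloneqq x_1^2 + \cdots + x_{n - 1}^2$ and $x_n$. Homogeneity of total degree $p$ therefore forces an expansion
\[P^{\circ}(x) = \sum_{m = 0}^{\lfloor p/2 \rfloor} a_m \bigl(x_1^2 + \cdots + x_{n-1}^2\bigr)^m x_n^{p - 2m},\]
and the normalization $P^{\circ}(e_n) = 1$ gives $a_0 = 1$ (only the $m=0$ term survives evaluation at $e_n$). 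In particular, the only powers of $r$ that appear are even, explaining the restriction $\nu \equiv 0 \pmod{2}$ in \eqref{eq:PcircOn}.

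The key computational step is to translate the harmonicity condition $\Delta P^{\circ} = 0$ into a recursion on the $a_m$. A direct computation gives $\sum_{j=1}^{n-1} \partial_{x_j}^2 (r^{2m}) = 2m(2m + n - 3) r^{2m-2}$ and $\partial_{x_n}^2 x_n^{p-2m} = (p-2m)(p-2m-1)x_n^{p-2m-2}$; collecting coefficients of $r^{2m} x_n^{p - 2m - 2}$ yields
\[a_{m+1} = -\frac{(p - 2m)(p - 2m - 1)}{2(m+1)(2m + n - 1)}\, a_m.\]
Iterating from $a_0 = 1$, the product $\prod_{j=0}^{m-1}(p-2j)(p-2j-1) = p!/(p-2m)!$ and $\prod_{j=0}^{m-1}(2j + n - 1) = 2^m \Gamma(m + (n-1)/2)/\Gamma((n-1)/2)$ combine to give, after substituting $\nu = 2m$ and using $(-1)^{\nu/2} = i^{\nu}$, precisely the coefficient appearing in \eqref{eq:PcircOn}.

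Finally, I would deduce the two special cases from the general formula. For $n = 1$ the expansion collapses to $\nu = 0$ (since $\HH_p(\R)$ is nontrivial only for $p \in \{0,1\}$), recovering \eqref{eq:PjcircR1}. For $n = 2$, applying the duplication identity $\Gamma(\nu/2 + 1/2) = \nu! \sqrt{\pi}/(4^{\nu/2} (\nu/2)!)$ simplifies the coefficient to $i^{\nu} \binom{p}{\nu}$, and comparison with the binomial expansion of $(x_2 \pm i x_1)^p$ gives both expressions in \eqref{eq:PjcircR2}. The main obstacle is a purely bookkeeping one, namely the algebraic simplification of the product arising from iterating the recursion into the closed form involving gamma functions; once the recursion is set up, everything is routine.
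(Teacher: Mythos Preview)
Your argument is correct and complete. The paper itself does not supply a proof of this lemma; it simply cites \cite[Section 2.1.2]{AH12} and records the formulae. Your proposal therefore goes beyond what the paper does by giving a self-contained derivation.

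The route you take --- uniqueness from the branching law (\hyperref[cor:On-1triv]{Corollary \ref*{cor:On-1triv}}), then $\Ogp(n-1)$-invariance forcing the ansatz $\sum_m a_m r^{2m} x_n^{p-2m}$, then harmonicity giving the two-term recursion, then solving the recursion in closed form --- is the standard approach one finds in the spherical-harmonics literature (and presumably in \cite{AH12} as well). All the intermediate computations you sketch check out: the Laplacian of $r^{2m}$ in $n-1$ variables is $2m(2m+n-3)r^{2m-2}$, the recursion is as stated, and the gamma-function bookkeeping for the closed form and for the $n=2$ specialization is correct.
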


\begin{remark}
Atkinson and Han name $P^{\circ}$ the Legendre polynomial of degree $p$ in $n$ dimensions \cite[Section 2.1.2]{AH12}; when restricted to the unit sphere in $\R^n$, this polynomial is also referred to as the zonal spherical harmonic. When $x_1^2 + \cdots + x_{n - 1}^2 = 1 - x_n^2$, $P^{\circ}(x_1,\ldots,x_n)$ is a polynomial in $x_n$ and can be expressed in terms of the Gegenbauer polynomial $C_p^{\frac{n - 2}{2}}$ (also called an ultraspherical polynomial) or the Jacobi polynomial $P_p^{\left(\frac{n - 3}{2},\frac{n - 3}{2}\right)}$; in particular, when $n = 2$, this is just the usual Legendre polynomial of degree $p$.
\end{remark}

\begin{lemma}[{\cite[Section 2.2]{AH12}}]
\label{lem:QenkR}
The reproducing kernel for $\HH_p(\R^n)$ is $(\dim \tau) P^{\circ}$, so that for all $P \in \HH_p(\R^n)$ and $k \in \Ogp(n)$,
\[P(e_n k) = (\dim \tau) \left\langle \tau(k) \cdot P, P^{\circ}\right\rangle.\]
In particular, $\langle P^{\circ}, P^{\circ} \rangle = (\dim \tau)^{-1}$. Moreover, for all $P \in \HH_p(\R^n)$ and $x \in \R^n$,
\[P(x) = \dim \tau \int_{\Ogp(n)} P(e_n k^{-1}) P^{\circ}(xk) \, dk.\]
\end{lemma}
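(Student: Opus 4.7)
The approach mirrors the complex case \hyperref[lem:QenkC]{Lemma \ref*{lem:QenkC}}, and a detailed treatment appears in \cite[Section 2.2]{AH12}. The plan is to proceed in three steps.

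First, I would apply the Riesz representation theorem to the continuous linear functional $\ell(P) \coloneqq P(e_n)$ on the finite-dimensional Hilbert space $\HH_p(\R^n)$ to produce a unique $Q \in \HH_p(\R^n)$ with $P(e_n) = \langle P, Q \rangle$ for every $P$. Since $\ell$ is invariant under right-translation by $\begin{psmallmatrix} k' & 0 \\ 0 & 1 \end{psmallmatrix}$ for $k' \in \Ogp(n - 1)$ and the inner product is $\tau$-invariant, $Q$ is itself $\Ogp(n - 1)$-invariant under the $\tau$-action. By \hyperref[cor:On-1triv]{Corollary \ref*{cor:On-1triv}}, such invariants form a one-dimensional subspace spanned by $P^{\circ}$, so $Q = c P^{\circ}$ for some scalar $c$.

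To pin down $c$, I would run a Parseval argument. With any orthonormal basis $\{P_1, \ldots, P_d\}$ of $\HH_p(\R^n)$ and $d = \dim \tau$, the tentative reproducing identity gives
\[\sum_{j = 1}^{d} \left|P_j(e_n k)\right|^2 = c^2 \sum_{j = 1}^{d} \left|\langle \tau(k) \cdot P_j, P^{\circ} \rangle\right|^2 = c^2 \|P^{\circ}\|^2,\]
independently of $k \in \Ogp(n)$; integrating this over $\Ogp(n)$ and using $\int_{\Ogp(n)} |P_j(e_n k)|^2 \, dk = \|P_j\|^2 = 1$ yields $c^2 \|P^{\circ}\|^2 = d$, while $1 = P^{\circ}(e_n) = c \|P^{\circ}\|^2$ forces $c \|P^{\circ}\|^2 = 1$. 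Together these give $c = \dim \tau$ and $\|P^{\circ}\|^2 = (\dim \tau)^{-1}$. Substituting $\tau(k) \cdot P$ for $P$ in $P(e_n) = (\dim \tau) \langle P, P^{\circ} \rangle$ then at once yields $P(e_n k) = (\dim \tau) \langle \tau(k) \cdot P, P^{\circ} \rangle$.

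Finally, I would extract the integral reproducing formula by unfolding the inner product. For $x$ on the unit sphere, writing $x = e_n h$ with $h \in \Ogp(n)$ gives
\[P(e_n h) = (\dim \tau) \int_{\Ogp(n)} P(e_n k h) P^{\circ}(e_n k) \, dk\]
(using that $P^{\circ}$ is real-valued), and the successive changes of variable $k \mapsto h^{-1} k$ and $k \mapsto k^{-1}$ on $\Ogp(n)$, combined with the symmetry $P^{\circ}(e_n k^{-1}) = P^{\circ}(e_n k)$ (since $P^{\circ}(e_n g)$ depends only on $g_{n,n}$ and $(g^{-1})_{n,n} = g_{n,n}$ for orthogonal $g$), convert this into the stated expression. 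The case of general $x \in \R^n$ follows from the homogeneity $P(\lambda x) = \lambda^p P(x)$. The chief technical point is the Parseval computation that pins down $c$; once it and \hyperref[cor:On-1triv]{Corollary \ref*{cor:On-1triv}} are in place, the remaining manipulations are formal.
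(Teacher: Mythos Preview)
The paper does not prove this lemma; it is quoted from \cite[Section 2.2]{AH12} as a known result, so there is no argument in the paper to compare against. Your proof is correct and is essentially the standard one: Riesz representation together with $\Ogp(n-1)$-invariance forces the reproducing vector to be a scalar multiple of $P^{\circ}$, and the Parseval computation pins down the scalar as $\dim \tau$.

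One small slip in the final step: to pass from $\int_{\Ogp(n)} P(e_n k h) P^{\circ}(e_n k)\,dk$ to the stated integral you want the right-translation $k \mapsto k h^{-1}$ (not the left-translation $k \mapsto h^{-1} k$), which yields $\int P(e_n k) P^{\circ}(e_n k h^{-1})\,dk$; then $k \mapsto k^{-1}$ and your symmetry $P^{\circ}(e_n g^{-1}) = P^{\circ}(e_n g)$ for $g \in \Ogp(n)$ give $\int P(e_n k^{-1}) P^{\circ}(e_n h k)\,dk$ as required. With that correction the manipulation goes through exactly as you describe.
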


\begin{lemma}[Hecke's Identity {\cite[Chapter IV, Theorem 3.4]{SW71}}]
\label{lem:HeckeR}
For any homogeneous harmonic polynomial $P \in \HH_p(\R^n)$ and $\xi \in \R^n$, we have that
\[\int_{\R^n} P(x) \exp\left(-\pi x \prescript{t}{}{x}\right) \overline{\psi}(x \prescript{t}{}{\xi}) \, dx = i^{-p} P(\xi) \exp\left(-\pi \xi \prescript{t}{}{\xi}\right).\]
\end{lemma}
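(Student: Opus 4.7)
The plan is to mimic the proof of the complex analogue, Lemma 7.3. The strategy is to first establish the identity for a single convenient $P \in \HH_p(\R^n)$ whose $\Ogp(n)$-orbit spans the entire representation, and then use equivariance of both sides under right translation by $\Ogp(n)$ together with linearity to extend to all of $\HH_p(\R^n)$. A natural choice is $P(x) = (x_1 + i x_2)^p$ when $n \geq 2$, which is easily verified to be harmonic and homogeneous of degree $p$; the edge case $n = 1$ with $p \in \{0,1\}$ can be handled separately using $P(x) = x_1^p$ and the classical Gaussian integral.

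For this choice of $P$, the key observation is that $(\partial_{\xi_1} + i\partial_{\xi_2})(\xi_1 + i\xi_2) = 1 + i^2 = 0$, so this first-order differential operator annihilates its own ``symbol''. Applied to $\exp(-\pi \xi \prescript{t}{}{\xi})$, it produces a factor of $-2\pi(\xi_1 + i \xi_2)$, and iterating (with the would-be cross terms vanishing by the annihilation property just noted) yields
\[P(\xi) \exp\left(-\pi \xi \prescript{t}{}{\xi}\right) = (-2\pi)^{-p} (\partial_{\xi_1} + i\partial_{\xi_2})^p \exp\left(-\pi \xi \prescript{t}{}{\xi}\right).\]
Now the chosen normalisations of $\psi$ and $dx$ make the Gaussian self-dual under Fourier transform: $\exp(-\pi \xi \prescript{t}{}{\xi}) = \int_{\R^n} \exp(-\pi x \prescript{t}{}{x}) \overline{\psi}(x \prescript{t}{}{\xi}) \, dx$. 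Moving the operator $(\partial_{\xi_1} + i \partial_{\xi_2})^p$ under the integral sign, where it acts on $\overline{\psi}(x \prescript{t}{}{\xi})$ simply as multiplication by $(-2\pi i)^p (x_1 + i x_2)^p = (-2\pi i)^p P(x)$, then gives the desired identity for this $P$ after rearrangement of the constants.

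To extend to general $Q \in \HH_p(\R^n)$, I would use that this space is an irreducible representation of $\Ogp(n)$ (namely $\tau_{(p,0,\ldots,0)}$), so the $\Ogp(n)$-orbit of any nonzero vector spans the space. Thus $Q$ is a linear combination of translates $x \mapsto P(xk)$ for various $k \in \Ogp(n)$. The change of variables $x \mapsto xk$ in the integral preserves both $x \prescript{t}{}{x}$ and the measure $dx$ and sends $x \prescript{t}{}{\xi}$ to $x \prescript{t}{}{(\xi k)}$, so that the identity for $x \mapsto P(xk)$ at $\xi$ reduces to the identity for $P$ at $\xi k$, noting that $\xi k \prescript{t}{}{k} \prescript{t}{}{\xi} = \xi \prescript{t}{}{\xi}$ since $k \in \Ogp(n)$. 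Linearity then completes the proof.

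The proof is essentially routine once the right special $P$ is identified. The only technical point needing attention is the interchange of differentiation in $\xi$ with integration in $x$, which is immediate from the rapid decay of the Gaussian; the genuine insight is the use of harmonicity (in the form of the annihilation $(\partial_{\xi_1} + i\partial_{\xi_2})(\xi_1 + i\xi_2) = 0$) to control the iterated derivatives, which is precisely how harmonicity enters the real Hecke identity.
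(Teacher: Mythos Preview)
Your proof is correct. The paper does not actually supply a proof of this lemma---it is simply cited from Stein--Weiss---but it \emph{does} prove the complex analogue (\hyperref[lem:HeckeC]{Lemma \ref*{lem:HeckeC}}) by exactly the method you propose: verify the identity for a single highest-weight vector by pulling a constant-coefficient differential operator through the Gaussian Fourier integral, then extend to the whole irreducible $K$-module by equivariance. Your choice $P(x) = (x_1 + i x_2)^p$ plays the role that $z_1^p \overline{z_n}^q$ plays in the complex case, and the crucial annihilation $(\partial_{\xi_1} + i\partial_{\xi_2})(\xi_1 + i\xi_2) = 0$ is the real shadow of the Wirtinger identity $\partial_{\overline{w_1}} w_1 = 0$ used there.

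One cosmetic point: your sentence ``sends $x \prescript{t}{}{\xi}$ to $x \prescript{t}{}{(\xi k)}$'' under $x \mapsto xk$ has the direction slightly off---one gets $x \prescript{t}{}{(\xi k^{-1})}$, or equivalently one should substitute $x \mapsto x k^{-1}$ to land on the identity for $P$ at $\xi k$. This is harmless (the paper's proof of \hyperref[lem:HeckeC]{Lemma \ref*{lem:HeckeC}} is written with the same brevity), but worth stating precisely if you write it out.
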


\section{The Newform in the Induced Model}
\label{sect:newforminduced}

\subsection{The Induced Model}

Let $\pi = \pi_1 \boxplus \cdots \boxplus \pi_r$ be an induced representation of Whittaker type. Let $V_{\pi_j}$ be the space of $\pi_j$; the space $V_{\pi}$ of $\pi$ may then be viewed as the space of smooth functions $f : \GL_n(F) \to V_{\pi_1} \otimes \cdots \otimes V_{\pi_r}$ that satisfy
\[f(umg) = \delta_{\Pgp}^{1/2}(m) \pi_1(m_1) \otimes \cdots \otimes \pi_r(m_r) \cdot f(g),\]
for any $u \in \Ngp_{\Pgp}(F)$, $m = \blockdiag(m_1,\ldots,m_r) \in \Mgp_{\Pgp}(F)$, and $g \in \GL_n(F)$, where $\Pgp(F) = \Pgp_{(n_1,\ldots,n_r)}(F)$. The action of $\pi$ on $V_{\pi}$ is via right translation, namely $(\pi(h) \cdot f)(g) \coloneqq f(gh)$.

To make this more explicit, we first describe the space $V_{\pi_j}$ of the essentially square-integrable representation $\pi_j$ of $\GL_{n_j}(F)$. The following result is well-known; see, for example, \cite[Chapter 7]{GH11}.

\begin{lemma}
\label{lem:discreteseriessubrep}
\hspace{1em}
\begin{enumerate}[leftmargin=*]
\item[\textnormal{(1)}] Let $\pi = \chi^{\kappa} |\cdot|^{t}$ be a character of $\GL_1(F) = F^{\times}$. The space $V_{\pi}$ of $\pi$ is simply the one-dimensional vector space spanned by the function $\chi^{\kappa}(x) |x|^{t}$.
\item[\textnormal{(2)}] Let $\pi = D_{\kappa} \otimes \left|\det\right|^t$ be an essentially discrete series representation of $\GL_2(\R)$. Then $\pi$ is a subrepresentation of the reducible principal series representation $\pi^{\sharp} \coloneqq |\cdot|^{t + \frac{\kappa - 1}{2}} \boxplus \chi^{\kappa} |\cdot|^{t - \frac{\kappa - 1}{2}}$ of $\GL_2(\R)$. Moreover, if $V_{\pi^{\sharp}}$ denotes the induced model of $\pi^{\sharp}$ consisting of smooth functions $f : \GL_2(\R) \to \C$ that satisfy
\[f(uag) = |a_1|^{t + \frac{\kappa}{2}} \chi^{\kappa}(a_2) |a_2|^{t - \frac{\kappa}{2}} f(g)\]
for all $u \in \Ngp_2(\R)$, $a = \diag(a_1,a_2) \in \Agp_2(\R)$, and $g \in \GL_2(\R)$, where $\pi^{\sharp}$ acts on $V_{\pi^{\sharp}}$ via right translation, then the induced model of $\pi$ is precisely the subspace
\[V_{\pi} \coloneqq \bigcap_{\substack{\mu_1 = 0 \\ \mu_1 \equiv \kappa \hspace{-.25cm} \pmod{2}}}^{\kappa - 2} \ker \Pi^{\tau_{(\mu_1,0)}}\]
of $V_{\pi^{\sharp}}$, where $\Pi^{\tau}$ denotes the projection of $V_{\pi^{\sharp}}$ onto the $\tau$-isotypic component $V_{\pi^{\sharp}}^{\tau}$.
\end{enumerate}
\end{lemma}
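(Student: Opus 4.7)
Part (1) is immediate from the definitions: for $\GL_1(F)$ there is no unipotent radical and no proper parabolic subgroup, so the induced model of $\pi = \chi^{\kappa}|\cdot|^t$ reduces to the one-dimensional space of smooth functions $f : F^{\times} \to \C$ satisfying $f(xa) = \chi^{\kappa}(a) |a|^t f(x)$ under right translation by $a \in F^{\times}$; this space is spanned by $x \mapsto \chi^{\kappa}(x)|x|^t$.

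For part (2), my plan is to compare $K_2 = \Ogp(2)$-type decompositions of $\pi$ and $\pi^{\sharp}$. The starting point is the classical reducibility of the principal series $\pi^{\sharp}$ at these special parameters: the inducing characters differ by $\chi^{\kappa}|\cdot|^{-(\kappa - 1)}$ with $\kappa \geq 2$ of the correct parity, so $\pi^{\sharp}$ fits in a short exact sequence
\[
0 \to D_{\kappa} \otimes \left|\det\right|^t \to \pi^{\sharp} \to F_{\kappa - 2} \otimes \left|\det\right|^{t} \to 0,
\]
where $F_{\kappa - 2}$ is the irreducible $(\kappa - 1)$-dimensional representation of $\GL_2(\R)$ (the Langlands quotient of the standard module $\pi^{\sharp}$) and $\pi = D_{\kappa} \otimes \left|\det\right|^t$ is the unique infinite-dimensional submodule.

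Next I would enumerate $K_2$-types on both sides. By \hyperref[lem:HomOn]{Lemma \ref*{lem:HomOn}} together with the $\Ogp(2) \to \Ogp(1)^2$ branching rule of \hyperref[lem:O_{n - 1}xO_1branching]{Lemma \ref*{lem:O_{n - 1}xO_1branching}}, the $K_2$-types of $\pi^{\sharp}$ are precisely $\tau_{(\mu_1, 0)}$ with $\mu_1 \geq 0$ and $\mu_1 \equiv \kappa \pmod{2}$ (with the convention that $\tau_{(0, 0)}$ denotes the trivial representation, which occurs only when $\kappa$ is even), each with multiplicity one. Separately, the classical restriction of the discrete series to $\Ogp(2)$ is $D_{\kappa}|_{\Ogp(2)} \cong \bigoplus_{\ell \geq \kappa,\ \ell \equiv \kappa \hspace{-.25cm} \pmod{2}} \tau_{(\ell, 0)}$, so by subtraction the $K_2$-types of the quotient $F_{\kappa - 2} \otimes \left|\det\right|^t$ are exactly $\tau_{(\mu_1, 0)}$ for $0 \leq \mu_1 \leq \kappa - 2$ with $\mu_1 \equiv \kappa \pmod{2}$. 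Since every $K_2$-type of $\pi^{\sharp}$ has multiplicity one, a vector $f \in V_{\pi^{\sharp}}$ lies in $V_{\pi}$ if and only if its isotypic projection $\Pi^{\tau} f$ vanishes for every $K_2$-type $\tau$ of the quotient, which is precisely the asserted intersection-of-kernels description of $V_{\pi}$.

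The main obstacle is the reducibility input together with the identification of $D_{\kappa} \otimes \left|\det\right|^t$ as the \emph{submodule} (not the quotient) of $\pi^{\sharp}$; once this is accepted as a classical fact from the representation theory of $\GL_2(\R)$, the remainder of the argument is a routine bookkeeping of $K$-types via Frobenius reciprocity and the orthogonal branching rules.
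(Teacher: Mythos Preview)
Your argument is correct. The paper does not actually prove this lemma: it simply records the result as well-known and cites \cite[Chapter~7]{GH11}. Your sketch --- identifying $D_{\kappa} \otimes \left|\det\right|^t$ as the unique infinite-dimensional submodule of $\pi^{\sharp}$ via the Langlands classification for $\GL_2(\R)$, then matching the $\Ogp(2)$-types of $\pi^{\sharp}$, $\pi$, and the finite-dimensional quotient via Frobenius reciprocity and the $\Ogp(2) \to \Ogp(1)^2$ branching rule --- is precisely the standard argument and fills in what the paper leaves to the reference.
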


\begin{corollary}
Let $\pi = \pi_1 \boxplus \cdots \boxplus \pi_r$ be an induced representation of Whittaker type. Then the induced model of $\pi$ may be taken to be the space $V_{\pi}$ of smooth functions $f : \GL_n(F) \times \Mgp_{\Pgp}(F) \to \C$ satisfying
\begin{equation}
\label{eq:inducedmodulus}
f(umg;m') = \delta_{\Pgp}^{1/2}(m) f(g; m' m)
\end{equation}
for all $u \in \Ngp_{\Pgp}(F)$, $m,m' \in \Mgp_{\Pgp}(F)$, and $g \in \GL_n(F)$ and such that for each $g \in \GL_n(F)$, $f(g;\cdot) : \Mgp_{\Pgp}(F) \to \C$ is an element of $V_{\pi_1} \otimes \cdots \otimes V_{\pi_r}$ with $V_{\pi_j}$ as in \hyperref[lem:discreteseriessubrep]{Lemma \ref*{lem:discreteseriessubrep}}.
\end{corollary}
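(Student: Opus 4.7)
The plan is straightforward: the corollary amounts to an uncurrying operation, trading the vector-valued functions $f : \GL_n(F) \to V_{\pi_1} \otimes \cdots \otimes V_{\pi_r}$ of Section~\ref{sect:prelim} for scalar-valued functions of one extra variable that parametrises the vector. The key enabling ingredient is \hyperref[lem:discreteseriessubrep]{Lemma \ref*{lem:discreteseriessubrep}}, which realises each $V_{\pi_j}$ concretely as a space of smooth scalar-valued functions on $\GL_{n_j}(F)$ on which $\pi_j$ acts by right translation. Via the canonical isomorphism $\Mgp_{\Pgp}(F) \cong \GL_{n_1}(F) \times \cdots \times \GL_{n_r}(F)$, the tensor product $V_{\pi_1} \otimes \cdots \otimes V_{\pi_r}$ is thereby identified with a space of scalar-valued functions on $\Mgp_{\Pgp}(F)$, and the outer tensor product $\pi_1 \boxtimes \cdots \boxtimes \pi_r$ of Levi factors still acts by right translation on this model.

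I would define the map $f \mapsto \widetilde{f}$ by $\widetilde{f}(g;m') := f(g)(m')$, viewing $f(g) \in V_{\pi_1} \otimes \cdots \otimes V_{\pi_r}$ as a scalar-valued function on $\Mgp_{\Pgp}(F)$ as above. That $\widetilde{f}(g; \cdot) \in V_{\pi_1} \otimes \cdots \otimes V_{\pi_r}$ is immediate. Verifying~\eqref{eq:inducedmodulus} is a one-line computation: the equivariance from Section~\ref{sect:prelim} gives $f(umg) = \delta_{\Pgp}^{1/2}(m) \, (\pi_1(m_1) \otimes \cdots \otimes \pi_r(m_r)) \cdot f(g)$, and since each $\pi_j$ acts by right translation, evaluating both sides at $m'$ yields $\widetilde{f}(umg;m') = \delta_{\Pgp}^{1/2}(m) f(g)(m'm) = \delta_{\Pgp}^{1/2}(m) \widetilde{f}(g; m'm)$, as required.

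To conclude, I would observe that $f \mapsto \widetilde{f}$ is a bijection onto the space described in the corollary: injectivity is clear, and surjectivity follows by the obvious inverse $\widetilde{f} \mapsto f$ with $f(g) := \widetilde{f}(g; \cdot)$, whose image lies in $V_{\pi_1} \otimes \cdots \otimes V_{\pi_r}$ by hypothesis and whose equivariance in the sense of Section~\ref{sect:prelim} is read off from~\eqref{eq:inducedmodulus} by the same right-translation argument run backwards. Smoothness and the $\GL_n(F)$-intertwining property are manifestly preserved in both directions. I anticipate no substantive obstacle; the only mild subtlety is tracking which slot of $\widetilde{f}(g;m')$ is acted upon and in which direction, and this works out cleanly precisely because each $\pi_j$ acts by \emph{right} translation on the concrete model of $V_{\pi_j}$ supplied by \hyperref[lem:discreteseriessubrep]{Lemma \ref*{lem:discreteseriessubrep}}.
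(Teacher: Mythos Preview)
Your proposal is correct and spells out precisely the uncurrying argument that the paper leaves implicit: the corollary is stated in the paper without proof, as an immediate consequence of \hyperref[lem:discreteseriessubrep]{Lemma \ref*{lem:discreteseriessubrep}}. Your verification that the equivariance condition from Section~\ref{sect:prelim} translates into~\eqref{eq:inducedmodulus} via the right-translation action on each $V_{\pi_j}$ is exactly the point, and there is nothing to add.
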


For $f \in V_{\pi}$, we write $f(g)$ to denote $f(g;1_n)$.

\begin{example}
Suppose that $\pi = \chi^{\kappa_1} |\cdot|^{t_1} \boxplus \cdots \boxplus \chi^{\kappa_n} |\cdot|^{t_n}$ is a principal series representation, so that $\kappa_j \in \Z$ for $F = \C$ for $F = \R$ and $\kappa_j \in \{0,1\}$. The induced model of $\pi$ is the vector space $V_{\pi}$ of smooth functions $f : \GL_n(F) \to \C$ that satisfy
\[f(uag) = f(g) \delta_n^{1/2}(a) \prod_{j = 1}^{n} \chi^{\kappa_j}(a_j) |a_j|^{t_j}\]
for all $u \in \Ngp_n(F)$, $a = \diag(a_1,\ldots,a_n) \in \Agp_n(F)$, and $g \in \GL_n(F)$.
\end{example}

Our goal now is to explicitly describe the newform $f^{\circ}$ in the induced model $V_{\pi}$ of an induced representation of Whittaker type $\pi$. We give three different explicit constructions: via the Iwasawa decomposition, via convolution sections, and via Godement sections. Initially, we define the newform in the induced model only up to multiplication by a nonzero constant; eventually in \hyperref[def:cannormsq]{Definitions \ref*{def:cannormsq}} and \ref{def:cannorm} we specify a normalisation that is particularly useful when proceeding to study the newform in the Whittaker model.

\subsection{The Newform via the Iwasawa Decomposition}

\subsubsection{Essentially Square-Integrable Representations}

We first describe the newform in the induced model of essentially square-integrable representations.

\begin{lemma}
\hspace{1em}
\begin{enumerate}[leftmargin=*]
\item[\textnormal{(1)}] For $\pi = \chi^{\kappa} |\cdot|^t$, the newform in the induced model is simply
\begin{equation}
\label{eq:fcircchi}
f^{\circ}(x) = c^{\circ} \chi^{\kappa}(x) |x|^t
\end{equation}
for any $c^{\circ} \in \C^{\times}$.
\item[\textnormal{(2)}] For $F = \R$ and $\pi = D_{\kappa} \otimes \left|\det\right|^t$, the newform in the induced model of $\pi$ is
\begin{equation}
\label{eq:fcircD}
f^{\circ}(g) = c^{\circ} |a_1|^{t + \frac{\kappa}{2}} \chi^{\kappa}(a_2) |a_2|^{t - \frac{\kappa}{2}} \overline{P^{\circ}}(e_2 k^{-1})
\end{equation}
for any $c^{\circ} \in \C^{\times}$ and $g \in \GL_2(\R)$ having the Iwasawa decomposition $g = uak$ with $u \in \Ngp_2(\R)$, $a = \diag(a_1,a_2) \in \Agp_2(\R)$, and $k \in \Ogp(2)$, where $P^{\circ}$ is the homogeneous harmonic polynomial associated to the newform $K$-type $\tau^{\circ}$ given by \eqref{eq:PjcircR2}.
\end{enumerate}
\end{lemma}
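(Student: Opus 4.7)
For part (1), Lemma \ref*{lem:discreteseriessubrep}(1) identifies $V_\pi$ as the one-dimensional vector space spanned by $x \mapsto \chi^\kappa(x)|x|^t$. Since the newform $v^\circ \in V_\pi$ is nonzero and determined only up to a scalar, the formula \eqref{eq:fcircchi} follows immediately with $c^\circ \in \C^\times$ playing the role of the normalising scalar.

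For part (2), write $\pi^\sharp \coloneqq |\cdot|^{t + (\kappa - 1)/2} \boxplus \chi^\kappa |\cdot|^{t - (\kappa - 1)/2}$, so that by Lemma \ref*{lem:discreteseriessubrep}(2), $V_\pi$ is the subspace of $V_{\pi^\sharp}$ cut out by the vanishing of the projectors $\Pi^{\tau_{(\mu_1, 0)}}$ for $\mu_1 \in \{0, 2, \ldots, \kappa - 2\}$ with $\mu_1 \equiv \kappa \pmod{2}$. The results of Section \ref*{sect:newformKtype} identify the newform $K$-type of $\pi$ as $\tau^\circ = \tau_{(\kappa, 0)}$, and by Theorem \ref*{thm:conductornewform} the subspace $V_\pi^{\tau^\circ|_{K_1}}$ is one-dimensional. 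The plan is therefore to verify that the function $f^\circ$ defined by the right-hand side of \eqref{eq:fcircD} is a well-defined nonzero element of this subspace; one-dimensionality then forces it to be the newform up to scalar.

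Three checks are required. First, well-definedness under the Iwasawa ambiguity $(a, k) \mapsto (a\varepsilon, \varepsilon^{-1} k)$ for $\varepsilon = \diag(\varepsilon_1, \varepsilon_2) \in \Agp_2(\R) \cap \Ogp(2)$: the character factor $\chi^\kappa(a_2)$ contributes $\varepsilon_2^\kappa$, which cancels against the transformation of $\overline{P^\circ}(e_2 k^{-1})$ under right multiplication by $\varepsilon$. The latter transformation is again $\varepsilon_2^\kappa$, as follows from combining the explicit formula \eqref{eq:PjcircR2} (which shows $P^\circ(x_1, -x_2) = (-1)^\kappa P^\circ(x_1, x_2)$) with the right $K_1$-invariance $P^\circ(-x_1, x_2) = P^\circ(x_1, x_2)$. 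The transformation property \eqref{eq:inducedmodulus} relative to $\pi^\sharp$ is then built into the Iwasawa-style formula. Second, by Lemma \ref*{lem:QenkR}, the restriction $k \mapsto \overline{P^\circ}(e_2 k^{-1})$ is a scalar multiple of a matrix coefficient of $\tau^\circ = \tau_{(\kappa, 0)}$; hence $f^\circ$ lies entirely in the $\tau^\circ$-isotypic component of $V_{\pi^\sharp}$, and Schur orthogonality forces $\Pi^{\tau_{(\mu_1, 0)}} f^\circ = 0$ for $\mu_1 \in \{0, 2, \ldots, \kappa - 2\}$, placing $f^\circ$ in the subrepresentation $V_\pi$. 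Third, the right $K_1$-invariance $f^\circ(g \begin{psmallmatrix} k' & 0 \\ 0 & 1 \end{psmallmatrix}) = f^\circ(g)$ for $k' \in \Ogp(1)$ reduces via the identity $e_2 \begin{psmallmatrix} (k')^{-1} & 0 \\ 0 & 1 \end{psmallmatrix} = e_2$ to the defining property of $P^\circ$ as the $K_1$-fixed vector in the model $\HH_\kappa(\R^2)$ of $\tau^\circ$.

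I expect the main obstacle to be the middle step, namely the descent of $f^\circ$ from $V_{\pi^\sharp}$ into the subrepresentation $V_\pi$: this is where Lemma \ref*{lem:QenkR} plays its essential role, by realising $f^\circ|_{K_2}$ as a matrix coefficient of $\tau^\circ$ and thereby activating Schur orthogonality against the other $K$-types that appear in $V_{\pi^\sharp} \setminus V_\pi$. The well-definedness computation is only bookkeeping, but it must be carried out carefully because the sign cancellation hinges on the precise parity behaviour of $P^\circ$ visible from the explicit formula \eqref{eq:PjcircR2}.
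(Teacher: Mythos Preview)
Your proof is correct and follows essentially the same route as the paper's. Both arguments verify well-definedness under the Iwasawa ambiguity, use Schur orthogonality (via \hyperref[lem:QenkR]{Lemma \ref*{lem:QenkR}}) to place $f^{\circ}$ in the $\tau^{\circ}$-isotypic subspace and hence in $V_{\pi}\subset V_{\pi^{\sharp}}$, and then confirm the newform property. The only cosmetic difference is in this last step: the paper checks directly that $\Pi^{\tau^{\circ}|_{K_1}} f^{\circ}=f^{\circ}$ by computing the convolution against $\xi^{\tau^{\circ}|_{\Ogp(1)}}$, whereas you verify $\tau^{\circ}$-isotypy and $K_1$-invariance separately and invoke one-dimensionality; also, for well-definedness the paper uses homogeneity of $P^{\circ}$ together with $e_2 a'=a_2' e_2$ rather than the explicit parity computation from \eqref{eq:PjcircR2}. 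One small remark: in your third check the identity $e_2\begin{psmallmatrix}(k')^{-1}&0\\0&1\end{psmallmatrix}=e_2$ already suffices on its own, so the appeal to the $K_1$-fixed property of $P^{\circ}$ is not actually needed there.
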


\begin{remark}
Strictly speaking, there is no need to write $\overline{P^{\circ}}$ instead of $P^{\circ}$, since this is real-valued; we do this simply to ensure notational consistency when later treating the cases $F = \R$ and $F = \C$ simultaneously, for the distinction is no longer moot in the latter case.
\end{remark}

\begin{proof}
This is clear for $\pi = \chi^{\kappa} |\cdot|^t$. For $\pi = D_{\kappa} \otimes \left|\det\right|^t$, we must first check that $f^{\circ}$ is well-defined, for the Iwasawa decomposition is not unique as $\Agp_2(\R)$ and $\Ogp(2)$ intersect nontrivially. If $a' = \diag(a_1',a_2') \in \Agp_2(\R) \cap \Ogp(2)$, so that $a_1',a_2' \in \{1,-1\}$, then on the one hand,
\[f^{\circ}(uaa'k) = c^{\circ} |a_1|^{t + \frac{\kappa}{2}} \chi^{\kappa}(a_2') \chi^{\kappa}(a_2) |a_2|^{t - \frac{\kappa}{2}} \overline{P^{\circ}}(e_2 k^{-1})\]
since $a' \in \Agp_2(\R)$ with $|a_1'| = |a_2'| = 1$, while on the other hand,
\[f^{\circ}(uaa'k) = c^{\circ} |a_1|^{t + \frac{\kappa}{2}} \chi^{\kappa}(a_2) |a_2|^{t - \frac{\kappa}{2}} \overline{P^{\circ}}(e_2 a' k^{-1})\]
since $a' \in \Ogp(2)$, and these are equal since $e_2 a' = a_2' e_2$, $P^{\circ}$ is homogeneous of degree $\kappa$ as $\tau^{\circ} = \tau_{(\kappa,0)}$, so that $P^{\circ} \in \HH_{\kappa}(\R^2)$, and $a_2^{\prime \kappa} = \chi^{\kappa}(a_2')$ as $a_2' \in \{1,-1\}$.

Next, Schur orthogonality shows that $f^{\circ} \in \ker \Pi^{\tau_{(\mu_1,0)}}$ for $0 \leq \mu_1 \leq \kappa - 2$ with $\mu_1 \equiv \kappa \pmod{2}$ since $P^{\circ} \in \HH_{\kappa}(\R^2)$, and so $f^{\circ}$ is indeed an element of the induced model $V_{\pi}$ of $\pi$ as defined in \hyperref[lem:discreteseriessubrep]{Lemma \ref*{lem:discreteseriessubrep}}.

Finally, to prove that $f^{\circ}$ is the newform, we must show that
\[\int_{\Ogp(2)} \xi^{\tau^{\circ}|_{\Ogp(1)}}(k) (\pi(k) \cdot f^{\circ})(g) \, dk = f^{\circ}(g).\]
From the definition \eqref{eq:fcircD} of $f^{\circ}$, it suffices to show that
\begin{equation}
\label{eq:newformcheck0}
\int_{\Ogp(2)} \xi^{\tau^{\circ}|_{\Ogp(1)}}(k) \overline{P^{\circ}}\left(e_2 k^{-1} k^{\prime -1}\right) \, dk = \overline{P^{\circ}}\left(e_2 k^{\prime -1}\right)
\end{equation}
for any $k' \in \Ogp(2)$. We note that
\[\xi^{\tau^{\circ}|_{\Ogp(1)}}(k) = (\dim \tau^{\circ}) \frac{\left\langle \tau^{\circ}(k^{-1}) \cdot P^{\circ}, P^{\circ} \right\rangle}{\left\langle P^{\circ}, P^{\circ} \right\rangle} = (\dim \tau^{\circ}) P^{\circ}(e_2 k^{-1}),\]
where the first equality follows from the definitions \eqref{eq:xitauKn-1} of $\xi^{\tau^{\circ}|_{\Ogp(1)}}$ and \eqref{eq:PjcircR2} of $P^{\circ}$, while the second follows from \hyperref[lem:QenkR]{Lemma \ref*{lem:QenkR}}. The equality \eqref{eq:newformcheck0} then follows from one more application of \hyperref[lem:QenkR]{Lemma \ref*{lem:QenkR}}.
\end{proof}

For our applications, we require explicit choices of the constants $c^{\circ}$ appearing in \eqref{eq:fcircchi} and \eqref{eq:fcircD}.

\begin{definition}
\label{def:cannormsq}
Let $\pi$ be an essentially square-integrable representation of $\GL_n(F)$, and let $f^{\circ}$ denote the newform in the induced model, which is given by \eqref{eq:fcircchi} if $n = 1$, so that $\pi = \chi^{\kappa} |\cdot|^t$, and by \eqref{eq:fcircD} if $n = 2$, so that $F = \R$ and $\pi = D_{\kappa} \otimes \left|\det\right|^t$. We say that $f^{\circ}$ is canonically normalised if
\begin{equation}
\label{eq:cannormsq}
c^{\circ} = \begin{dcases*}
1 & if $\pi = \chi^{\kappa} |\cdot|^t$,	\\
i^{\kappa} \zeta_{\R}(\kappa) \zeta_{\R}(\kappa + 1) & if $\pi = D_{\kappa} \otimes \left|\det\right|^t$.
\end{dcases*}
\end{equation}
\end{definition}

\subsubsection{Induced Representations of Whittaker Type}

We now give an explicit construction of the newform $f^{\circ} : \GL_n(F) \times \Mgp_{\Pgp}(F) \to \C$ in the induced model of an induced representation of Whittaker type $\pi = \pi_1 \boxplus \cdots \boxplus \pi_r$ of $\GL_n(F)$ when $g \in \GL_n(F)$ is written in terms of its Iwasawa decomposition.

This description involve a distinguished homogeneous harmonic polynomial $P_{(n_1,\ldots,n_r)}^{\circ}$ that is defined in terms of polynomials $P_j^{\circ}$ in the following way. To each essentially square-integrable representation $\pi_j$ of $\GL_{n_j}(F)$ with newform $K$-type $\tau_j^{\circ}$, we associate a distinguished homogeneous harmonic polynomial $P_j^{\circ}$.
\begin{itemize}
\item For $F = \R$, $n_j = 1$, $\pi_j = \chi^{\kappa_j} |\cdot|_{\R}^{t_j}$, and $\tau_j^{\circ}$ the one-dimensional representation of $\Ogp(1)$ of highest weight $\kappa_j \in \{0,1\}$, $P_j^{\circ} \in \HH_p(\R)$ is the homogeneous harmonic polynomial associated to $\tau = \tau_j^{\circ}$ given by \eqref{eq:PjcircR1} with $p = \kappa_j$.
\item For $F = \R$, $n_j = 2$, $\pi_j = D_{\kappa_j} \otimes \left|\det\right|_{\R}^{t_j}$, and $\tau_j^{\circ}$ the two-dimensional representation of $\Ogp(2)$ of highest weight $(\kappa_j,0)$ with $\kappa_j \geq 2$ a positive integer, $P_j^{\circ} \in \HH_p(\R^2)$ is the homogeneous harmonic polynomial associated to $\tau = \tau_j^{\circ}$ given by \eqref{eq:PjcircR2} with $p = \kappa_j$.
\item For $F = \C$, so that $n_j = 1$, $\pi_j = \chi^{\kappa_j} |\cdot|_{\C}^{t_j}$, and $\tau_j^{\circ}$ the one-dimensional representation of $\Ugp(1)$ of highest weight $\kappa_j \in \Z$, $P_j^{\circ} \in \HH_{p,q}(\C)$ is the homogeneous harmonic polynomial associated to $\tau = \tau_j^{\circ}$ given by \eqref{eq:PjcircC} with $p = \max\{\kappa_j,0\}$ and $q = -\min\{\kappa_j,0\}$.
\end{itemize}
We then define
\begin{align}
\label{eq:PstarR}
P_{(n_1,\ldots,n_r)}^{\circ}(x) & \coloneqq \prod_{j = 1}^{r} \begin{dcases*}
P_j^{\circ}(x_{\ell}) & if $\ell = n_1 + \cdots + n_j$ and $\pi_j = \chi^{\kappa_j} |\cdot|_{\R}^{t_j}$,	\\
P_j^{\circ}(x_{\ell},x_{\ell + 1}) & if $\ell = n_1 + \cdots + n_j - 1$ and $\pi_j = D_{\kappa_j} \otimes \left|\det\right|_{\R}^{t_j}$
\end{dcases*}
\intertext{for $F = \R$, while for $F = \C$, we define}
\label{eq:PstarC}
P_{(n_1,\ldots,n_r)}^{\circ}(z) & \coloneqq \prod_{j = 1}^{r} P_j^{\circ}(z_j).
\end{align}
It is straightforward to see that the polynomials $P_{(n_1,\ldots,n_r)}^{\circ}(x)$ and $P_{(n_1,\ldots,n_r)}^{\circ}(z)$ are elements of $\HH_{\mu_1^{\circ}}(\R^n)$ and $\HH_{\mu_1^{\circ},-\mu_n^{\circ}}(\C^n)$ respectively, where $\mu^{\circ} = (\mu_1^{\circ},\ldots,\mu_n^{\circ})$ is the highest weight of the newform $K$-type $\tau^{\circ} = \tau_{\mu^{\circ}}$ of $\pi = \pi_1 \boxplus \cdots \boxplus \pi_r$.

The description of the newform $f^{\circ}$ in the induced model of $\pi = \pi_1 \boxplus \cdots \boxplus \pi_r$ also involves the canonically normalised newforms $f_1^{\circ},\ldots,f_r^{\circ}$ of the essentially square-integrable representations $\pi_1,\ldots,\pi_r$.

\begin{proposition}
\label{prop:newformidentity}
Let $\pi = \pi_1 \boxplus \cdots \boxplus \pi_r$ be an induced representation of Whittaker type with $r \geq 2$. For $g \in \GL_n(F)$ having the Iwasawa decomposition $g = umk$ with respect to the parabolic subgroup $\Pgp(F) = \Pgp_{(n_1,\ldots,n_r)}(F)$, so that $u \in \Ngp_{\Pgp}(F)$, $m = \blockdiag(m_1,\ldots,m_r) \in \Mgp_{\Pgp}(F)$, and $k \in K$, and for $m' = \blockdiag(m_1',\ldots,m_r') \in \Mgp_{\Pgp}(F)$, the newform $f^{\circ} : \GL_n(F) \times \Mgp_{\Pgp}(F) \to \C$ in the induced model $V_{\pi}$ of $\pi$ is of the form
\begin{multline}
\label{eq:fcirc}
f^{\circ}(g;m') \coloneqq c^{\circ} \dim \tau_1^{\circ} \cdots \dim \tau_r^{\circ} \delta_{\Pgp}^{1/2}(m) \int_{K_{n_1}} \hspace{-.3cm} \cdots \int_{K_{n_r}} f_1^{\circ}\left(m_1' m_1 k_1\right) \cdots f_r^{\circ}\left(m_r' m_r k_r\right)	\\
\times \overline{P_{(n_1,\ldots,n_r)}^{\circ}}\left(e_n k^{-1} \blockdiag\left(k_1,\ldots,k_r\right)\right) \, dk_r \cdots dk_1
\end{multline}
for some constant $c^{\circ} \in \C^{\times}$, where each $f_j^{\circ}$ is the canonically normalised newform of $\pi_j$ and $\tau_j^{\circ}$ is the newform $K_{n_j}$-type of $\pi_j$.
\end{proposition}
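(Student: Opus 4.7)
The plan is to verify that the right-hand side of \eqref{eq:fcirc}, viewed as a function $F^{\circ} \colon \GL_n(F) \times \Mgp_{\Pgp}(F) \to \C$ (for any fixed choice of nonzero $c^{\circ}$), lies in the one-dimensional subspace $V_{\pi}^{\tau^{\circ}|_{K_{n - 1}}}$ identified by \hyperref[thm:conductornewform]{Theorem \ref*{thm:conductornewform}} and is not identically zero. By the multiplicity-one statement therein, $F^{\circ}$ must then be a nonzero scalar multiple of the newform, and absorbing this scalar into $c^{\circ}$ yields the desired equality. This reduces the proposition to four steps: (i) $F^{\circ}$ is well-defined and lies in $V_{\pi}$; (ii) $F^{\circ}$ is right $K_{n - 1}$-invariant; (iii) $F^{\circ}$ is $\tau^{\circ}$-isotypic under $\pi|_{K_n}$; and (iv) $F^{\circ}$ is not the zero function.

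For (i), the Iwasawa decomposition $g = umk$ relative to $\Pgp(F)$ is unique only up to replacing $(m, k)$ with $(m \mu^{-1}, \mu k)$ for $\mu = \blockdiag(\mu_1, \ldots, \mu_r) \in \Mgp_{\Pgp}(F) \cap K_n$. Since $\delta_{\Pgp}(\mu) = 1$, the substitution $k_j \mapsto \mu_j k_j$ in each $K_{n_j}$-integral absorbs the extra $\mu_j^{-1}$ inside $f_j^{\circ}$, while $\mu \cdot \blockdiag(k_1, \ldots, k_r)$ restores the original argument of $\overline{P^{\circ}_{(n_1, \ldots, n_r)}}$; hence $F^{\circ}$ is well-defined. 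The transformation law \eqref{eq:inducedmodulus} follows because the Iwasawa decomposition of $u_0 m_0 g$ is $(u_0 (m_0 u m_0^{-1}))(m_0 m)(k)$ and $\delta_{\Pgp}^{1/2}$ is multiplicative on $\Mgp_{\Pgp}(F)$. That $F^{\circ}(g; \cdot)$ belongs to $V_{\pi_1} \otimes \cdots \otimes V_{\pi_r}$ reduces to the corresponding fact for each $f_j^{\circ}$ together with the observation that the relevant subspaces (the intersections of kernels of $\Pi^{\tau_{(\mu_1, 0)}}$ from \hyperref[lem:discreteseriessubrep]{Lemma \ref*{lem:discreteseriessubrep}}, when $F = \R$ and $n_j = 2$) are preserved under right translation and commute with the compact integrations defining $F^{\circ}$.

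Step (ii) is immediate: the Iwasawa $K$-component of $g \begin{psmallmatrix} k' & 0 \\ 0 & 1 \end{psmallmatrix}$ is $k \begin{psmallmatrix} k' & 0 \\ 0 & 1 \end{psmallmatrix}$, modifying only the argument of $\overline{P^{\circ}_{(n_1, \ldots, n_r)}}$ to $e_n \begin{psmallmatrix} {k'}^{-1} & 0 \\ 0 & 1 \end{psmallmatrix} k^{-1} \blockdiag(k_1, \ldots, k_r)$, and the identity $e_n \begin{psmallmatrix} {k'}^{-1} & 0 \\ 0 & 1 \end{psmallmatrix} = e_n$ gives invariance. For (iii), the restriction map $V_{\pi} \ni f \mapsto [k \mapsto f(k; m')] \in C^{\infty}(K_n)$ is $K_n$-equivariant for right translation, so it suffices to show $F^{\circ}|_{K_n}$ lies in the $\tau^{\circ}$-isotypic subspace of $C^{\infty}(K_n)$. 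Since $P^{\circ}_{(n_1, \ldots, n_r)}$ spans a line in the model $\HH_{\mu_1^{\circ}, -\mu_n^{\circ}}(\C^n)$ or $\HH_{\mu_1^{\circ}}(\R^n)$ of $\tau^{\circ}$, its conjugate lies in a model of $\widetilde{\tau^{\circ}}$; a Peter--Weyl computation (easily verifiable in the rank-one case $n = 1$, where $\overline{P^{\circ}}(e_1 k^{-1}) = k^{\kappa}$ transforms by ${k'}^{\kappa}$ under right translation by $k'$) shows that the function $k \mapsto \overline{P^{\circ}_{(n_1, \ldots, n_r)}}(e_n k^{-1} Y)$ is, for each fixed $Y \in K_n$, a matrix coefficient of $\tau^{\circ}$; the inverse in $k^{-1}$ is precisely what converts the $\widetilde{\tau^{\circ}}$-action on the polynomial into a $\tau^{\circ}$-action under right translation.

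For (iv), I would evaluate at $g \in K_n$ and $m' = 1_n$, so that $\delta_{\Pgp}^{1/2}(m) = 1$ and each $f_j^{\circ}(k_j)$ reduces to a matrix coefficient of $\tau_j^{\circ}$. A direct calculation, combining each factor in $\prod_j f_j^{\circ}(k_j)$ with the corresponding $\overline{P_j^{\circ}}$-factor of $\overline{P^{\circ}_{(n_1, \ldots, n_r)}}$ and using $|k_j| = 1$, collapses the compact integrals to a nonzero scalar multiple of $\overline{P^{\circ}_{(n_1, \ldots, n_r)}}(e_n g^{-1})$, which is a nonzero function of $g \in K_n$. Applying multiplicity one identifies $F^{\circ}$ with a nonzero scalar multiple of the newform. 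I expect the main obstacle to be step (iii): identifying $\overline{P^{\circ}_{(n_1, \ldots, n_r)}}$ within the correct model of $\widetilde{\tau^{\circ}}$ and tracking the effect of the inversion $k^{-1}$ through the block-diagonal structure of $\blockdiag(k_1, \ldots, k_r)$ requires delicate bookkeeping, particularly when $F = \R$ with some $n_j = 2$, where $\overline{P_j^{\circ}}$ sits in a two-dimensional representation of $\Ogp(2)$ rather than a one-dimensional character of $\Ogp(1)$ or $\Ugp(1)$.
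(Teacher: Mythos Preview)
Your proposal is correct and follows essentially the same route as the paper. The only organizational difference is that the paper verifies membership in $V_{\pi}^{\tau^{\circ}|_{K_{n-1}}}$ in a single stroke by checking that the projection $\Pi^{\tau^{\circ}|_{K_{n-1}}}$ fixes $f^{\circ}$, using the explicit identity $\xi^{\tau^{\circ}|_{K_{n-1}}}(k) = (\dim\tau^{\circ})\,P^{\circ}(e_n k^{-1})$ together with the reproducing-kernel property of \hyperref[lem:QenkC]{Lemmata \ref*{lem:QenkC}} and \ref{lem:QenkR}, whereas you split this into separate checks of $K_{n-1}$-invariance and $\tau^{\circ}$-isotypy; both arguments rest on the same reproducing-kernel fact, and your worry about step (iii) is unwarranted, since that lemma applies uniformly to any $P$ in the harmonic-polynomial model (including the $\Ogp(2)$ factors) and immediately exhibits $k\mapsto \overline{P^{\circ}_{(n_1,\ldots,n_r)}}(e_n k^{-1}Y)$ as a matrix coefficient of $\tau^{\circ}$.
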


\begin{proof}
Since the Iwasawa decomposition is not unique as $\Mgp_{\Pgp}(F)$ and $K$ intersect nontrivially, our first task is to show that $f^{\circ}(umm''k;m')$ is well-defined for $m'' = \blockdiag(m_1'',\ldots,m_r'') \in \Mgp_{\Pgp}(F) \cap K$. On the one hand, this is
\begin{multline*}
c^{\circ} \dim \tau_1^{\circ} \cdots \dim \tau_r^{\circ} \delta_{\Pgp}^{1/2}(m) \int_{K_{n_1}} \hspace{-.3cm} \cdots \int_{K_{n_r}} f_1^{\circ}\left(m_1' m_1 m_1'' k_1\right) \cdots f_r^{\circ}\left(m_r' m_r m_r'' k_r\right)	\\
\times \overline{P_{(n_1,\ldots,n_r)}^{\circ}}\left(e_n k^{-1} \blockdiag\left(k_1,\ldots,k_r\right)\right) \, dk_r \cdots dk_1
\end{multline*}
since $m'' \in \Mgp_{\Pgp}(F)$, noting that $\delta_{\Pgp}(m'') = 1$ as $m'' \in K$. On the other hand, this is
\begin{multline*}
c^{\circ} \dim \tau_1^{\circ} \cdots \dim \tau_r^{\circ} \delta_{\Pgp}^{1/2}(m) \int_{K_{n_1}} \hspace{-.3cm} \cdots \int_{K_{n_r}} f_1^{\circ}\left(m_1' m_1 k_1\right) \cdots f_r^{\circ}\left(m_r' m_r k_r\right)	\\
\times \overline{P_{(n_1,\ldots,n_r)}^{\circ}}\left(e_n k^{-1} {m''}^{-1} \blockdiag\left(k_1, \ldots, k_r\right)\right) \, dk_r \cdots dk_1
\end{multline*}
since $m'' \in K$; as $m_j'' \in K_{n_j}$, this is seen to be equal to the first expression upon making the change of variables $k_j \mapsto m_j'' k_j$.

Next, we confirm that this is an element of the induced model $V_{\pi}$ of $\pi$. It is clear that $f^{\circ}$ is a smooth function from $\GL_n(F) \times \Mgp_{\Pgp}(F)$ to $\C$ that satisfies \eqref{eq:inducedmodulus}. Moreover, $f^{\circ}(g;\cdot)$ is indeed an element of $V_{\pi_1} \otimes \cdots \otimes V_{\pi_r}$ for each $g \in \GL_n(F)$, since upon writing $P_{(n_1,\ldots,n_r)}^{\circ} = \prod_{j = 1}^{r} P_j^{\circ}$, the integrals over $K_{n_j} \ni k_j$ are either trivial if $n_j = 1$, or lead to $f_j^{\circ} \overline{P_j^{\circ}}$ being replaced with the sum of two such products of elements of $V_{\pi_j}$ and homogeneous harmonic polynomials, for we may use Schur orthogonality for the two-dimensional representation $\tau_j^{\circ} = \tau_{(\kappa_j,0)}$ for $\pi_j = D_{\kappa_j} \otimes \left|\det\right|^{t_j}$.

Finally, we show that this is the newform, which requires confirming that
\[\int_{K_n} \xi^{\tau^{\circ},K_{n - 1}}(k) (\pi(k) \cdot f^{\circ})(g;m') \, dk = f^{\circ}(g;m').\]
From the definition \eqref{eq:fcirc} of $f^{\circ}$ together with the Iwasawa decomposition, it suffices to show that for each $k' \in K_n$,
\begin{equation}
\label{eq:newformcheck}
\int_{K_n} \xi^{\tau^{\circ},K_{n - 1}}(k) \overline{P_{(n_1,\ldots,n_r)}^{\circ}}\left(e_n k^{-1} k^{\prime -1}\right) \, dk = \overline{P_{(n_1,\ldots,n_r)}^{\circ}}\left(e_n k^{\prime -1}\right).
\end{equation}
We note that
\[\xi^{\tau^{\circ},K_{n - 1}}(k) = (\dim \tau^{\circ}) \frac{\left\langle \tau^{\circ}(k^{-1}) \cdot P^{\circ}, P^{\circ} \right\rangle}{\left\langle P^{\circ}, P^{\circ} \right\rangle} = (\dim \tau^{\circ}) P^{\circ}(e_n k^{-1}),\]
where the first equality follows from the definitions \eqref{eq:xitauKn-1} of $\xi^{\tau^{\circ},K_{n - 1}}$ and \eqref{eq:PcircUn} and \eqref{eq:PcircOn} of $P^{\circ}$, while the second follows from \hyperref[lem:QenkC]{Lemmata \ref*{lem:QenkC}} and \ref{lem:QenkR}. The equality \eqref{eq:newformcheck} then follows from one more application of \hyperref[lem:QenkC]{Lemmata \ref*{lem:QenkC}} and \ref{lem:QenkR}.
\end{proof}

\hyperref[prop:newformidentity]{Proposition \ref*{prop:newformidentity}} completely prescribes the behaviour of the newform $f^{\circ}(g) \coloneqq f^{\circ}(g;1_n)$ when $g = uak$ is given by the Iwasawa decomposition with respect to the standard Borel subgroup.

\begin{corollary}
\label{cor:newformIwasawa}
For $u \in \Ngp_n(F)$, $a = \diag(a_1,\ldots,a_n) \in \Agp_n(F)$, and $g \in \GL_n(F)$, the newform in the induced model satisfies
\begin{equation}
\label{eq:induced}
f^{\circ}(uag) = f^{\circ}(g) \delta_n^{1/2}(a) \prod_{j = 1}^{r} \begin{dcases*}
\chi^{\kappa_j}(a_{\ell}) |a_{\ell}|^{t_j} & \parbox{.26\textwidth}{if $\ell = n_1 + \cdots + n_j$ and $\pi_j = \chi^{\kappa_j} |\cdot|^{t_j}$,}	\\
\chi^{\kappa_j}(a_{\ell + 1}) |a_{\ell}|^{t_j + \frac{\kappa_j - 1}{2}} |a_{\ell + 1}|^{t_j - \frac{\kappa_j - 1}{2}} & 
\parbox{.26\textwidth}{if $\ell = n_1 + \cdots + n_j - 1$ and $\pi_j = D_{\kappa_j} \otimes \left|\det\right|^{t_j}$,}
\end{dcases*}
\end{equation}
and for $k \in K_n$,
\begin{equation}
\label{eq:inducedK}
\begin{split}
f^{\circ}(k) & = c^{\circ} c_1^{\circ} \cdots c_r^{\circ} \overline{P_{(n_1,\ldots,n_r)}^{\circ}}(e_n k^{-1})	\\
& = c^{\circ} \prod_{j = 1}^{r} \begin{dcases*}
\overline{P_j^{\circ}}\left(\overline{k_{\ell,n}}\right) & \parbox{.33\textwidth}{if $\ell = n_1 + \cdots + n_j$ and $\pi_j = \chi^{\kappa_j} |\cdot|^{t_j}$,}	\\
i^{\kappa_j} \zeta_{\R}(\kappa_j) \zeta_{\R}(\kappa_j + 1) \overline{P_j^{\circ}}\left(\overline{k_{\ell,n}}, \overline{k_{\ell + 1,n}}\right) & 
\parbox{.33\textwidth}{if $\ell = n_1 + \cdots + n_j - 1$ and $\pi_j = D_{\kappa_j} \otimes \left|\det\right|^{t_j}$.}
\end{dcases*}
\end{split}
\end{equation}
\end{corollary}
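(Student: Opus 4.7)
The plan is to deduce both identities as immediate consequences of Proposition \ref{prop:newformidentity}. For \eqref{eq:induced}, I would exploit the transformation property \eqref{eq:inducedmodulus} of the induced model: writing $a = \diag(a_1,\ldots,a_n)$ as a block-diagonal element of $\Mgp_{\Pgp}(F)$ with blocks $m_j$ of size $n_j$, we have $f^{\circ}(uag;1_n) = \delta_{\Pgp}^{1/2}(a) f^{\circ}(g;a)$, and the second argument unpacks through \eqref{eq:fcirc} into the product $f_1^{\circ}(m_1)\cdots f_r^{\circ}(m_r)$ times $f^{\circ}(g)$, since inserting $m' = a$ into \eqref{eq:fcirc} and then making the change of variables $k_j \mapsto k_j$ (noting that $m_j$ is already a Levi element) reduces each of the inner integrals back to $f_j^{\circ}(m_j)$. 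For the character blocks ($n_j = 1$) this gives directly the advertised factor $\chi^{\kappa_j}(a_\ell)|a_\ell|^{t_j}$. For the discrete series blocks ($n_j = 2$), the explicit formula \eqref{eq:fcircD} evaluated at a diagonal matrix gives $c_j^\circ |a_\ell|^{t_j+\kappa_j/2}\chi^{\kappa_j}(a_{\ell+1})|a_{\ell+1}|^{t_j-\kappa_j/2}$, and a short bookkeeping calculation shows that $\delta_{\Pgp}^{1/2}(a) = \delta_n^{1/2}(a) \cdot \prod_{j:\, n_j = 2} |a_\ell|^{-1/2}|a_{\ell+1}|^{1/2}$, which exactly compensates the shift from $\kappa_j/2$ to $(\kappa_j-1)/2$ in the exponents appearing in \eqref{eq:induced}.

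For \eqref{eq:inducedK} I would apply Proposition \ref{prop:newformidentity} with $g = k \in K_n$, whose Iwasawa decomposition is trivial ($u = 1_n$, $m = 1_n$, $\delta_{\Pgp}^{1/2}(1_n) = 1$), and simplify each $K_{n_j}$-integral separately. Setting $v \coloneqq e_n k^{-1}$, the vector $v \cdot \blockdiag(k_1,\ldots,k_r)$ decomposes into blocks, so $\overline{P_{(n_1,\ldots,n_r)}^\circ}$ factors as $\prod_j \overline{P_j^\circ}$ evaluated on the corresponding coordinates. For $n_j = 1$, one has $f_j^\circ(k_j) = \chi^{\kappa_j}(k_j)$ on the compact group $K_1$, and a direct computation using $\chi^{\kappa_j}(k_j)\overline{P_j^\circ}(v_\ell k_j) = \overline{P_j^\circ}(\overline{k_{\ell,n}})$ (upon using $|k_j|_F = 1$ so that $\chi(k_j) = k_j$) collapses the integral. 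For $n_j = 2$, the formula \eqref{eq:fcircD} gives $f_j^\circ(k_j) = c_j^\circ \overline{P_j^\circ}(e_2 k_j^{-1})$, and the reproducing kernel identity of \hyperref[lem:QenkR]{Lemma \ref*{lem:QenkR}} (or \hyperref[lem:QenkC]{Lemma \ref*{lem:QenkC}} for $F = \C$), applied in the complex-conjugated form
\[
\dim \tau_j^\circ \int_{K_{n_j}} \overline{P_j^\circ}\bigl(e_2 k_j^{-1}\bigr) \overline{P_j^\circ}\bigl((v_\ell,v_{\ell+1}) k_j\bigr)\, dk_j = \overline{P_j^\circ}(v_\ell,v_{\ell+1}),
\]
produces exactly $c_j^\circ \overline{P_j^\circ}(\overline{k_{\ell,n}},\overline{k_{\ell+1,n}})$, with $c_j^\circ = i^{\kappa_j}\zeta_\R(\kappa_j)\zeta_\R(\kappa_j+1)$ supplied by the normalisation \eqref{eq:cannormsq}.

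The main obstacle is not conceptual but organisational: keeping track of the exponents of the modulus characters (matching $\delta_{\Pgp}^{1/2}$ against $\delta_n^{1/2}$ block by block, particularly for the two-dimensional blocks where the discrepancy must be absorbed precisely), and correctly pairing the factors $P_j^\circ$ of $P_{(n_1,\ldots,n_r)}^\circ$ with the coordinates of $e_n k^{-1}$ determined by the block partition. Once this bookkeeping is set up carefully, both identities follow mechanically, and no further analytic input beyond the reproducing kernel identity is required.
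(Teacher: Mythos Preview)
Your approach is correct and is essentially the same as the paper's: both derive \eqref{eq:induced} and \eqref{eq:inducedK} directly from the formula \eqref{eq:fcirc} in Proposition~\ref{prop:newformidentity}, using the explicit descriptions \eqref{eq:fcircchi}, \eqref{eq:fcircD} of the $f_j^{\circ}$ together with the reproducing kernel identities of Lemmata~\ref{lem:QenkC} and~\ref{lem:QenkR}. One small point to tighten: the transformation law \eqref{eq:inducedmodulus} applies only for $u \in \Ngp_{\Pgp}(F)$, whereas the statement allows $u \in \Ngp_n(F)$; the paper handles this by writing $u = u'u''$ with $u' \in \Ngp_{\Pgp}(F)$ and $u'' = \blockdiag(u_1'',\ldots,u_r'') \in \Mgp_{\Pgp}(F)$, so that $u''a$ is the Levi part of $uak$, and then the block-unipotent pieces $u_j''$ are absorbed harmlessly by the left $\Ngp_{n_j}$-equivariance of each $f_j^{\circ}$.
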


\begin{proof}
Via the Iwasawa decomposition, it suffices to prove \eqref{eq:induced} for $g = k \in K$. We write $u \in \Ngp_n(F)$ as $u' u''$ with $u' \in \Ngp_{\Pgp}(F)$ and $u'' = \blockdiag(u_1'',\ldots,u_r'') \in \Mgp_{\Pgp}(F)$ with $u_j'' \in \Ngp_{n_j}(F)$, so that $u'' a \in \Mgp_{\Pgp}(F)$ whenever $a \in \Agp_n(F)$. We then take $g = uak = u' u''a k$ and $m' = 1_n$ in \eqref{eq:fcirc} and apply \eqref{eq:fcircchi} and \eqref{eq:fcircD} to deduce \eqref{eq:induced}. The identity \eqref{eq:inducedK} then follows upon taking $u = a = 1_n$, writing $P_{(n_1,\ldots,n_r)}^{\circ} = \prod_{j = 1}^{r} P_j^{\circ}$, and invoking \hyperref[lem:QenkC]{Lemmata \ref*{lem:QenkC}} and \ref{lem:QenkR} to evaluate the integrals over $K_{n_j} \ni k_j$.
\end{proof}

\subsection{The Newform via Convolution Sections}

We now give a different description of the newform in the induced model. This description is a recursive formula for $f^{\circ}$ as an integral over $\GL_n(F)$ involving $f^{\circ}$ itself and a distinguished standard Schwartz function $\Phi \in \Ss_0(\Mat_{n \times n}(F))$, where the space of standard Schwartz functions $\Ss_0(\Mat_{n \times n}(F))$ consists of functions $\Phi : \Mat_{n \times n}(F) \to \C$ of the form
\[\Phi(x) = P(x) \exp\left(-d_F \pi \Tr\left(x \prescript{t}{}{\overline{x}}\right)\right)\]
with $P$ a polynomial in the entries of $x$ and $\overline{x}$ and $d_F \coloneqq [F:\R]$ as in \eqref{eq:Phicirc}. When $\pi$ is a spherical representation and $f$ is the spherical vector, such a formula is known by the work of Gerasimov, Lebedev, and Oblezin \cite[Theorem 5.1]{GLO08} and Ishii and Stade \cite[Proposition 2.6]{IsSt13} (with the latter expressed in terms of the Mellin transform of the Whittaker function); see also \cite[Section 5]{IsSt13}.

\begin{proposition}
\label{prop:Pieri}
Let $\pi = \pi_1 \boxplus \cdots \boxplus \pi_r$ be an induced representation of Whittaker type of $\GL_n(F)$ with newform $f^{\circ}$ in the induced model $V_{\pi}$. Then for all $h \in \GL_n(F)$ and for $\Re(s)$ sufficiently large,
\begin{equation}
\label{eq:Pieri}
\int_{\GL_n(F)} f^{\circ}(hg) \Phi(g) \left|\det g\right|^{s + \frac{n - 1}{2}} \, dg = L(s,\pi) f^{\circ}(h),
\end{equation}
where $\Phi \in \Ss_0(\Mat_{n \times n}(F))$ is the standard Schwartz function
\begin{equation}
\label{eq:Phistandard}
\Phi(x) \coloneqq (\dim \tau^{\circ}) \overline{P^{\circ}}(e_n x) \exp\left(-d_F \pi \Tr\left(x \prescript{t}{}{\overline{x}}\right)\right),
\end{equation}
with $P^{\circ}$ the homogeneous harmonic polynomial associated to the newform $K$-type $\tau^{\circ}$ of $\pi$ via \eqref{eq:PcircUn} and \eqref{eq:PcircOn}. 
\end{proposition}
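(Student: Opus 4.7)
The plan is to verify the identity by combining a representation-theoretic reduction via the multiplicity-one property of the newform with an explicit Iwasawa-decomposition computation at $h = 1_n$. Let $T_\Phi f^\circ(h)$ denote the integral on the left-hand side; absolute convergence for $\Re(s) \gg 0$ follows from moderate growth of $f^\circ$ and Schwartz decay of $\Phi$, so $T_\Phi f^\circ$ is a well-defined smooth vector in $V_\pi$.

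First I would show that $T_\Phi f^\circ$ lies in the one-dimensional subspace $V_\pi^{\tau^\circ|_{K_{n - 1}}}$ singled out by \hyperref[thm:conductornewform]{Theorem \ref*{thm:conductornewform}}, which forces $T_\Phi f^\circ = c(s) f^\circ$ for a scalar $c(s) \in \C$. Right $K_{n - 1}$-invariance is immediate from the change of variables $g \mapsto \blockdiag(k',1)^{-1} g$ for $k' \in K_{n - 1}$: one verifies $\Phi(\blockdiag(k',1)^{-1} g) = \Phi(g)$ since $e_n \blockdiag(k',1)^{-1} = e_n$ kills the dependence in the polynomial factor $\overline{P^\circ}(e_n x)$, while the Gaussian $\exp(-d_F \pi \Tr(x \prescript{t}{}{\overline{x}}))$ is invariant under left multiplication by $K_n$ by unitarity or orthogonality. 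For $\tau^\circ$-isotypic-ness under the right $K_n$-action, the key input is that $\overline{P^\circ}(e_n \cdot)$ spans a canonical copy of the dual $K_n$-type inside the polynomial functions on $\Mat_{n \times n}(F)$, which together with the reproducing-kernel identities of \hyperref[lem:QenkC]{Lemmata \ref*{lem:QenkC}} and \ref{lem:QenkR} constrains the $K_n$-types in the cyclic subspace generated by $T_\Phi f^\circ$ to those possessing a $K_{n - 1}$-fixed vector of the minimal Howe degree $c(\pi)$.

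To identify $c(s)$, evaluate at $h = 1_n$ by unfolding via the Iwasawa decomposition $g = uak$ with $u \in \Ngp_n(F)$, $a = \diag(a_1,\ldots,a_n) \in \Agp_n(F)$, $k \in K_n$, and $dg = \delta_n^{-1}(a) \, du \, d^\times a \, dk$. By \hyperref[cor:newformIwasawa]{Corollary \ref*{cor:newformIwasawa}}, the factor $f^\circ(uak)$ splits as $f^\circ(k) \delta_n^{1/2}(a) \chi_\pi(a)$ for an explicit character $\chi_\pi$ of $\Agp_n(F)$ determined by the $(\kappa_j, t_j)$. Since $e_n u = e_n$ for $u$ unipotent upper-triangular and $e_n a = a_n e_n$, the polynomial part of $\Phi(uak)$ reduces to a monomial in $a_n$ and $\overline{a_n}$ times $\overline{P^\circ}(e_n k)$. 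The Gaussian $\exp(-d_F \pi \Tr(u a a^* u^*))$ separates over the strict upper-triangular entries $u_{ji}$ of $u$, and the resulting independent Gaussian integrals produce factors $|a_i|_F^{-1}$ per column that combine cleanly with $\delta_n^{-1}(a)$. The surviving $d^\times a$-integral splits into $n$ one-variable zeta integrals evaluable via \eqref{eq:zetaCint} and \eqref{eq:zetaRint}, which assemble using \eqref{eq:L(s,pi)C}, \eqref{eq:L(s,pi)R1}, \eqref{eq:L(s,pi)R2}, and the multiplicativity \eqref{eq:isobaricRS} into $L(s, \pi)$. The remaining $dk$-integral $\int_{K_n} f^\circ(k) \overline{P^\circ}(e_n k) \, dk$ pairs the harmonic polynomial against itself and evaluates by Schur orthogonality (again \hyperref[lem:QenkC]{Lemmata \ref*{lem:QenkC}} and \ref{lem:QenkR}) to $f^\circ(1_n)/\dim \tau^\circ$, cancelling the prefactor $\dim \tau^\circ$ in $\Phi$ and yielding $c(s) = L(s, \pi)$.

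The principal obstacle is rigorously establishing the $\tau^\circ$-isotypic claim: for fixed $g$, the function $k_0 \mapsto \overline{P^\circ}(e_n k_0^{-1} g)$ on $K_n$ is not a matrix coefficient of $\tau^\circ$ unless $g^\ast g$ is a scalar matrix, and its spherical-harmonic decomposition on $S^{n - 1}$ generically spreads across several $K_n$-isotypes, so ruling out contributions to the oldform subspaces of \hyperref[thm:oldforms]{Theorem \ref*{thm:oldforms}} requires a careful pairing argument against $f^\circ$. A cleaner fallback is to induct on the number $r$ of isobaric summands of $\pi$: the base case $r = 1$ of essentially square-integrable $\pi$ follows directly from Hecke's identity (\hyperref[lem:HeckeC]{Lemmata \ref*{lem:HeckeC}} and \ref{lem:HeckeR}) combined with the integral representations \eqref{eq:zetaCint} and \eqref{eq:zetaRint}, while the inductive step uses the parabolic-induction formula of \hyperref[prop:newformidentity]{Proposition \ref*{prop:newformidentity}} together with the multiplicativity \eqref{eq:isobaricRS} of $L$-functions to reduce to the lower-rank summands.
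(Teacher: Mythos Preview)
Your primary approach has a genuine gap in the evaluation step. Take the simplest ramified case $F = \R$, $n = 2$, $\pi = \chi|\cdot|^{t_1} \boxplus |\cdot|^{t_2}$. Your Borel Iwasawa unfolding at $h = 1_n$ produces, for the variable $a_1$, the integral $\int_{\R^\times} \sgn(a_1)\,|a_1|^{s+t_1} e^{-\pi a_1^2}\,d^\times a_1$, which vanishes by oddness; more generally, whenever $\kappa_\ell \neq 0$ for some $\ell < n$ the corresponding factor $\int_{F^\times} \chi^{\kappa_\ell}(a_\ell)|a_\ell|^{s+t_\ell} e^{-d_F\pi a_\ell\overline{a_\ell}}\,d^\times a_\ell$ is killed by the character. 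So your claim that ``the $d^\times a$-integral splits into $n$ one-variable zeta integrals \ldots\ which assemble into $L(s,\pi)$'' is false: the left-hand side is zero. This is consistent with the right-hand side, since from \eqref{eq:inducedK} one has $f^\circ(1_n) = c^\circ c_1^\circ\cdots c_r^\circ\,\overline{P_{(n_1,\ldots,n_r)}^\circ}(e_n)$, and $P_{(n_1,\ldots,n_r)}^\circ(e_n) = \prod_j P_j^\circ(0,\ldots,0)\cdot P_r^\circ(\ldots,1)$ vanishes as soon as any $\kappa_j \neq 0$ for $j < r$. You obtain the tautology $0 = c(s)\cdot 0$, and $c(s)$ remains undetermined. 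The problem is structural: only the last row $e_n g$ enters $\Phi$, so after your reduction the polynomial factor sees only $a_n$, while the ramification of $\pi_1,\ldots,\pi_{r-1}$ sits in the other torus variables and is annihilated rather than converted into the correct shifted zeta factors.

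The paper avoids both this and the $\tau^\circ$-isotypy issue by computing directly at an arbitrary $h = k \in K_n$. After the change of variables $g \mapsto k^{-1}g$ and Iwasawa decomposition $g = umk'$ relative to the \emph{parabolic} $\Pgp_{(n_1,\ldots,n_r)}$, the integral over $k' \in K_n$ pairs $\overline{P^\circ}(e_n k^{-1}umk')$ against the factor $\overline{P_{(n_1,\ldots,n_r)}^\circ}(e_n k'^{-1}\blockdiag(k_1,\ldots,k_r))$ coming from \eqref{eq:fcirc}; the reproducing-kernel identities of \hyperref[lem:QenkC]{Lemmata \ref*{lem:QenkC}} and \ref{lem:QenkR} then replace $P^\circ$ by $P_{(n_1,\ldots,n_r)}^\circ$ evaluated at $e_n k^{-1}um$. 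Crucially this factors as $\prod_j \overline{P_j^\circ}$, and each $P_j^\circ$ now interacts with the correct block $m_j$ of $m \in \Mgp_{\Pgp}$. The remaining $\Ngp_{\Pgp} \times \Mgp_{\Pgp}$ integral is then handled block by block (a one-variable zeta integral for $n_j = 1$, a more delicate integration-by-parts argument for $n_j = 2$), with Hecke's identity killing the off-diagonal unipotent contributions and each block producing exactly $L(s,\pi_j)\,\overline{P_j^\circ}(\overline{k_{\ell,n}},\ldots)$. The product reassembles via \eqref{eq:inducedK} into $L(s,\pi) f^\circ(k)$, with the $k$-dependence carried through rather than collapsed to zero. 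Your inductive fallback, if pursued through \hyperref[prop:newformidentity]{Proposition \ref*{prop:newformidentity}}, would effectively reconstruct this computation.
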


In particular, the integral \eqref{eq:Pieri} converges absolutely if $\Re(s) > -\Re(t_j)$ for each $j \in \{1,\ldots,r\}$ for which $n_j = 1$, so that $\pi_j = \chi^{\kappa_j} |\cdot|^{t_j}$, and $\Re(s) > -\Re(t_j) + (\kappa_j - 1)/2$ for each $j \in \{1,\ldots,r\}$ for which $n_j = 2$, so that $F = \R$ and $\pi_j = D_{\kappa_j} \otimes \left|\det\right|^{t_j}$.

We may think of the integral \eqref{eq:Pieri} as defining a convolution section of $V_{\pi}$ in the sense of Jacquet \cite{Jac04}, where the convolution is with respect to the function $\phi(g) \coloneqq \Phi(g) \left|\det g\right|^{s + \frac{n - 1}{2}}$. (Note that Jacquet deals only with functions $\phi : \GL_n(F) \to \C$ that are smooth and compactly supported.) Alternatively, the identity \eqref{eq:Pieri} may be thought of as a Pieri-type formula, generalising \cite[Theorem 3.8]{Ish18}.

\begin{proof}
Via the Iwasawa decomposition with respect to the standard Borel subgroup and \eqref{eq:induced}, it suffices to show the identity \eqref{eq:Pieri} for $h = k \in K_n$. We make the change of variables $g \mapsto k^{-1} g$, then use the Iwasawa decomposition $g = umk'$ with respect to the parabolic subgroup $\Pgp(F) = \Pgp_{(n_1,\ldots,n_r)}(F)$; the Haar measure becomes $dg = \delta_{\Pgp}^{-1}(m) \, du \, d^{\times}m \, dk'$. We see that the left-hand side of \eqref{eq:Pieri} is
\begin{equation}
\label{eq:Pieriintabsconv}
\int_{\Mgp_{\Pgp}(F)} \left|\det m\right|^{s + \frac{n - 1}{2}} \delta_{\Pgp}^{-1}(m) \int_{\Ngp_{\Pgp}(F)} \int_{K_n} f^{\circ}(umk') \Phi(k^{-1} umk') \, dk' \, du \, d^{\times} m.
\end{equation}
The absolute convergence of this integral for $\Re(s)$ sufficiently large is not difficult; it follows directly from the definitions \eqref{eq:fcirc} of the newform in the induced model and \eqref{eq:Phistandard} of the standard Schwartz function $\Phi \in \Ss_0(\Mat_{n \times n}(F))$ together with the bounds from \cite[Lemma 3.3 (ii)]{Jac09}.

We may evaluate the integral over $K_n \ni k'$ in \eqref{eq:Pieriintabsconv} by inserting \eqref{eq:fcirc} and \eqref{eq:Phistandard} and using \hyperref[lem:QenkC]{Lemmata \ref*{lem:QenkC}} and \ref{lem:QenkR}. We subsequently make the change of variables $m_j \mapsto m_j k_j^{-1}$, where $m = \blockdiag(m_1,\ldots,m_r)$, to trivially evaluate the integrals over $K_{n_j} \ni k_j$, leading to
\begin{multline}
\label{eq:Pieriint}
c^{\circ} \dim \tau_1^{\circ} \cdots \dim \tau_r^{\circ} \int_{\Mgp_{\Pgp}(F)} \int_{\Ngp_{\Pgp}(F)} \left|\det m\right|^{s + \frac{n - 1}{2}} \delta_{\Pgp}^{-1/2}(m)	\\
\times f_1^{\circ}(m_1) \cdots f_r^{\circ}(m_r) \overline{P_{(n_1,\ldots,n_r)}^{\circ}}(e_n k^{-1} um) \exp\left(-d_F \pi \Tr\left(u m \prescript{t}{}{\overline{m}} \prescript{t}{}{\overline{u}}\right)\right) \, du \, d^{\times} m,
\end{multline}
We evaluate the integrals over $\Mgp_{\Pgp}(F) \ni m$ and $\Ngp_{\Pgp}(F) \ni u$ in \eqref{eq:Pieriint} by breaking these integrals up into parts, where this decomposition is dependent on the size of $n_j \in \{1,2\}$ for $j \in \{1,\ldots,r\}$. In doing so, we use the fact that $k^{-1} = \prescript{t}{}{\overline{k}}$ and recall the definitions \eqref{eq:PstarC} and \eqref{eq:PstarR} of the polynomial $P_{(n_1,\ldots,n_r)}^{\circ}$ in order to write
\[\dim \tau_1^{\circ} \cdots \dim \tau_r^{\circ} \overline{P_{(n_1,\ldots,n_r)}^{\circ}}(e_n g) = \prod_{j = 1}^{r} \dim \tau_j^{\circ} \begin{dcases*}
\overline{P_j^{\circ}}(g_{n,\ell}) & \parbox{.26\textwidth}{if $\ell = n_1 + \cdots + n_j$ and $\pi_j = \chi^{\kappa_j} |\cdot|^{t_j}$,}	\\
\overline{P_j^{\circ}}(g_{n,\ell},g_{n,\ell + 1}) & \parbox{.26\textwidth}{if $\ell = n_1 + \cdots + n_j - 1$ and $\pi_j = D_{\kappa_j} \otimes \left|\det\right|^{t_j}$.}
\end{dcases*}\]

We first deal with the case of $n_j = 1$, so that $\pi_j = \chi^{\kappa_j} |\cdot|^{t_j}$; in this case, we evaluate the integrals over $F^{\times} \ni m_j$ and $F \ni u_{i,\ell}$ with $i \in \{1,\ldots,\ell - 1\}$ for $\ell = n_1 + \cdots + n_j$. After making the change of variables $u_{i,\ell} \mapsto m_j^{-1} u_{i,\ell}$, recalling the definitions \eqref{eq:fcircchi} of the newform in the induced model $f_j^{\circ}$ and \eqref{eq:PjcircC} and \eqref{eq:PjcircR1} of the polynomial $P_j^{\circ}$, and expanding this polynomial via the multinomial theorem, we are left with evaluating
\begin{multline*}
c_j^{\circ} \sum_{\nu_1 + \cdots + \nu_{\ell} = \|\kappa_j\|} \binom{\|\kappa_j\|}{\nu_1, \ldots, \nu_{\ell}} \prod_{i = 1}^{\ell} k_{i,n}^{\max\{\sgn(\kappa_j) \nu_i,0\}} \overline{k_{i,n}}^{-\min\{\sgn(\kappa_j) \nu_i,0\}}	\\
\times \int_{F^{\times}} \overline{m_j}^{\max\{\sgn(\kappa_j) \nu_{\ell},0\}} m_j^{- \min\{\sgn(\kappa_j) \nu_{\ell},0\}} \chi^{\kappa_j}(m_j) |m_j|^{s + t_j} \exp\left(-d_F \pi m_j \overline{m_j}\right) \, dm_j	\\
\times \prod_{i = 1}^{\ell - 1} \int_F \overline{u_{i,\ell}}^{\max\{\sgn(\kappa_j) \nu_i,0\}} u_{i,\ell}^{-\min\{\sgn(\kappa_j) \nu_i,0\}} \exp\left(-d_F \pi u_{i,\ell} \overline{u_{i,\ell}}\right) \, du_{i,\ell}.
\end{multline*}
Here
\[\binom{\kappa}{\nu_1,\ldots,\nu_{\ell}} \coloneqq \frac{\kappa!}{\nu_1! \cdots \nu_{\ell}!}\]
denotes the multinomial coefficient for $\kappa,\nu_1,\ldots,\nu_{\ell} \in \N_0$ with $\nu_1 + \cdots + \nu_{\ell} = \kappa$. The integral over $F \ni u_{i,\ell}$ vanishes unless $\nu_i = 0$, in which case it is $1$, upon applying Hecke's identity, \hyperref[lem:HeckeC]{Lemmata \ref*{lem:HeckeC}} and \ref{lem:HeckeR}. All that remains is the integral over $F^{\times} \ni m_j$, which is equal to
\[c_j^{\circ} k_{\ell,n}^{\max\{\kappa_j,0\}} \overline{k_{\ell,n}}^{-\min\{\kappa_j,0\}} \int_{F^{\times}} |m_j|^{s + t_j + \frac{\|\kappa_j\|}{d_F}} \exp(-d_F \pi m_j \overline{m_j}) \, dm_j = c_j^{\circ} L(s,\pi_j) \overline{P_j^{\circ}}\left(\overline{k_{\ell,n}}\right)\]
having used the fact that
\begin{equation}
\label{eq:abstochar}
|x|^{\frac{\|\kappa\|}{d_F}} = \chi^{-\kappa}(x) x^{\max\{\kappa,0\}} \overline{x}^{-\min\{\kappa,0\}} = \chi^{\kappa}(x) \overline{x}^{\max\{\kappa,0\}} x^{-\min\{\kappa,0\}}
\end{equation}
and recalling the definitions \eqref{eq:PjcircC} and \eqref{eq:PjcircR1} of the polynomial $P_j^{\circ}$, \eqref{eq:L(s,pi)C} and \eqref{eq:L(s,pi)R1} of the $L$-function $L(s,\pi_j)$ in terms of $\zeta_F(s)$, and \eqref{eq:zetaCint} and \eqref{eq:zetaRint} of the zeta function $\zeta_F(s)$ as an integral over $F^{\times}$.

Next, we deal with the case of $n_j = 2$, so that $F = \R$ and $\pi_j = D_{\kappa_j} \otimes \left|\det\right|^{t_j}$; we evaluate the integrals over $\GL_2(\R) \ni m_j$ and $\R^2 \ni (u_{i,\ell}, u_{i,\ell + 1})$ with $i \in \{1,\ldots,\ell - 1\}$ for $\ell = n_1 + \cdots + n_j - 1$. We write $m_j = \begin{psmallmatrix} 1 & u_{\ell,\ell + 1} \\ 0 & 1 \end{psmallmatrix} \begin{psmallmatrix} a_{\ell} & 0 \\ 0 & a_{\ell + 1} \end{psmallmatrix} k'$ for $u_{\ell,\ell + 1} \in \R$, $a_{\ell},a_{\ell + 1} \in \R^{\times}$, and $k' \in \Ogp(2)$; the Haar measure becomes $dm_j = |a_{\ell}|^{-1} |a_{\ell + 1}| \, du_{\ell,\ell + 1} \, d^{\times}a_{\ell} \, d^{\times}a_{\ell + 1} \, dk'$. We use Schur orthogonality to evaluate the integral over $\Ogp(2) \ni k'$, then make the change of variables $u_{\ell,\ell + 1} \mapsto a_{\ell + 1}^{-1} u_{\ell,\ell + 1}$, $u_{i,\ell} \mapsto a_{\ell}^{-1} u_{i,\ell}$, and $u_{i,\ell + 1} \mapsto a_{\ell + 1}^{-1} u_{i,\ell + 1} - u_{\ell,\ell + 1} u_{i,\ell}$ for $i \in \{1,\ldots,\ell - 1\}$. Recalling the definitions \eqref{eq:fcircD} of $f_j^{\circ}$ and \eqref{eq:PjcircR2} of $P_j^{\circ}$, and expanding this polynomial via the multinomial theorem, we are led to
\begin{multline*}
\frac{c_j^{\circ}}{2} \sum_{\pm} \sum_{\nu_1 + \cdots + \nu_{\ell + 1} = \kappa_j} \binom{\kappa_j}{\nu_1, \ldots, \nu_{\ell + 1}} \prod_{i = 1}^{\ell + 1} k_{i,n}^{\nu_i}	\\
\times \int_{\R^{\times}} a_{\ell + 1}^{\nu_{\ell + 1}} \chi^{\kappa_j}(a_{\ell + 1}) |a_{\ell + 1}|^{s + t_j - \frac{\kappa_j - 1}{2}} \exp\left(-\pi a_{\ell + 1}^2\right) d^{\times} a_{\ell + 1}	\\
\times \int_{\R^{\times}} |a_{\ell}|^{s + t_j + \frac{\kappa_j - 1}{2}} \exp\left(-\pi a_{\ell}^2\right) \int_{\R} \left(u_{\ell,\ell + 1} \mp i a_{\ell}\right)^{\nu_{\ell}} \exp\left(-\pi u_{\ell,\ell + 1}^2\right) \, du_{\ell,\ell + 1} \, d^{\times} a_{\ell}	\\
\times \prod_{i = 1}^{\ell - 1} \int_{\R^2} \left(u_{i, \ell + 1} \mp i u_{i,\ell}\right)^{\nu_i} \exp\left(-\pi \left(u_{i,\ell}^2 + u_{i,\ell + 1}^2\right)\right) \, du_{i,\ell} \, du_{i,\ell + 1}.
\end{multline*}
We use Hecke's identity, \hyperref[lem:HeckeR]{Lemma \ref*{lem:HeckeR}}, to see that the integral over $\R^2 \ni (u_{i,\ell},u_{i,\ell + 1})$ vanishes unless $\nu_i = 0$, in which case it is $1$; consequently, the only nonzero summands are those for which $\nu_{\ell} = \kappa_j - \nu_{\ell + 1} \eqqcolon \nu$. For the integral over $\R \ni u_{\ell,\ell + 1}$, we make the change of variables $u_{\ell,\ell + 1} \mapsto u_{\ell,\ell + 1} \pm ia_{\ell}$ and shift the contour of integration back to the line $\Im(u_{\ell,\ell + 1}) = 0$ via Cauchy's integral theorem, for the integrand extends holomorphically to an entire function of the complex variable $u_{\ell,\ell + 1}$. Since
\[\int_{\R} u^{\nu} \exp\left(-\pi u^2\right) \overline{\psi}(ua) \, du = i^{\nu} (2\pi)^{-\nu} \frac{\dee^{\nu}}{\dee a^{\nu}} \exp\left(-\pi a^2\right),\]
we arrive at
\begin{multline*}
c_j^{\circ} \sum_{\substack{\nu = 0 \\ \nu \equiv 0 \hspace{-.25cm} \pmod{2}}}^{\kappa_j} \binom{\kappa_j}{\nu} \left(i k_{\ell,n}\right)^{\nu} k_{\ell + 1,n}^{\kappa_j - \nu}	\\
\times (2\pi)^{-\nu} \int_{\R^{\times}} |a_{\ell + 1}|^{s + t_j + \frac{\kappa_j + 1}{2} - \nu} \exp\left(-\pi a_{\ell + 1}^2\right) d^{\times} a_{\ell + 1} \int_{\R^{\times}} |a_{\ell}|^{s + t_j + \frac{\kappa_j - 1}{2}} \frac{\dee^{\nu}}{\dee a_{\ell}^{\nu}} \exp\left(-\pi a_{\ell}^2\right) \, d^{\times} a_{\ell},
\end{multline*}
having observed the vanishing of the integral over $\R^{\times} \ni a_{\ell + 1}$ for odd $\nu$. We integrate by parts $\nu$ times with respect to $a_{\ell}$, then integrate by parts $\nu/2$ times with respect to $a_{\ell}$, differentiating $\exp(-\pi a_{\ell}^2)$, and $\nu/2$ times with respect to $a_{\ell + 1}$, differentiating $\exp(-\pi a_{\ell + 1}^2)$. We end up at
\begin{multline*}
c_j^{\circ} \sum_{\substack{\nu = 0 \\ \nu \equiv 0 \hspace{-.25cm} \pmod{2}}}^{\kappa_j} \binom{\kappa_j}{\nu} \left(i k_{\ell,n}\right)^{\nu} k_{\ell + 1,n}^{\kappa_j - \nu}	\\
\times \int_{\R^{\times}} |a_{\ell + 1}|^{s + t_j + \frac{\kappa_j + 1}{2}} \exp\left(-\pi a_{\ell + 1}^2\right) d^{\times} a_{\ell + 1} \int_{\R^{\times}} |a_{\ell}|^{s + t_j + \frac{\kappa_j - 1}{2}} \exp\left(-\pi a_{\ell}^2\right) \, d^{\times} a_{\ell}	\\
= c_j^{\circ} L(s,\pi_j) \overline{P_j^{\circ}}\left(\overline{k_{\ell,n}}, \overline{k_{\ell + 1,n}}\right),
\end{multline*}
again recalling the definitions \eqref{eq:PjcircR2} of $P_j^{\circ}$, \eqref{eq:L(s,pi)R2} of $L(s,\pi_j)$ in terms of products of $\zeta_F(s)$, and \eqref{eq:zetaRint} of $\zeta_F(s)$ as an integral over $F^{\times}$.

Combining these calculations, we find that
\begin{multline*}
\int_{\GL_n(F)} f^{\circ}(kg) \Phi(g) \left|\det g\right|^{s + \frac{n - 1}{2}} \, dg	\\
= c^{\circ} \prod_{j = 1}^{r} c_j^{\circ} L(s,\pi_j) \begin{dcases*}
\overline{P_j^{\circ}}(\overline{k_{\ell,n}}) & if $\ell = n_1 + \cdots + n_j$ and $\pi_j = \chi^{\kappa_j} |\cdot|^{t_j}$,	\\
\overline{P_j^{\circ}}(\overline{k_{\ell,n}},\overline{k_{\ell + 1,n}}) & if $\ell = n_1 + \cdots + n_j - 1$ and $\pi_j = D_{\kappa_j} \otimes \left|\det\right|^{t_j}$,
\end{dcases*}
\end{multline*}
which is precisely $L(s,\pi) f^{\circ}(k)$ via the isobaric decomposition \eqref{eq:isobaricRS} of $L(s,\pi)$ and the identity \eqref{eq:inducedK} for $f^{\circ}(k)$.

Finally, an inspection of the proof above shows that the integral \eqref{eq:Pieriint} converges absolutely if $\Re(s) > -\Re(t_j)$ for each $j \in \{1,\ldots,r\}$ for which $n_j = 1$ and $\Re(s) > -\Re(t_j) + (\kappa_j - 1)/2$ for each $j \in \{1,\ldots,r\}$ for which $n_j = 2$.
\end{proof}

We may use the identity \eqref{eq:Pieri} to prove \hyperref[thm:GJram]{Theorem \ref*{thm:GJram}}, thereby resolving the test vector problem for archimedean Godement--Jacquet zeta integrals.

\begin{proof}[Proof of {\hyperref[thm:GJram]{Theorem \ref*{thm:GJram}}}]
From the definition \eqref{eq:GodementJacquet} of the Godement--Jacquet zeta integral and \hyperref[prop:Pieri]{Proposition \ref*{prop:Pieri}}, we have that
\begin{align*}
Z(s,\beta,\Phi) & = \int_{\GL_n(F)} \left\langle \pi(g) \cdot v^{\circ}, \widetilde{v}^{\circ} \right\rangle \Phi(g) \left|\det g\right|^{s + \frac{n - 1}{2}} \, dg	\\
& = \left\langle \int_{\GL_n(F)} \left(\pi(g) \cdot v^{\circ}\right) \Phi(g) \left|\det g\right|^{s + \frac{n - 1}{2}} \, dg, \widetilde{v}^{\circ} \right\rangle	\\
& = \left\langle L(s,\pi) v^{\circ}, \widetilde{v}^{\circ} \right\rangle	\\
& = L(s,\pi) \left\langle v^{\circ}, \widetilde{v}^{\circ} \right\rangle	\\
& = L(s,\pi).\qedhere
\end{align*}
\end{proof}

A similar calculation to that of the proof of \hyperref[prop:Pieri]{Proposition \ref*{prop:Pieri}} yields the following.

\begin{proposition}
Let $\pi$ be an induced representation of Whittaker type of $\GL_n(F)$ with newform $f^{\circ}$ in the induced model $V_{\pi}$. Define $\widetilde{f}^{\circ}(g) \coloneqq f^{\circ}(\prescript{t}{}{g}^{-1})$. Then for all $h \in \GL_n(F)$ and for $\Re(s)$ sufficiently large,
\begin{equation}
\label{eq:tildePieri}
\int_{\GL_n(F)} \widetilde{f}^{\circ}(hg) \widetilde{\Phi}(g) \left|\det g\right|^{s + \frac{n - 1}{2}} \, dg = L(s,\widetilde{\pi}) \widetilde{f}^{\circ}(h),
\end{equation}
where $\widetilde{\Phi} \in \Ss_0(\Mat_{n \times n}(F))$ is the standard Schwartz function
\begin{equation}
\label{eq:tildePhistandard}
\widetilde{\Phi}(x) \coloneqq \left(\dim \tau^{\circ}\right) \overline{P^{\circ}}(e_n \overline{x}) \exp\left(-d_F \pi \Tr\left(x \prescript{t}{}{\overline{x}}\right)\right)
\end{equation}
with $P^{\circ}$ the homogeneous harmonic polynomial associated to the newform $K$-type $\tau^{\circ}$ of $\pi$ via \eqref{eq:PcircUn} and \eqref{eq:PcircOn}.
\end{proposition}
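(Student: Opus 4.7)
My plan is to observe that \eqref{eq:tildePieri} is formally identical to \eqref{eq:Pieri} upon replacing the triple $(\pi, f^{\circ}, \Phi)$ by $(\widetilde{\pi}, \widetilde{f}^{\circ}, \widetilde{\Phi})$, and to transcribe the proof of Proposition \ref{prop:Pieri} with this substitution after verifying two identifications.

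First I would show that $\widetilde{\Phi}$ coincides with the standard Schwartz function \eqref{eq:Phistandard} associated to the newform $K$-type $\widetilde{\tau^{\circ}}$ of the contragredient $\widetilde{\pi}$. For $F = \C$, the highest weight of $\widetilde{\tau^{\circ}}$ is obtained from that of $\tau^{\circ}$ by reflection and negation (since $\kappa_j \mapsto -\kappa_j$), and inspection of \eqref{eq:PcircUn} shows that the canonical zonal polynomial $\widetilde{P^{\circ}}$ of $\widetilde{\tau^{\circ}}$ is precisely $\overline{P^{\circ}}$; for $F = \R$, the restriction to even $\nu$ in \eqref{eq:PcircOn} forces $P^{\circ}$ to have real coefficients, giving $\widetilde{\tau^{\circ}} = \tau^{\circ}$ and $\widetilde{P^{\circ}} = P^{\circ}$. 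In either case the identity $\overline{P^{\circ}}(e_n \overline{x}) = P^{\circ}(e_n x)$ (valid because $P^{\circ}$ has real coefficients) converts the formula defining $\widetilde{\Phi}$ into \eqref{eq:Phistandard} for $\widetilde{\pi}$.

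Next I would establish that $\widetilde{f}^{\circ}$ is the newform in the induced model of $\widetilde{\pi}$ from the opposite parabolic $\overline{\Pgp}(F) = \overline{\Ngp}_{\Pgp}(F) \Mgp_{\Pgp}(F)$. Using $\prescript{t}{}{(h_1 g)}^{-1} = \prescript{t}{}{h_1}^{-1} \prescript{t}{}{g}^{-1}$ and applying \eqref{eq:induced} to $f^{\circ}$, one checks that $\widetilde{f}^{\circ}(\overline{u}g) = \widetilde{f}^{\circ}(g)$ for $\overline{u} \in \overline{\Ngp}_{\Pgp}(F)$ and $\widetilde{f}^{\circ}(mg) = \delta_{\Pgp}^{-1/2}(m) \chi_{\widetilde{\pi}}(m) \widetilde{f}^{\circ}(g)$ for $m \in \Mgp_{\Pgp}(F)$ (with $\chi_{\widetilde{\pi}}$ denoting the factor arising from the induced model of $\widetilde{\pi}$), since the involution $g \mapsto \prescript{t}{}{g}^{-1}$ sends $\overline{\Ngp}_{\Pgp}$ to $\Ngp_{\Pgp}$, preserves $\Mgp_{\Pgp}$, converts $\delta_{\Pgp}$ into $\delta_{\overline{\Pgp}}$, and turns the characters of the $\pi_j$ into those of the $\widetilde{\pi_j}$.

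With these identifications the proof of Proposition \ref{prop:Pieri} applies mutatis mutandis: the reduction to $h = k \in K_n$ uses the opposite-parabolic Iwasawa decomposition $h = \overline{u}mk$ in place of $h = uak$; decomposing the integration variable as $g = \overline{u}'m'k'$ with $dg = \delta_{\Pgp}(m')\, d\overline{u}'\, d^{\times}m'\, dk'$ and applying \eqref{eq:fcirc} to $\widetilde{f}^{\circ}(\overline{u}'m'k') = f^{\circ}(\prescript{t}{}{\overline{u}'}^{-1} \prescript{t}{}{m'}^{-1} \prescript{t}{}{k'}^{-1})$ (now in standard upper-Iwasawa form) expresses the integral in a shape analogous to \eqref{eq:Pieriint}; the subsequent change of variables $m'_j \mapsto m'_j k_j^{-1}$, the reproducing-kernel identities (\hyperref[lem:QenkC]{Lemmata \ref*{lem:QenkC}} and \ref{lem:QenkR}), and Hecke's identity (\hyperref[lem:HeckeC]{Lemmata \ref*{lem:HeckeC}} and \ref{lem:HeckeR}) then reduce the computation to products of one-dimensional integrals evaluating to $L(s, \widetilde{\pi_j})$ via \eqref{eq:L(s,tildepi)C}, \eqref{eq:L(s,tildepi)R1}, and \eqref{eq:L(s,tildepi)R2}, whose product is $L(s, \widetilde{\pi})$ by \eqref{eq:isobaricRS}. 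The principal obstacle lies in the bookkeeping for the contragredient — particularly the interplay of transposition and complex conjugation for $F = \C$, and the switch from the upper to the opposite parabolic — but once these identifications are in place the argument is a direct transcription of that of \ref{prop:Pieri}.
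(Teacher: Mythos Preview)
Your proposal is correct and is essentially what the paper does: it simply asserts that ``a similar calculation to that of the proof of Proposition \ref{prop:Pieri}'' yields the result, leaving the transcription to the reader. Your identifications --- that $\widetilde{\Phi}$ is precisely the Schwartz function \eqref{eq:Phistandard} attached to $\widetilde{\pi}$ (via $\widetilde{P^{\circ}} = \overline{P^{\circ}}$ and the real-coefficient observation), and that $\widetilde{f}^{\circ}$ is the newform in the opposite-parabolic model of $\widetilde{\pi}$ --- are correct and give a clean conceptual reason why the transcription succeeds, with the block-by-block integrals producing $L(s,\widetilde{\pi_j})$ in place of $L(s,\pi_j)$.
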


In particular, the integral \eqref{eq:tildePieri} converges absolutely if $\Re(s) > \Re(t_j)$ for each $j \in \{1,\ldots,r\}$ for which $n_j = 1$, so that $\pi_j = \chi^{\kappa_j} |\cdot|^{t_j}$, and $\Re(s) > \Re(t_j) + (\kappa_j - 1)/2$ for each $j \in \{1,\ldots,r\}$ for which $n_j = 2$, so that $F = \R$ and $\pi_j = D_{\kappa_j} \otimes \left|\det\right|^{t_j}$.

\begin{remark}
From \eqref{eq:tildePieri}, we see that $Z(s,\widetilde{\beta},\widetilde{\Phi}) = L(s,\widetilde{\pi})$, where $\widetilde{\beta}(g) \coloneqq \beta(\prescript{t}{}{g}^{-1})$. This is in perfect accordance with the local functional equation \eqref{eq:GJfunceq} upon noting that $\widetilde{\Phi} = i^{c(\pi)} \widehat{\Phi}$ via Hecke's identity, \hyperref[lem:HeckeC]{Lemmata \ref*{lem:HeckeC}} and \ref{lem:HeckeR}.
\end{remark}

\subsection{The Newform via Godement Sections}
\label{sect:newformGodement}

Our third description of the newform in the induced model is via Godement sections. This is a recursive formula for the newform $f^{\circ}$ of $\pi = \pi_1 \boxplus \pi_2 \boxplus \cdots \boxplus \pi_r$ in terms of an integral involving the newform $f_0^{\circ}$ of $\pi_0 \coloneqq \pi_2 \boxplus \cdots \boxplus \pi_r$ and a distinguished standard Schwartz function. Unlike our earlier descriptions of $f^{\circ}$ via the Iwasawa decomposition and via convolution sections, this description via Godement sections is only valid for certain induced representations of Whittaker type; we require the parameter $t_1$ associated to $\pi_1$ to have sufficiently large real part. When we proceed to studying the Whittaker newform, we remove this condition via analytic continuation.

\subsubsection{The Case $\pi_1 = \chi^{\kappa_1} |\cdot|^{t_1}$}

We first consider the case for which $\pi = \pi_1 \boxplus \cdots \boxplus \pi_r$ with $n_1 = 1$, so that $\pi_1 = \chi^{\kappa_1} |\cdot|^{t_1}$. We begin with a simple modification of \hyperref[prop:newformidentity]{Proposition \ref*{prop:newformidentity}}.

\begin{lemma}
\label{lem:initialpropformula}
For $n \geq 2$, let $\pi = \pi_1 \boxplus \pi_2 \boxplus \cdots \boxplus \pi_r$ and $\pi_0 \coloneqq \pi_2 \boxplus \cdots \boxplus \pi_r$ be induced representations of Whittaker type of $\GL_n(F)$ and $\GL_{n - 1}(F)$ with $\pi_1 = \chi^{\kappa_1} |\cdot|^{t_1}$. Let $f_0^{\circ}$ be the newform of $\pi_0$ in the induced model $V_{\pi_0}$. Then for $v \in \Mat_{1 \times (n - 1)}(F)$, $x \in F^{\times}$, $h \in \GL_{n - 1}(F)$, and $k \in K_n$, the newform $f^{\circ}$ in the induced model $V_{\pi}$ satisfies
\begin{multline}
\label{eq:initialpropformula}
f^{\circ}\left(\begin{pmatrix} 1 & v \\ 0 & 1_{n - 1} \end{pmatrix} \begin{pmatrix} x & 0 \\ 0 & h \end{pmatrix} k\right) = \frac{c^{\circ}}{c_0^{\circ}} (\dim \tau_0^{\circ}) |x|^{\frac{n - 1}{2}} \left|\det h\right|^{-\frac{1}{2}}	\\ \times \int_{K_1} \int_{K_{n - 1}} \chi^{\kappa_1}(x k_1) |x k_1|^{t_1} f_0^{\circ}(h k_2) \overline{P_{(1,n - 1)}^{\circ}}\left(e_n k^{-1} \begin{pmatrix} k_1 & 0 \\ 0 & k_2 \end{pmatrix}\right) \, dk_2 \, dk_1.
\end{multline}
Here the constants $c^{\circ}$ and $c_0^{\circ}$ are those associated to $f^{\circ}$ and $f_0^{\circ}$ via \eqref{eq:fcirc}, while
\[P_{(1,n - 1)}^{\circ}(x) \coloneqq P_1^{\circ}(x_1) P_0^{\circ}(x_2,\ldots,x_n),\]
where $P_1^{\circ}$ and $P_0^{\circ}$ are the homogeneous harmonic polynomials associated to the newform $K$-types $\tau_1^{\circ}$ and $\tau_0^{\circ}$ of $\pi_1$ and $\pi_0$ respectively via \eqref{eq:PcircUn} and \eqref{eq:PcircOn}.
\end{lemma}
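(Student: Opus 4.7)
The plan is to derive the formula by invoking Proposition~\ref{prop:newformidentity} twice---once for $\pi$ and once for $\pi_0$---with the reproducing kernel property of homogeneous harmonic polynomials as the bridge between them. The idea is that the Iwasawa decomposition with respect to $\Pgp = \Pgp_{(1,n_2,\ldots,n_r)}(F)$ factors through the Iwasawa decomposition with respect to $\Pgp_0 = \Pgp_{(n_2,\ldots,n_r)}(F) \subset \GL_{n-1}(F)$, and the harmonic polynomial $P_{(n_1,\ldots,n_r)}^{\circ}$ factors accordingly as $P_1^{\circ}\cdot P_0^{\circ}$ by definitions \eqref{eq:PstarC} and \eqref{eq:PstarR}, since the newform $K$-type polynomial $P_0^\circ$ of $\pi_0$ coincides with $P_{(n_2,\ldots,n_r)}^\circ$.

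First I would write $h = u_0 m_0 k_0$ with $u_0 \in \Ngp_{\Pgp_0}(F)$, $m_0 = \blockdiag(m_2,\ldots,m_r) \in \Mgp_{\Pgp_0}(F)$, and $k_0 \in K_{n-1}$. Then $g = UMK$ with $M = \blockdiag(x,m_0) \in \Mgp_{\Pgp}(F)$ and $K = \blockdiag(1,k_0)\, k \in K_n$, and a direct computation of modulus characters gives $\delta_\Pgp^{1/2}(M) = |x|^{(n-1)/2}|\det h|^{-1/2}\,\delta_{\Pgp_0}^{1/2}(m_0)$ (using $|\det h| = |\det m_0|$). Applying Proposition~\ref{prop:newformidentity} to $f^\circ(g)$, the outer integral over $K_1 \ni k_1$ separates from the inner integrals over $K_{n_j} \ni k_j$ for $j \geq 2$, and the factorization of $P_{(n_1,\ldots,n_r)}^{\circ}$ splits the harmonic polynomial as $\overline{P_1^{\circ}}(w_1 k_1)\,\overline{P_0^{\circ}}(w' k_0^{-1} \blockdiag(k_2,\ldots,k_r))$, where $(w_1,w') = e_n k^{-1}$. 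Next, I would apply the reproducing kernel property (Lemmata~\ref{lem:QenkC} and \ref{lem:QenkR}) to the map $z \mapsto \overline{P_0^{\circ}}(z\, k_0^{-1} \blockdiag(k_2,\ldots,k_r))$, which is harmonic of the correct bidegree since $k_0^{-1}\blockdiag(k_2,\ldots,k_r) \in K_{n-1}$ is unitary. This yields
\[
\overline{P_0^{\circ}}\!\bigl(w' k_0^{-1} \blockdiag(k_2,\ldots,k_r)\bigr) = \dim \tau_0^{\circ} \int_{K_{n-1}} \overline{P_0^{\circ}}\!\bigl(e_{n-1}\tilde k^{-1} k_0^{-1} \blockdiag(k_2,\ldots,k_r)\bigr)\, \overline{P_0^{\circ}}(w' \tilde k) \, d\tilde k.
\]
After interchanging integrals, the inner integrals over the $K_{n_j}$ are precisely the form produced by Proposition~\ref{prop:newformidentity} applied to $f_0^{\circ}(h\tilde k)$ with Iwasawa decomposition $h\tilde k = u_0 m_0 (k_0 \tilde k)$; replacing them by $f_0^{\circ}(h\tilde k)/(c_0^{\circ}\prod_{j\geq 2}(\dim\tau_j^{\circ})\,\delta_{\Pgp_0}^{1/2}(m_0))$, collecting the constants (using $\dim \tau_1^{\circ}=1$ and the ratio of modulus characters from the first step), and renaming $\tilde k \to k_2$ gives \eqref{eq:initialpropformula}, once one notes $|xk_1|^{t_1} = |x|^{t_1}$ and $f_1^\circ(xk_1) = \chi^{\kappa_1}(xk_1)|x|^{t_1}$.

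The main subtlety is the reproducing-kernel step for $F = \C$: $\overline{P_0^{\circ}}$ does not lie in $\HH_{p_0,q_0}(\C^{n-1})$ but rather in $\HH_{q_0,p_0}(\C^{n-1})$, so one must verify that it is the reproducing kernel of that latter space. This is immediate from the realness of the coefficients in \eqref{eq:PcircUn}, which yields $\overline{P_{p_0,q_0}^{\circ}} = P_{q_0,p_0}^{\circ}$ together with the obvious symmetry of Lemma~\ref{lem:QenkC} under $(p,q)\leftrightarrow(q,p)$. The real case is entirely parallel and in fact simpler because $\overline{P_0^{\circ}} = P_0^{\circ}$.
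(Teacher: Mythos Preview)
Your overall strategy is sound and essentially the mirror image of the paper's proof: the paper starts from the right-hand side of \eqref{eq:initialpropformula}, expands $f_0^{\circ}(hk_2)$ via Corollary~\ref{cor:newformIwasawa}, evaluates the $K_{n-1}$-integral by the reproducing kernel, and recognises the result as Corollary~\ref{cor:newformIwasawa} for $f^{\circ}$. You start from the left-hand side, apply Proposition~\ref{prop:newformidentity} to $f^{\circ}$, insert a reproducing-kernel integral, and recognise $f_0^{\circ}$. Both routes hinge on the same reproducing-kernel step.

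However, there is a genuine error. You claim that ``the newform $K$-type polynomial $P_0^{\circ}$ of $\pi_0$ coincides with $P_{(n_2,\ldots,n_r)}^{\circ}$.'' This is false for $r \geq 3$: the polynomial $P_0^{\circ}$ is the \emph{zonal} harmonic of $\tau_0^{\circ}$ given by \eqref{eq:PcircUn} or \eqref{eq:PcircOn}, whereas $P_{(n_2,\ldots,n_r)}^{\circ}$ is the \emph{product} polynomial \eqref{eq:PstarR} or \eqref{eq:PstarC}. For instance, over $\C$ with $\pi_0 = \chi \boxplus \chi$ one has $P_{(1,1)}^{\circ}(z_1,z_2) = z_1 z_2$ but $P_0^{\circ}(z_1,z_2) = z_2^2$. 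Both lie in $\HH_{p_0,q_0}(\C^{n-1})$, but they are different elements. This confusion propagates: in your displayed reproducing-kernel identity, the first two instances of ``$\overline{P_0^{\circ}}$'' must be $\overline{P_{(n_2,\ldots,n_r)}^{\circ}}$ (so that the subsequent identification with $f_0^{\circ}(h\tilde k)$ via Proposition~\ref{prop:newformidentity} is valid), while only the final factor $\overline{P_0^{\circ}}(w'\tilde k)$ is the zonal reproducing kernel.

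The fix is straightforward: drop the coincidence claim and instead observe that $\overline{P_{(n_2,\ldots,n_r)}^{\circ}}(\,\cdot\, k_0^{-1}\blockdiag(k_2,\ldots,k_r))$ lies in $\HH_{q_0,p_0}(\C^{n-1})$ (respectively $\HH_{p_0}(\R^{n-1})$), so the reproducing kernel $\overline{P_0^{\circ}}$ (respectively $P_0^{\circ}$) of that space applies via Lemmata~\ref{lem:QenkC} and~\ref{lem:QenkR}. After that correction your argument goes through and yields exactly \eqref{eq:initialpropformula}, with the factor $\overline{P_0^{\circ}}(w'\tilde k)$ combining with $\overline{P_1^{\circ}}(w_1 k_1)$ to produce $\overline{P_{(1,n-1)}^{\circ}}(e_n k^{-1}\blockdiag(k_1,\tilde k))$ as required.
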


\begin{proof}
We show that \eqref{eq:initialpropformula} reproduces \hyperref[cor:newformIwasawa]{Corollary \ref*{cor:newformIwasawa}}, which determines $f^{\circ}$ completely. Writing $P_{(1,n - 1)}^{\circ} = P_1^{\circ} P_0^{\circ}$, we see that the integral over $K_1 \ni k_1$ is trivial by the homogeneity of $P_1^{\circ}$ and \eqref{eq:abstochar}. We then use the Iwasawa decomposition $h = u' a' k'$ with respect to the standard Borel subgroup, so that $u' \in \Ngp_{n - 1}(F)$, $a' = \diag(a_1',\ldots,a_{n - 1}') \in \Agp_{n - 1}(F)$, and $k' \in K_{n - 1}$, and apply \hyperref[cor:newformIwasawa]{Corollary \ref*{cor:newformIwasawa}} in order to rewrite $f_0^{\circ}(hk_2)$. The integral over $K_{n - 1} \ni k_2$ may then be evaluated via \hyperref[lem:QenkC]{Lemmata \ref*{lem:QenkC}} and \ref{lem:QenkR}. The resulting expression for $f^{\circ}$ is then precisely that given in \hyperref[cor:newformIwasawa]{Corollary \ref*{cor:newformIwasawa}} with $u = \begin{psmallmatrix} 1 & v \\ 0 & v u' \end{psmallmatrix} \in \Ngp_n(F)$, $a = \diag(x,a_1',\ldots,a_{n - 1}') \in \Agp_n(F)$, and $\begin{psmallmatrix}
 1 & 0 \\ 0 & k' \end{psmallmatrix} k \in K_n$ in place of $k$.
\end{proof}

We now use the identity \eqref{eq:initialpropformula} in conjunction with the convolution section \eqref{eq:tildePieri} in order to prove that $f^{\circ}$ may be written as a Godement section.

\begin{proposition}
For $n \geq 2$, let $\pi = \pi_1 \boxplus \pi_2 \boxplus \cdots \boxplus \pi_r$ and $\pi_0 \coloneqq \pi_2 \boxplus \cdots \boxplus \pi_r$ be induced representations of Whittaker type of $\GL_n(F)$ and $\GL_{n - 1}(F)$ with $\pi_1 = \chi^{\kappa_1} |\cdot|^{t_1}$. Let $f_0^{\circ}$ be the newform of $\pi_0$ in the induced model $V_{\pi_0}$. Let $\Phi \in \Ss_0(\Mat_{(n - 1) \times n}(F))$ be the standard Schwartz function of the form
\[\Phi(x) \coloneqq \overline{P_1^{\circ}}\left(\det \left(x \begin{pmatrix} 1_{n - 1} \\ 0 \end{pmatrix}\right)\right) \left(\dim \tau_0^{\circ}\right) \overline{P_0^{\circ}}\left(e_n \prescript{t}{}{\overline{x}}\right) \exp\left(-d_F \pi \Tr\left(x \prescript{t}{}{\overline{x}}\right)\right),\]
where $P_1^{\circ}$ and $P_0^{\circ}$ are the homogeneous harmonic polynomials associated to the newform $K$-types $\tau_1^{\circ}$ and $\tau_0^{\circ}$ of $\pi_1$ and $\pi_0$ respectively via \eqref{eq:PcircUn} and \eqref{eq:PcircOn}. Then if $\Re(t_1)$ is sufficiently large, the newform $f^{\circ}$ in the induced model $V_{\pi}$ satisfies the identity
\begin{multline}
\label{eq:fviaf0}
f^{\circ}(g) = \frac{c^{\circ} (-1)^{\kappa_1(n - 1)}}{c_0^{\circ} L\left(1 + t_1 + \frac{\|\kappa_1\|}{d_F}, \widetilde{\pi_0}\right)} \chi^{\kappa_1}(\det g) \left|\det g\right|^{t_1 + \frac{n - 1}{2}}	\\
\times \int_{\GL_{n - 1}(F)} f_0^{\circ}(h) \Phi\left(h^{-1} \begin{pmatrix} 0 & 1_{n - 1} \end{pmatrix} g\right) \chi^{-\kappa_1}(\det h) \left|\det h\right|^{-t_1 - \frac{n}{2}} \, dh,
\end{multline}
where the constants $c^{\circ}$ and $c_0^{\circ}$ are those associated to $f^{\circ}$ and $f_0^{\circ}$ via \eqref{eq:fcirc}.
\end{proposition}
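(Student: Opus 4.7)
The plan is to verify \eqref{eq:fviaf0} by evaluating the right-hand side, hereafter denoted $F(g)$, on the Iwasawa decomposition with respect to $\Pgp_{(1, n - 1)}(F)$ and comparing with \hyperref[lem:initialpropformula]{Lemma \ref*{lem:initialpropformula}}. Absolute convergence for $\Re(t_1)$ sufficiently large is standard: the Schwartz decay of $\Phi \in \Ss_0(\Mat_{(n - 1) \times n}(F))$ together with the growth estimates on $f_0^{\circ}$ from \cite[Lemma 3.3]{Jac09} yields convergence once $\Re(t_1)$ exceeds a threshold determined by the parameters of $\pi_0$.

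Substituting $g = \begin{psmallmatrix} 1 & v \\ 0 & 1_{n - 1}\end{psmallmatrix} \begin{psmallmatrix} x & 0 \\ 0 & h'\end{psmallmatrix} k$ into $F(g)$, I note that $\det g = x \det h'$ and $(0, 1_{n - 1}) g = h' (0, 1_{n - 1}) k$, the latter via the identity $(0, h') = h' (0, 1_{n - 1})$. The substitution $h \mapsto h' h$ is measure-preserving on the unimodular group $\GL_{n - 1}(F)$, and after collecting factors $F(g)$ reduces to $\chi^{\kappa_1}(x) |x|^{t_1 + (n - 1)/2} |\det h'|^{-1/2}$ times an analogous integral evaluated at $g = k$ with $f_0^{\circ}(h)$ replaced by $f_0^{\circ}(h' h)$. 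The leading factor already matches the prefactor in \hyperref[lem:initialpropformula]{Lemma \ref*{lem:initialpropformula}}, so it remains to identify the inner integral with the explicit double integral over $K_1 \times K_{n - 1}$ there.

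With $k' = (0, 1_{n - 1}) k$ and $A$ denoting the $(n - 1) \times (n - 1)$ submatrix formed by the first $n - 1$ columns of $k'$, the Schwartz function decomposes as $\Phi(h^{-1} k') = \overline{P_1^{\circ}}((\det h)^{-1} \det A) \Phi_0(h^{-1} k')$, where $\Phi_0$ absorbs the factors involving $P_0^{\circ}$ and the Gaussian. Cramer's rule applied to $k^{-1} = \prescript{t}{}{\overline{k}}$ gives $\det A = (-1)^{n + 1} \det k \cdot \overline{k_{1, n}}$, and together with the scaling law $\overline{P_1^{\circ}}(\lambda y) = \chi^{-\kappa_1}(\lambda) |\lambda|^{\|\kappa_1\|/d_F} \overline{P_1^{\circ}}(y)$ this extracts the sign $(-1)^{\kappa_1(n - 1)}$ and the factor $\overline{P_1^{\circ}}(\overline{k_{1, n}})$ while cancelling the $\chi^{\pm \kappa_1}(\det h)$ and $|\det h|^{-\|\kappa_1\|/d_F}$ contributions against those already present in the integrand. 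The proposition thereby reduces to showing that
\[\int_{\GL_{n - 1}(F)} f_0^{\circ}(h' h) \Phi_0(h^{-1} k') |\det h|^{-t_1 - n/2 - \|\kappa_1\|/d_F} \, dh\]
equals $L(1 + t_1 + \|\kappa_1\|/d_F, \widetilde{\pi_0})$ times the explicit Schur-orthogonality integral in \hyperref[lem:initialpropformula]{Lemma \ref*{lem:initialpropformula}}, up to the normalising constants $c_0^{\circ}, c_2^{\circ}, \ldots, c_r^{\circ}$ prescribed by \eqref{eq:cannormsq} and the formula \eqref{eq:inducedK} for $f_0^{\circ}$.

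The main obstacle is establishing this final integral identity. My strategy is to substitute $h \mapsto \prescript{t}{}{h}^{-1}$ (measure-preserving, since $\GL_{n - 1}(F)$ is unimodular), which converts $f_0^{\circ}(h'h)$ into an expression involving $\widetilde{f_0}^{\circ}$, and to exploit that $\Phi_0$ factorises as the product of a Gaussian in the first $n - 1$ columns of $h^{-1} k'$ and a factor on the last column which is itself a Schwartz function in $\Ss_0(\Mat_{(n - 1) \times 1}(F))$ of the form appearing in \eqref{eq:tildePhistandard}. The Gaussian factor, now viewed as a Schwartz function on $\Mat_{(n - 1) \times (n - 1)}(F)$ after identification, furnishes the convolution section integrand for $\widetilde{\pi_0}$; applying \eqref{eq:tildePieri} with parameter $s = 1 + t_1 + \|\kappa_1\|/d_F$ produces the required $L$-factor $L(1 + t_1 + \|\kappa_1\|/d_F, \widetilde{\pi_0})$, while the residual $K_{n - 1}$-integral can be recognised as the one in \hyperref[lem:initialpropformula]{Lemma \ref*{lem:initialpropformula}} via the reproducing kernel property (\hyperref[lem:QenkC]{Lemmata \ref*{lem:QenkC}} and \ref{lem:QenkR}) applied to $P_0^{\circ}$, matching the normalising constant in \eqref{eq:fviaf0}.
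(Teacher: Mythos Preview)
Your reduction to the Iwasawa form and the extraction of the $\overline{P_1^{\circ}}$ factor via the Cramer identity are carried out correctly (modulo the slip $\det g = x\det h'\det k$ rather than $x\det h'$; the stray $\chi^{\kappa_1}(\det k)$ cancels against the one you produce from $\overline{P_1^{\circ}}(\det A)$). The approach is genuinely different from the paper's: the paper does not verify \eqref{eq:fviaf0} directly but instead \emph{derives} it by specialising the convolution identity \eqref{eq:tildePieri} for $\pi$ on $\GL_n(F)$ at $s = 1 + t_1 + \|\kappa_1\|/d_F$, then unfolding the $\GL_n$-integral via the Iwasawa decomposition of the integration variable with respect to $\Pgp_{(1,n-1)}$, inserting \hyperref[lem:initialpropformula]{Lemma \ref*{lem:initialpropformula}}, and finally evaluating a residual Gaussian integral over $\Mat_{1\times n}(F)$ by Hecke's identity.

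Where your argument breaks down is the last step. After the substitution $h \mapsto \prescript{t}{}{h}^{-1}$ you correctly obtain $\widetilde{f_0}^{\circ}(\prescript{t}{}{h'}^{-1}h)$ and, using $k'\prescript{t}{}{\overline{k'}} = 1_{n-1}$, the Gaussian collapses to $\exp(-d_F\pi\Tr(h\prescript{t}{}{\overline{h}}))$. But the polynomial factor becomes $\overline{P_0^{\circ}}(w\overline{h})$ with $w = (\overline{k_{2,n}},\ldots,\overline{k_{n,n}})$, \emph{not} $\overline{P_0^{\circ}}(e_{n-1}\overline{h})$. Since $w$ is generically not a unit vector (indeed $|w|^2 = 1 - |k_{1,n}|^2$), there is no $K_{n-1}$-rotation reducing this to the form of $\widetilde{\Phi}_0$ in \eqref{eq:tildePhistandard}, and \eqref{eq:tildePieri} for $\pi_0$ does not apply. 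Your alternative suggestion --- that the pure Gaussian on the first $n-1$ columns already ``furnishes the convolution section integrand for $\widetilde{\pi_0}$'' --- is false whenever $\pi_0$ is ramified: the kernel in \eqref{eq:tildePhistandard} carries the factor $\overline{P_0^{\circ}}$, and a bare Gaussian convolved against $\widetilde{f_0}^{\circ}$ does not produce $L(s,\widetilde{\pi_0})\widetilde{f_0}^{\circ}$. Moreover the column factorisation of $\Phi_0$ is of no help for decoupling the $h$-integral, since the first $n-1$ columns $h^{-1}A$ and the last column $h^{-1}b$ of $h^{-1}k'$ are tied together through $h$.

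The paper avoids this obstruction precisely because it works one level up: by starting from \eqref{eq:tildePieri} on $\GL_n$, the $P^{\circ}$-factor is the one attached to $\pi$, and after the unfolding and the change of variables $v' \mapsto v'g^{-1}$ the only nontrivial integral left is a \emph{linear} one over $v' \in \Mat_{1\times n}(F)$, which Hecke's identity handles term by term. If you wish to push your direct-verification route through, you would need to prove a genuinely new identity replacing \eqref{eq:tildePieri} --- essentially a version with $e_{n-1}$ replaced by an arbitrary vector $w$ --- and that amounts to redoing the Iwasawa computation underlying \hyperref[prop:Pieri]{Proposition \ref*{prop:Pieri}} from scratch.
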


In particular, the integral \eqref{eq:fviaf0} converges absolutely if $\Re(t_1) > \Re(t_j) - 1 - \|\kappa_1\|/d_F$ for each $j \in \{2,\ldots,r\}$ for which $n_j = 1$, so that $\pi_j = \chi^{\kappa_j} |\cdot|^{t_j}$, and $\Re(t_1) > \Re(t_j) + (\kappa_j - 1)/2 - 1 - \kappa_1$ for each $j \in \{2,\ldots,r\}$ for which $n_j = 2$, so that $F = \R$ and $\pi_j = D_{\kappa_j} \otimes \left|\det\right|^{t_j}$.

\begin{proof}
We take $s = 1 + t_1 + \|\kappa_1\|/d_F$ in the convolution section identity \eqref{eq:tildePieri}, so that
\[f^{\circ}(g) = \frac{1}{L\left(1 + t_1 + \frac{\|\kappa_1\|}{d_F}, \widetilde{\pi}\right)} \int_{\GL_n(F)} f^{\circ}\left(g \prescript{t}{}{g}^{\prime -1}\right) \widetilde{\Phi}(g') \left|\det g'\right|^{1 + t_1 + \frac{\|\kappa_1\|}{d_F} + \frac{n - 1}{2}} \, dg',\]
with $\widetilde{\Phi}$ as in \eqref{eq:tildePhistandard}. We make the change of variables $g' \mapsto \prescript{t}{}{g} \prescript{t}{}{g}^{\prime -1}$ and use the Iwasawa decomposition $g' = umk$ with respect to the parabolic subgroup $\Pgp(F) = \Pgp_{(1,n - 1)}(F)$, where $u = \begin{psmallmatrix} 1 & v \\ 0 & 1_{n - 1} \end{psmallmatrix} \in \Ngp_{\Pgp}(F)$ with $v \in \Mat_{1 \times (n - 1)}(F)$, $m = \begin{psmallmatrix} x & 0 \\ 0 & h \end{psmallmatrix} \in \Mgp_{\Pgp}(F)$ with $x \in F^{\times}$ and $h \in \GL_{n - 1}(F)$, and $k \in K_n$; the Haar measure is $dg' = |x|^{1 - n} \left|\det h\right| \, dv \, d^{\times}x \, dh \, dk$. We may now insert the identity \eqref{eq:initialpropformula} for $f^{\circ}$. Next, we make the change of variables $x \mapsto x k_1^{-1}$, $h \mapsto h k_2^{-1}$, and $k \mapsto \begin{psmallmatrix} k_1 & 0 \\ 0 & k_2 \end{psmallmatrix} k$, so that the integrals over $K_1 \ni k_1$ and $K_{n - 1} \ni k_2$ are trivial; subsequently, the integral over $K_n \ni k$ may be evaluated via \hyperref[lem:QenkC]{Lemmata \ref*{lem:QenkC}} and \ref{lem:QenkR} after inserting the definition \eqref{eq:tildePhistandard} of $\widetilde{\Phi}$. After making the change of variables $v \mapsto -xv$ and $x \mapsto x^{-1}$, we write $v' \coloneqq \begin{pmatrix} x & v \end{pmatrix} \in \Mat_{1 \times n}(F)$, so that $dv' = \zeta_F(1)^{-1} |x| \, d^{\times}x \, dv$, and make the change of variables $v' \mapsto v' g^{-1}$. Finally, we use \eqref{eq:abstochar} in conjunction with the homogeneity of $P_1^{\circ}$ and the fact that
\[L\left(1 + t_1 + \frac{\|\kappa_1\|}{d_F}, \widetilde{\pi}\right) = \zeta_F\left(1 + \frac{2\|\kappa_1\|}{d_F}\right) L\left(1 + t_1 + \frac{\|\kappa_1\|}{d_F}, \widetilde{\pi_0}\right)\]
via \eqref{eq:isobaricRS}, \eqref{eq:L(s,tildepi)C}, and \eqref{eq:L(s,tildepi)R1}. We arrive at the identity
\begin{multline}
\label{eq:lastline}
f^{\circ}(g) = \frac{c^{\circ}}{c_0^{\circ} L\left(1 + t_1 + \frac{\|\kappa_1\|}{d_F}, \widetilde{\pi_0}\right)} \chi^{\kappa_1}(\det g) \left|\det g\right|^{t_1 + \frac{n - 1}{2}} \int_{\GL_{n - 1}(F)} f_0^{\circ}(h) \chi^{-\kappa_1}(\det h) \left|\det h\right|^{-t_1 - \frac{n}{2}}	\\
\times \left(\dim \tau_0^{\circ}\right) \overline{P_0^{\circ}}\left(e_n \prescript{t}{}{\overline{g}} \begin{pmatrix} 0 \\ 1_{n - 1} \end{pmatrix} \prescript{t}{}{\overline{h}}^{-1}\right) \exp\left(-d_F \pi \Tr\left(h^{-1} \begin{pmatrix} 0 & 1_{n - 1} \end{pmatrix} g \prescript{t}{}{\overline{g}} \begin{pmatrix} 0 \\ 1_{n - 1} \end{pmatrix} \prescript{t}{}{\overline{h}}^{-1}\right)\right)	\\
\times \frac{\zeta_F(1)}{\zeta_F\left(1 + \frac{2\|\kappa_1\|}{d_F}\right)} \int\limits_{\Mat_{1 \times n}(F)} \overline{P_1^{\circ}}\left((\det h^{-1}) v' \adj(g) \prescript{t}{}{e}_1 e_n \prescript{t}{}{\overline{v'}}\right) \exp\left(-d_F \pi v' \prescript{t}{}{\overline{v'}}\right) \, dv' \, dh,
\end{multline}
having recalled that the adjugate of $g$ is $\adj(g) \coloneqq (\det g) g^{-1}$. To evaluate the last line of \eqref{eq:lastline}, we expand $\overline{P_1^{\circ}}$ as a polynomial in $v_1',\ldots,v_n'$ via the multinomial theorem, yielding
\begin{multline*}
\frac{\zeta_F(1)}{\zeta_F\left(1 + \frac{2\|\kappa_1\|}{d_F}\right)} (\overline{\det h^{-1}})^{\max\{\kappa_1,0\}} (\det h^{-1})^{-\min\{\kappa_1,0\}} \sum_{\nu_1 + \cdots + \nu_n = \|\kappa_1\|} \binom{\|\kappa_1\|}{\nu_1, \ldots, \nu_n}	\\
\times \prod_{i = 1}^{n} (\overline{\adj(g)_{i,1}})^{\max\{\sgn(\kappa_1) \nu_i,0\}} (\adj(g)_{i,1})^{-\min\{\sgn(\kappa_1) \nu_i,0\}}	\\
\times \prod_{i = 1}^{n - 1} \int_F \overline{v_i'}^{\max\{\sgn(\kappa_1) \nu_i,0\}} {v_i'}^{- \min\{\sgn(\kappa_1) \nu_i,0\}} \exp\left(-d_F \pi v_i' \overline{v_i'}\right) \, dv_i'	\\
\times \int_F \overline{v_n'}^{\max\{\sgn(\kappa_1)\nu_n,0\} - \min\{\kappa_1,0\}} {v_n'}^{-\min\{\sgn(\kappa_1)\nu_n,0\} + \max\{\kappa_1,0\}} \exp\left(-d_F \pi v_n' \overline{v_n'}\right) \, dv_n'.
\end{multline*}
Via Hecke's identity, \hyperref[lem:HeckeC]{Lemmata \ref*{lem:HeckeC}} and \ref{lem:HeckeR}, the integral over $F \ni v_n'$ vanishes unless $\nu_1 = \cdots = \nu_{n - 1} = 0$ and $\nu_n = \|\kappa_1\|$, in which case the integral over $F \ni v_i'$ for $i \in \{1,\ldots,n - 1\}$ is equal to $1$. Via \eqref{eq:abstochar} and the fact that $\zeta_F(1) |v_n'|^{-1} \, dv_n' = d^{\times}v_n'$, we are left with
\[\frac{\overline{P_1^{\circ}}\left((\det h^{-1}) \adj(g)_{n,1}\right)}{\zeta_F\left(1 + \frac{2\|\kappa_1\|}{d_F}\right)} \int_{F^{\times}} |v_n'|^{1 + \frac{2\|\kappa_1\|}{d_F}} \exp\left(-d_F \pi v_n' \overline{v_n'}\right) \, d^{\times}v_n' = \overline{P_1^{\circ}}\left((\det h^{-1}) \adj(g)_{n,1}\right)\]
by \eqref{eq:zetaCint} and \eqref{eq:zetaRint}. Since
\[\adj(g)_{n,1} = (-1)^{n - 1} \det \left(\begin{pmatrix} 0 & 1_{n - 1}\end{pmatrix} g \begin{pmatrix} 1_{n - 1} \\ 0 \end{pmatrix}\right),\]
the result then follows from \eqref{eq:lastline} and the homogeneity of $P_1^{\circ}$.
\end{proof}

\subsubsection{The Case $\pi_1 = D_{\kappa_1} \otimes \left|\det\right|^{t_1}$}

Next, we give a description of the newform in the induced model when $F = \R$, $n_1 = 2$, and $\pi_1 = D_{\kappa_1} \otimes \left|\det\right|^{t_1}$ is an essentially discrete series representation. We do this first when $n = 2$, so that $\pi = \pi_1$.

\begin{proposition}
Let $\pi = D_{\kappa} \otimes \left|\det\right|^{t}$ be an essentially discrete series representation of $\GL_2(\R)$. Let $\Phi \in \Ss_0(\Mat_{1 \times 2}(\R))$ be the standard Schwartz function of the form
\begin{equation}
\label{eq:PhiHermite}
\Phi(x) \coloneqq \int_{\R} \overline{P^{\circ}}\left(v, e_2 \prescript{t}{}{x}\right) \exp(-\pi v^2) \, dv \exp\left(-\pi x \prescript{t}{}{x}\right),
\end{equation}
where $P^{\circ}$ is the homogeneous harmonic polynomial associated to the newform $K$-type $\tau^{\circ} = \tau_{(\kappa,0)}$ of $\pi$ via \eqref{eq:PcircOn}. Then the canonically normalised newform $f^{\circ}$ in the induced model $V_{\pi}$ satisfies the identity
\begin{equation}
\label{eq:fviaf0ds}
f^{\circ}(g) = i^{\kappa} \left|\det g\right|^{t + \frac{\kappa}{2}} \int_{\R^{\times}} a_2^{-\kappa} \Phi\left(a_2^{-1} \begin{pmatrix} 0 & 1 \end{pmatrix} g\right) \, d^{\times}a_2.
\end{equation}
\end{proposition}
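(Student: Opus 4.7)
The approach is to show that the right-hand side, which I call $F(g)$, is well-defined and lies in $V_{\pi^\sharp}$, and that its restriction to $\Ogp(2)$ agrees with $f^\circ|_{\Ogp(2)}$. Since an element of $V_{\pi^\sharp}$ is determined by its restriction to $\Ogp(2)$ via the Iwasawa decomposition, the two sides then coincide on $\GL_2(\R)$. The common restriction to $\Ogp(2)$ will be a scalar multiple of the homogeneous harmonic polynomial $\overline{P^{\circ}}(e_2 k^{-1}) \in \HH_{\kappa}(\R^2)$, so it is $\tau_{(\kappa,0)}$-isotypic, placing $F$ in $V_\pi$ by \hyperref[lem:discreteseriessubrep]{Lemma \ref*{lem:discreteseriessubrep}}.

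First I would verify absolute convergence of the integral defining $F(k)$ for $k \in \Ogp(2)$: because $k_{2,1}^2 + k_{2,2}^2 = 1$, the exponential factor inside $\Phi((a_2')^{-1} e_2 k)$ reduces to $\exp(-\pi (a_2')^{-2})$, yielding absolute convergence in both $v$ and $a_2'$. For general $g$, convergence follows from the transformation law
\begin{equation*}
F(uag) = |a_1|^{t + \frac{\kappa}{2}} \chi^{\kappa}(a_2) |a_2|^{t - \frac{\kappa}{2}} F(g),
\end{equation*}
obtained by noting $e_2 uag = a_2 e_2 g$ for $u = \begin{psmallmatrix} 1 & x \\ 0 & 1 \end{psmallmatrix} \in \Ngp_2(\R)$ and $a = \diag(a_1,a_2) \in \Agp_2(\R)$ and substituting $a_2' \mapsto a_2 a_2'$. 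This is precisely the transformation law of $V_{\pi^\sharp}$ for the reducible principal series $\pi^\sharp = |\cdot|^{t + \frac{\kappa - 1}{2}} \boxplus \chi^{\kappa} |\cdot|^{t - \frac{\kappa - 1}{2}}$ containing $\pi$ as a subrepresentation.

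I would then compute $F(k)$ for $k \in \Ogp(2)$ by substituting \eqref{eq:PhiHermite} and the expansion \eqref{eq:PjcircR2} of $P^{\circ}$, and evaluating the resulting Gaussian integrals via $u = \pi v^2$ and $u = \pi/y^2$: namely $\int_{\R} v^{\nu} e^{-\pi v^2}\,dv = \zeta_{\R}(\nu+1)$ and $\int_{\R^\times} y^{\nu - 2\kappa} e^{-\pi/y^2}\,d^\times y = \zeta_{\R}(2\kappa - \nu)$ for even $\nu$. Separating variables gives
\begin{equation*}
F(k) = i^{\kappa} \sum_{\substack{\nu \text{ even} \\ 0 \leq \nu \leq \kappa}} \binom{\kappa}{\nu} i^{\nu} k_{2,2}^{\kappa - \nu}\, \zeta_{\R}(\nu + 1)\zeta_{\R}(2\kappa - \nu).
\end{equation*}

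The main obstacle is to identify this polynomial in $k_{2,2}$ with $c^{\circ} \overline{P^{\circ}}(e_2 k^{-1})$, where $c^{\circ} = i^{\kappa}\zeta_{\R}(\kappa)\zeta_{\R}(\kappa+1)$ by \eqref{eq:cannormsq}. Since $e_2 k^{-1} = (k_{1,2}, k_{2,2})$ lies on the unit circle and $P^{\circ}$ is even in its first variable, \eqref{eq:PjcircR2} gives $\overline{P^{\circ}}(e_2 k^{-1}) = \operatorname{Re}(k_{2,2} + i |k_{1,2}|)^{\kappa} = T_{\kappa}(k_{2,2})$, where $T_{\kappa}$ is the Chebyshev polynomial of the first kind; hence it suffices to show
\begin{equation*}
\sum_{\substack{\nu \text{ even}\\ 0 \leq \nu \leq \kappa}} \binom{\kappa}{\nu} i^{\nu} b^{\kappa - \nu} \zeta_{\R}(\nu + 1)\zeta_{\R}(2\kappa - \nu) = \zeta_{\R}(\kappa)\zeta_{\R}(\kappa + 1)\, T_{\kappa}(b).
\end{equation*}
Applying $\Gamma(m + \tfrac{1}{2}) = (2m)! \sqrt{\pi}/(4^m m!)$ on the left and Legendre's duplication formula $\Gamma(\kappa/2)\Gamma((\kappa+1)/2) = 2^{1 - \kappa} \sqrt{\pi}\, (\kappa - 1)!$ on the right, this reduces to the classical explicit expansion
\begin{equation*}
T_{\kappa}(b) = \frac{\kappa}{2}\sum_{m = 0}^{\lfloor \kappa/2 \rfloor} \frac{(-1)^m (\kappa - m - 1)!}{m!(\kappa - 2m)!}(2b)^{\kappa - 2m},
\end{equation*}
which dispatches the matter. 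Combining this with the transformation law yields $F = f^{\circ}$ throughout $\GL_2(\R)$.
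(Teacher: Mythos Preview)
Your proof is correct but takes a genuinely different route from the paper. The paper derives \eqref{eq:fviaf0ds} from the convolution section identity \eqref{eq:tildePieri}: specialising to $s = 1 + t + (\kappa-1)/2$, making the change of variables $g' \mapsto \prescript{t}{}{g}\,\prescript{t}{}{g}^{\prime -1}$, applying the Iwasawa decomposition, inserting \eqref{eq:fcircD}, and evaluating the $\Ogp(2)$-integral via the reproducing kernel property of \hyperref[lem:QenkR]{Lemma \ref*{lem:QenkR}}. After further elementary substitutions and the identification $L(1+t+\tfrac{\kappa-1}{2},\widetilde{\pi}) = \zeta_{\R}(\kappa)\zeta_{\R}(\kappa+1)$, the identity drops out with no appeal to any explicit polynomial formula.

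Your argument instead checks directly that the right-hand side satisfies the $V_{\pi^\sharp}$ transformation law, reduces to $\Ogp(2)$, and computes both sides there, ultimately invoking the classical monomial expansion of the Chebyshev polynomial $T_{\kappa}$. This is self-contained --- it does not rely on \hyperref[prop:Pieri]{Proposition \ref*{prop:Pieri}} or \eqref{eq:tildePieri} --- but the Chebyshev identity is special to this $n=2$ discrete series case and would not generalise to the analogous Godement section identities \eqref{eq:fviaf0} and \eqref{eq:fviaf02}. The paper's route is more uniform across all three Godement section propositions, treating them by the same convolution-section mechanism; yours is more elementary for this particular case and makes the appearance of the normalising constant $i^{\kappa}\zeta_{\R}(\kappa)\zeta_{\R}(\kappa+1)$ quite transparent.
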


\begin{remark}
The integral over $\R \ni v$ in \eqref{eq:PhiHermite} may be expressed in terms of Hermite polynomials, though we do not make direct use of this fact.
\end{remark}

\begin{proof}
We take $s = 1 + t + (\kappa - 1)/2$ in the convolution section identity \eqref{eq:tildePieri}, so that
\[f^{\circ}(g) = \frac{1}{L\left(1 + t + \frac{\kappa - 1}{2}, \widetilde{\pi}\right)} \int_{\GL_2(\R)} f^{\circ}\left(g \prescript{t}{}{g}^{\prime -1}\right) \widetilde{\Phi}(g^{\prime}) \left|\det g'\right|^{1 + t + \frac{\kappa}{2}} \, dg',\]
with $\widetilde{\Phi}$ as in \eqref{eq:tildePhistandard}. We make the change of variables $g' \mapsto \prescript{t}{}{g} \prescript{t}{}{g}^{\prime -1}$ and use the Iwasawa decomposition $g' = uak$ with respect to the standard Borel subgroup, where $u = \begin{psmallmatrix} 1 & v \\ 0 & 1 \end{psmallmatrix} \in \Ngp_2(\R)$ with $v \in \R$, $a = \diag(a_1,a_2) \in \Agp_2(\R)$ with $a_1,a_2 \in \R^{\times}$, and $k \in \Ogp(2)$; the Haar measure is $dg' = |a_1|^{-1} |a_2| \, dv \, d^{\times}a_1 \, d^{\times}a_2 \, dk$. We may now insert the identity \eqref{eq:fcircD} for $f^{\circ}$, at which point \hyperref[lem:QenkR]{Lemma \ref*{lem:QenkR}} allows us to evaluate the integral over $\Ogp(2) \ni k$. Next, we make the change of variables $a_1 \mapsto a_1^{-1}$, $v \mapsto -a_1^{-1} v$, and $\begin{pmatrix} a_1 & v \end{pmatrix} \mapsto \begin{pmatrix} a_1 & v \end{pmatrix} g^{-1}$, noting that $d^{\times}a_1 = |a_1|^{-1} \, da_1$ as $\zeta_{\R}(1) = 1$. Finally, we use the fact that
\[L\left(1 + t + \frac{\kappa - 1}{2}, \widetilde{\pi}\right) = \zeta_{\R}(\kappa) \zeta_{\R}(\kappa + 1)\]
via \eqref{eq:L(s,tildepi)R2}. In this way, we find that
\begin{multline*}
f^{\circ}(g) = \frac{c^{\circ}}{\zeta_{\R}(\kappa) \zeta_{\R}(\kappa + 1)} \left|\det g\right|^{t + \frac{\kappa}{2}} \int_{\R^{\times}} a_2^{-\kappa} \int_{\R} \exp(-\pi a_1^2)	\\
\times \int_{\R} \overline{P^{\circ}}\left(v, e_2 \prescript{t}{}{g} \begin{pmatrix} 0 \\ 1\end{pmatrix} a_2^{-1}\right) \exp(-\pi v^2) \exp\left(-\pi a_2^{-1} \begin{pmatrix} 0 & 1 \end{pmatrix} g \prescript{t}{}{g} \begin{pmatrix} 0 \\ 1\end{pmatrix} a_2^{-1}\right) \, dv \, da_1 \, d^{\times}a_2.
\end{multline*}
It remains to note that the integral over $\R \ni a_1$ is equal to $1$ and to recall the definition \eqref{eq:cannormsq} of the normalising constant $c^{\circ}$.
\end{proof}

Finally, we consider the more general case for which $\pi = \pi_1 \boxplus \cdots \boxplus \pi_r$ with $n_1 = 2$ and $\pi_1 = D_{\kappa_1} \otimes \left|\det\right|^{t_1}$. We first require a simple modification of \hyperref[prop:newformidentity]{Proposition \ref*{prop:newformidentity}} akin to \hyperref[lem:initialpropformula]{Lemma \ref*{lem:initialpropformula}}.

\begin{lemma}
For $n \geq 3$, let $\pi = \pi_1 \boxplus \pi_2 \boxplus \cdots \boxplus \pi_r$ and $\pi_0 \coloneqq \pi_2 \boxplus \cdots \boxplus \pi_r$ be induced representations of Whittaker type of $\GL_n(\R)$ and $\GL_{n - 2}(\R)$ with $\pi_1 = D_{\kappa_1} \otimes \left|\det\right|^{t_1}$. Let $f_0^{\circ}$ be the newform of $\pi_0$ in the induced model $V_{\pi_0}$. Then for $v_1,v_2 \in \Mat_{1 \times (n - 2)}(\R)$, $v_3 \in \R$, $a_1,a_2 \in \R^{\times}$, $h \in \GL_{n - 2}(\R)$, and $k \in \Ogp(n)$, the newform $f^{\circ}$ in the induced model $V_{\pi}$ satisfies
\begin{multline}
\label{eq:initialpropformula2}
f^{\circ}\left(\begin{pmatrix} 1 & v_1 & v_2 \\ 0 & 1 & v_3 \\ 0 & 0 & 1_{n - 2} \end{pmatrix} \begin{pmatrix} a_1 & 0 & 0 \\ 0 & a_2 & 0 \\ 0 & 0 & h \end{pmatrix} k\right) = \frac{c^{\circ} c_1^{\circ}}{c_0^{\circ}} |a_1|^{t_1 + \frac{\kappa_1 - 1}{2} + \frac{n - 1}{2}} \chi^{\kappa_1}(a_2) |a_2|^{t_1 - \frac{\kappa_1 - 1}{2} + \frac{n - 3}{2}} \left|\det h\right|^{-1}	\\
\times \dim \tau_0^{\circ} \int_{\Ogp(n - 2)} f_0^{\circ}(hk_2) \overline{P_{(2,n - 2)}^{\circ}}\left(e_n k^{-1} \begin{pmatrix} 1_2 & 0 \\ 0 & k_2 \end{pmatrix}\right) \, dk_2.
\end{multline}
Here the constants $c^{\circ}$, $c_1^{\circ}$, and $c_0^{\circ}$ are those associated to $f^{\circ}$, $f_1^{\circ}$, and $f_0^{\circ}$ via \eqref{eq:fcirc} and \eqref{eq:fcircD}, while
\[P_{(2,n - 2)}^{\circ}(x) \coloneqq P_1^{\circ}(x_1,x_2) P_0^{\circ}(x_3,\ldots,x_n),\]
where $P_1^{\circ}$ and $P_0^{\circ}$ are the homogeneous harmonic polynomials associated to the newform $K$-types $\tau_1^{\circ}$ and $\tau_0^{\circ}$ of $\pi_1$ and $\pi_0$ respectively via \eqref{eq:PcircOn}.
\end{lemma}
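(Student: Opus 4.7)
The plan is to verify that the right-hand side of \eqref{eq:initialpropformula2} reproduces $f^{\circ}$ as determined by \hyperref[cor:newformIwasawa]{Corollary \ref*{cor:newformIwasawa}}, following the strategy of the proof of \hyperref[lem:initialpropformula]{Lemma \ref*{lem:initialpropformula}} mutatis mutandis.

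First I would factor $P_{(2,n-2)}^{\circ}(y) = P_1^{\circ}(y_1,y_2)\, P_0^{\circ}(y_3,\ldots,y_n)$. Setting $e_n k^{-1} = (w_1,\ldots,w_n)$, only the $P_0^{\circ}$ factor in the polynomial under the integral depends on $k_2$, namely as $\overline{P_0^{\circ}}((w_3,\ldots,w_n) k_2)$. Next I would apply the Iwasawa decomposition $h = u''a''k''$ in $\GL_{n-2}(\R)$ with $u'' \in \Ngp_{n-2}(\R)$, $a'' = \diag(a_3,\ldots,a_n) \in \Agp_{n-2}(\R)$, and $k'' \in \Ogp(n-2)$, and use \hyperref[cor:newformIwasawa]{Corollary \ref*{cor:newformIwasawa}} applied to $\pi_0$ to rewrite
\[
f_0^{\circ}(hk_2) = f_0^{\circ}(k''k_2)\, \delta_{n-2}^{1/2}(a'')\, \Xi_0(a''),
\]
with $\Xi_0(a'')$ the product of the $\pi_j$-factors for $j\ge 2$. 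Substituting $k_2 \mapsto (k'')^{-1}k_2$ and expressing $f_0^{\circ}(k_2)$ via \eqref{eq:inducedK} as $c_0^{\circ}$ times $\overline{P_0^{\circ}}$ at the relevant components of $e_{n-2}k_2^{-1}$, the $\Ogp(n-2)$-integral collapses by \hyperref[lem:QenkR]{Lemma \ref*{lem:QenkR}} to a scalar multiple of $\overline{P_0^{\circ}}((w_3,\ldots,w_n)(k'')^{-1})$.

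To finish, I would identify this output with $f^{\circ}$ of the argument as computed by \hyperref[cor:newformIwasawa]{Corollary \ref*{cor:newformIwasawa}} directly. The input admits the Iwasawa decomposition with respect to the standard Borel,
\[
\begin{pmatrix} 1 & v_1 & v_2 \\ 0 & 1 & v_3 \\ 0 & 0 & 1_{n-2}\end{pmatrix}\begin{pmatrix}\diag(a_1,a_2) & 0 \\ 0 & h\end{pmatrix} k = u_0\, \diag(a_1,a_2,a_3,\ldots,a_n)\begin{pmatrix} 1_2 & 0 \\ 0 & k''\end{pmatrix} k,
\]
with $u_0 \in \Ngp_n(\R)$ obtained by pushing $\blockdiag(1_2,u'')$ through the explicit unipotent matrix on the left. \hyperref[cor:newformIwasawa]{Corollary \ref*{cor:newformIwasawa}} then expresses the left-hand side as $f^{\circ}\!\bigl(\blockdiag(1_2, k'')\,k\bigr)$ times $\delta_n^{1/2}(\diag(a_1,\ldots,a_n))$, times the $\pi_1$-factor $\chi^{\kappa_1}(a_2)|a_1|^{t_1+(\kappa_1-1)/2}|a_2|^{t_1-(\kappa_1-1)/2}$, times the $\pi_j$-factors for $j\ge 2$. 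The latter cancel against $\Xi_0(a'')$, and
\[
\frac{\delta_n^{1/2}(\diag(a_1,a_2,a''))}{\delta_{n-2}^{1/2}(a'')} = |a_1|^{(n-1)/2}|a_2|^{(n-3)/2}|\det h|^{-1}
\]
supplies precisely the remaining exponents in \eqref{eq:initialpropformula2}. The compact-part factor $f^{\circ}(\blockdiag(1_2,k'')\,k)$ matches the $\overline{P_1^{\circ}}(w_1,w_2)\, \overline{P_0^{\circ}}((w_3,\ldots,w_n)(k'')^{-1})$ expression produced above upon expanding via \hyperref[prop:newformidentity]{Proposition \ref*{prop:newformidentity}} and once more applying \hyperref[lem:QenkR]{Lemma \ref*{lem:QenkR}} to its inner $\Ogp(n-2)$-integral.

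The main obstacle will be the bookkeeping: tracking exponents of $|a_1|$, $|a_2|$, $|\det h|$ across the two modulus characters $\delta_n^{1/2}$ and $\delta_{n-2}^{1/2}$, and verifying that the normalisation constants combine into the ratio $c^{\circ}c_1^{\circ}/c_0^{\circ}$ demanded by \eqref{eq:initialpropformula2}---the appearance of $c_1^{\circ}$ arising from the canonical normalisation \eqref{eq:cannormsq} of $f_1^{\circ}$ at $\diag(a_1,a_2)$ via \eqref{eq:fcircD}.
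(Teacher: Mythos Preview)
Your approach is correct and is precisely what the paper does: it simply states that the proof is essentially identical to that of \hyperref[lem:initialpropformula]{Lemma \ref*{lem:initialpropformula}}. One small notational point to watch: after applying \hyperref[lem:QenkR]{Lemma \ref*{lem:QenkR}}, the $\Ogp(n-2)$-integral collapses to a scalar multiple of $\overline{P_{(n_2,\ldots,n_r)}^{\circ}}\bigl((w_3,\ldots,w_n)(k'')^{-1}\bigr)$ (with constant $c_0^{\circ}c_2^{\circ}\cdots c_r^{\circ}$ from \eqref{eq:inducedK}), not the zonal polynomial $\overline{P_0^{\circ}}$ itself---but since the same polynomial appears on the other side when you expand $f^{\circ}\bigl(\blockdiag(1_2,k'')\,k\bigr)$ via \eqref{eq:inducedK}, the comparison goes through exactly as you describe.
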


\begin{proof}
The proof is essentially identical to that of \hyperref[lem:initialpropformula]{Lemma \ref*{lem:initialpropformula}}.
\end{proof}

We now use the identity \eqref{eq:initialpropformula2} in conjunction with the convolution section \eqref{eq:tildePieri} in order to prove that $f^{\circ}$ may be written as a Godement section.

\begin{proposition}
\label{prop:Godementdiscrete}
For $n \geq 3$, let $\pi = \pi_1 \boxplus \pi_2 \boxplus \cdots \boxplus \pi_r$ and $\pi_0 \coloneqq \pi_2 \boxplus \cdots \boxplus \pi_r$ be induced representations of Whittaker type of $\GL_n(\R)$ and $\GL_{n - 2}(\R)$ with $\pi_1 = D_{\kappa_1} \otimes \left|\det\right|^{t_1}$. Let $f_0^{\circ}$ be the newform of $\pi_0$ in the induced model $V_{\pi_0}$. Let $\Phi \in \Ss_0(\Mat_{(n - 1) \times n}(\R))$ be the standard Schwartz function of the form
\[\Phi(x) \coloneqq \int_{\R} \overline{P_1^{\circ}} \left(v, e_n \prescript{t}{}{x} \prescript{t}{}{e}_1\right) \exp(-\pi v^2) \, dv (\dim \tau_0^{\circ}) \overline{P_0^{\circ}}\left(e_n \prescript{t}{}{x} \begin{pmatrix} 0 \\ 1_{n - 2} \end{pmatrix}\right) \exp\left(-\pi \Tr\left(x \prescript{t}{}{x}\right)\right),\]
where $P_1^{\circ}$ and $P_0^{\circ}$ are the homogeneous harmonic polynomials associated to the newform $K$-types $\tau_1^{\circ}$ and $\tau_0^{\circ}$ of $\pi_1$ and $\pi_0$ respectively via \eqref{eq:PcircOn}. Then if $\Re(t_1)$ is sufficiently large, the newform $f^{\circ}$ in the induced model $V_{\pi}$ satisfies the identity
\begin{multline}
\label{eq:fviaf02}
f^{\circ}(g) = \frac{c^{\circ} i^{\kappa_1}}{c_0^{\circ} L\left(1 + t_1 + \frac{\kappa_1 - 1}{2},\widetilde{\pi_0}\right)} \left|\det g\right|^{t_1 + \frac{\kappa_1 - 1}{2} + \frac{n - 1}{2}} \int_{\GL_{n - 2}(\R)} f_0^{\circ}(h) \left|\det h\right|^{-t_1 - \frac{\kappa_1 - 1}{2} - \frac{n - 1}{2}}	\\
\times \int_{\R^{\times}} a_2^{-\kappa_1} \int\limits_{\Mat_{1 \times (n - 2)}(\R)} \Phi\left(\begin{pmatrix} a_2^{-1} & v_3 \\ 0 & h^{-1} \end{pmatrix} \begin{pmatrix} 0 & 1_{n - 1}\end{pmatrix} g\right) \, dv_3 \, d^{\times}a_2 \, dh,
\end{multline}
where the constants $c^{\circ}$ and $c_0^{\circ}$ are those associated to $f^{\circ}$ and $f_0^{\circ}$ via \eqref{eq:fcirc}.
\end{proposition}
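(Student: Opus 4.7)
The plan is to mirror the argument used for the principal series case (the preceding proposition), with the parabolic subgroup $\Pgp_{(1,n-1)}(\R)$ replaced by $\Pgp_{(2,n-2)}(\R)$, and with the convolution section identity \eqref{eq:tildePieri} evaluated at $s = 1 + t_1 + (\kappa_1-1)/2$ rather than at $s = 1 + t_1 + \|\kappa_1\|/d_F$. The argument will also absorb, as a special case applied to the ``inner'' $\GL_2(\R)$ block, the identity \eqref{eq:fviaf0ds} already established for $\pi = D_\kappa \otimes |\det|^t$; this is where the Hermite-type integral $\int_\R \overline{P_1^\circ}(v,\cdot) \exp(-\pi v^2)\,dv$ enters.

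First, apply \eqref{eq:tildePieri} at $s = 1 + t_1 + (\kappa_1-1)/2$ to write
\[f^{\circ}(g) = \frac{1}{L\left(1 + t_1 + \frac{\kappa_1-1}{2},\widetilde{\pi}\right)} \int_{\GL_n(\R)} f^{\circ}\left(g \prescript{t}{}{g}^{\prime -1}\right) \widetilde{\Phi}(g')\left|\det g'\right|^{1 + t_1 + \frac{\kappa_1-1}{2} + \frac{n-1}{2}} \, dg',\]
with $\widetilde{\Phi}$ as in \eqref{eq:tildePhistandard}. Then substitute $g' \mapsto \prescript{t}{}{g} \prescript{t}{}{g}^{\prime -1}$ and use the Iwasawa decomposition $g' = umk$ for $\Pgp_{(2,n-2)}(\R)$, with $u \in \Ngp_\Pgp(\R)$ of the form $\begin{psmallmatrix} 1_2 & * \\ 0 & 1_{n-2}\end{psmallmatrix}$, $m = \blockdiag(m_1,h)$, and $k \in \Ogp(n)$. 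Further decompose $m_1 = \begin{psmallmatrix} 1 & v_3 \\ 0 & 1\end{psmallmatrix} \diag(a_1,a_2) k_1$ as in the proof of \eqref{eq:fviaf0ds}. This puts $g'$ into a form compatible with the identity \eqref{eq:initialpropformula2}, which we insert, producing a $P_{(2,n-2)}^\circ$-kernel evaluated at $e_n k^{-1}$.

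Next, evaluate the integral over $\Ogp(n) \ni k$ by \hyperref[lem:QenkR]{Lemma \ref*{lem:QenkR}} after pulling $\begin{psmallmatrix}1_2 & 0 \\ 0 & k_2\end{psmallmatrix}$ inside via change of variables (trivialising the $\Ogp(n-2)\ni k_2$ integral), then dispose of the $\Ogp(2)\ni k_1$ integral by Schur orthogonality applied to the representation $\tau_1^\circ = \tau_{(\kappa_1,0)}$. After the change of variables $a_1 \mapsto a_1^{-1}$, $v_3 \mapsto -a_1^{-1}v_3$, and an analogous $\adj$-type repackaging of the remaining row vector into $\begin{pmatrix}0 & 1_{n-1}\end{pmatrix}g$ (as in the principal series proof), the integrand will be the newform $f_0^\circ(h)$ times a Gaussian-polynomial kernel and some harmonic factors, together with one remaining real integration whose integrand matches the $n=2$ case of \eqref{eq:PhiHermite}; that integration produces precisely the factor $\int_\R \overline{P_1^\circ}(v, e_n \prescript{t}{}{x} \prescript{t}{}{e}_1) \exp(-\pi v^2)\, dv$ appearing in the definition of $\Phi$. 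Finally, isolate the constant using \eqref{eq:isobaricRS} and \eqref{eq:L(s,tildepi)R2}, which give
\[L\left(1 + t_1 + \tfrac{\kappa_1-1}{2},\widetilde{\pi}\right) = \zeta_{\R}(\kappa_1)\zeta_{\R}(\kappa_1+1)\,L\left(1 + t_1 + \tfrac{\kappa_1-1}{2},\widetilde{\pi_0}\right),\]
and then absorb $\zeta_\R(\kappa_1)\zeta_\R(\kappa_1+1)$ together with the sign $i^{\kappa_1}$ into the canonical normalisation constant $c_1^\circ = i^{\kappa_1}\zeta_\R(\kappa_1)\zeta_\R(\kappa_1+1)$ from \eqref{eq:cannormsq}.

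The main obstacle is the bookkeeping inside the $2\times 2$ block: unlike in the principal-series argument, Hecke's identity cannot be applied ``coordinate by coordinate'' to vanish all but one of the Gaussian integrations, because the harmonic polynomial $\overline{P_1^\circ}$ associated to $\tau_{(\kappa_1,0)}$ genuinely mixes two real variables. One must instead perform the Iwasawa decomposition of $m_1$ before invoking Schur orthogonality, and recognise that integrating out the diagonal variable $a_1$ against the Gaussian produces exactly the one-dimensional Hermite-type integral appearing in \eqref{eq:PhiHermite}; this is the step that links the $n=2$ Godement formula \eqref{eq:fviaf0ds} to the general $n \geq 3$ case. The convergence claim for $\Re(t_1)$ sufficiently large follows in the same way as the convergence statement after \eqref{eq:fviaf0}, by tracking absolute values of $|\det h|^{-t_1-(\kappa_1-1)/2-(n-1)/2}$ against the Gaussian decay of $\Phi$ and the growth of $f_0^\circ$ supplied by \hyperref[cor:newformIwasawa]{Corollary \ref*{cor:newformIwasawa}}.
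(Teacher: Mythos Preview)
Your proposal is correct and follows the same strategy as the paper. One minor difference: the paper applies the Iwasawa decomposition with respect to $\Pgp_{(1,1,n-2)}(\R)$ directly (so $u = \begin{psmallmatrix} 1 & v_1 & v_2 \\ 0 & 1 & v_3 \\ 0 & 0 & 1_{n-2}\end{psmallmatrix}$ with scalar $v_1$ and $v_2,v_3 \in \Mat_{1\times(n-2)}(\R)$, and $m = \diag(a_1,a_2,h)$), rather than first decomposing with respect to $\Pgp_{(2,n-2)}(\R)$ and then Iwasawa-decomposing the $\GL_2(\R)$ block. Going straight to $\Pgp_{(1,1,n-2)}$ means no extra $\Ogp(2)\ni k_1$ integration ever appears, so the ``Schur orthogonality'' step you mention is unnecessary; in your two-stage version the integrand depends on $(k_1,k)$ only through $\blockdiag(k_1,1_{n-2})k$, so $k_1$ is trivialised by the change of variables $k \mapsto \blockdiag(k_1^{-1},1_{n-2})k$, not by orthogonality. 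Also, there is no adjugate manipulation here as there was in the $n_1=1$ case: after the changes $v_1 \mapsto -a_1 v_1$, $v_2 \mapsto -a_1 v_2 - v_1 v_3$, $v_3 \mapsto -a_2 v_3$, $a_1 \mapsto a_1^{-1}$ and the row substitution $\begin{pmatrix} a_1 & v_1 & v_2\end{pmatrix} \mapsto \begin{pmatrix} a_1 & v_1 & v_2\end{pmatrix} g^{-1}$, the integrals over $a_1$, $v_1$, and $v_{2,1},\ldots,v_{2,n-3}$ are pure Gaussians, and only the last component of $v_2$ survives as the Hermite-type integral in $\Phi$. (Note also your variable $v_3$ inside $m_1$ is a scalar and is not the $v_3 \in \Mat_{1\times(n-2)}(\R)$ appearing in \eqref{eq:fviaf02}.)
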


In particular, the integral \eqref{eq:fviaf02} converges absolutely if $\Re(t_1) > \Re(t_j) - 1 - (\kappa_1 - 1)/2$ for each $j \in \{2,\ldots,r\}$ for which $n_j = 1$, so that $\pi_j = \chi^{\kappa_j} |\cdot|^{t_j}$, and $\Re(t_1) > \Re(t_j) + (\kappa_j - 1)/2 - 1 - (\kappa_1 - 1)/2$ for each $j \in \{2,\ldots,r\}$ for which $n_j = 2$, so that $\pi_j = D_{\kappa_j} \otimes \left|\det\right|^{t_j}$.

\begin{proof}
We take $s = 1 + t_1 + (\kappa_1 - 1)/2$ in the convolution section identity \eqref{eq:tildePieri}, so that
\[f^{\circ}(g) = \frac{1}{L\left(1 + t_1 + \frac{\kappa_1 - 1}{2},\widetilde{\pi}\right)} \int_{\GL_n(\R)} f^{\circ}\left(g \prescript{t}{}{g}^{\prime -1}\right) \widetilde{\Phi}(g') \left|\det g'\right|^{1 + t_1 + \frac{\kappa_1 - 1}{2} + \frac{n - 1}{2}} \, dg',\]
with $\widetilde{\Phi}$ as in \eqref{eq:tildePhistandard}. We make the change of variables $g' \mapsto \prescript{t}{}{g} \prescript{t}{}{g}^{\prime -1}$, then use the Iwasawa decomposition $g' = umk$ with respect to the parabolic subgroup $\Pgp(\R) = \Pgp_{(1,1,n - 2)}(\R)$, where $u = \begin{psmallmatrix} 1 & v_1 & v_2 \\ 0 & 1 & v_3 \\ 0 & 0 & 1_{n - 2} \end{psmallmatrix} \in \Ngp_{\Pgp}(\R)$ with $v_1 \in \R$, $v_2,v_3 \in \Mat_{1 \times (n - 2)}(\R)$, $m = \begin{psmallmatrix} a_1 & 0 & 0 \\ 0 & a_2 & 0 \\ 0 & 0 & h \end{psmallmatrix} \in \Mgp_{\Pgp}(\R)$ with $a_1,a_2 \in \R^{\times}$ and $h \in \GL_{n - 2}(\R)$, and $k \in \Ogp(n)$; the Haar measure is
\[dg' = |a_1|^{1 - n} |a_2|^{3 - n} \left|\det h\right|^2 \, dv_1 \, dv_2 \, dv_3 \, d^{\times}a_1 \, d^{\times}a_2 \, dh \, dk.\]
We may now insert the identity \eqref{eq:initialpropformula2} for $f^{\circ}$. Next, we make the change of variables $h \mapsto h k_2^{-1}$ and $k' \mapsto \begin{psmallmatrix} 1_2 & 0 \\ 0 & k_2\end{psmallmatrix} k'$, so that the integral over $\Ogp(n - 2) \ni k_2$ is trivial; subsequently, the integral over $\Ogp(n) \ni k$ may be evaluated via \hyperref[lem:QenkR]{Lemma \ref*{lem:QenkR}}. Next, we make the change of variables $v_1 \mapsto -a_1 v_1$, $v_2 \mapsto -a_1 v_2 - v_1 v_3$, $v_3 \mapsto -a_2 v_3$, $a_1 \mapsto a_1^{-1}$, and $\begin{pmatrix} a_1 & v_1 & v_2 \end{pmatrix} \mapsto \begin{pmatrix} a_1 & v_1 & v_2 \end{pmatrix} g^{-1}$, noting that $d^{\times}a_1 = |a_1|^{-1} \, da_1$ as $\zeta_{\R}(1) = 1$. Finally, we use the fact that
\[L\left(1 + t_1 + \frac{\kappa_1 - 1}{2},\widetilde{\pi}\right) = \zeta_{\R}(\kappa_1) \zeta_{\R}(\kappa_1 + 1) L\left(1 + t_1 + \frac{\kappa_1 - 1}{2},\widetilde{\pi_0}\right)\]
via \eqref{eq:isobaricRS} and \eqref{eq:L(s,tildepi)R2}. In this way, we find that
\begin{multline*}
f^{\circ}(g) = \frac{c^{\circ} c_1^{\circ}}{c_0^{\circ} \zeta_{\R}(\kappa_1) \zeta_{\R}(\kappa_1 + 1) L\left(1 + t_1 + \frac{\kappa_1 - 1}{2},\widetilde{\pi_0}\right)} \left|\det g\right|^{t_1 + \frac{\kappa_1 - 1}{2} + \frac{n - 1}{2}}	\\
\times \int_{\GL_{n - 2}(\R)} f_0^{\circ}(h) \left|\det h\right|^{-t_1 - \frac{\kappa_1 - 1}{2} - \frac{n - 1}{2}}	\\
\times (\dim \tau_0^{\circ}) \overline{P_0^{\circ}}\left(e_n \prescript{t}{}{g} \begin{pmatrix} 0 \\ 1_{n - 2} \end{pmatrix} \prescript{t}{}{h}^{-1} \right) \exp\left(-\pi \Tr\left(h^{-1} \begin{pmatrix} 0 & 1_{n - 2}\end{pmatrix} g \prescript{t}{}{g} \begin{pmatrix} 0 \\ 1_{n - 2} \end{pmatrix} \prescript{t}{}{h}^{-1}\right)\right)	\\
\times \int_{\R^{\times}} a_2^{-\kappa_1} \int_{\R} \exp(-\pi a_1^2) \int\limits_{\Mat_{1 \times (n - 2)}(\R)} \exp\left(-\pi \Tr\left(\begin{pmatrix} 0 & a_2^{-1} & v_3 \end{pmatrix} g \prescript{t}{}{g} \begin{pmatrix} 0 \\ a_2^{-1} \\ \prescript{t}{}{v_3} \end{pmatrix}\right)\right)	\\
\times \int\limits_{\Mat_{1 \times (n - 2)}(\R)} \overline{P_1^{\circ}}\left(e_{n - 1} v_2, e_n \prescript{t}{}{g} \begin{pmatrix} 0 \\ a_2^{-1} \\ \prescript{t}{}{v_3} \end{pmatrix}\right) \exp\left(-\pi v_2 \prescript{t}{}{v}_2\right) \int_{\R} \exp(-\pi v_1^2) \, dv_1 \, dv_2 \, dv_3 \, d^{\times}a_1 \, d^{\times}a_2 \, dh.
\end{multline*}
It remains to note that the integrals over $\R \ni a_1$, $\R \ni v_1$, and $\R \ni v_{2,i}$ for $i \in \{1,\ldots,n - 3\}$ are trivial and to recall the definition \eqref{eq:cannormsq} of the normalising constant $c_1^{\circ}$.
\end{proof}

In order to prove a recursive formula for the newform in the Whittaker model, we require a similar identity to \eqref{eq:fviaf02} for a slightly modified induced representation of Whittaker type.

\begin{lemma}
For $n \geq 3$, let $\pi^{\ast} = \pi_1^{\ast} \boxplus \pi_2^{\ast} \boxplus \pi_2 \boxplus \cdots \boxplus \pi_r$ and $\pi_0 \coloneqq \pi_2 \boxplus \cdots \boxplus \pi_r$ be induced representations of Whittaker type of $\GL_n(\R)$ and $\GL_{n - 2}(\R)$ with $\pi_1^{\ast} = |\cdot|^{t_1^{\ast}}$ and $\pi_2^{\ast} = |\cdot|^{t_2^{\ast}}$. Let $f_0^{\circ}$ be the newform of $\pi_0$ in the induced model $V_{\pi_0}$. Let $\Phi \in \Ss_0(\Mat_{(n - 1) \times n}(\R))$ be the standard Schwartz function of the form
\[\Phi^{\ast}(x) \coloneqq (\dim \tau_0^{\circ}) \overline{P_0^{\circ}}\left(e_n \prescript{t}{}{x} \begin{pmatrix} 0 \\ 1_{n - 2} \end{pmatrix}\right) \exp\left(-\pi \Tr\left(x \prescript{t}{}{x}\right)\right),\]
where $P_0^{\circ}$ is the homogeneous harmonic polynomial associated to the newform $K$-type $\tau_0^{\circ}$ of $\pi_0$ via \eqref{eq:PcircOn}. Then if $\Re(t_1^{\ast})$ is sufficiently large, the newform $f^{\ast\circ}$ in the induced model $V_{\pi^{\ast}}$ satisfies the identity
\begin{multline}
\label{eq:fastviaf02}
f^{\ast\circ}(g) = \frac{c^{\ast\circ}}{c_0^{\circ} \zeta_{\R}(1 + t_1^{\ast} - t_2^{\ast}) L\left(1 + t_1^{\ast},\widetilde{\pi_0}\right)} \left|\det g\right|^{t_1^{\ast} + \frac{n - 1}{2}} \int_{\GL_{n - 2}(\R)} f_0^{\circ}(h) \left|\det h\right|^{-t_1^{\ast} - \frac{n - 1}{2}}	\\
\times \int_{\R^{\times}} |a_2|^{-1 - t_1^{\ast} + t_2^{\ast}} \int\limits_{\Mat_{1 \times (n - 2)}(\R)} \Phi\left(\begin{pmatrix} a_2^{-1} & v_3 \\ 0 & h^{-1} \end{pmatrix} \begin{pmatrix} 0 & 1_{n - 1}\end{pmatrix} g\right) \, dv_3 \, d^{\times}a_2 \, dh,
\end{multline}
where the constants $c^{\ast\circ}$ and $c_0^{\circ}$ are those associated to $f^{\ast\circ}$ and $f_0^{\circ}$ via \eqref{eq:fcirc}.
\end{lemma}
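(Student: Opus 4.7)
The plan is to follow the proof of \hyperref[prop:Godementdiscrete]{Proposition \ref*{prop:Godementdiscrete}} closely, replacing the two-dimensional piece $\pi_1 = D_{\kappa_1} \otimes \left|\det\right|^{t_1}$ by the two characters $\pi_1^{\ast} \boxplus \pi_2^{\ast}$ with trivial $\chi$-parameters. The key simplification is that $\kappa_1^{\ast} = \kappa_2^{\ast} = 0$, so the harmonic polynomials associated to $\pi_1^{\ast}$ and $\pi_2^{\ast}$ are identically $1$ and the newform $K$-type $\tau^{\ast\circ}$ of $\pi^{\ast}$ coincides (as an abstract $\Ogp(n)$-representation) with $\tau_0^{\circ}$. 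Consequently, the polynomial factor of the standard Schwartz function $\widetilde{\Phi^{\ast\circ}}$ appearing in the convolution section identity \eqref{eq:tildePieri} applied to $\pi^{\ast}$ is simply $\overline{P_0^{\circ}}(e_n \overline{g'})$, which matches the definition of $\Phi^{\ast}$ in the statement.

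I first apply \eqref{eq:tildePieri} to $f^{\ast\circ}$ at $s = 1 + t_1^{\ast}$ (convergent for $\Re(t_1^{\ast})$ sufficiently large), then perform the change of variables $g' \mapsto \prescript{t}{}{g} \prescript{t}{}{g}^{\prime -1}$ followed by the Iwasawa decomposition with respect to $\Pgp_{(1,1,n - 2)}(\R)$ exactly as in the proof of \hyperref[prop:Godementdiscrete]{Proposition \ref*{prop:Godementdiscrete}}. Next I substitute an analogue of \eqref{eq:initialpropformula2} for $\pi^{\ast}$, derived from \hyperref[prop:newformidentity]{Proposition \ref*{prop:newformidentity}}:
\[f^{\ast\circ}(umk') = \frac{c^{\ast\circ}}{c_0^{\circ}} |a_1|^{t_1^{\ast} + \frac{n - 1}{2}} |a_2|^{t_2^{\ast} + \frac{n - 3}{2}} \left|\det h\right|^{-1} \dim \tau_0^{\circ} \int_{\Ogp(n - 2)} f_0^{\circ}(h k_2) \overline{P_0^{\circ}}\left(e_n k^{\prime -1} \begin{pmatrix} 1_2 & 0 \\ 0 & k_2 \end{pmatrix}\right) dk_2,\]
where $u = \begin{psmallmatrix} 1 & v_1 & v_2 \\ 0 & 1 & v_3 \\ 0 & 0 & 1_{n - 2} \end{psmallmatrix}$ and $m = \diag(a_1, a_2, h)$. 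Absorbing the $\Ogp(n - 2)$-integral via $h \mapsto hk_2^{-1}$ and $k \mapsto \begin{psmallmatrix} 1_2 & 0 \\ 0 & k_2 \end{psmallmatrix} k$, then evaluating the resulting $\Ogp(n)$-integral via \hyperref[lem:QenkR]{Lemma \ref*{lem:QenkR}} against $\overline{P_0^{\circ}}$, reproduces the $P_0^{\circ}$ factor of $\Phi^{\ast}$.

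Finally, I make the same sequence of changes of variables as in \hyperref[prop:Godementdiscrete]{Proposition \ref*{prop:Godementdiscrete}}: $v_1 \mapsto -a_1 v_1$, $v_2 \mapsto -a_1 v_2 - v_1 v_3$, $v_3 \mapsto -a_2 v_3$, $a_1 \mapsto a_1^{-1}$, and $\begin{pmatrix} a_1 & v_1 & v_2 \end{pmatrix} \mapsto \begin{pmatrix} a_1 & v_1 & v_2 \end{pmatrix} g^{-1}$. Without any harmonic polynomial weight in the first two coordinates, the integrals over $\R \ni a_1$ and $\R \ni v_1$ each collapse to trivial Gaussian integrals equal to $1$. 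Combining this with the identity
\[L\left(1 + t_1^{\ast}, \widetilde{\pi^{\ast}}\right) = \zeta_{\R}(1) \zeta_{\R}\left(1 + t_1^{\ast} - t_2^{\ast}\right) L\left(1 + t_1^{\ast}, \widetilde{\pi_0}\right) = \zeta_{\R}\left(1 + t_1^{\ast} - t_2^{\ast}\right) L\left(1 + t_1^{\ast}, \widetilde{\pi_0}\right)\]
via \eqref{eq:isobaricRS} and \eqref{eq:L(s,tildepi)R1} yields the denominator in \eqref{eq:fastviaf02}. The main obstacle is the careful exponent-tracking: combining $|a_2|^{t_2^{\ast} + (n-3)/2}$ from the Iwasawa formula, the Haar measure factor $|a_2|^{3-n}$, and the convolution-section weight $|\det g'|^{1 + t_1^{\ast} + (n-1)/2}$ to produce both the prefactor $\left|\det g\right|^{t_1^{\ast} + (n-1)/2}$ and the internal weight $|a_2|^{-1 - t_1^{\ast} + t_2^{\ast}}$. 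This is however a direct variant of the book-keeping already carried out for \hyperref[prop:Godementdiscrete]{Proposition \ref*{prop:Godementdiscrete}}, and no new technical ingredients are required.
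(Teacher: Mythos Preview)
Your proposal is correct and follows exactly the approach the paper takes: the paper's own proof consists of the single sentence ``This follows via the same method as the proof of \hyperref[prop:Godementdiscrete]{Proposition \ref*{prop:Godementdiscrete}},'' and you have accurately spelled out what that method entails in this simpler setting where $\kappa_1^{\ast} = \kappa_2^{\ast} = 0$. The only minor omission is that, as in \hyperref[prop:Godementdiscrete]{Proposition \ref*{prop:Godementdiscrete}}, the integrals over $v_{2,i}$ for $i \in \{1,\ldots,n-3\}$ also collapse to trivial Gaussians, not just those over $a_1$ and $v_1$; but this is clearly implicit in your sketch.
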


In particular, the integral \eqref{eq:fastviaf02} converges absolutely if $\Re(t_1^{\ast}) > \Re(t_2^{\ast}) - 1$, $\Re(t_1^{\ast}) > \Re(t_j) - 1$ for each $j \in \{2,\ldots,r\}$ for which $n_j = 1$, so that $\pi_j = \chi^{\kappa_j} |\cdot|^{t_j}$, and $\Re(t_1^{\ast}) > \Re(t_j) + (\kappa_j - 1)/2 - 1$ for each $j \in \{2,\ldots,r\}$ for which $n_j = 2$, so that $\pi_j = D_{\kappa_j} \otimes \left|\det\right|^{t_j}$.

\begin{proof}
This follows via the same method as the proof of \hyperref[prop:Godementdiscrete]{Proposition \ref*{prop:Godementdiscrete}}.
\end{proof}

\section{The Newform in the Whittaker Model}
\label{sect:newformWhittaker}

\subsection{The Jacquet Integral}
\label{sect:Jacquetint}

Let $\pi = \pi_1 \boxplus \cdots \boxplus \pi_r$ be an induced representation of Whittaker type of $\GL_n(F)$, so that each $\pi_j$ is of the form $\chi^{\kappa_j} |\cdot|^{t_j}$ or $D_{\kappa_j} \otimes \left|\det\right|^{t_j}$. Given an element $f$ of the induced model $V_{\pi}$ of $\pi$, we define the Jacquet integral
\begin{equation}
\label{eq:Jacquetint}
W(g) \coloneqq \int_{\Ngp_n(F)} f(w_n ug) \overline{\psi_n}(u) \, du.
\end{equation}
This integral converges absolutely if $\Re(t_1) > \cdots > \Re(t_r)$ and defines a Whittaker function $W = W_f \in \WW(\pi,\psi)$; that is, as a function of $f \in V_{\pi}$, $\Lambda(f) \coloneqq W_f(1_n)$ defines a Whittaker functional, which is therefore unique up to scalar multiplication.

Wallach \cite{Wal92} has shown that the Jacquet integral gives a Whittaker functional for \emph{all} induced representations of Whittaker type, and not just those for which $\Re(t_1) > \cdots > \Re(t_r)$, via analytic continuation in the following way. Write $\pi = \pi_{t_1,\ldots,t_r}$ for such a representation, and let $V_{\pi} = V_{\pi_{t_1,\ldots,t_r}}$ denote its induced model. Fixing each $\chi^{\kappa_j}$ and $D_{\kappa_j}$ but regarding $t_j$ as a complex variable, we may view the space $V_{\pi_{t_1,\ldots,t_r}}$ as a holomorphic fibre bundle. A section $f_{t_1,\ldots,t_r}(g;m')$ is a map from $\GL_n(F) \times \Mgp_{\Pgp}(F) \times \C^r$ to $\C$ such that $f_{t_1,\ldots,t_r}(\cdot;\cdot)$ is an element of $V_{\pi_{t_1,\ldots,t_r}}$ for each fixed $(t_1,\ldots,t_r) \in \C^r$; a standard section (or flat section) is a section for which $f_{t_1,\ldots,t_r}(k;1_n)$ is independent of $(t_1,\ldots,t_r) \in \C^r$ for all $k \in K$. From \cite[Theorem 15.4.1]{Wal92}, the Jacquet integral \eqref{eq:Jacquetint} evaluated on a standard section extends holomorphically as a function of $(t_1,\ldots,t_r) \in \C^r$ with $\Re(t_1) > \cdots > \Re(t_r)$ to all of $\C^r$, and hence via analytic continuation defines an equivariant map from $V_{\pi_{t_1,\ldots,t_r}}$ to $\WW(\pi_{t_1,\ldots,t_r},\psi)$.

From this, we see that the newform $f^{\circ}$ in the induced model $V_{\pi}$ defined via the Iwasawa decomposition \eqref{eq:fcirc} gives a standard section of newforms $f_{t_1,\ldots,t_r}^{\circ}$ provided that we choose the normalising constant $c^{\circ}$ to be independent of $(t_1,\ldots,t_r) \in \C^r$. The corresponding Whittaker function is then given via the analytic continuation of the Jacquet integral \eqref{eq:Jacquetint}. Furthermore, we may choose the normalising constant $c^{\circ}$ to be dependent on $(t_1,\ldots,t_r) \in \C^r$ and still obtain the corresponding Whittaker function via the analytic continuation of the Jacquet integral so long as $c^{\circ}$ is holomorphic as a function of $(t_1,\ldots,t_r) \in \C^r$.

With this in mind, we may now define the canonically normalised newform in the induced and Whittaker models.

\begin{definition}
\label{def:cannorm}
Let $\pi = \pi_1 \boxplus \cdots \boxplus \pi_r$ be an induced representation of Langlands type of $\GL_n(F)$. The canonically normalised newform $f^{\circ}$ in the induced model $V_{\pi}$ is defined via \eqref{eq:fcircchi} and \eqref{eq:fcircD} with normalising constant \eqref{eq:cannormsq} if $r = 1$, while for $r \geq 2$, it is defined via \eqref{eq:fcirc} with normalising constant
\[c^{\circ} \coloneqq \prod_{j = 1}^{r - 1} \prod_{\ell = j + 1}^{r} i^{c(\pi_{\ell})} \begin{dcases*}
L\left(1 + t_j + \frac{\|\kappa_j\|}{d_F}, \widetilde{\pi_{\ell}}\right) & if $\pi_j = \chi^{\kappa_j} |\cdot|^{t_j}$,	\\
L\left(1 + t_j + \frac{\kappa_j - 1}{2}, \widetilde{\pi_{\ell}}\right) L\left(1 + t_j + \frac{\kappa_j + 1}{2}, \widetilde{\pi_{\ell}}\right) & if $\pi_j = D_{\kappa_j} \otimes \left|\det\right|^{t_j}$.
\end{dcases*}\]
The canonically normalised newform $W^{\circ}$ in the Whittaker model $\WW(\pi,\psi)$ is given by the analytic continuation of the Jacquet integral \eqref{eq:Jacquetint} of the canonically normalised newform $f^{\circ}$. We call $W^{\circ}$ the Whittaker newform.
\end{definition}

\begin{remark}
When $\pi$ is spherical, some authors refer to the canonically normalised Whittaker function as the completed Whittaker function; see, for example, \cite[Section 2.1]{BHM20}. We follow the nomenclature of \cite[Section 8]{GMW21}.
\end{remark}

Recalling the identities \eqref{eq:L(s,tildepi)C}, \eqref{eq:L(s,tildepi)R1}, and \eqref{eq:L(s,tildepi)R2} relating $L$-functions to zeta functions, we observe that the normalising constant $c^{\circ}$ is well-defined since $\zeta_F(s)$ is holomorphic for $\Re(s) > 0$ and $\pi$ being an induced representation of Langlands type means that $\Re(t_1) \geq \cdots \geq \Re(t_r)$. We also note that if $\pi = \pi_1 \boxplus \pi_2 \boxplus \cdots \boxplus \pi_r$ and $\pi_0 \coloneqq \pi_2 \boxplus \cdots \boxplus \pi_r$, then the associated normalising constants satisfy the relation
\begin{equation}
\label{eq:ccirctoc0circ}
c^{\circ} = \begin{dcases*}
i^{c(\pi_0)} L\left(1 + t_1 + \frac{\|\kappa_1\|}{d_F}, \widetilde{\pi_0}\right) c_0^{\circ} & if $\pi_1 = \chi^{\kappa_1} |\cdot|^{t_1}$,	\\
i^{c(\pi_0)} L\left(1 + t_1 + \frac{\kappa_1 - 1}{2}, \widetilde{\pi_0}\right) L\left(1 + t_1 + \frac{\kappa_1 + 1}{2}, \widetilde{\pi_0}\right) c_0^{\circ} & if $\pi_1 = D_{\kappa_1} \otimes \left|\det\right|^{t_1}$.
\end{dcases*}
\end{equation}
This is a consequence of \hyperref[def:cannorm]{Definition \ref*{def:cannorm}}, \hyperref[thm:additiveconductor]{Theorem \ref*{thm:additiveconductor}}, and \eqref{eq:isobaricRS}.

\begin{remark}
While we do not prove this, our methods below can be extended to show that not only is the Whittaker newform well-defined for induced representations of Langlands type, it is also well-defined for induced representations of Whittaker type, including those for which $c^{\circ}$ is \emph{not} well-defined (note that in these cases, the Whittaker model is a model for a quotient of $\pi$ rather than for $\pi$ itself). Furthermore, one can show that the Whittaker newform of $\pi = \pi_1 \boxplus \cdots \boxplus \pi_r$ remains unchanged when $\pi$ is replaced by $\pi_{\sigma(1)} \boxplus \cdots \boxplus \pi_{\sigma(r)}$ for any permutation $\sigma \in S_r$ (cf.\ \hyperref[rem:permutation]{Remark \ref*{rem:permutation}}). When $\pi$ is a spherical induced representation of Whittaker type, these claims follow from the work of Jacquet \cite[Th\'{e}or\`{e}me (8.6)]{Jac67}.
\end{remark}

\subsection{The Newform via Convolution Sections}

We now use the convolution section identity \eqref{eq:Pieri} for the newform in the induced model together with the Jacquet integral \eqref{eq:Jacquetint} in order to give a convolution section identity for the Whittaker newform. This is a recursive formula for $W^{\circ}$ as an integral over $\GL_n(F)$ involving $W^{\circ}$ and the distinguished standard Schwartz function $\Phi$ given by \eqref{eq:Phistandard}.

\begin{lemma}
Let $\pi$ be an induced representation of Langlands type of $\GL_n(F)$ with Whittaker newform $W^{\circ} \in \WW(\pi,\psi)$. Then for all $h \in \GL_n(F)$ and for $\Re(s)$ sufficiently large,
\begin{equation}
\label{eq:WhittakerPieri}
\int_{\GL_n(F)} W^{\circ}(hg) \Phi(g) \left|\det g\right|^{s + \frac{n - 1}{2}} \, dg = L(s,\pi) W^{\circ}(h),
\end{equation}
where $\Phi \in \Ss_0(\Mat_{n \times n}(F))$ is the standard Schwartz function given by \eqref{eq:Phistandard}.
\end{lemma}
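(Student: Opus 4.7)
The plan is to reduce this statement to the convolution section identity \eqref{eq:Pieri} for the newform in the induced model by substituting the Jacquet integral \eqref{eq:Jacquetint} for $W^{\circ}$ and interchanging the order of integration. First I would restrict to induced representations with $\Re(t_1) > \cdots > \Re(t_r)$ strictly, so that the Jacquet integral converges absolutely and
\[W^{\circ}(hg) = \int_{\Ngp_n(F)} f^{\circ}(w_n u h g) \overline{\psi_n}(u) \, du.\]
Substituting this into the left-hand side of \eqref{eq:WhittakerPieri} and formally interchanging the integrals over $\Ngp_n(F)$ and $\GL_n(F)$, one obtains
\[\int_{\Ngp_n(F)} \overline{\psi_n}(u) \int_{\GL_n(F)} f^{\circ}((w_n u h) g) \Phi(g) \left|\det g\right|^{s + \frac{n-1}{2}} \, dg \, du.\]
By \hyperref[prop:Pieri]{Proposition \ref*{prop:Pieri}} applied with base point $w_n u h$, the inner integral collapses to $L(s,\pi) f^{\circ}(w_n u h)$, and what remains outside is precisely the Jacquet integral for $W^{\circ}(h)$, giving $L(s,\pi) W^{\circ}(h)$.

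The main obstacle will be justifying the interchange via Fubini's theorem. To do this I would use the explicit Iwasawa-decomposition formula \eqref{eq:fcirc} for $f^{\circ}$ to obtain a gauge estimate for $|f^{\circ}(w_n u h g)|$ in terms of the diagonal part of the Iwasawa decomposition of $w_n u h g$, combined with the Gaussian decay of $\Phi \in \Ss_0(\Mat_{n \times n}(F))$ recorded in \eqref{eq:Phistandard}. For $\Re(s)$ sufficiently large, the absolute value of the integrand should be dominated by a product of a function whose integral over $\Ngp_n(F)$ is convergent (this is where the hypothesis $\Re(t_j) > \Re(t_{j+1})$ enters) and a rapidly decaying function of $g$ against a polynomial weight, so that the double integral converges absolutely. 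This estimation, while essentially routine given the explicit form of $f^{\circ}$ and $\Phi$, is the step that requires the most care.

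With absolute convergence in hand, the identity \eqref{eq:WhittakerPieri} will hold in the region $\Re(t_1) > \cdots > \Re(t_r)$ for $\Re(s)$ sufficiently large. To extend it to all induced representations of Langlands type, I would invoke analytic continuation in the parameters $(t_1, \ldots, t_r)$. Viewing the canonically normalised newform as a standard section up to the holomorphic normalising constant $c^{\circ}$ of \hyperref[def:cannorm]{Definition \ref*{def:cannorm}}, the Jacquet integral extends holomorphically in $(t_1,\ldots,t_r)$ by the result of Wallach recalled in \hyperref[sect:Jacquetint]{Section \ref*{sect:Jacquetint}}, so $W^{\circ}$ depends holomorphically on these parameters; the same is true of $L(s,\pi)$. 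The left-hand side of \eqref{eq:WhittakerPieri} likewise depends holomorphically on $(t_1,\ldots,t_r)$ for $\Re(s)$ large, since \eqref{eq:Pieri} expresses the corresponding induced-model convolution as $L(s,\pi) f^{\circ}$ (again holomorphic in $(t_1,\ldots,t_r)$), and the Jacquet integral of this still makes sense. Both sides therefore agree on a non-empty open subset of the parameter space and coincide throughout, yielding \eqref{eq:WhittakerPieri} for all induced representations of Langlands type.
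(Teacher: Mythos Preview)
Your proposal is correct and follows essentially the same approach as the paper: substitute the Jacquet integral, interchange the order of integration to apply \hyperref[prop:Pieri]{Proposition \ref*{prop:Pieri}} at the base point $w_n u h$, and then extend by analytic continuation in $(t_1,\ldots,t_r)$. The only notable differences are that the paper additionally takes $\Re(t_1)$ sufficiently large in the initial region (not just $\Re(t_1)>\cdots>\Re(t_r)$) to streamline the Fubini justification, and for the analytic continuation it cites the absolute convergence of the left-hand side for general Langlands-type $\pi$ from \cite[Lemma 3.2 (ii) and Proposition 3.3]{Jac09} rather than arguing holomorphicity directly.
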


\begin{proof}
We show this initially for $\Re(t_1) > \cdots > \Re(t_r)$ with $\Re(t_1)$ sufficiently large; this identity then extends via analytic continuation to all induced representations of Langlands type, for the left-hand side is absolutely convergent for $\Re(s)$ sufficiently large by \cite[Lemma 3.2 (ii) and Proposition 3.3]{Jac09}.

We replace $h$ with $w_n uh$ in the convolution section identity \eqref{eq:Pieri} for $f^{\circ}$ and insert this identity into the Jacquet integral \eqref{eq:Jacquetint}. The result then follows upon interchanging the order of integration, which is justified by the absolute convergence of the Jacquet integral together with the absolute convergence of the integral \eqref{eq:Pieri}.
\end{proof}

\subsection{The Newform via Godement Sections}
\label{sect:WhittakernewformGodement}

Next, we use Godement section identities for the newform in the induced model together with the Jacquet integral \eqref{eq:Jacquetint} in order to give a Godement section identity for the Whittaker newform. This is a propagation formula: a recursive formula for $W^{\circ}$ in terms of an integral over $\GL_{n - 1}(F)$ involving a $\GL_{n - 1}$ Whittaker function and a distinguished standard Schwartz function.

\subsubsection{The Case $\pi_1 = \chi^{\kappa_1} |\cdot|^{t_1}$}

As in \hyperref[sect:newformGodement]{Section \ref*{sect:newformGodement}}, we first treat the case for which $\pi = \pi_1 \boxplus \cdots \boxplus \pi_r$ with $n_1 = 1$, so that $\pi_1 = \chi^{\kappa_1} |\cdot|^{t_1}$.

\begin{lemma}
\label{lem:propagationformula}
For $n \geq 2$, let $\pi = \pi_1 \boxplus \pi_2 \boxplus \cdots \boxplus \pi_r$ and $\pi_0 \coloneqq \pi_2 \boxplus \cdots \boxplus \pi_r$ be induced representations of Langlands type of $\GL_n(F)$ and $\GL_{n - 1}(F)$ with $\pi_1 = \chi^{\kappa_1} |\cdot|^{t_1}$. Let $W^{\circ} \in \WW(\pi,\psi)$ and $W_0^{\circ} \in \WW(\pi_0,\psi)$ be the Whittaker newforms of $\pi$ and $\pi_0$. Then for $g \in \GL_{n - 1}(F)$,
\begin{equation}
\label{eq:WcircviaW0circ}
W^{\circ} \begin{pmatrix} g & 0 \\ 0 & 1 \end{pmatrix} = \left|\det g\right|^{t_1 + \frac{\|\kappa_1\|}{d_F} + \frac{n - 1}{2}} \int_{\GL_{n - 1}(F)} W_0^{\circ}(h) \Phi_1(h^{-1} g) \Phi_0^{\circ}(e_{n - 1} h) \left|\det h\right|^{-t_1 - \frac{\|\kappa_1\|}{d_F} - \frac{n}{2} + 1} \, dh,
\end{equation}
with the standard Schwartz functions $\Phi_1 \in \Ss_0(\Mat_{(n - 1) \times (n - 1)}(F))$ and $\Phi_0^{\circ} \in \Ss_0(\Mat_{1 \times (n - 1)}(F))$ given by
\begin{align}
\label{eq:Phi1}
\Phi_1(x_1) & \coloneqq \exp\left(-d_F \pi \Tr\left(x_1 \prescript{t}{}{\overline{x_1}}\right)\right),	\\
\label{eq:Phi0circ}
\Phi_0^{\circ}(x_2) & \coloneqq \left(\dim \tau_0^{\circ}\right) \overline{P_0^{\circ}}(x_2) \exp\left(-d_F \pi x_2 \prescript{t}{}{\overline{x_2}}\right),
\end{align}
where $P_0^{\circ}$ is the homogeneous harmonic polynomial associated to the newform $K_{n - 1}$-type $\tau_0^{\circ}$ of $\pi_0$ via \eqref{eq:PcircUn} and \eqref{eq:PcircOn}.
\end{lemma}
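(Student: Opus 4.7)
The plan is to derive~\eqref{eq:WcircviaW0circ} by inserting the Godement section identity~\eqref{eq:fviaf0} into the Jacquet integral~\eqref{eq:Jacquetint}, assuming initially that $\Re(t_1)$ is large enough for~\eqref{eq:fviaf0} to apply. Decompose $u \in \Ngp_n(F)$ as $u = \begin{psmallmatrix} u_0 & v \\ 0 & 1 \end{psmallmatrix}$ with $u_0 \in \Ngp_{n - 1}(F)$ and $v \in \Mat_{(n-1) \times 1}(F)$, so that $du = du_0\,dv$ and $\psi_n(u) = \psi_{n - 1}(u_0)\,\psi(v_{n - 1})$. A direct block-matrix calculation gives
\[
\begin{pmatrix} 0 & 1_{n-1} \end{pmatrix} w_n u \begin{pmatrix} g & 0 \\ 0 & 1 \end{pmatrix} = \begin{pmatrix} w_{n-1} u_0 g & w_{n-1} v \end{pmatrix},
\]
and the Schwartz function $\Phi$ of~\eqref{eq:fviaf0} factorises cleanly as $\Phi_{(1)}(x_1)\,\Phi_{(2)}(\prescript{t}{}{\overline{x_2}})$ separating the first $n-1$ columns $x_1$ from the last column $x_2$. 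After the innocuous substitution $h \mapsto w_{n-1} h$ (which eliminates the leading $w_{n-1}$ in the $\Phi$-argument while turning $f_0^{\circ}(h)$ into $f_0^{\circ}(w_{n-1} h)$), I interchange the order of integration; absolute convergence for $\Re(t_1)$ large follows from the bounds in \cite[Lemma 3.3]{Jac09} applied to both~$f_0^{\circ}$ and~$\Phi$.

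Next, I substitute $h \mapsto u_0 h$, which turns $f_0^{\circ}(w_{n-1}h)$ into $f_0^{\circ}(w_{n-1} u_0 h)$ and cancels the $u_0$ in $h^{-1} u_0 g \mapsto h^{-1} g$, at the cost of introducing $u_0^{-1}$ into the last-column argument; the further substitution $v \mapsto u_0 v$ (trivial Jacobian, and preserving $\overline{\psi}(v_{n-1})$ since $u_0$ is upper-triangular unipotent) removes this remnant. The $u_0$-integral then collapses to
\[
\int_{\Ngp_{n-1}(F)} f_0^{\circ}(w_{n-1} u_0 h)\,\overline{\psi_{n-1}}(u_0)\,du_0 = W_0^{\circ}(h)
\]
exactly by the definition of the Jacquet integral for $\pi_0$. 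For the remaining $v$-integration, I substitute $v = hw$ (Jacobian $|\det h|_F$, as established uniformly for $F = \R, \C$) and rewrite $\overline{\psi}((hw)_{n-1}) = \overline{\psi}(y\,\prescript{t}{}{\overline{e_{n-1}h}})$ with $y = \prescript{t}{}{\overline{w}}$; Hecke's identity (\hyperref[lem:HeckeC]{Lemmata \ref*{lem:HeckeC}} and~\ref{lem:HeckeR}) applied to $\overline{P_0^{\circ}}$ then yields
\[
\int_{F^{n-1}} \Phi_{(2)}(\prescript{t}{}{\overline{h^{-1} v}})\,\overline{\psi}(v_{n-1})\,dv = i^{-c(\pi_0)} |\det h|_F\,\Phi_0^{\circ}(e_{n-1} h),
\]
since $\deg \overline{P_0^{\circ}} = c(\pi_0)$.

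It remains to collect all scalar prefactors and show they collapse to unity. A uniform case analysis shows that $\overline{P_1^{\circ}}(\det(h^{-1} g)) = \chi^{-\kappa_1}(\det(h^{-1}g))\,|\det(h^{-1}g)|^{\|\kappa_1\|/d_F}$; combining this with the explicit $\chi^{\kappa_1}(\det g)$ prefactor and $\chi^{-\kappa_1}(\det h)$ inside the integral of~\eqref{eq:fviaf0} produces exactly the desired pure powers of $|\det g|$ and $|\det h|$, with all characters cancelling. The remaining numerical constant is the product of the explicit prefactor of~\eqref{eq:fviaf0}, the factor $i^{-c(\pi_0)}$ from Hecke's identity, and a sign $\chi^{\kappa_1}(\det w_n)\,\chi^{-\kappa_1}(\det w_{n-1}) = (-1)^{\kappa_1(n-1)}$ from the two Weyl elements; this last cancels the explicit $(-1)^{\kappa_1(n-1)}$ in~\eqref{eq:fviaf0}, and then~\eqref{eq:ccirctoc0circ}, which reads $c^{\circ}/c_0^{\circ} = i^{c(\pi_0)} L\bigl(1 + t_1 + \|\kappa_1\|/d_F,\widetilde{\pi_0}\bigr)$ in the present case, cancels both the $L$-factor denominator and the $i^{-c(\pi_0)}$. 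I expect the main obstacle to lie precisely in this bookkeeping of signs and normalising scalars: the prefactor $(-1)^{\kappa_1(n - 1)}$ of~\eqref{eq:fviaf0} and the choice of $c^{\circ}$ in \hyperref[def:cannorm]{Definition \ref*{def:cannorm}} were engineered exactly so that this cancellation is clean. Once~\eqref{eq:WcircviaW0circ} is established for $\Re(t_1) > \cdots > \Re(t_r)$ with $\Re(t_1)$ sufficiently large, it extends to all induced representations of Langlands type by analytic continuation of the standard section, as explained in \hyperref[sect:Jacquetint]{Section \ref*{sect:Jacquetint}}.
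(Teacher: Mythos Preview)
Your proposal is correct and follows essentially the same route as the paper: insert the Godement section identity \eqref{eq:fviaf0} into the Jacquet integral, perform the combined change of variables $h \mapsto w_{n-1} u_0 h$ and $v \mapsto u_0 h v$ (which you carry out in stages), identify the $u_0$-integral as $W_0^{\circ}(h)$ and the $v$-integral via Hecke's identity, and then track the scalar cancellations using \eqref{eq:ccirctoc0circ}. The only point where the paper is slightly more careful is the analytic continuation step: you invoke \hyperref[sect:Jacquetint]{Section \ref*{sect:Jacquetint}} for the left-hand side, but the right-hand side of \eqref{eq:WcircviaW0circ} is not a priori defined for general Langlands-type parameters, and the paper appeals specifically to \cite[Proposition 7.2]{Jac09} to justify that this integral expression itself extends holomorphically.
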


\begin{remark}
When $n = 2$, so that $\pi = \chi^{\kappa_1} |\cdot|^{t_1} \boxplus \chi^{\kappa_2} |\cdot|^{t_2}$, the integral over $\GL_1(F) = F^{\times} \ni h$ in \eqref{eq:WcircviaW0circ} may be explicitly evaluated in order to show that
\[W^{\circ}\begin{pmatrix} g & 0 \\ 0 & 1 \end{pmatrix} = \begin{dcases*}
2 |g|^{\frac{t_1 + t_2 + 1 + \kappa_1 + \kappa_2}{2}} K_{\frac{t_1 - t_2 + \kappa_1 - \kappa_2}{2}}(2\pi|g|) & if $F = \R$,	\\
4 |g|^{\frac{t_1 + t_2 + 1}{2} + \frac{\|\kappa_1\| + \|\kappa_2\|}{4}} K_{t_1 - t_2 + \frac{\|\kappa_1\| - \|\kappa_2\|}{2}}(4\pi|g|^{1/2}) & if $F = \C$,
\end{dcases*}\]
where $K_{\nu}(z)$ denotes the modified Bessel function of the second kind.
\end{remark}

\begin{proof}
We show this initially for $\Re(t_1) > \cdots > \Re(t_r)$ with $\Re(t_1)$ sufficiently large; from \cite[Proposition 7.2]{Jac09}, this identity then extends via analytic continuation to all induced representations of Langlands type (note that Jacquet instead works with representations that are induced from a \emph{lower} parabolic subgroup rather than an upper parabolic subgroup). We may write
\[W^{\circ} \begin{pmatrix} g & 0 \\ 0 & 1 \end{pmatrix} = \int\limits_{\Mat_{(n - 1) \times 1}(F)} \int_{\Ngp_{n - 1}(F)} f^{\circ}\left(\begin{pmatrix} 0 & 1 \\ w_{n - 1} & 0 \end{pmatrix} \begin{pmatrix} u & v \\ 0 & 1 \end{pmatrix} \begin{pmatrix} g & 0 \\ 0 & 1 \end{pmatrix}\right) \overline{\psi_{n - 1}}(u) \overline{\psi}(e_{n - 1} v) \, du \, dv\]
from the definition \eqref{eq:Jacquetint} of the Jacquet integral. We insert the Godement section identity \eqref{eq:fviaf0} for $f^{\circ}$ with $g$ replaced by $\begin{psmallmatrix} 0 & 1 \\ w_{n - 1} & 0 \end{psmallmatrix} \begin{psmallmatrix} u & v \\ 0 & 1 \end{psmallmatrix} \begin{psmallmatrix} g & 0 \\ 0 & 1 \end{psmallmatrix}$ into this expression, additionally inserting the identity \eqref{eq:ccirctoc0circ} for the normalising constant $c^{\circ}$. As explicated in \cite[Section 7.2]{Jac09}, the ensuing double integral is absolutely convergent, so that we can make the change of variables $h \mapsto w_{n - 1} u h$ and $v \mapsto uhv$; we find that $W^{\circ}\begin{psmallmatrix} g & 0 \\ 0 & 1 \end{psmallmatrix}$ is equal to
\begin{multline*}
i^{c(\pi_0)} (-1)^{\kappa_1(n - 1)} \chi^{\kappa_1}(\det w_n) \chi^{-\kappa_1}(\det w_{n - 1}) \chi^{\kappa_1}(\det g) \left|\det g\right|^{t_1 + \frac{n - 1}{2}}	\\
\times \int_{\GL_{n - 1}(F)} \overline{P_1^{\circ}}\left(\det\left(h^{-1} g\right)\right) \exp\left(-d_F \pi \Tr\left(h^{-1} g \prescript{t}{}{\overline{g}} \prescript{t}{}{\overline{h}}^{-1}\right)\right) \chi^{-\kappa_1}(\det h) \left|\det h\right|^{-t_1 - \frac{n}{2} + 1}	\\
\times (\dim \tau_0^{\circ}) \int\limits_{\Mat_{(n - 1) \times 1}(F)} \overline{P_0^{\circ}}\left(\prescript{t}{}{\overline{v}}\right) \exp\left(-d_F \pi \prescript{t}{}{\overline{v}} v\right) \overline{\psi}(e_{n - 1} hv) \, dv \int_{\Ngp_{n - 1}(F)} f_0^{\circ}(w_{n - 1} uh) \overline{\psi_{n - 1}}(u) \, du \, dh.
\end{multline*}
The integral over $\Mat_{(n - 1) \times 1}(F) \ni v$ is equal to
\[i^{-c(\pi_0)} \overline{P_0^{\circ}}(e_{n - 1} h) \exp(-\pi e_{n - 1} h \prescript{t}{}{h} \prescript{t}{}{e}_{n - 1})\]
via Hecke's identity, \hyperref[lem:HeckeC]{Lemmata \ref*{lem:HeckeC}} and \ref{lem:HeckeR}, while the integral over $\Ngp_{n - 1}(F) \ni u$ is equal to $W_0^{\circ}(h)$ via \eqref{eq:Jacquetint}. It remains to use \eqref{eq:abstochar} in conjunction with the definition of $P_1^{\circ}$ as well as to note that $\det w_n \det w_{n - 1} = (-1)^{n - 1}$, so that $\chi^{\kappa_1}(\det w_n) \chi^{-\kappa_1}(\det w_{n - 1}) = (-1)^{\kappa_1(n - 1)}$.
\end{proof}

\begin{remark}
For spherical Whittaker functions, such a propagation formula (in a slightly modified form) is due to Gerasimov, Lebedev, and Oblezin \cite[Proposition 4.1]{GLO08} and Ishii and Stade \cite[Proposition 2.1]{IsSt13} (cf.\ \cite[Appendix A]{IM22}); iterating this propagation formula gives a recursive formula for $\GL_n(F)$ Whittaker functions in terms of $\GL_2(F)$ and $\GL_{n - 2}(F)$ Whittaker functions known earlier by the work of Stade \cite[Theorem 2.1]{Sta90}.
\end{remark}

We also require the following propagation formula for $W^{\prime\circ} \in \WW(\pi',\overline{\psi})$ when $\pi'$ is spherical, which follows analogously to \hyperref[lem:propagationformula]{Lemma \ref*{lem:propagationformula}}.

\begin{lemma}
For $n \geq 2$, let $\pi' = |\cdot|^{t_1'} \boxplus |\cdot|^{t_2'} \boxplus \cdots \boxplus |\cdot|^{t_n'}$ and $\pi_0' \coloneqq |\cdot|^{t_2'} \boxplus \cdots \boxplus |\cdot|^{t_n'}$ be spherical representations of Langlands type of $\GL_n(F)$ and $\GL_{n - 1}(F)$. Let $W^{\prime\circ} \in \WW(\pi',\overline{\psi})$ and $W_0^{\prime\circ} \in \WW(\pi_0',\overline{\psi})$ be the spherical Whittaker functions of $\pi'$ and $\pi_0'$. Then for $g \in \GL_n(F)$,
\begin{multline}
\label{eq:W'circviaW0'circ}
W^{\prime\circ}(g) = \left|\det g\right|^{t_1' + \frac{n - 1}{2}} \int_{\GL_{n - 1}(F)} W_0^{\prime\circ}(h) \left|\det h\right|^{-t_1' - \frac{n}{2}}	\\
\times \int\limits_{\Mat_{(n - 1) \times 1}(F)} \Phi'\left(h^{-1} \begin{pmatrix} 1_{n - 1} & v \end{pmatrix} g\right) \psi(e_{n - 1} v) \, dv \, dh,
\end{multline}
with the standard Schwartz function $\Phi' \in \Ss_0(\Mat_{(n - 1) \times n}(F))$ given by
\begin{equation}
\label{eq:Phi'}
\Phi'(x) \coloneqq \exp\left(-d_F \pi \Tr\left(x \prescript{t}{}{\overline{x}}\right)\right).
\end{equation}
\end{lemma}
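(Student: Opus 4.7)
The plan is to mirror the proof of Lemma 9.3, substituting the Godement section representation \eqref{eq:fviaf0} of $f^{\prime\circ}$ into the Jacquet integral defining $W^{\prime\circ}$. I would work initially in the region where $\Re(t_1') > \cdots > \Re(t_n')$ with $\Re(t_1')$ large enough that both the Jacquet integral and the Godement section identity are available, then extend to all spherical representations of Langlands type by analytic continuation in the Langlands parameters, as in \cite[Proposition 7.2]{Jac09}.

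First, I would unfold the Jacquet integral by writing a general $u_{\mathrm{total}} \in \Ngp_n(F)$ as $\begin{psmallmatrix} u & v \\ 0 & 1 \end{psmallmatrix}$ with $u \in \Ngp_{n-1}(F)$ and $v \in \Mat_{(n-1) \times 1}(F)$, using the factorisation $\psi_n(u_{\mathrm{total}}) = \psi_{n-1}(u)\psi(e_{n-1}v)$ and the non-square block identity
\[w_n \begin{pmatrix} u & v \\ 0 & 1 \end{pmatrix} = \begin{pmatrix} 0 & 1 \\ w_{n-1} & 0 \end{pmatrix} \begin{pmatrix} u & v \\ 0 & 1 \end{pmatrix}.\]
Note that $W^{\prime\circ} \in \WW(\pi',\overline{\psi})$ means the Jacquet integral is against $\psi_n$ rather than $\overline{\psi_n}$.

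Next, I would insert \eqref{eq:fviaf0} taking $\pi_1' = |\cdot|^{t_1'}$ as the first isobaric summand and $\pi_0' = |\cdot|^{t_2'} \boxplus \cdots \boxplus |\cdot|^{t_n'}$. In this spherical setting $\kappa_1 = 0$ gives $\overline{P_1^\circ} \equiv 1$, and $\overline{P_0^\circ} \equiv 1$ since $\pi_0'$ is spherical; the Godement Schwartz function therefore collapses to $\exp(-d_F \pi \Tr(x\prescript{t}{}{\overline{x}})) = \Phi'(x)$ as in \eqref{eq:Phi'}, and the prefactor $c^{\prime\circ}/(c_0^{\prime\circ} L(1 + t_1', \widetilde{\pi_0'}))$ equals $1$ by \eqref{eq:ccirctoc0circ}. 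A direct block calculation gives $(0, 1_{n-1}) \begin{psmallmatrix} 0 & 1 \\ w_{n-1} & 0 \end{psmallmatrix} = (w_{n-1}, 0)$, so the argument of $\Phi'$ becomes $h^{-1}(w_{n-1} u, w_{n-1} v) g$. I would then apply the changes of variables $h \mapsto w_{n-1} u h$ and $v \mapsto uv$: the first converts $f_0^{\prime\circ}(h)$ into $f_0^{\prime\circ}(w_{n-1} u h)$ and reduces the first block of the $\Phi'$ argument to $h^{-1}$, while the second leaves $\psi(e_{n-1} v)$ invariant, since $e_{n-1} u = e_{n-1}$ for $u \in \Ngp_{n-1}(F)$, and reduces the second block to $h^{-1} v$. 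The resulting inner integral over $u$ is $\int_{\Ngp_{n-1}(F)} f_0^{\prime\circ}(w_{n-1} u h) \psi_{n-1}(u) \, du = W_0^{\prime\circ}(h)$, and factoring $h^{-1}(1_{n-1}, v)$ out of $\Phi'$ then yields \eqref{eq:W'circviaW0'circ}.

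The main obstacle is justifying the interchanges of iterated integrals and extending the identity in the parameters. Absolute convergence for $\Re(t_1')$ sufficiently large follows from the Gaussian decay of $\Phi'$ combined with the standard bounds on $f_0^{\prime\circ}$ in \cite[Lemma 3.3 (ii)]{Jac09}, so that Fubini applies throughout. For the extension, both sides of \eqref{eq:W'circviaW0'circ} are holomorphic in $(t_1', \ldots, t_n')$ on the full parameter space: the left-hand side by the analytic continuation of the Jacquet integral on canonically normalised standard sections recalled in \hyperref[sect:Jacquetint]{Section \ref*{sect:Jacquetint}}, and the right-hand side by the same mechanism applied to $W_0^{\prime\circ}$ together with the parameter-independence of $\Phi'$; hence the identity propagates from the deep Langlands region to all spherical representations of Langlands type.
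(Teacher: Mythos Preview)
Your proposal is correct and takes precisely the approach the paper intends: the paper's own proof is the one-line remark ``follows analogously to \hyperref[lem:propagationformula]{Lemma \ref*{lem:propagationformula}}'', and you have faithfully executed that analogy, correctly noting that in the spherical case the polynomials $P_1^{\circ}$ and $P_0^{\circ}$ are identically $1$ (so the Godement Schwartz function collapses to $\Phi'$), that the character in the Jacquet integral is $\psi_n$ rather than $\overline{\psi_n}$, and that the integral over $v$ is left unevaluated since $g$ ranges over all of $\GL_n(F)$ rather than the embedded $\GL_{n-1}(F)$. The one cosmetic difference is that the paper's proof of \hyperref[lem:propagationformula]{Lemma \ref*{lem:propagationformula}} cites \cite[Section 7.2]{Jac09} rather than \cite[Lemma 3.3 (ii)]{Jac09} for the absolute convergence, but this is immaterial.
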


\subsubsection{The Case $\pi_1 = D_{\kappa_1} \otimes \left|\det\right|^{t_1}$}

We next treat the case for which $\pi = \pi_1 \boxplus \cdots \boxplus \pi_r$ with $n_1 = 2$, so that $F = \R$ and $\pi_1 = D_{\kappa_1} \otimes \left|\det\right|^{t_1}$.

\begin{lemma}
For $n \geq 2$, let $\pi = \pi_1 \boxplus \pi_2 \boxplus \cdots \boxplus \pi_r$ and $\pi_0^{\ast} \coloneqq \pi_1^{\ast} \boxplus \pi_2 \boxplus \cdots \boxplus \pi_r$ be induced representations of Langlands type of $\GL_n(\R)$ and $\GL_{n - 1}(\R)$ with $\pi_1 = D_{\kappa_1} \otimes \left|\det\right|^{t_1}$ and $\pi_1^{\ast} \coloneqq |\cdot|^{t_1 + \frac{\kappa_1 + 1}{2}}$. Let $W^{\circ} \in \WW(\pi,\psi)$ and $W_0^{\ast\circ} \in \WW(\pi_0^{\ast},\psi)$ be the Whittaker newforms of $\pi$ and $\pi_0^{\ast}$. Then for $g \in \GL_{n - 1}(\R)$,
\begin{equation}
\label{eq:WcircviaW0astcirc}
W^{\circ} \begin{pmatrix} g & 0 \\ 0 & 1 \end{pmatrix} = \left|\det g\right|^{t_1 + \frac{\kappa_1 - 1}{2} + \frac{n - 1}{2}} \int_{\GL_{n - 1}(\R)} W_0^{\ast\circ}(h) \Phi_1(h^{-1} g) \Phi_0^{\circ}(e_{n - 1} h) \left|\det h\right|^{-t_1 - \frac{\kappa_1 - 1}{2} - \frac{n}{2} + 1} \, dh,
\end{equation}
with the standard Schwartz functions $\Phi_1 \in \Ss_0(\Mat_{(n - 1) \times (n - 1)}(\R))$ as in \eqref{eq:Phi1} and $\Phi_0^{\ast\circ} \in \Ss_0(\Mat_{1 \times (n - 1)}(\R))$ given by
\begin{equation}
\label{eq:Phi0astcirc}
\Phi_0^{\ast\circ}(x_2) \coloneqq \left(\dim \tau_0^{\ast\circ}\right) \overline{P_0^{\ast\circ}}(x_2) \exp\left(-\pi x_2 \prescript{t}{}{x}_2\right),
\end{equation}
where $P_0^{\ast\circ}$ is the homogeneous harmonic polynomial associated to the newform $\Ogp(n - 1)$-type $\tau_0^{\ast\circ}$ of $\pi_0^{\ast}$ via \eqref{eq:PcircOn}.
\end{lemma}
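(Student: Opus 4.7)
The plan is to mirror the proof of the preceding propagation formula for the character case (\hyperref[lem:propagationformula]{Lemma \ref*{lem:propagationformula}}), substituting the Godement section identity \eqref{eq:fviaf02} for the discrete series in place of \eqref{eq:fviaf0}. I will first establish the identity assuming $\Re(t_1) > \Re(t_2) > \cdots > \Re(t_r)$ with $\Re(t_1)$ sufficiently large, and then extend to all induced representations of Langlands type by analytic continuation in $(t_1, \ldots, t_r) \in \C^r$, as justified by \cite[Proposition 7.2]{Jac09}. The case $n = 2$ --- where $\pi = D_{\kappa_1} \otimes |\det|^{t_1}$ and $\pi_0^{\ast}$ is the $\GL_1(\R)$-character $|\cdot|^{t_1 + (\kappa_1+1)/2}$ --- will be handled as a base case by substituting \eqref{eq:fviaf0ds} directly into the Jacquet integral and evaluating explicitly.

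For $n \geq 3$, I will decompose $u \in \Ngp_n(\R)$ in block form as $u = \begin{pmatrix} u' & v \\ 0 & 1 \end{pmatrix}$ with $u' \in \Ngp_{n-1}(\R)$ and $v \in \Mat_{(n-1) \times 1}(\R)$, so that the Jacquet integral for $W^{\circ}\begin{pmatrix} g & 0 \\ 0 & 1 \end{pmatrix}$ becomes an iterated integral over these two variables. Into this I will insert the identity \eqref{eq:fviaf02} for $f^{\circ}$ evaluated at $w_n u \begin{pmatrix} g & 0 \\ 0 & 1 \end{pmatrix}$, together with the discrete-series form of \eqref{eq:ccirctoc0circ} for $c^{\circ}/c_0^{\circ}$. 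For $\Re(t_1)$ sufficiently large, the resulting fivefold integral --- over $u'$, $v$, $h \in \GL_{n-2}(\R)$, $a_2 \in \R^{\times}$, and $v_3 \in \Mat_{1 \times (n-2)}(\R)$ --- is absolutely convergent by the estimates in \cite[Section 7.2]{Jac09}, so Fubini applies. A change of variables $h \mapsto w_{n-1} u' h$ will absorb the top block of $u'$ into $h$, while an affine shift of $v$ decouples the $v$-integral; applying Hecke's identity (\hyperref[lem:HeckeR]{Lemma \ref*{lem:HeckeR}}) to the resulting Gaussian integral against $\overline{P_0^{\ast\circ}}$ produces the factors $\Phi_1(h^{-1} g)$ and $i^{-c(\pi_0)} \Phi_0^{\ast\circ}(e_{n-1} h)$.

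The remaining integrations over $a_2$, $v_3$, and the residual nilpotent variables from $u'$ must then be recognised collectively as the Jacquet-integral construction of $W_0^{\ast\circ}(h)$, the Whittaker newform of $\pi_0^{\ast} = |\cdot|^{t_1+(\kappa_1+1)/2} \boxplus \pi_0$. The crucial point is that the weight $|a_2|^{-\kappa_1}$ appearing in \eqref{eq:fviaf02}, once combined with the change of variables inherited from the Iwasawa decomposition in the derivation of that identity, produces precisely the modulus character $|\cdot|^{t_1 + (\kappa_1+1)/2}$ corresponding to $\pi_1^{\ast}$; this is the origin of the shift $t_1 \mapsto t_1 + (\kappa_1+1)/2$ in the statement. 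Meanwhile, the $i^{-c(\pi_0)}$ factor from Hecke's identity cancels against the $i^{c(\pi_0)}$ in \eqref{eq:ccirctoc0circ}, and the $L$-factor in the denominator of \eqref{eq:fviaf02} cancels the matching $L$-factor in \eqref{eq:ccirctoc0circ}, leaving exactly the constants needed to match $W_0^{\ast\circ}$ with the normalisation fixed by \hyperref[def:cannorm]{Definition \ref*{def:cannorm}}. The principal obstacle is this final identification: verifying that the $a_2$-, $v_3$-, and residual nilpotent integrals reassemble into the Jacquet integral defining $W_0^{\ast\circ}(h)$ with the correct normalisation, and that the residual $i^{\kappa_1}$ from \eqref{eq:fviaf02} is absorbed correctly. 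The nontrivial choice of $t_1 + (\kappa_1+1)/2$ --- rather than either of the principal series exponents $t_1 \pm (\kappa_1-1)/2$ naturally associated to $D_{\kappa_1}$ --- is the main subtlety and the point at which the specific structure of the Iwasawa decomposition underlying \eqref{eq:fviaf02} becomes essential.
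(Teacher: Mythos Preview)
Your base case $n=2$ matches the paper, and your plan for the LHS when $n\geq 3$ --- insert \eqref{eq:fviaf02} into the Jacquet integral --- is also what the paper does. The gap is the step you flag yourself: ``the remaining integrations over $a_2$, $v_3$, and the residual nilpotent variables $\ldots$ must then be recognised collectively as the Jacquet-integral construction of $W_0^{\ast\circ}(h)$.'' This recognition is not direct. The Schwartz function in \eqref{eq:fviaf02} involves $P_1^{\circ}$ on $\R^2$ and $P_0^{\circ}$ on $\R^{n-2}$, whereas the $\Phi_0^{\ast\circ}$ on the right-hand side of \eqref{eq:WcircviaW0astcirc} involves $P_0^{\ast\circ}$ on $\R^{n-1}$; and $W_0^{\ast\circ}(h)$ must be evaluated at \emph{arbitrary} $h\in\GL_{n-1}(\R)$, not just block-diagonal $h$, so the already-proved propagation formula \eqref{eq:WcircviaW0circ} for $\pi_0^{\ast}$ cannot be applied to it directly. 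Your change of variables ``$h \mapsto w_{n-1} u' h$'' also mixes dimensions: the $h$ in \eqref{eq:fviaf02} lies in $\GL_{n-2}(\R)$, while $u'\in\Ngp_{n-1}(\R)$.

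The paper sidesteps this recognition problem by an indirect comparison. It introduces an auxiliary principal series $\pi^{\ast} = |\cdot|^{t_1^{\ast}} \boxplus |\cdot|^{t_2^{\ast}} \boxplus \pi_2 \boxplus \cdots \boxplus \pi_r$ of $\GL_n(\R)$ and computes $W^{\ast\circ}\begin{psmallmatrix} g & 0 \\ 0 & 1\end{psmallmatrix}$ two ways: once via \eqref{eq:WcircviaW0circ} (since the first factor is now a character), which yields precisely the right-hand side of \eqref{eq:WcircviaW0astcirc} as an integral in $W_0^{\ast\circ}$; and once via the companion Godement section \eqref{eq:fastviaf02}, which yields an integral in $W_0^{\circ}$, the $\GL_{n-2}$ newform. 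Specialising $t_1^{\ast}=t_1+\frac{\kappa_1-1}{2}$, $t_2^{\ast}=t_1+\frac{\kappa_1+1}{2}$ gives a common expression \eqref{eq:Wastcirc3} for the RHS in terms of $W_0^{\circ}$. The LHS is then computed by your method (inserting \eqref{eq:fviaf02} into the Jacquet integral), and the resulting expression in $W_0^{\circ}$ is matched term-by-term with \eqref{eq:Wastcirc3}; Hecke's identity on the $(v,v_3')$-integral kills the $P_1^{\circ}$ factor via $\overline{P_1^{\circ}}(0,a_2)=a_2^{\kappa_1}$, absorbing the residual $i^{\kappa_1}$. The auxiliary $\pi^{\ast}$ is the missing ingredient that lets you express the RHS in terms of $W_0^{\circ}$ without ever needing $W_0^{\ast\circ}$ at a general argument.
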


\begin{remark}
When $n = 2$, so that $\pi = D_{\kappa} \otimes \left|\det\right|^t$, the integral over $\GL_1(\R) = \R^{\times} \ni h$ in \eqref{eq:WcircviaW0astcirc} may be explicitly evaluated in order to show that
\[W^{\circ}\begin{pmatrix} g & 0 \\ 0 & 1 \end{pmatrix} = |g|^{t + \frac{\kappa}{2}} \exp(-2\pi|g|).\]
\end{remark}

\begin{proof}
First we consider the case $n = 2$, so that $\pi = D_{\kappa} \otimes \left|\det\right|^t$ and $\pi_0^{\ast} = |\cdot|^{t + \frac{\kappa + 1}{2}}$. We insert the Godement section identity \eqref{eq:fviaf0ds} for $f^{\circ}$ with $g$ replaced by $\begin{psmallmatrix} 0 & 1 \\ 1 & 0 \end{psmallmatrix} \begin{psmallmatrix} 1 & v' \\0 & 1 \end{psmallmatrix} \begin{psmallmatrix} g & 0 \\ 0 & 1 \end{psmallmatrix}$ into the Jacquet integral \eqref{eq:Jacquetint}, then interchange the order of integration and make the change of variables $v' \mapsto a_2 v'$, yielding
\begin{multline*}
W^{\circ}\begin{pmatrix} g & 0 \\ 0 & 1 \end{pmatrix} = i^{\kappa} |g|^{t + \frac{\kappa}{2}} \int_{\R^{\times}} |a_2| a_2^{-\kappa} \exp\left(-\pi (a_2^{-1} g)^2\right)	\\
\times \int_{\R} \int_{\R} \overline{P^{\circ}}(v,v') \exp\left(-\pi\left(v^2 + v^{\prime 2}\right)\right) \overline{\psi}(a_2 v') \, dv \, dv' \, d^{\times}a_2.
\end{multline*}
Via Hecke's identity, \hyperref[lem:HeckeR]{Lemma \ref*{lem:HeckeR}}, the integral over $\R^2 \ni (v,v')$ is
\[i^{-\kappa} \overline{P^{\circ}}(0,a_2) \exp(-\pi a_2^2) = i^{-\kappa} a_2^{\kappa} \exp(-\pi a_2^2),\]
and so we obtain the identity \eqref{eq:WcircviaW0astcirc} upon relabelling $a_2$ as $h$.

Now we consider the case $n \geq 3$. We again show this initially for $\Re(t_1) > \cdots > \Re(t_r)$ with $\Re(t_1)$ sufficiently large; from \cite[Proposition 7.2]{Jac09}, this identity then extends via analytic continuation to all induced representations of Langlands type. The derivation of the identity \eqref{eq:WcircviaW0astcirc} is somewhat indirect: we first determine an alternate expression for the right-hand side of \eqref{eq:WcircviaW0astcirc}, and then show that the left-hand side is equal to this expression.

To begin, let $\pi^{\ast} \coloneqq \pi_1^{\ast} \boxplus \pi_2^{\ast} \boxplus \pi_2 \boxplus \cdots \boxplus \pi_r$ and $\pi_0^{\ast} \coloneqq \pi_2^{\ast} \boxplus \pi_2 \boxplus \cdots \boxplus \pi_r$ be induced representations of Langlands type of $\GL_n(\R)$ and $\GL_{n - 1}(\R)$ with $\pi_1^{\ast} = |\cdot|^{t_1^{\ast}}$ and $\pi_2^{\ast} = |\cdot|^{t_2^{\ast}}$, and let $W^{\ast\circ} \in \WW(\pi^{\ast},\psi)$ and $W_0^{\ast\circ} \in \WW(\pi_0^{\ast},\psi)$ be the Whittaker newforms of $\pi^{\ast}$ and $\pi_0^{\ast}$.

On the one hand, we have from \eqref{eq:WcircviaW0circ} that $W^{\ast\circ} \begin{psmallmatrix} g & 0 \\ 0 & 1 \end{psmallmatrix}$ is equal to
\begin{equation}
\label{eq:Wastcirc1}
\left|\det g\right|^{t_1^{\ast} + \frac{n - 1}{2}} \int_{\GL_{n - 1}(\R)} W_0^{\ast\circ}(h) \Phi_1(h^{-1} g) \Phi_0^{\ast\circ}(e_{n - 1} h) \left|\det h\right|^{-t_1^{\ast} - \frac{n}{2} + 1} \, dh,
\end{equation}
with $\Phi_1$ as in \eqref{eq:Phi1} and $\Phi_0^{\ast\circ}$ as in \eqref{eq:Phi0astcirc}.

On the other hand, for $\Re(t_1^{\ast})$ sufficiently large, $W^{\ast\circ} \begin{psmallmatrix} g & 0 \\ 0 & 1 \end{psmallmatrix}$ is equal to
\begin{multline*}
\int\limits_{\Mat_{(n - 2) \times 1}(\R)} \int\limits_{\Mat_{(n - 2) \times 1}(\R)} \int_{\R} \int_{\Ngp_{n - 2}(\R)} f^{\ast\circ}\left(\begin{pmatrix} 0 & 0 & 1 \\ 0 & 1 & 0 \\ w_{n - 2} & 0 & 0 \end{pmatrix} \begin{pmatrix} u' & v_1' & v_2' \\ 0 & 1 & v_3' \\ 0 & 0 & 1 \end{pmatrix} \begin{pmatrix} g & 0 \\ 0 & 1 \end{pmatrix}\right)	\\
\times \overline{\psi_{n - 2}}(u) \overline{\psi}(e_{n - 2} v_1') \overline{\psi}(v_3') \, du \, dv_3' \, dv_2' \, dv_1'.
\end{multline*}
We insert the Godement section identity \eqref{eq:fastviaf02} for $f^{\ast\circ}$ into this expression, additionally inserting the identity \eqref{eq:ccirctoc0circ} for the normalising constant $c^{\ast\circ}$. By a straightforward extension of \cite[Proposition 7.2]{Jac09}, the ensuing multiple integral is absolutely convergent, so that we can make the change of variables $h \mapsto w_{n - 2} u' h$, $v_1' \mapsto u' v_1'$, $v_2' \mapsto u' hv_2'$, $v_3 \mapsto u^{\prime -1} w_{n - 2}$, and $v_3' \mapsto a_2 (v_3' - v_3 hv_2')$. Using the definition of the Jacquet integral, \eqref{eq:Jacquetint}, to evaluate the ensuing integral over $\Ngp_{n - 2}(\R) \ni u'$ and Hecke's identity, \hyperref[lem:HeckeR]{Lemma \ref*{lem:HeckeR}}, to evaluate the ensuing integrals over $\Mat_{(n - 2) \times 1}(\R) \ni v_2'$ and $\R \ni v_3'$, we find that
\begin{multline}
\label{eq:Wastcirc2}
W^{\ast\circ} \begin{pmatrix} g & 0 \\ 0 & 1 \end{pmatrix} = (-1)^{c(\pi_0)} L\left(1 + t_2^{\ast}, \widetilde{\pi_0}\right) \left|\det g\right|^{t_1^{\ast} + \frac{n - 1}{2}} \int_{\GL_{n - 2}(\R)} W_0^{\circ}(h) \left|\det h\right|^{1 - t_1^{\ast} - \frac{n - 1}{2}}	\\
\times \int\limits_{\Mat_{(n - 2) \times 1}(\R)} \exp\left(-\pi \Tr\left(h^{-1} \begin{pmatrix} 1_{n - 2} & v_1' \end{pmatrix} g \prescript{t}{}{g} \begin{pmatrix} 1_{n - 2} \\ \prescript{t}{}{v}_1' \end{pmatrix} \prescript{t}{}{h}^{-1}\right)\right) \overline{\psi}(e_{n - 2} v_1')	\\
\times \int_{\R^{\times}} |a_2|^{-t_1^{\ast} + t_2^{\ast}} \exp(-\pi a_2^2) \int\limits_{\Mat_{1 \times (n - 2)}(\R)} (\dim \tau_0^{\circ}) \overline{P_0^{\circ}}(a_2 v_3 h) \exp(-\pi a_2^2 v_3 h \prescript{t}{}{h} \prescript{t}{}{v}_3)	\\
\times \exp\left(-\pi \Tr\left(\begin{pmatrix} v_3 & v_3 v_1' + a_2^{-1} \end{pmatrix} g \prescript{t}{}{g} \begin{pmatrix} \prescript{t}{}{v}_3 \\ v_1' v_3 + a_2^{-1} \end{pmatrix}\right)\right) \, dv_3 \, d^{\times}a_2 \, dv_1' \, dh.
\end{multline}
Here $W_0^{\circ}$ is the Whittaker newform for $\pi_0^{\circ} \coloneqq \pi_2 \boxplus \cdots \boxplus \pi_r$ and $P_0^{\circ}$ is the homogeneous harmonic polynomial associated to the newform $\Ogp(n - 2)$-type $\tau_0^{\circ}$ of $\pi^{\circ}$ via \eqref{eq:PcircOn}, and we have used \hyperref[thm:additiveconductor]{Theorem \ref*{thm:additiveconductor}} to write $c(\pi_0^{\ast}) = c(\pi_2^{\ast}) + c(\pi_0) = c(\pi_0)$.

Next, we note that the identities \eqref{eq:Wastcirc1} and \eqref{eq:Wastcirc2} for $W^{\ast\circ} \begin{psmallmatrix} g & 0 \\ 0 & 1 \end{psmallmatrix}$ both extend holomorphically to $t_1^{\ast} = t_1 + (\kappa_1 - 1)/2$ and $t_2^{\ast} = t_1 + (\kappa_1 + 1)/2$. From this, we see that the right-hand side of \eqref{eq:WcircviaW0astcirc} is equal to
\begin{multline}
\label{eq:Wastcirc3}
(-1)^{c(\pi_0)} L\left(1 + t_1 + \frac{\kappa_1 + 1}{2}, \widetilde{\pi_0}\right) \left|\det g\right|^{t_1 + \frac{\kappa_1 - 1}{2} + \frac{n - 1}{2}} \int_{\GL_{n - 2}(\R)} W_0^{\circ}(h) \left|\det h\right|^{1 - t_1 - \frac{\kappa_1 - 1}{2} - \frac{n - 1}{2}}	\\
\times \int\limits_{\Mat_{(n - 2) \times 1}(\R)} \exp\left(-\pi \Tr\left(h^{-1} \begin{pmatrix} 1_{n - 2} & v_1' \end{pmatrix} g \prescript{t}{}{g} \begin{pmatrix} 1_{n - 2} \\ \prescript{t}{}{v}_1' \end{pmatrix} \prescript{t}{}{h}^{-1}\right)\right) \overline{\psi}(e_{n - 2} v_1')	\\
\times \int_{\R^{\times}} |a_2| \exp(-\pi a_2^2) \int\limits_{\Mat_{1 \times (n - 2)}(\R)} (\dim \tau_0^{\circ}) \overline{P_0^{\circ}}(a_2 v_3 h) \exp(-\pi a_2^2 v_3 h \prescript{t}{}{h} \prescript{t}{}{v}_3)	\\
\times \exp\left(-\pi \Tr\left(\begin{pmatrix} v_3 & v_3 v_1' + a_2^{-1} \end{pmatrix} g \prescript{t}{}{g} \begin{pmatrix} \prescript{t}{}{v}_3 \\ v_1' v_3 + a_2^{-1} \end{pmatrix}\right)\right) \, dv_3 \, d^{\times}a_2 \, dv_1' \, dh.
\end{multline}

Now we show that $W^{\circ} \begin{psmallmatrix} g & 0 \\ 0 & 1 \end{psmallmatrix}$ is equal to \eqref{eq:Wastcirc3} when $\Re(t_1)$ is sufficiently large, from which the result shall follow via analytic continuation. We begin by noting that it is equal to
\begin{multline*}
\int\limits_{\Mat_{(n - 2) \times 1}(\R)} \int\limits_{\Mat_{(n - 2) \times 1}(\R)} \int_{\R} \int_{\Ngp_{n - 2}(\R)} f^{\circ}\left(\begin{pmatrix} 0 & 0 & 1 \\ 0 & 1 & 0 \\ w_{n - 2} & 0 & 0 \end{pmatrix} \begin{pmatrix} u' & v_1' & v_2' \\ 0 & 1 & v_3' \\ 0 & 0 & 1 \end{pmatrix} \begin{pmatrix} g & 0 \\ 0 & 1 \end{pmatrix}\right)	\\
\times \overline{\psi_{n - 2}}(u) \overline{\psi}(e_{n - 2} v_1') \overline{\psi}(v_3') \, du \, dv_3' \, dv_2' \, dv_1'.
\end{multline*}
We insert the Godement section identity \eqref{eq:fviaf02} for $f^{\circ}$ into this expression, additionally inserting the identity \eqref{eq:ccirctoc0circ} for the normalising constant $c^{\circ}$. The ensuing multiple integral is again absolutely convergent, so that we can make the change of variables $h \mapsto w_{n - 2} u' h$, $v_1' \mapsto u' v_1'$, $v_2' \mapsto u' hv_2'$, $v_3 \mapsto u^{\prime -1} w_{n - 2}$, and $v_3' \mapsto a_2 (v_3' - v_3 hv_2')$. We again evaluate the ensuing integral over $\Ngp_{n - 2}(\R) \ni u'$ via the definition of the Jacquet integral and use Hecke's identity, \hyperref[lem:HeckeR]{Lemma \ref*{lem:HeckeR}}, to evaluate the integrals over $\Mat_{(n - 2) \times 1}(\R) \ni v_2'$ and $\R^2 \ni (v_3,v_3')$. The latter integral is equal to
\[i^{-\kappa_1} \overline{P_1^{\circ}}(0,a_2) \exp(-\pi a_2^2) = i^{-\kappa_1} a_2^{\kappa_1} \exp(-\pi a_2^2).\]
The resulting expression is precisely \eqref{eq:Wastcirc3}.
\end{proof}

\section{Rankin--Selberg Integrals}
\label{sect:RankinSelberg}

It is time to put the propagation formul\ae{} \eqref{eq:WcircviaW0circ}, \eqref{eq:W'circviaW0'circ}, and \eqref{eq:WcircviaW0astcirc} for $W^{\circ}$ and $W^{\prime\circ}$ to good use. Following the method of Jacquet \cite[Section 8]{Jac09}, we use these formul\ae{} to express the $\GL_n \times \GL_n$ Rankin--Selberg integral as the product of a $\GL_n \times \GL_{n - 1}$ Rankin--Selberg integral and a $\GL_n \times \GL_1$ Rankin--Selberg $L$-function, and similarly express the $\GL_n \times \GL_{n - 1}$ Rankin--Selberg integral as a product of a $\GL_{n - 1} \times \GL_{n - 1}$ Rankin--Selberg integral and a $\GL_1 \times \GL_{n - 1}$ Rankin--Selberg $L$-function.

\subsection{\texorpdfstring{$\GL_n \times \GL_n$}{GL\9040\231\80\327GL\9040\231} Rankin--Selberg Integrals}

We first consider the $\GL_1 \times \GL_1$ Rankin--Selberg integral defined by \eqref{eq:RankinSelbergnn}; this is simply the Tate zeta integral.

\begin{proposition}
\label{prop:basecase}
Let $\pi = \chi^{\kappa} |\cdot|^t$ be a character of $F^{\times}$ and let $\pi' = |\cdot|^{t'}$ be a spherical character of $F^{\times}$. Let $W^{\circ} \in \WW(\pi,\psi)$ be the Whittaker newform of $\pi$ and let $W^{\prime\circ} \in \WW(\pi',\overline{\psi})$ be the spherical Whittaker function of $\pi'$. Then for $\Re(s)$ sufficiently large, the $\GL_1 \times \GL_1$ Rankin--Selberg integral $\Psi(s,W^{\circ},W^{\prime\circ},\Phi^{\circ})$ is equal to $L(s,\pi \times \pi')$ with the standard Schwartz function $\Phi^{\circ} \in \Ss_0(\Mat_{1 \times 1}(F))$ given by
\[\Phi^{\circ}(x) \coloneqq \overline{P^{\circ}}(x) \exp\left(-d_F \pi x \overline{x}\right),\]
where $P^{\circ}$ is the homogeneous harmonic polynomial associated to the newform $K$-type $\tau^{\circ}$ of $\pi$ via \eqref{eq:PcircUn} and \eqref{eq:PcircOn}.
\end{proposition}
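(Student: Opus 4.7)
The plan is to carry out a direct computation, since this is the base case of the induction that drives the subsequent $\GL_n \times \GL_n$ and $\GL_n \times \GL_{n-1}$ results. First I would unfold the definition \eqref{eq:RankinSelbergnn} of the $\GL_1 \times \GL_1$ Rankin--Selberg integral: since $\Ngp_1(F)$ is trivial and $e_1 g = g$, we have
\[
\Psi(s, W^{\circ}, W^{\prime\circ}, \Phi^{\circ}) = \int_{F^{\times}} W^{\circ}(g) W^{\prime\circ}(g) \Phi^{\circ}(g) |g|^s \, d^{\times} g.
\]
Then I would plug in the explicit shapes of the three ingredients. For $n = 1$ the Jacquet integral \eqref{eq:Jacquetint} is trivial, so $W^{\circ} = f^{\circ}$; with the canonical normalisation \eqref{eq:cannormsq} this gives $W^{\circ}(g) = \chi^{\kappa}(g) |g|^t$. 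Likewise, the spherical vector normalisation of \hyperref[sect:sphericalrep]{Section \ref*{sect:sphericalrep}} (empty product) yields $W^{\prime\circ}(g) = |g|^{t'}$. Finally $\Phi^{\circ}(g) = \overline{P^{\circ}}(g) \exp(-d_F \pi g \overline{g})$ by hypothesis.

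Next I would collapse the factor $\chi^{\kappa}(g) \overline{P^{\circ}}(g)$ into a pure power of $|g|$. Using the explicit formulae \eqref{eq:PjcircC} and \eqref{eq:PjcircR1} for $P^{\circ}$, we obtain $\overline{P^{\circ}}(g) = \overline{g}^{\max\{\kappa,0\}} g^{-\min\{\kappa,0\}}$ (with the convention $\kappa \in \{0,1\}$ when $F=\R$); multiplying by $\chi^{\kappa}(g)$ and applying \eqref{eq:abstochar} gives $\chi^{\kappa}(g) \overline{P^{\circ}}(g) = |g|^{\|\kappa\|/d_F}$. After this simplification the integral becomes
\[
\int_{F^{\times}} |g|^{s + t + t' + \|\kappa\|/d_F} \exp\!\left(-d_F \pi g \overline{g}\right) d^{\times} g,
\]
which by the integral representations \eqref{eq:zetaCint} and \eqref{eq:zetaRint} equals $\zeta_F(s + t + t' + \|\kappa\|/d_F)$ for $\Re(s)$ sufficiently large.

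Finally, I would identify this with $L(s, \pi \times \pi')$ by invoking \eqref{eq:twistedLfunction}, which gives $L(s, \pi \times \pi') = L(s + t', \pi)$, together with the explicit formulae \eqref{eq:L(s,pi)R1} and \eqref{eq:L(s,pi)C}, which state that $L(s, \chi^{\kappa} |\cdot|^t)$ equals $\zeta_{\R}(s + t + \kappa)$ when $F = \R$ and $\zeta_{\C}(s + t + \|\kappa\|/2)$ when $F = \C$. In both cases $\|\kappa\|/d_F$ matches the shift, completing the identification. There is no substantial obstacle: the calculation is essentially a Tate integral twisted so that the polynomial factor $\overline{P^{\circ}}$ exactly cancels the character $\chi^{\kappa}$, leaving a clean zeta integral. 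The only point deserving a line of justification is that the interchange $\chi^{\kappa}(g) \overline{P^{\circ}}(g) = |g|^{\|\kappa\|/d_F}$ is uniform in the sign of $\kappa$ (for $F = \C$) and in the parity cases (for $F = \R$), which is handled uniformly by \eqref{eq:abstochar}.
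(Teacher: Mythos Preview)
Your proposal is correct and follows essentially the same approach as the paper's proof: identify $W^{\circ}(g) = \chi^{\kappa}(g)|g|^t$ and $W^{\prime\circ}(g) = |g|^{t'}$, use \eqref{eq:abstochar} together with the explicit form of $P^{\circ}$ to reduce the integrand to $|g|^{s+t+t'+\|\kappa\|/d_F}\exp(-d_F\pi g\overline{g})$, and then invoke \eqref{eq:zetaCint}, \eqref{eq:zetaRint}, \eqref{eq:L(s,pi)C}, \eqref{eq:L(s,pi)R1}, and \eqref{eq:twistedLfunction} to recognise the result as $L(s,\pi\times\pi')$. Your write-up is in fact slightly more explicit than the paper's (e.g.\ spelling out that the Jacquet integral is trivial for $n=1$), but the logical path is identical.
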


\begin{proof}
By definition, $W^{\circ}(g) = \chi^{\kappa}(g) |g|^t$ and $W^{\prime\circ}(g) = |g|^{t'}$. We then use \eqref{eq:abstochar} in conjunction with the definition of $P^{\circ}$ in order to see that
\[\Psi(s,W^{\circ},W^{\prime\circ},\Phi^{\circ}) = \int_{F^{\times}} |x|^{s + t + \frac{\|\kappa\|}{d_F} + t'} \exp(-d_F \pi x \overline{x}) \, d^{\times}x.\]
By the identities \eqref{eq:zetaCint} and \eqref{eq:zetaRint} relating the integral to zeta functions, the identities \eqref{eq:L(s,pi)C} and \eqref{eq:L(s,pi)R1} relating zeta functions to $L$-functions, and the identity \eqref{eq:twistedLfunction} relating Rankin--Selberg $L$-functions involving twists by a character to standard $L$-functions, this is precisely $L(s, \pi \times \pi')$.
\end{proof}

Next, we prove a recursive formula for the $\GL_n \times \GL_n$ Rankin--Selberg integral for $n \geq 2$.

\begin{proposition}
\label{prop:(n,n)reduction}
For $n \geq 2$, let $\pi = \pi_1 \boxplus \cdots \boxplus \pi_r$ be an induced representations of Langlands type of $\GL_n(F)$, and let $\pi' = |\cdot|^{t_1'} \boxplus |\cdot|^{t_2'} \boxplus \cdots \boxplus |\cdot|^{t_n'}$ and $\pi_0' \coloneqq |\cdot|^{t_2'} \boxplus \cdots \boxplus |\cdot|^{t_n'}$ be spherical representations of Langlands type of $\GL_n(F)$ and $\GL_{n - 1}(F)$. Let $W^{\circ} \in \WW(\pi,\psi)$ be the Whittaker newform of $\pi$ and let $W^{\prime\circ} \in \WW(\pi',\overline{\psi})$ and $W_0^{\prime\circ} \in \WW(\pi_0',\overline{\psi})$ be the spherical Whittaker functions of $\pi'$ and $\pi_0'$. Then for $\Re(s)$ sufficiently large, the $\GL_n \times \GL_n$ Rankin--Selberg integral $\Psi(s,W^{\circ},W^{\prime\circ},\Phi^{\circ})$ is equal to
\[\Psi(s,W^{\circ},W_0^{\prime\circ}) L\left(s, \pi \times |\cdot|^{t_1'}\right),\]
with the standard Schwartz function $\Phi^{\circ} \in \Ss_0(\Mat_{1 \times n}(F))$ given by
\[\Phi^{\circ}(x) \coloneqq \left(\dim \tau^{\circ}\right) \overline{P^{\circ}}(x) \exp\left(-d_F \pi x \prescript{t}{}{\overline{x}}\right),\]
where $P^{\circ}$ is the homogeneous harmonic polynomial associated to the newform $K$-type $\tau^{\circ}$ of $\pi$ via \eqref{eq:PcircUn} and \eqref{eq:PcircOn}.
\end{proposition}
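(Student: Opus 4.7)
The plan is to insert the propagation formula \eqref{eq:W'circviaW0'circ} for the spherical Whittaker function $W^{\prime\circ}$ into the definition \eqref{eq:RankinSelbergnn} of the $\GL_n \times \GL_n$ Rankin--Selberg integral and to disentangle the resulting integral into two independent pieces: a $\GL_n \times \GL_{n - 1}$ Rankin--Selberg integral $\Psi(s, W^\circ, W_0^{\prime\circ})$, and a Godement--Jacquet-type integral for $W^\circ$ that, via the Pieri-type identity \eqref{eq:WhittakerPieri} with $s$ replaced by $s + t_1'$, evaluates to $L(s + t_1', \pi) = L(s, \pi \times |\cdot|^{t_1'})$ by \eqref{eq:twistedLfunction}. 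This realises the ``slightly more direct approach via convolution sections that masks the presence of Godement--Jacquet zeta integrals'' alluded to in the introduction.

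Substituting \eqref{eq:W'circviaW0'circ} yields a quadruple integral over $(g, h, v) \in \Ngp_n(F) \backslash \GL_n(F) \times \GL_{n - 1}(F) \times F^{n - 1}$; for $\Re(s)$ sufficiently large, the Whittaker function growth estimates of \cite[Lemmata 3.2--3.3]{Jac09} grant absolute convergence and Fubini. I then parametrise $g$ via the partial Iwasawa decomposition $g = \begin{pmatrix} g_0 & 0 \\ 0 & y \end{pmatrix} k$ relative to $\Pgp_{(n - 1, 1)}(F)$, with $g_0 \in \Ngp_{n - 1}(F) \backslash \GL_{n - 1}(F)$, $y \in F^{\times}$, and $k \in K_n$, so that $e_n g = y e_n k$ and $(1_{n - 1}, v) g = (g_0, y v) k$ after selecting a trivial representative for the left-$\Ngp_n$ action. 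Rescaling $v \mapsto y^{-1} v$ and invoking Hecke's identity (\hyperref[lem:HeckeC]{Lemmata \ref*{lem:HeckeC}} and \ref{lem:HeckeR}) collapses the $v$-integration against $\Phi'$ into the Schwartz function $\Phi_0^\circ$ appearing in the propagation formula \eqref{eq:WcircviaW0circ} for $W^\circ$; the resulting $(g_0, h)$-subintegral then matches the definition \eqref{eq:RankinSelbergnm} of $\Psi(s, W^\circ, W_0^{\prime\circ})$, while the residual $(y, k)$-integration against $\Phi^\circ$ with weight $|\det g|^{s + t_1' + (n - 1)/2}$ is precisely the integrand of the Pieri-type identity \eqref{eq:WhittakerPieri}, yielding $L(s + t_1', \pi) = L(s, \pi \times |\cdot|^{t_1'})$.

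The main obstacle is the coordinate bookkeeping: although the original integrand descends to the product of coset spaces, the individual factor $\Phi'(h^{-1}(1_{n - 1}, v) g)$ does not, so the representative choice for $\Ngp_n(F) \backslash \GL_n(F)$ and the tracking of how the Whittaker transformation $W^\circ(u g) = \psi_n(u) W^\circ(g)$ interacts with $\psi(e_{n - 1} v)$ and with the unipotent radical of $\Pgp_{(n - 1, 1)}(F)$ must be carried out with care, as must the careful accounting of the powers of $|\det g_0|$, $|y|$, and $|\det h|$ that arise from the modulus character and the various rescalings. Using \eqref{eq:WhittakerPieri} in place of an explicit Godement--Jacquet computation keeps the induction entirely within the world of Rankin--Selberg-type convolutions and avoids a separate induction on $\pi$.
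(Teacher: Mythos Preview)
Your overall architecture is exactly right: insert the propagation formula \eqref{eq:W'circviaW0'circ} for $W^{\prime\circ}$, and reduce the result to the convolution-section identity \eqref{eq:WhittakerPieri} at $s+t_1'$. The paper does precisely this. The gap is in your ``disentangling'' step via the partial Iwasawa decomposition $g=\begin{psmallmatrix} g_0 & 0\\ 0 & y\end{psmallmatrix}k$.

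Two concrete problems. First, the $v$-integral involves only the Gaussian $\Phi'$ and the character $\psi(e_{n-1}v)$; Hecke's identity with the trivial polynomial returns a Gaussian, not the function $\Phi_0^{\circ}$, which carries the nontrivial harmonic polynomial $P_0^{\circ}$. In any case $\Phi_0^{\circ}$ and the formula \eqref{eq:WcircviaW0circ} refer to a decomposition $\pi=\pi_1\boxplus\pi_0$ with $\pi_1$ a character, which plays no role in this proposition (here $\pi$ is arbitrary). Second, and more seriously, after your Iwasawa parametrisation the factor $W^{\circ}\bigl(\begin{psmallmatrix} g_0 & 0\\ 0 & y\end{psmallmatrix}k\bigr)$ couples $g_0$, $y$, and $k$, so the integral cannot split into a ``$(g_0,h)$-subintegral'' times a ``$(y,k)$-integration''. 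Moreover \eqref{eq:WhittakerPieri} is an integral over all of $\GL_n(F)$, not over $F^{\times}\times K_n$; your residual $(y,k)$-integration is simply not that identity.

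The fix, which is what the paper does, is to unfold on the $h$-side rather than decompose $g$. Write $h=uh$ with $u\in\Ngp_{n-1}(F)$, send $u\mapsto u^{-1}$, $v\mapsto u^{-1}v$, and use $\psi_{n-1}(u)\psi(e_{n-1}v)W^{\circ}(g)=W^{\circ}\bigl(\begin{psmallmatrix} u & v\\ 0 & 1\end{psmallmatrix}g\bigr)$ together with $e_n\begin{psmallmatrix} u & v\\ 0 & 1\end{psmallmatrix}=e_n$ to absorb the $(u,v)$-integration into the $\Ngp_n\backslash\GL_n$-integral over $g$, promoting it to a full $\GL_n(F)$-integral. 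After the further substitution $g\mapsto\begin{psmallmatrix} h & 0\\ 0 & 1\end{psmallmatrix}g$, the inner integral is
\[
\int_{\GL_n(F)} W^{\circ}\!\left(\begin{pmatrix} h & 0\\ 0 & 1\end{pmatrix}g\right)\Phi(g)\left|\det g\right|^{s+t_1'+\frac{n-1}{2}}\,dg,
\]
with $\Phi(g)=\Phi^{\circ}(e_ng)\,\Phi'\bigl((1_{n-1}\ 0)g\bigr)$ exactly the Schwartz function \eqref{eq:Phistandard}; this is \eqref{eq:WhittakerPieri} at $s+t_1'$, and the remaining $h$-integral is $\Psi(s,W^{\circ},W_0^{\prime\circ})$ on the nose. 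No Iwasawa decomposition of $g$ is needed.
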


\begin{proof}
Just as in \cite[Equation (8.1)]{Jac09}, we insert the propagation formula \eqref{eq:W'circviaW0'circ} for $W^{\prime\circ}(g)$ into the definition \eqref{eq:RankinSelbergnn} of $\Psi(s,W^{\circ},W^{\prime\circ},\Phi^{\circ})$; the absolute convergence of the triple integral is shown in \cite[Section 8.2]{Jac09}. We replace $h$ with $uh$, where now $u \in \Ngp_{n - 1}(F)$ and $h \in \Ngp_{n - 1}(F) \backslash \GL_{n - 1}(F)$, make the change of variables $u \mapsto u^{-1}$ and $v \mapsto u^{-1} v$, then replace $\begin{psmallmatrix} u & v \\ 0 & 1\end{psmallmatrix} g$ with $g$, where now $g \in \GL_n(F)$; in doing so, we use the fact that $W^{\prime\circ}(uh) = \overline{\psi_{n - 1}}(u) W^{\prime\circ}(h)$ and that $\psi_{n - 1}(u) \psi(e_{n - 1} v) W^{\circ}(g) = W^{\circ}\left(\begin{psmallmatrix} u & v \\ 0 & 1\end{psmallmatrix} g\right)$. We then make the change of variables $g \mapsto \begin{psmallmatrix} h & 0 \\ 0 & 1 \end{psmallmatrix} g$. In this way, we find that $\Psi(s,W^{\circ},W^{\prime\circ},\Phi^{\circ})$ is equal to
\[\int\limits_{\Ngp_{n - 1}(F) \backslash \GL_{n - 1}(F)} W_0^{\prime\circ}(h) \left|\det h\right|^{s - \frac{1}{2}} \int_{\GL_n(F)} W^{\circ}\left(\begin{pmatrix} h & 0 \\ 0 & 1 \end{pmatrix} g\right) \Phi(g) \left|\det g\right|^{s + t_1' + \frac{n - 1}{2}} \, dg \, dh,\]
where we have defined $\Phi(g) \coloneqq \Phi^{\circ}(e_n g) \Phi'\left(\begin{pmatrix} 1_{n - 1} & 0 \end{pmatrix} g\right)$, where $\Phi' \in \Ss_0(\Mat_{(n - 1) \times n}(F))$ is as in \eqref{eq:Phi'}. Since the standard Schwartz function $\Phi \in \Ss_0(\Mat_{n \times n}(F))$ is as in \eqref{eq:Phistandard}, the integral over $\GL_n(F) \ni g$ is equal to $L(s + t_1',\pi) W^{\circ}\begin{psmallmatrix} h & 0 \\ 0 & 1 \end{psmallmatrix}$ from \eqref{eq:WhittakerPieri}. This yields the desired identity upon recalling the definition \eqref{eq:RankinSelbergnm} of $\Psi(s,W^{\circ},W_0^{\prime\circ})$ and the identity \eqref{eq:twistedLfunction} relating Rankin--Selberg $L$-functions involving twists by a character to standard $L$-functions.
\end{proof}

\subsection{\texorpdfstring{$\GL_n \times \GL_{n - 1}$}{GL\9040\231\80\327GL\9040\231\9040\213\9040\201} Rankin--Selberg Integrals}

\subsubsection{The Case $\pi_1 = \chi^{\kappa_1} |\cdot|^{t_1}$}

We now prove a recursive formula for the $\GL_n \times \GL_{n - 1}$ Rankin--Selberg integral. As in \hyperref[sect:newformGodement]{Sections \ref*{sect:newformGodement}} and \ref{sect:WhittakernewformGodement}, we first treat the case for which $\pi = \pi_1 \boxplus \cdots \boxplus \pi_r$ with $n_1 = 1$, so that $\pi_1 = \chi^{\kappa_1} |\cdot|^{t_1}$.

\begin{proposition}
\label{prop:(n,n-1)reduction}
For $n \geq 2$, let $\pi = \pi_1 \boxplus \pi_2 \boxplus \cdots \boxplus \pi_r$ and $\pi_0 \coloneqq \pi_2 \boxplus \cdots \boxplus \pi_r$ be induced representations of Langlands type of $\GL_n(F)$ and $\GL_{n - 1}(F)$ with $\pi_1 = \chi^{\kappa_1} |\cdot|^{t_1}$ a character of $F^{\times}$, and let $\pi' = |\cdot|^{t_1'} \boxplus \cdots \boxplus |\cdot|^{t_{n - 1}'}$ be a spherical representation of Langlands type of $\GL_{n - 1}(F)$. Let $W^{\circ} \in \WW(\pi,\psi)$ and $W_0^{\circ} \in \WW(\pi_0,\psi)$ be the Whittaker newforms of $\pi$ and $\pi_0$ and let $W^{\prime\circ} \in \WW(\pi',\overline{\psi})$ be the spherical Whittaker function of $\pi'$. Then for $\Re(s)$ sufficiently large, the $\GL_n \times \GL_{n - 1}$ Rankin--Selberg integral $\Psi(s,W^{\circ},W^{\prime\circ})$ is equal to
\[\Psi(s,W_0^{\circ},W^{\prime\circ},\Phi_0^{\circ}) L\left(s, \pi_1 \times \pi'\right),\]
with the standard Schwartz function $\Phi_0^{\circ} \in \Ss_0(\Mat_{1 \times (n - 1)}(F))$ given by
\[\Phi_0^{\circ}(x) \coloneqq \left(\dim \tau_0^{\circ}\right) \overline{P_0^{\circ}}(x) \exp\left(-d_F \pi x \prescript{t}{}{\overline{x}}\right),\]
where $P_0^{\circ}$ is the homogeneous harmonic polynomial associated to the newform $K_{n - 1}$-type $\tau_0^{\circ}$ of $\pi_0$ via \eqref{eq:PcircUn} and \eqref{eq:PcircOn}.
\end{proposition}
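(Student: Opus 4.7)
The plan is to substitute the propagation formula \eqref{eq:WcircviaW0circ} for $W^{\circ}\begin{pmatrix} g & 0 \\ 0 & 1 \end{pmatrix}$ into the defining integral \eqref{eq:RankinSelbergnm} of $\Psi(s, W^\circ, W^{\prime\circ})$ and, by a sequence of changes of variables and interchanges of integration, to recast the resulting double integral as a $\GL_{n-1}(F) \times \GL_{n-1}(F)$ Rankin--Selberg integral multiplied by a $\GL_{n-1}(F)$ convolution section of $W^{\prime\circ}$; applying \eqref{eq:WhittakerPieri} for the spherical representation $\pi'$ then produces the factor $L(s, \pi_1 \times \pi')$. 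This is the direct analogue, with the roles of $\pi$ and $\pi'$ exchanged, of the strategy used in \hyperref[prop:(n,n)reduction]{Proposition \ref*{prop:(n,n)reduction}}.

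Setting $\alpha \coloneqq t_1 + \|\kappa_1\|/d_F + (n-1)/2$ and $\beta \coloneqq 1 - t_1 - \|\kappa_1\|/d_F - n/2$, the substitution yields the iterated integral
\[\int_{\Ngp_{n-1}(F) \backslash \GL_{n-1}(F)} W^{\prime\circ}(g) \left|\det g\right|^{s - 1/2 + \alpha} \int_{\GL_{n-1}(F)} W_0^\circ(h) \Phi_1(h^{-1} g) \Phi_0^\circ(e_{n-1} h) \left|\det h\right|^\beta \, dh \, dg.\]
First I would change variables $h \mapsto gh$ in the inner $\GL_{n-1}(F)$ integral, which is valid by left-invariance of Haar measure; this replaces $\Phi_1(h^{-1}g)$ by $\Phi_1(h^{-1})$ and introduces a factor $\left|\det g\right|^{\beta}$, so that the total weight on $g$ becomes $\left|\det g\right|^{s}$ since $\alpha + \beta = 1/2$. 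After interchanging the order of integration, I would change variables $g \mapsto g h^{-1}$ in the inner $\Ngp_{n-1}(F) \backslash \GL_{n-1}(F)$ integral, which is well-defined because right multiplication by $h^{-1}$ descends to this coset space and preserves its right-invariant quotient measure; then, sending $h \mapsto h^{-1}$ and interchanging once more, one arrives at
\[\int_{\Ngp_{n-1}(F) \backslash \GL_{n-1}(F)} W_0^\circ(g) \Phi_0^\circ(e_{n-1} g) \left|\det g\right|^s \int_{\GL_{n-1}(F)} W^{\prime\circ}(gh) \Phi_1(h) \left|\det h\right|^{s - \beta} \, dh \, dg.\]
The inner integral is exactly the $\GL_{n-1}(F)$ convolution section of $W^{\prime\circ}$ against the standard Schwartz function \eqref{eq:Phistandard} attached to $\pi'$, which because $\pi'$ is spherical reduces to $\Phi_1$ from \eqref{eq:Phi1} (as $\dim \tau^{\prime\circ} = 1$ and $P^{\prime\circ} \equiv 1$); hence by \eqref{eq:WhittakerPieri} applied to $\pi'$ it evaluates to $L(s - \beta - (n-2)/2, \pi') W^{\prime\circ}(g) = L(s + t_1 + \|\kappa_1\|/d_F, \pi') W^{\prime\circ}(g)$. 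The outer integral is $\Psi(s, W_0^\circ, W^{\prime\circ}, \Phi_0^\circ)$ by \eqref{eq:RankinSelbergnn}, and the identification $L(s + t_1 + \|\kappa_1\|/d_F, \pi') = L(s, \pi_1 \times \pi')$ follows from \eqref{eq:isobaricRS} and \eqref{eq:twistedLfunction} together with the explicit $L$-factors of $\chi^{\kappa_1}|\cdot|^{t_1}$ recorded in \eqref{eq:L(s,pi)C} and \eqref{eq:L(s,pi)R1}.

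The hard part will be justifying the two interchanges of integration and the intervening changes of variables, each of which requires absolute convergence of the corresponding iterated integral for $\Re(s)$ sufficiently large. I expect this to follow from the rapid decay of the Gaussian factors in $\Phi_1$ and $\Phi_0^\circ$, the moderate growth of $W_0^\circ$ and $W^{\prime\circ}$ on the diagonal torus controlled via the Iwasawa decomposition, and the quantitative bounds of Jacquet \cite[Lemma 3.3, Proposition 3.3]{Jac09} that underpin the analogous convergence arguments used in the proof of \hyperref[prop:(n,n)reduction]{Proposition \ref*{prop:(n,n)reduction}}. Once these convergence issues are handled, the computation is entirely formal: it is precisely the dual of the manipulation in \hyperref[prop:(n,n)reduction]{Proposition \ref*{prop:(n,n)reduction}}, the propagation formula for $W^\circ$ replacing that for $W^{\prime\circ}$ and the convolution section identity for $\pi'$ replacing that for $\pi$.
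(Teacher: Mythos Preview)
Your proposal is correct and follows essentially the same approach as the paper: insert the propagation formula \eqref{eq:WcircviaW0circ} into the Rankin--Selberg integral, rearrange so that the inner integral becomes the convolution section \eqref{eq:WhittakerPieri} for the spherical representation $\pi'$, and identify the outer integral as $\Psi(s,W_0^{\circ},W^{\prime\circ},\Phi_0^{\circ})$. The only cosmetic difference is bookkeeping: the paper unfolds the inner $\GL_{n-1}$-integral as $\Ngp_{n-1} \times (\Ngp_{n-1}\backslash\GL_{n-1})$ and merges the unipotent part with the outer variable (making the use of the Whittaker transformation law explicit), whereas you perform changes of variables directly on the quotient and implicitly rely on the resulting integrand being well-defined there; the paper likewise defers the absolute convergence to \cite[Section 8.3]{Jac09}.
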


\begin{proof}
Just as in \cite[Equation (8.3)]{Jac09}, we insert the propagation formula \eqref{eq:WcircviaW0circ} for $W^{\circ}\begin{psmallmatrix} g & 0 \\ 0 & 1 \end{psmallmatrix}$ into the definition \eqref{eq:RankinSelbergnm} of $\Psi(s,W^{\circ},W^{\prime\circ})$; the absolute convergence of the ensuing double integral is justified in \cite[Section 8.3]{Jac09}. We replace $h$ with $uh$, where now $u \in \Ngp_{n - 1}(F)$ and $h \in \Ngp_{n - 1}(F) \backslash \GL_{n - 1}(F)$, make the change of variables $u \mapsto u^{-1}$, then replace $ug$ with $g$, where now $g \in \GL_{n - 1}(F)$; in doing so, we use the fact that $W_0^{\circ}(uh) = \psi_{n - 1}(u) W_0^{\circ}(h)$ and that $\overline{\psi_{n - 1}}(u) W^{\prime\circ}(g) = W^{\prime\circ}(ug)$. We then make the change of variables $g \mapsto hg$, leading to the identity
\begin{multline*}
\Psi(s,W^{\circ},W^{\prime\circ}) = \int\limits_{\Ngp_{n - 1}(F) \backslash \GL_{n - 1}(F)} W_0^{\circ}(h) \Phi_0^{\circ}(e_{n - 1} h) \left|\det h\right|^s	\\
\times \int_{\GL_{n - 1}(F)} W^{\prime\circ}(hg) \Phi_1(g) \left|\det g\right|^{s + t_1 + \frac{\|\kappa_1\|}{d_F} + \frac{n}{2} - 1} \, dg \, dh,
\end{multline*}
where the standard Schwartz functions $\Phi_1 \in \Ss_0(\Mat_{(n - 1) \times (n - 1)}(F))$ and $\Phi_0^{\circ} \in \Ss_0(\Mat_{1 \times (n - 1)}(F))$ are as in \eqref{eq:Phi1} and \eqref{eq:Phi0circ}. From \eqref{eq:WhittakerPieri}, the integral over $\GL_{n - 1}(F) \ni g$ is equal to
\[L\left(s + t_1 + \frac{\|\kappa_1\|}{d_F},\pi'\right) W^{\prime\circ}(h).\]
From \eqref{eq:isobaricRS}, \eqref{eq:twistedLfunction}, \eqref{eq:L(s,pi)C}, and \eqref{eq:L(s,pi)R1}, we have that
\[L\left(s + t_1 + \frac{\|\kappa_1\|}{d_F},\pi'\right) = L(s,\pi_1 \times \pi').\]
This yields the desired identity upon recalling the definition \eqref{eq:RankinSelbergnn} of $\Psi(s,W_0^{\circ},W^{\prime\circ},\Phi_0^{\circ})$.
\end{proof}

\subsubsection{The Case $\pi_1 = D_{\kappa_1} \otimes \left|\det\right|^{t_1}$}

We next treat the case for which $\pi = \pi_1 \boxplus \cdots \boxplus \pi_r$ with $n_1 = 2$, so that $F = \R$ and $\pi_1 = D_{\kappa_1} \otimes \left|\det\right|^{t_1}$.

\begin{proposition}
\label{prop:(n,n-1)reduction2}
For $n \geq 2$, let $\pi = \pi_1 \boxplus \pi_2 \boxplus \cdots \boxplus \pi_r$ and $\pi_0^{\ast} \coloneqq \pi_1^{\ast} \boxplus \pi_2 \boxplus \cdots \boxplus \pi_r$ be induced representations of Langlands type of $\GL_n(\R)$ and $\GL_{n - 1}(\R)$ with $\pi_1 = D_{\kappa_1} \otimes \left|\det\right|^{t_1}$ and $\pi_1^{\ast} = |\cdot|^{t_1 + \frac{\kappa_1 + 1}{2}}$, and let $\pi' = |\cdot|^{t_1'} \boxplus \cdots \boxplus |\cdot|^{t_{n - 1}'}$ be a spherical representation of Langlands type of $\GL_{n - 1}(\R)$. Let $W^{\circ} \in \WW(\pi,\psi)$ and $W_0^{\ast\circ} \in \WW(\pi_0^{\ast},\psi)$ be the Whittaker newforms of $\pi$ and $\pi_0^{\ast}$ and let $W^{\prime\circ} \in \WW(\pi',\overline{\psi})$ be the spherical Whittaker function of $\pi'$. Then for $\Re(s)$ sufficiently large, the $\GL_n \times \GL_{n - 1}$ Rankin--Selberg integral $\Psi(s,W^{\circ},W^{\prime\circ})$ is equal to
\[\Psi(s,W_0^{\ast\circ},W^{\prime\circ},\Phi_0^{\ast\circ}) L\left(s + t_1 + \frac{\kappa_1 - 1}{2}, \pi'\right),\]
with the standard Schwartz function $\Phi_0^{\circ} \in \Ss_0(\Mat_{1 \times (n - 1)}(\R))$ given by
\[\Phi_0^{\ast\circ}(x) \coloneqq \left(\dim \tau_0^{\ast\circ}\right) \overline{P_0^{\ast\circ}}(x) \exp\left(-\pi x \prescript{t}{}{x}\right),\]
where $P_0^{\ast\circ}$ is the homogeneous harmonic polynomial associated to the newform $\Ogp(n - 1)$-type $\tau_0^{\ast\circ}$ of $\pi_0^{\ast}$ via \eqref{eq:PcircOn}.
\end{proposition}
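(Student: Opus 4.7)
The plan is to mirror the proof of \hyperref[prop:(n,n-1)reduction]{Proposition \ref*{prop:(n,n-1)reduction}} essentially verbatim, exploiting the fact that the propagation formula \eqref{eq:WcircviaW0astcirc} has exactly the same shape as \eqref{eq:WcircviaW0circ}: both express $W^{\circ}\begin{psmallmatrix} g & 0 \\ 0 & 1 \end{psmallmatrix}$ as the convolution of a Whittaker newform on $\GL_{n-1}(\R)$ against a product of two standard Schwartz functions, with the only differences being that $\pi_0$ is replaced by $\pi_0^{\ast}$ and the exponent $t_1 + \|\kappa_1\|/d_F$ is replaced by $t_1 + (\kappa_1 - 1)/2$.

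First I would insert \eqref{eq:WcircviaW0astcirc} into the definition \eqref{eq:RankinSelbergnm} of $\Psi(s, W^{\circ}, W^{\prime\circ})$, obtaining a double integral over $h \in \GL_{n-1}(\R)$ and $g \in \Ngp_{n-1}(\R) \backslash \GL_{n-1}(\R)$; absolute convergence for $\Re(s)$ large follows exactly as in \cite[Section 8.3]{Jac09}. Next I would write $g = ug'$ with $u \in \Ngp_{n-1}(\R)$ and $g' \in \Ngp_{n-1}(\R) \backslash \GL_{n-1}(\R)$, apply the change of variables $u \mapsto u^{-1}$ inside the integral over $h$, and then merge $u$ into $h$ via $h \mapsto h u$. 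Using the equivariance $W_0^{\ast\circ}(hu) = \psi_{n-1}(u) W_0^{\ast\circ}(h)$ and $\overline{\psi_{n-1}}(u) W^{\prime\circ}(g') = W^{\prime\circ}(ug')$, the $u$-dependence collapses. After the subsequent change of variables $g' \mapsto h g'$, the Rankin--Selberg integral takes the form
\begin{align*}
\Psi(s,W^{\circ},W^{\prime\circ}) &= \int\limits_{\Ngp_{n-1}(\R) \backslash \GL_{n-1}(\R)} W_0^{\ast\circ}(h) \Phi_0^{\ast\circ}(e_{n-1} h) \left|\det h\right|^s \\
&\quad \times \int_{\GL_{n-1}(\R)} W^{\prime\circ}(h g') \Phi_1(g') \left|\det g'\right|^{s + t_1 + \frac{\kappa_1 - 1}{2} + \frac{n}{2} - 1} \, dg' \, dh,
\end{align*}
with $\Phi_1 \in \Ss_0(\Mat_{(n-1) \times (n-1)}(\R))$ as in \eqref{eq:Phi1}.

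For the inner integral over $\GL_{n-1}(\R) \ni g'$, I would invoke the convolution section identity \eqref{eq:WhittakerPieri} for the spherical Whittaker function $W^{\prime\circ}$ of $\pi'$, which gives $L(s + t_1 + (\kappa_1 - 1)/2, \pi') W^{\prime\circ}(h)$. What remains is precisely the $\GL_{n-1} \times \GL_{n-1}$ Rankin--Selberg integral $\Psi(s, W_0^{\ast\circ}, W^{\prime\circ}, \Phi_0^{\ast\circ})$ from \eqref{eq:RankinSelbergnn}, producing the desired factorisation. The main potential obstacle is simply the bookkeeping of the shift $t_1 + (\kappa_1 - 1)/2$ coming from \eqref{eq:WcircviaW0astcirc} versus the exponent $s + \frac{n-1}{2}$ built into \eqref{eq:WhittakerPieri}, but this is a purely mechanical check, and no new analytic input beyond \eqref{eq:WcircviaW0astcirc} and \eqref{eq:WhittakerPieri} is needed. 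Note that, in contrast to \hyperref[prop:(n,n-1)reduction]{Proposition \ref*{prop:(n,n-1)reduction}}, the $L$-factor produced here is only one of the two $\zeta_{\R}$-factors of $L(s,\pi_1 \times \pi')$; the second factor is implicitly packaged inside $\pi_0^{\ast}$ through the presence of $\pi_1^{\ast} = |\cdot|^{t_1 + (\kappa_1 - 1)/2}$, and will surface upon a further application of \hyperref[prop:(n,n-1)reduction]{Proposition \ref*{prop:(n,n-1)reduction}} to $\pi_0^{\ast}$ in the double induction of \hyperref[sect:RankinSelberg]{Section \ref*{sect:RankinSelberg}}.
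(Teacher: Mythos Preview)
Your proposal is correct and follows exactly the approach taken in the paper, whose entire proof is the single sentence ``The proof is identical to that of \hyperref[prop:(n,n-1)reduction]{Proposition \ref*{prop:(n,n-1)reduction}} except that we use the Godement section identity \eqref{eq:WcircviaW0astcirc} for $W^{\circ}\begin{psmallmatrix} g & 0 \\ 0 & 1 \end{psmallmatrix}$ in place of \eqref{eq:WcircviaW0circ}.'' Your only slip is in the bookkeeping of the unfolding: it is the inner variable $h \in \GL_{n-1}(\R)$ that should be decomposed as $uh$ with $u \in \Ngp_{n-1}(\R)$ (since $g$ already ranges over the quotient $\Ngp_{n-1}(\R) \backslash \GL_{n-1}(\R)$), after which the $u$-integral merges with the $g$-integral to produce a full $\GL_{n-1}(\R)$ integral; but your displayed formula and the subsequent application of \eqref{eq:WhittakerPieri} are exactly right.
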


\begin{proof}
The proof is identical to that of \hyperref[prop:(n,n-1)reduction]{Proposition \ref*{prop:(n,n-1)reduction}} except that we use the Godement section identity \eqref{eq:WcircviaW0astcirc} for $W^{\circ}\begin{psmallmatrix} g & 0 \\ 0 & 1 \end{psmallmatrix}$ in place of \eqref{eq:WcircviaW0circ}.
\end{proof}

\subsection{Proofs of \texorpdfstring{\hyperref[thm:testvector]{Theorems \ref*{thm:testvector}}}{Theorems \ref{thm:testvector}} and {\ref{thm:GLnxGLn}}}

We first record the following uniqueness principle.

\begin{lemma}
\label{lem:GLn-1uniqueness}
Suppose that $W$ is a smooth function on $\GL_{n - 1}(F)$ of moderate growth that satisfies $W(ugk) = \psi_{n - 1}(u) W(g)$ for all $u \in \Ngp_{n - 1}(F)$, $g \in \GL_{n - 1}(F)$, and $k \in K_{n - 1}$. Then if
\[\int\limits_{\Ngp_{n - 1}(F) \backslash \GL_{n - 1}(F)} W(g) W^{\prime\circ}(g) \left|\det g\right|^{s - \frac{1}{2}} \, dg = 0\]
for all $s \in \C$ and spherical representations $\pi'$ of $\GL_{n - 1}(F)$, we must have that $W(g) = 0$ for all $g \in \GL_{n - 1}(F)$.
\end{lemma}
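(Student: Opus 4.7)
The plan is to reduce the statement, via the Iwasawa decomposition, to a uniqueness statement for Mellin transforms on the torus $\Agp_{n-1}(F)$, and then to exploit the freedom in the choice of spherical $\pi'$ and spectral parameter $s$ to realise a sufficiently rich family of Mellin kernels, so that only the zero function can annihilate all of them.

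First I would observe that, because $W$ is left quasi-invariant under $(\Ngp_{n-1}(F),\psi_{n-1})$ and right $K_{n-1}$-invariant (and similarly for the spherical vector $W^{\prime\circ}$), both functions are determined by their restrictions to $\Agp_{n-1}(F)$. Writing the Iwasawa decomposition $g = uak$ and using $dg = \delta_{n-1}^{-1}(a) \, du \, d^\times a \, dk$, the hypothesis collapses to
\[
\int_{\Agp_{n-1}(F)} W(a) \, W^{\prime\circ}(a) \, |\det a|^{s-\frac{1}{2}} \, \delta_{n-1}^{-1}(a) \, d^\times a = 0
\]
for all $s \in \C$ and every spherical principal series $\pi' = |\cdot|^{t_1'} \boxplus \cdots \boxplus |\cdot|^{t_{n-1}'}$.

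Next I would argue by induction on $n$, using the propagation formula \eqref{eq:W'circviaW0'circ} as the inductive engine. The base case $n = 2$ is immediate: $W$ is a function on $F^\times$ (smooth, of moderate growth, and invariant under $K_1$), while $W^{\prime\circ}(a_1) = |a_1|^{t_1'}$, so the hypothesis reads
\[
\int_{F^\times} W(a_1) \, |a_1|^{s - \frac{1}{2} + t_1'} \, d^\times a_1 = 0
\]
for all $s, t_1' \in \C$; the injectivity of the Mellin transform on smooth functions of moderate growth on $F^\times$ then forces $W \equiv 0$. For the inductive step with $n \geq 3$, I would substitute \eqref{eq:W'circviaW0'circ} (with $n$ replaced by $n-1$) into the integral, interchange the order of integration (justified by the Gaussian decay of $\Phi'$ against the moderate growth of $W$), and rearrange to obtain an identity of the form
\[
\int_{\Ngp_{n-2}(F) \backslash \GL_{n-2}(F)} W_0^{\prime\circ}(h) \, W^{\sharp}(h; s, t_1') \, |\det h|^{s' - \frac{1}{2}} \, dh = 0
\]
valid for all $s, t_1'$ and every spherical $\pi_0' = |\cdot|^{t_2'} \boxplus \cdots \boxplus |\cdot|^{t_{n-1}'}$, where $s' = s + t_1'$ (say) and the auxiliary function $W^{\sharp}(\cdot; s, t_1')$ on $\GL_{n-2}(F)$ is built from $W$ by integration against the Gaussian $\Phi'$ and an appropriate determinant twist. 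One then verifies that $W^{\sharp}$ inherits the structural hypotheses of the lemma on $\GL_{n-2}(F)$. The inductive hypothesis yields $W^{\sharp} \equiv 0$ for every choice of $s$ and $t_1'$, and finally a Mellin inversion in $t_1'$, combined with the explicit Gaussian form of $\Phi'$, strips away the convolution and recovers $W \equiv 0$.

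The main obstacle I anticipate is the clean bookkeeping of the inductive step: isolating the correct auxiliary function $W^{\sharp}(h; s, t_1')$ so that its left $(\Ngp_{n-2}(F), \psi_{n-2})$-quasi-invariance and right $K_{n-2}$-invariance are manifest, and then executing the final Mellin inversion in $t_1'$ to back out the original $W$ from the Gaussian-damped transform. The interchange of integration needs the moderate growth of $W$ to be balanced against the Gaussian decay of $\Phi'$, and the inversion step requires that the resulting one-parameter Mellin transform in $t_1'$ remain injective on the space of functions obtained from $W$ by evaluating along appropriate slices of $\GL_{n-1}(F)$.
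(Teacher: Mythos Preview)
Your plan is a genuine attempt at a self-contained argument, whereas the paper does not actually give one: its proof simply cites Jacquet--Piatetski-Shapiro--Shalika \cite[Lemme (3.5)]{JP-SS81} for nonarchimedean $F$, asserts that the same argument goes through over~$\R$ and~$\C$, and offers the Whittaker--Plancherel theorem \cite[Chapter 15]{Wal92} as an alternative. So your inductive scheme via the propagation formula \eqref{eq:W'circviaW0'circ} is a different route from either option the paper names. The JPSS argument works directly on the torus and uses the explicit shape of the spherical Whittaker function to realise enough Mellin kernels in one shot; Whittaker--Plancherel is a black-box completeness statement. Your approach instead peels off one spectral parameter $t_1'$ at a time by unfolding $W^{\prime\circ}$ recursively --- this is in the spirit of Propositions \ref{prop:(n,n)reduction} and \ref{prop:(n,n-1)reduction}, which is appealing since it reuses machinery already built in the paper.

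That said, the step you flag as the obstacle is a real one, and your sketch does not resolve it. After the inductive hypothesis yields $W^{\sharp}(h;s,t_1')=0$ for all $h$ and all parameters, note that $W^{\sharp}$ depends on $s$ and $t_1'$ only through the single combination $s+t_1'$ (the determinant power on $g$), so ``Mellin inversion in $t_1'$'' gives you only one free Mellin variable, not an extra one beyond what $s$ already provided. The remaining degrees of freedom must come from the $h$-variable together with the explicit Gaussian kernel $\Phi'$, and showing that this family of convolution kernels is separating on the class of $W$'s under consideration is itself a uniqueness statement of comparable depth to the one you are trying to prove. It is plausible (and ultimately true), but it does not follow from a one-line Mellin inversion; you would need either to invoke the injectivity of the Gaussian transform in this setting directly, or --- more simply --- to abandon the induction and argue as JPSS do, computing the multivariable Mellin transform of $W^{\prime\circ}$ on $\Agp_{n-1}(F)$ and using the full set of parameters $(s,t_1',\ldots,t_{n-1}')$ simultaneously to separate all torus variables at once.
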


\begin{proof}
This is proved by Jacquet, Piatetski-Shapiro, and Shalika \cite[Lemme (3.5)]{JP-SS81} when $F$ is nonarchimedean; the same proof holds for archimedean $F$ with minimal modifications. Alternatively, one can show this via the Whittaker--Plancherel theorem \cite[Chapter 15]{Wal92}.
\end{proof}

With these results in hand, we may complete the proofs of \hyperref[thm:testvector]{Theorems \ref*{thm:testvector}} and \ref{thm:GLnxGLn}.

\begin{proof}[{Proofs of {\hyperref[thm:testvector]{Theorems \ref*{thm:testvector}}} and {\ref{thm:GLnxGLn}}}]
We prove these theorems by double induction. The base case is the case $n = 1$ of \hyperref[thm:GLnxGLn]{Theorem \ref*{thm:GLnxGLn}}, which is precisely \hyperref[prop:basecase]{Proposition \ref*{prop:basecase}}.

Suppose by induction that \hyperref[thm:GLnxGLn]{Theorem \ref*{thm:GLnxGLn}} holds with $n - 1$ in place in $n$. If $\pi = \pi_1 \boxplus \pi_2 \boxplus \cdots \boxplus \pi_r$ and $\pi_0 \coloneqq \pi_2 \boxplus \cdots \boxplus \pi_r$ with $\pi_1 = \chi^{\kappa_1} |\cdot|^{t_1}$, then \hyperref[prop:(n,n-1)reduction]{Proposition \ref*{prop:(n,n-1)reduction}} and the induction hypothesis imply that
\[\Psi(s,W^{\circ},W^{\prime\circ}) = L(s,\pi_0 \times \pi') L(s,\pi_1 \times \pi').\]
By \eqref{eq:isobaricRS}, this is precisely $L(s,\pi \times \pi')$. Similarly, if $\pi = \pi_1 \boxplus \pi_2 \boxplus \cdots \boxplus \pi_r$ and $\pi_0^{\ast} \coloneqq \pi_1^{\ast} \boxplus \pi_2 \boxplus \cdots \boxplus \pi_r$ with $\pi_1 = D_{\kappa_1} \otimes \left|\det\right|^{t_1}$ and $\pi_1^{\ast} \coloneqq |\cdot|^{t_1 + \frac{\kappa_1 + 1}{2}}$, then \hyperref[prop:(n,n-1)reduction2]{Proposition \ref*{prop:(n,n-1)reduction2}} and the induction hypothesis imply that
\[\Psi(s,W^{\circ},W^{\prime\circ}) = L(s,\pi_0^{\ast} \times \pi') L\left(s + t_1 + \frac{\kappa_1 - 1}{2},\pi'\right).\]
By \eqref{eq:isobaricRS} and \eqref{eq:twistedLfunction}, this is
\[L\left(s + t_1 + \frac{\kappa_1 - 1}{2},\pi'\right) L\left(s + t_1 + \frac{\kappa_1 + 1}{2},\pi'\right) \prod_{j = 2}^{r} L(s,\pi_j \times \pi'),\]
which by \eqref{eq:L(s,pi)R1}, \eqref{eq:L(s,pi)R2}, and \eqref{eq:isobaricRS} is again $L(s,\pi \times \pi')$.

Next, suppose by induction that \hyperref[thm:testvector]{Theorem \ref*{thm:testvector}} holds. Then \hyperref[prop:(n,n)reduction]{Proposition \ref*{prop:(n,n)reduction}} and the induction hypothesis imply that for $\pi' = |\cdot|^{t_1'} \boxplus |\cdot|^{t_2'} \boxplus \cdots \boxplus |\cdot|^{t_n'}$ and $\pi_0' \coloneqq |\cdot|^{t_2'} \boxplus \cdots \boxplus |\cdot|^{t_n'}$,
\[\Psi(s,W^{\circ},W^{\prime\circ},\Psi^{\circ}) = L(s,\pi \times \pi_0') L(s,\pi \times |\cdot|^{t_1'}).\]
By \eqref{eq:isobaricRS}, this is precisely $L(s,\pi \times \pi')$.

Finally, \hyperref[lem:GLn-1uniqueness]{Lemma \ref*{lem:GLn-1uniqueness}} implies the uniqueness of $W^{\circ}$ as a right $K_{n - 1}$-invariant test function for the $\GL_n \times \GL_{n - 1}$ Rankin--Selberg integral.
\end{proof}

\phantomsection
\addcontentsline{toc}{section}{Acknowledgements}
\hypersetup{bookmarksdepth=-1}

\subsection*{Acknowledgements}

I would like to thank Herv\'{e} Jacquet and Akshay Venkatesh for their encouragement as well as useful discussions. Thanks are also owed to Subhajit Jana for helpful clarifications regarding the theory of analytic newvectors.

\hypersetup{bookmarksdepth}

\end{document}